\renewcommand{\P}{\mathbb{P}}
\newcommand{\E}{\mathbb{E}}
\newcommand{\Var}{\text{Var}}
\newcommand{\Z}{\mathbb{Z}}
\newcommand{\R}{\mathbb{R}}
\newcommand{\N}{\mathbb{N}}
\renewcommand{\S}{\mathbb{S}}
\newcommand{\eps}{\varepsilon} 
\def\id{{\mathbf I}}
\newcommand{\<}{\langle}
\renewcommand{\>}{\rangle}
\newcommand{\diag}{\text{diag}}
\newcommand{\tr}{\text{tr}}
\newcommand{\op}{{\rm op}}
\newcommand{\ones}{\bm{1}}
\def\sT{{\mathsf T}}
\DeclareMathOperator*{\argmin}{arg\,min}
\newtheorem{theorem}{Theorem}
\newtheorem*{theorem*}{Theorem}
\newtheorem{lemma}{Lemma}
\newtheorem{assumption}{Assumption}
\newtheorem{proposition}{Proposition}
\newtheorem{corollary}{Corollary}
\theoremstyle{definition}
\newtheoremstyle{myremark} % name
    {\topsep}                    % Space above
    {\topsep}                    % Space below
    {\rm}                        % Body font
    {}                           % Indent amount
    {\bf}                        % Theorem head font
    {.}                          % Punctuation after theorem head
    {.5em}                       % Space after theorem head
    {}  % Theorem head spec (can be left empty, meaning normal)
\theoremstyle{myremark}
\newtheorem{remark}{Remark}[section]
\DeclareSymbolFont{rsfs}{U}{rsfs}{m}{n}
\DeclareSymbolFontAlphabet{\mathscrsfs}{rsfs}
\def\bA{{\boldsymbol A}}
\def\bB{{\boldsymbol B}}
\def\bC{{\boldsymbol C}}
\def\bD{{\boldsymbol D}}
\def\bE{{\boldsymbol E}}
\def\bF{{\boldsymbol F}}
\def\bH{{\boldsymbol H}}
\def\bL{{\boldsymbol L}}
\def\bM{{\boldsymbol M}}
\def\bP{{\boldsymbol P}}
\def\bQ{{\boldsymbol Q}}
\def\bR{{\boldsymbol R}}
\def\bS{{\boldsymbol S}}
\def\bU{{\boldsymbol U}}
\def\bV{{\boldsymbol V}}
\def\bX{{\boldsymbol X}}
\def\bY{{\boldsymbol Y}}
\def\bZ{{\boldsymbol Z}}
\def\ba{{\boldsymbol a}}
\def\be{{\boldsymbol e}}
\def\boldf{{\boldsymbol f}}
\def\bh{{\boldsymbol h}}
\def\bu{{\boldsymbol u}}
\def\bv{{\boldsymbol v}}
\def\bw{{\boldsymbol w}}
\def\bx{{\boldsymbol x}}
\def\by{{\boldsymbol y}}
\def\bz{{\boldsymbol z}}
\def\bmu{{\boldsymbol \mu}}
\def\bbeta{{\boldsymbol \beta}}
\def\bdelta{{\boldsymbol\delta}}
\def\beps{{\boldsymbol \eps}}
\def\bphi{{\boldsymbol \phi}}
\def\btheta{{\boldsymbol \theta}}
\def\bDelta{{\boldsymbol \Delta}}
\def\bPhi{{\boldsymbol \Phi}}
\def\bSigma{{\boldsymbol \Sigma}}
\def\bPi{{\boldsymbol \Pi}}
\def\hba{{\hat {\boldsymbol a}}}
\def\hf{{\hat f}}
\def\ha{{\hat a}}
\def\bbHe{{\rm He}}
\def\de{{\rm d}}
\def\Coeff{{\rm Coeff}}
\def\de{{\rm d}}
\def\Unif{{\rm Unif}}
\def\bbHe{{\rm He}}
\def\cV{{\mathcal V}}
\def\cQ{{\mathcal Q}}
\def\cE{{\mathcal E}}
\def\cV{{\mathcal V}}
\def\cH{{\mathcal H}}
\def\cA{{\mathcal A}}
\def\tbA{\Tilde \bA}
\def\Unif{{\sf Unif}}
\def\normal{{\sf N}}
\def\proj{{\mathsf P}}
\def\reals{{\mathbb R}}
\def\integers{{\mathbb Z}}
\def\naturals{{\mathbb N}}
\def\normal{{\sf N}}
\def\proj{{\mathsf P}}
\def\Unif{{\sf Unif}}
\def\normal{{\sf N}}
\def\proj{{\mathsf P}}
\def\reals{{\mathbb R}}
\def\integers{{\mathbb Z}}
\def\naturals{{\mathbb N}}
\def\proj{{\mathsf P}}
\def\tQ{\tilde{Q}}
\def\hba{{\hat {\boldsymbol a}}}
\def\hf{{\hat f}}
\def\cE{{\mathcal E}}
\def\Coeff{{\rm Coeff}}
\def\de{{\rm d}}
\def\Unif{{\rm Unif}}
\def\cE{{\mathcal E}}
\def\normal{{\sf N}}
\def\bDelta{{\boldsymbol \Delta}}
\def\Cube{{\mathscrsfs Q}}
\def\Coeff{{\rm Coeff}}
\def\bA{{\boldsymbol A}}
\def\btheta{{\boldsymbol \theta}}
\def\Tr{{\rm Tr}}
\def\cV{{\mathcal V}}
\def\bP{{\boldsymbol P}}
\def\diag{{\rm diag}}
\def\bS{{\boldsymbol S}}
\def\bD{{\boldsymbol D}}
\def\bL{{\boldsymbol L}}
\def\hf{\hat f}
\def\bR{{\boldsymbol R}}
\def\cuH{\mathscrsfs{H}}
\def\bC{{\boldsymbol C}}
\def\barsigma{\bar{\sigma}}
\def\oproj{{\overline \proj}}
\def\ind{\mathbbm{1}}
\def\tC{\Tilde C}
\def\tQ{\Tilde Q}
\def\balpha{\boldsymbol{\alpha}}
\def\bgamma{\boldsymbol{\gamma}}
\def\cU{\mathcal{U}}
\def\tbC{\Tilde \bC}
\def\tba{\Tilde \ba}
\def\tbeta{\Tilde \beta}
\def\tbbeta{\Tilde \bbeta}
\def\boldf{\boldsymbol{f}}
\def\bXi{\boldsymbol{\Xi}}
\def\cB{\mathcal{B}}
\def\MP{{\rm MP}}
\colorlet{linkequation}{blue}
\begin{document}

\title{Spectrum of inner-product kernel matrices in the polynomial regime and multiple descent phenomenon in kernel ridge regression}

\author{Theodor Misiakiewicz\thanks{Department of
    Statistics, Stanford University} \;\;
%   Andrea Montanari\footnotemark[1] \thanks{Department of Electrical Engineering,
%     Stanford University}, \;\;
   % Basil Saeed\thanks{Department of Electrical Engineering,
  %  Stanford University} 
  }

\maketitle

\begin{abstract}
    
    We study the spectrum of inner-product kernel matrices, i.e., $n \times n$ matrices with entries $h (\< \bx_i , \bx_j \>/d)$ where the $(\bx_i)_{i \leq n}$ are i.i.d.~random covariates in $\R^d$. In the linear high-dimensional regime $n \asymp d$, it was shown that these matrices are well approximated by their linearization, which simplifies into the sum of a rescaled Wishart matrix and identity matrix. In this paper, we generalize this decomposition to the polynomial high-dimensional regime $n \asymp d^\ell,\ell \in \naturals$, for data uniformly distributed on the sphere and hypercube. In this regime, the kernel matrix is well approximated by its degree-$\ell$ polynomial approximation and can be decomposed into a low-rank spike matrix, identity and a `Gegenbauer matrix' with entries $Q_\ell (\<\bx_i , \bx_j \>)$, where $Q_\ell$ is the degree-$\ell$ Gegenbauer polynomial. We show that the spectrum of the Gegenbauer matrix converges in distribution to a Marchenko-Pastur law.

    This problem is motivated by the study of the prediction error of kernel ridge regression (KRR) in the polynomial regime $n \asymp d^\kappa, \kappa >0$. Previous work showed that for $\kappa \not\in \naturals$, KRR fits exactly a degree-$\lfloor \kappa \rfloor$ polynomial approximation to the target function. In this paper, we use our characterization of the kernel matrix to complete this picture and compute the precise asymptotics of the test error in the limit $n/d^\kappa \to \psi$ with $\kappa \in \naturals$. In this case, the test error can present a double descent behavior, depending on the effective regularization and signal-to-noise ratio at level $\kappa$. Because this double descent can occur each time $\kappa$ crosses an integer, this explains the multiple descent phenomenon in the KRR risk curve observed in several previous works.

\end{abstract}

\tableofcontents

\section{Introduction}

Kernel methods are among the most popular tools in statistics and machine learning and have been extensively studied in the classical bias-variance trade-off setting \cite{berlinet2011reproducing,wainwright2019high}. Over the past few years, they have attracted a renewed interest because of their connection to neural networks in the `neural tangent kernel' regime \cite{jacot2018neural,li2018learning,du2018gradient,lee2019wide,allen2019convergence,chizat2019lazy}. Moreover, it was argued in \cite{belkin2018understand} that kernel methods share a number of surprising phenomena with deep learning, which are not explained by classical theory. This prompted a number of works to study kernel methods in the `overfitted regime', which brought to light several interesting behavior: near optimality of interpolators and benign overfitting \cite{liang2020just,ghorbani2021linearized,bartlett2020benign}, self-induced regularization \cite{ghorbani2021linearized,liang2020multiple} and double descent of the prediction risk \cite{mei2022generalization,hastie2022surprises}. These phenomena appear in the high-dimensional regime, when both the number of samples $n$ and the dimensionality $d$ of the data are large \cite{rakhlin2019consistency}, and are not captured by previous approaches such as capacity/source conditions \cite{caponnetto2007optimal}. This motivates the development of theory specific to kernel methods in high-dimension.

The seminal work \cite{el2010spectrum} studies the spectrum of inner-product kernel matrices in the linear high-dimensional regime $n \asymp d$ as $d,n \to \infty$. Consider an inner-product kernel function $H_d : \R^d \times \R^d \to \R$ induced by some function $h : \R \to \R$, i.e., $H_d (\bx_1, \bx_2 ) = h(\< \bx_1 , \bx_2 \> / d)$. Given $n$ i.i.d.~covariates $\{ \bx_i \}_{i \in [n]}$ with $\bx_i \in \R^d$, the empirical kernel matrix is given by
\begin{equation}\label{eq:kernel_mat}
\bH = \big(  h ( \< \bx_i , \bx_j \> / d ) \big)_{i,j \in [n]} \in \R^{n \times n}\, . 
\end{equation}
\cite{el2010spectrum} showed that when $n,d \to \infty$ with $n/d = \Theta (1)$, the random matrix $\bH$ can be approximated consistently in operator norm by its linearization $\overline{\bH}_{\text{lin}}$ (i.e., $\|\bH - \overline{\bH}_{\text{lin}} \|_\op \rightarrow 0  $ in probability):
\begin{equation}
    \overline{\bH}_{\text{lin}} = \alpha \ones \ones^\sT + \beta \frac{\bX \bX^\sT }{d} + \mu \id_n \, , 
\end{equation}
where $\bX = [ \bx_1 , \ldots , \bx_n ]^\sT \in \R^{n \times d}$ and, when covariates are isotropic,
\[
\alpha = h(0 ) + h'' (0)/(2d) \, , \qquad \beta = f '(0) \, , \qquad \mu = h(1) - h(0) - h' (0) \, .
\]

The linearization of the kernel matrix in the linear high-dimensional asymptotics was later used to bound the prediction error of kernel ridge regression (KRR) \cite{liang2020just,liu2021kernel,bartlett2021deep}. In particular, it was shown that KRR can learn at most a linear approximation to the target function in that regime. In order to study a more realistic scenario where $n \gg d$ with $n,d$ large, several works have proposed to consider a more general \textit{polynomial high-dimensional regime}, with $n \asymp d^\kappa$ for fixed $\kappa \in \R_{>0}$ as $d,n \to \infty$ \cite{ghorbani2021linearized,liang2020multiple,ghorbani2020neural,canatar2021spectral,mei21generalization}. The papers most relevant to our setting \cite{ghorbani2021linearized,mei21generalization} require the eigenvalues of the kernel operator to have a `spectral gap' in their analysis, and only apply to $\kappa \not\in \naturals$ for isotropic data (see Section \ref{sec:spectrum_Kernel_matrix}). In that case, they show that the kernel matrix \eqref{eq:kernel_mat} can be well approximated by its degree-$\lfloor \kappa \rfloor$ polynomial approximation $\overline{\bH}_{\text{poly-}\lfloor \kappa \rfloor} := \bH_{\leq \lfloor \kappa \rfloor} + \mu_{>\lfloor \kappa \rfloor} \id_n$, where $\bH_{\leq \lfloor \kappa \rfloor}$ is a degree-$\lfloor \kappa \rfloor$ polynomial in the Gram matrix $(\< \bx_i , \bx_j\>)_{ij \in [n]}$. In particular, the matrix $\bH_{\leq \lfloor \kappa \rfloor}$ is low-rank ($\text{rank} = \Theta_d (d^{\lfloor \kappa \rfloor}) \ll d^\kappa \asymp n$) with diverging non-zero eigenvalues (a `spike matrix'). \cite{ghorbani2021linearized,mei21generalization} use this approximation to show that KRR essentially works as a shrinkage operator in this regime, and fits a degree-$\lfloor \kappa \rfloor $ polynomial approximation to the target function. However, for $\kappa \in \naturals$, the matrix $\bH_{\leq  \kappa }$ is not low rank anymore and its analysis and the analysis of KRR remains open. Similarly, \cite{liang2020multiple} provides an upper bound on the variance of KRR with isotropic data, which vanishes when $\kappa \not\in \naturals$ and is vacuous for $\kappa \in \naturals$. They argue from simulation that such a behavior is to be expected as the test error of KRR can display peaks at $\kappa \in \naturals$.

The goal of this paper is to complete this picture and analyze the kernel matrix $\bH$ and the test error of KRR for $n \asymp d^\ell$, for any fixed\footnote{We will denote $\kappa \in \R_{>0}$ the general exponent for $n\asymp d^\kappa$, and prefer the notation $\kappa = \ell$ when $\kappa \in \naturals$.} $\ell \in \naturals$. We consider data uniformly distributed on either the sphere $\S^{d-1} (\sqrt{d}) = \{\bx \in \R^d: \| \bx \|_2 = \sqrt{d}\}$ of radius $\sqrt{d}$, or the hypercube $\Cube^d = \{ +1 , -1 \}^d$. We will write $\cA_d \in \{ \S^{d-1} (\sqrt{d}) , \Cube^d \}$ and $\bx \sim \Unif (\cA_d)$. 
While we present our results for these two simple data distributions, we note that all the results and proofs in this paper can be restated in the abstract setting of \cite{mei21generalization}, which only requires the eigenvalues and eigenfunctions of the kernel operator to follow some decay and concentration properties\footnote{The `spectral gap' condition mentioned above would be relaxed to a convergence condition on the eigenvalues of order $\lambda_i = \Theta (n^{-1})$, with the associated eigenfunctions verifying a condition similar to Proposition \ref{prop:vanishing_quadratic}.}. The drawback of this abstract setting is the difficulty of checking whether these conditions are verified in specific examples, which requires an exact eigendecomposition of the kernel and often a substantial amount of work (see examples in \cite{mei2021learning,misiakiewicz2021learning}).

The rest of the paper is organized as follows. We summarize our main results in Section \ref{sec:summary_results} and discuss related work in Section \ref{sec:related_work}.  In Section \ref{sec:spectrum_Kernel_matrix}, we present the polynomial approximation of the kernel matrix in the polynomial regime and show that its spectrum converges to a shifted and rescaled Marchenko-Pastur law. Finally, we compute in Section \ref{sec:KRR} the precise asymptotics of the test error of KRR with inner-product kernel when $n/d^\ell \to \psi$.

\subsection{Summary of main results}
\label{sec:summary_results}

\subsubsection{Spectrum of inner-product kernel matrices in the polynomial regime}

The spectral analysis of the kernel matrix is based on the explicit eigendecomposition of inner-product kernels on $\cA_d$, in terms of the orthogonal Gegenbauer polynomials $\{Q_k^{(d)} \}_{k \geq 0}$. \cite{ghorbani2021linearized,mei21generalization} showed that the high-degree part of the kernel matrix behaves as an isometry. Hence for $n = \Theta_d (d^\ell)$, the kernel matrix can be approximated consistently in operator norm by its degree-$\ell$ polynomial approximation 
%\begin{equation}
%h ( \< \bx_1 , \bx_2 \>) = \sum_{k =0}^\infty \mu_{d,k} Q^{(d)}_k ( \< \bx_1 , \bx_2 \> ) \, , 
%\end{equation}
%where the $\{Q_k^{(d)} \}_{k \geq 0}$ are the orthogonal Gegenbauer polynomials (on $\cA_d$), i.e., $Q^{(d)}_k : \R \to \R$ is a degree-$k$ polynomial and $\E_{\bx} [ Q^{(d)}_k (\< \bz_1, \bx \>) Q^{(d)}_l (\< \bz_2 , \bx \> ) ] = 0$ for $k \neq l$, $\bx \sim \Unif (\cA_d)$ and $\bz_1 , \bz_2 \in \cA_d$. For any $\ell \in \naturals$, the kernel matrix \eqref{eq:kernel_mat} can therefore be decomposed into
%\begin{equation}
%    \begin{aligned}
%    \bH =&~ \sum_{k =0}^\infty \mu_{d,k} \bQ_k = \bH_{\leq \ell - 1} + \mu_{d,\ell} \bQ_\ell + \bH_{>\ell} \, ,\\
%    \bH_{\leq \ell - 1} = &~ \sum_{k =0}^{\ell - 1} \mu_{d,k} \bQ_k\, , \qquad \bH_{>\ell}=\sum_{k =\ell +1}^{\infty} \mu_{d,k} \bQ_k \, ,
%    \end{aligned}
%\end{equation}
%where we will call $\bQ_k = ( Q^{(d)}_k ( \< \bx_i , \bx_j \> ) )_{ij \in [n]}$ the $k$-th Gegenbauer matrix. In \cite{ghorbani2021linearized,mei21generalization}, it was proved that for $n = \Theta_d (d^\ell)$, the high-degree part of the kernel is approximately an isometry $\E [ \| \bH_{>\ell} - \mu_{d,>\ell} \id_n \|_{\op}^2 ] = o_d(1) $ where $\mu_{d,>\ell} = \sum_{k \geq \ell +1} \mu_{d,k}$. Hence in the polynomial regime $n = \Theta_d ( d^\ell )$, the kernel matrix can be approximated consistently in operator norm by its degree-$\ell$ polynomial approximation 
\begin{equation}\label{eq:poly_approx_kernel_matrix}
    \overline{\bH}_{\text{poly-}\ell} = \bH_{\leq \ell - 1} + \mu_{d,\ell} \bQ_\ell + \mu_{d,>\ell} \id_n \, ,
\end{equation}
where $\bH_{\leq \ell - 1} = ( h_{\leq \ell -1} (\< \bx_i , \bx_j \>))_{i,j \in [n]}$ with $h_{\leq \ell -1}$ the degree-$(\ell - 1)$ polynomial approximation of $h$ in $L^2 (\cA_d)$, and $\bQ_\ell = ( Q^{(d)}_\ell ( \< \bx_i , \bx_j \> ) )_{ij \in [n]}$ with $Q_\ell^{(d)}$ the degree-$\ell$ Gegenbauer polynomial. The matrix $\bH_{\leq \ell - 1}$ has rank $\Theta_d (d^{\ell - 1})\ll n$ (the dimension of the space of degree-$(\ell-1)$ polynomials), with smallest non-zero eigenvalue $\lambda_{\min} (\bH_{\leq \ell - 1}) = \Omega_{d,\P}(d)$ for generic (universal) kernel $h$ \cite{ghorbani2021linearized}. Hence, $\bH_{\leq \ell - 1}$ corresponds to a low-rank spike matrix with diverging eigenvalues and whose eigenspace will align (in some sense) with the subspace $\cV_{\leq \ell -1}$ in $L^2 (\cA_d)$ of all polynomials of degree $\leq \ell - 1$.

On the other hand, $\bQ_\ell$ has rank $\min ( n , B(\cA_d , \ell) )$, where $B(\cA_d , \ell) = \Theta_d ( d^\ell)$ denote the dimension of the subspace $\cV_\ell$ of degree-$\ell$ polynomials orthogonal to $\cV_{\leq \ell - 1}$. We show that its spectrum converges to a Marchenko-Pastur distribution:

\begin{theorem*}[Spectrum of $\bQ_\ell$] Let $\nu_{\MP,\psi}$ be the Marchenko-Pastur distribution with aspect ratio $\psi >0$, and $\hat \nu_n$ the empirical spectral distribution of $\bQ_\ell$. Then $\hat \nu_n$ converges in distribution to $\nu_{\MP,\psi}$, almost surely as $\lim_{d \to \infty} \frac{n}{B(\cA_d,\ell)} = \psi$.
\end{theorem*}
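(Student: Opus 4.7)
The plan is to reduce the claim to the classical Marchenko--Pastur theorem for sample covariance matrices with i.i.d.\ rows of identity covariance. Set $N := B(\cA_d, \ell)$ and let $\{Y_{\ell,s}\}_{s \in [N]}$ be an $L^2(\cA_d)$-orthonormal basis of the degree-$\ell$ eigenspace $\cV_\ell$. The Gegenbauer addition formula gives
\[
Q_\ell^{(d)}(\< \bx , \by \>) \;=\; \frac{1}{N} \sum_{s=1}^{N} Y_{\ell,s}(\bx)\, Y_{\ell,s}(\by),
\]
so if $\bY \in \R^{n \times N}$ has entries $\bY_{is} = Y_{\ell,s}(\bx_i)$, then $\bQ_\ell = \tfrac{1}{N} \bY \bY^\sT$. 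Writing $\by_i \in \R^N$ for the $i$-th row of $\bY$, these are i.i.d.\ random vectors with $\E[\by_i] = \bzero$ and $\E[\by_i \by_i^\sT] = \id_N$ (using that $\cV_\ell$ is orthogonal to constants for $\ell \geq 1$). In the regime $n/N \to \psi$, this places $\bQ_\ell$ in the standard sample-covariance framework with aspect ratio $\psi$.

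I would then invoke the version of Marchenko--Pastur for sample covariance matrices with i.i.d.\ rows of identity covariance (e.g., Bai--Zhou or Yaskov), which requires only two conditions: $(i)$ $\|\by_1\|^2/N \to 1$ almost surely, and $(ii)$ for every sequence of deterministic matrices $\bA_N \in \R^{N \times N}$ with $\sup_N \|\bA_N\|_\op < \infty$,
\[
\frac{1}{N}\, \by_1^\sT \bA_N \by_1 \;-\; \frac{1}{N}\, \tr(\bA_N) \;\longrightarrow\; 0 \quad \text{in probability.}
\]
Condition $(i)$ is immediate: evaluating the addition formula at $\bx = \by = \bx_1$ gives $\|\by_1\|^2 = N \cdot Q_\ell^{(d)}(\<\bx_1, \bx_1\>) = N \cdot Q_\ell^{(d)}(d) = N$ exactly on $\cA_d$, under the standard normalization $Q_\ell^{(d)}(d) = 1$.

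The main obstacle is establishing $(ii)$. Expanding the variance yields
\[
\Var\!\left(\tfrac{1}{N}\, \by_1^\sT \bA_N \by_1\right) \;=\; \frac{1}{N^2}\sum_{s,s',t,t'} (\bA_N)_{ss'}(\bA_N)_{tt'} \Big\{ \E\big[Y_{\ell,s}(\bx_1) Y_{\ell,s'}(\bx_1) Y_{\ell,t}(\bx_1) Y_{\ell,t'}(\bx_1)\big] - \delta_{ss'}\delta_{tt'} \Big\},
\]
so the core analytical task is a uniform fourth-moment estimate for the basis $\{Y_{\ell,s}\}$. My approach would be to decompose each product $Y_{\ell,s} Y_{\ell,s'}$ into its projections onto the eigenspaces $\cV_k$ for $k \leq 2\ell$ via the Clebsch--Gordan / linearization formula for Gegenbauer polynomials, apply Parseval, and control the resulting quadratic form using $\|\bA_N\|_F^2 \leq N\,\|\bA_N\|_\op^2$ together with a factor that vanishes as $d \to \infty$. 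This vanishing factor is precisely the content of Proposition \ref{prop:vanishing_quadratic}, which would be invoked as the key technical input at this step.

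Given $(i)$ and $(ii)$, almost sure convergence of the ESD to $\nu_{\MP,\psi}$ follows from the standard Stieltjes-transform machinery: the normalized trace of the resolvent $s_n(z) = \tfrac{1}{n}\tr\bigl((\bQ_\ell - z\id)^{-1}\bigr)$ concentrates around its expectation via a Burkholder-type martingale inequality using only the i.i.d.\ row structure, while its expectation satisfies the Marchenko--Pastur self-consistency equation up to $o(1)$ errors controlled by $(i)$--$(ii)$. The upgrade from in-probability to almost sure convergence uses a Borel--Cantelli argument combined with the polynomial-in-$d$ decay of the concentration estimates.
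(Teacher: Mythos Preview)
Your proposal is correct and matches the paper's approach: write $\bQ_\ell = \bY\bY^\sT/N$ with i.i.d.\ isotropic rows, invoke the Bai--Zhou/Yaskov criterion, and supply the quadratic-form condition via Proposition~\ref{prop:vanishing_quadratic}. Note that Proposition~\ref{prop:vanishing_quadratic} \emph{is} your condition $(ii)$ verbatim, so your Clebsch--Gordan sketch is superfluous once you invoke it; the paper's own proof of that proposition proceeds not by harmonic linearization but by direct combinatorics in the Fourier basis on $\Cube^d$ and, on $\S^{d-1}$, via the explicit spherical-coordinate representation of spherical harmonics.
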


This theorem with the decomposition \eqref{eq:poly_approx_kernel_matrix} shows that the spectrum of the kernel matrix converges to a shifted and rescaled Marchenko-Pastur distribution when $n = \Theta_d (d^\ell), \ell \in \naturals$. Recall that such a result was only known to hold for $\ell = 1$ \cite{el2010spectrum}. The proof of the theorem relies on rewriting the Gegenbauer matrix as a covariance matrix of a certain polynomial mapping of the $\bx_i$'s, and a sufficient condition from \cite{yaskov2016necessary} for matrices with dependent entries to satisfy a Marchenko-Pastur theorem (see Section \ref{sec:spectrum_Kernel_matrix}).

As mentioned in the introduction, the results described in this paper hold in the abstract setting of \cite{mei21generalization}, with the added assumption that the eigenfunctions associated to eigenvalues of order $\Theta (n^{-1})$ obey the condition in \cite{yaskov2016necessary}. For example, this can be proved with little added work for the cyclic invariant kernel considered in \cite{mei2021learning} and the convolutional kernels with patch-size $q$ in \cite{misiakiewicz2021learning} (with the degree-$\ell$ polynomial approximation to the kernel matrix holding for $n = \Theta_d (d^{\ell - 1})$ and $n = \Theta_d (d q^{\ell - 1} )$ respectively).
A more challenging setting would be to show directly such a result in the setting of anisotropic sub-Gaussian data \cite{el2010spectrum,liang2020just}, without an explicit access to the eigendecomposition.

\subsubsection{Precise asymptotics of KRR prediction error in the polynomial regime}

\begin{figure}[t]
\begin{tikzpicture}

\node[inner sep=0pt] (russell) at (0,0)
    {\includegraphics[width=.7\textwidth]{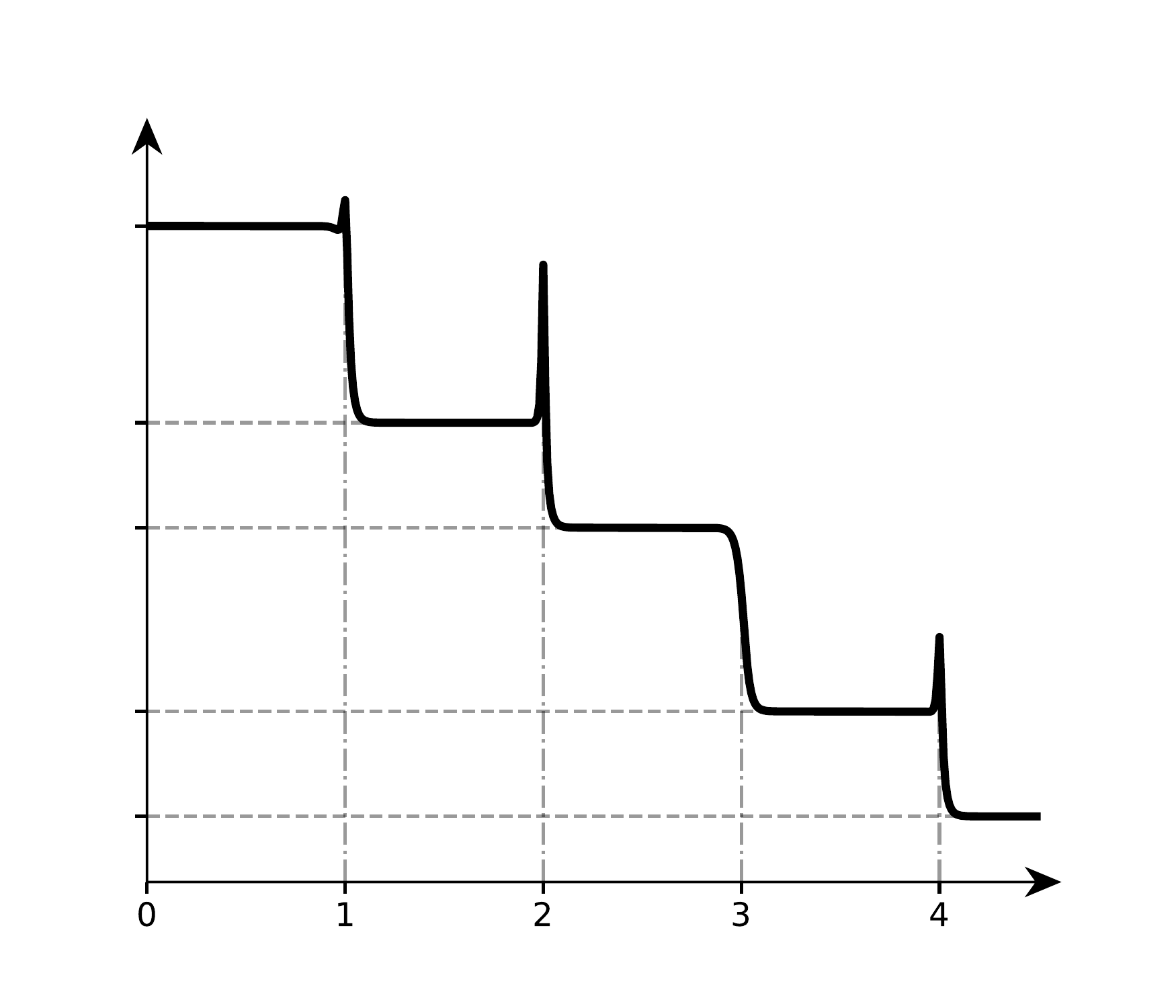}};
\node[inner sep=0pt] (whitehead) at (7,2.5)
    {\includegraphics[width=.48\textwidth]{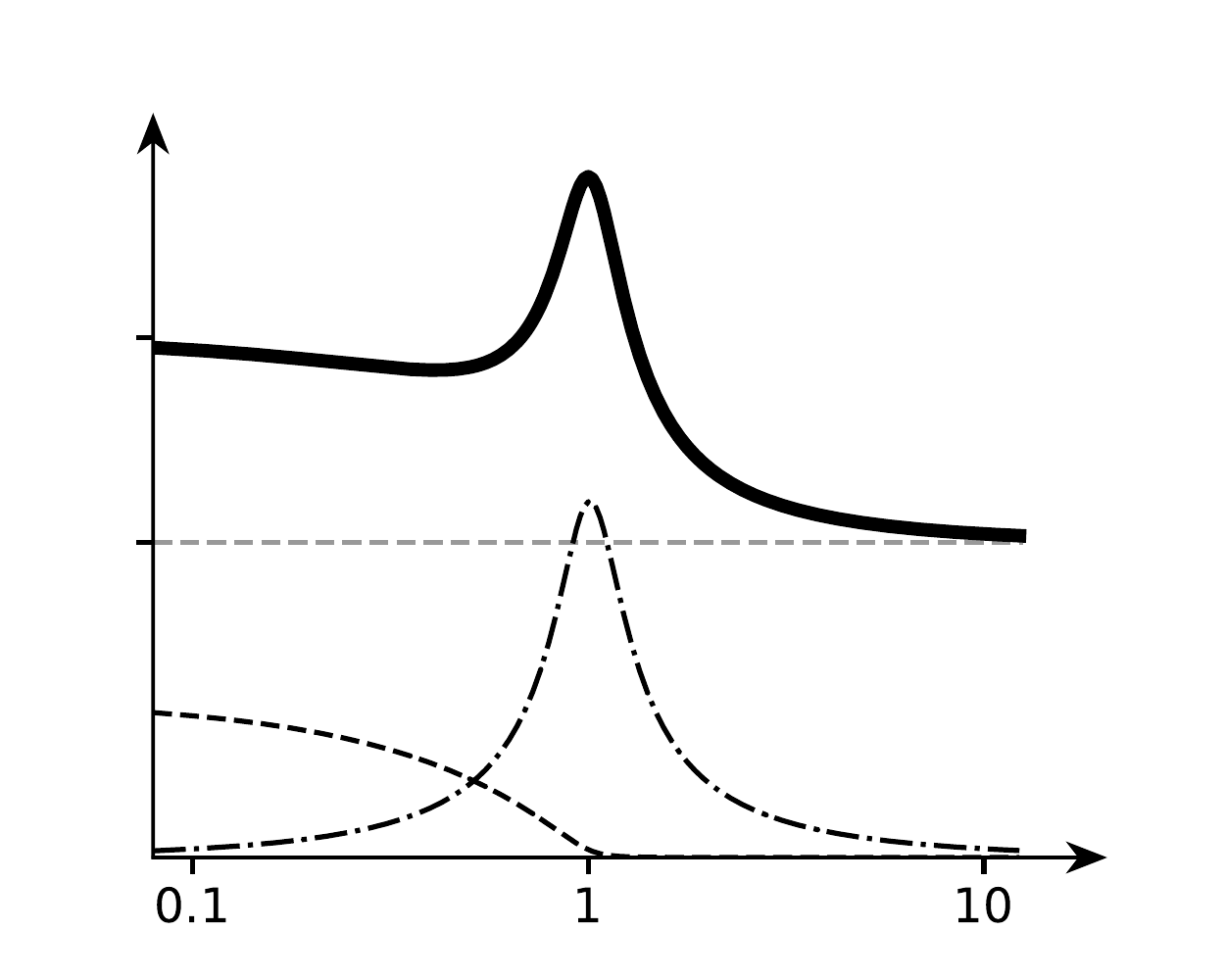}};

\node[rectangle,draw,ultra thick,align=center] (a) at (-1,5) {\large Test error of KRR \\
in the polynomial regime};
 \node[rectangle] (a) at (-5.4,2.7) {$\| \proj_{>0} f_* \|_{L^2}^2$};
 \node[rectangle] (a) at (-5.4,0.8) {$\| \proj_{>1} f_* \|_{L^2}^2$};
 \node[rectangle] (a) at (-5.4,-0.2) {$\| \proj_{>2} f_* \|_{L^2}^2$};
 \node[rectangle] (a) at (-5.4,-2) {$\| \proj_{>3} f_* \|_{L^2}^2$};
 \node[rectangle] (a) at (-5.4,-3.1) {$\| \proj_{>4} f_* \|_{L^2}^2$};
 
 \node[rectangle] (a) at (5.2,-3.7) {$\frac{\log(n)}{\log(d)}$};
 \node[rectangle] (a) at (0,-4.5) {$\kappa$};
 
  \node[rectangle] (a) at (7,-0.5) {$\psi$};
  
\node[rectangle] (a) at (10,0.5) {\footnotesize $\frac{n}{B(\cA_d,\ell)}$};

\node[rectangle] (a) at (2.6,3.5) {\footnotesize $ \| \proj_{>\ell-1} f_* \|_{L^2}^2$};

\node[rectangle] (a) at (4.9,2.45) {\footnotesize $ \| \proj_{>\ell} f_* \|_{L^2}^2$};

\node[rectangle] (a) at (4.9,1.25) {\footnotesize $ \cB(\psi,\zeta_\ell)$};

\node[rectangle] (a) at (7.9,1.25) {\footnotesize $ \cV(\psi,\zeta_\ell)$};
 
\draw (3.5, -1.8) circle (0.8);
\draw (2.74, -1.54) -- (3.9, 4.9);
\draw (3.9, -2.49) -- (10.2, -0.3);

%\node[circle,draw]{} at (3.6,-2)
\end{tikzpicture}
\vspace{-20pt}
\caption{A cartoon illustration of the test error of KRR in the polynomial regime $n/d^\kappa \to \psi$, as $n,d \to \infty$, for any $\kappa,\psi \in \R_{>0}$. The test error follows a staircase, with peaks that can occur at each $\kappa =\ell \in \naturals$, depending on the effective regularization $\zeta_\ell$ and effective signal-to-noise ratio $\text{SNR}_\ell$ at level $\ell$ (a peak occurs if $\zeta_\ell$ and/or $\text{SNR}_\ell$ are small enough). \label{fig:illustation}}

\end{figure}

As an application, we consider kernel ridge regression (KRR) in the polynomial regime. We observe $n$ i.i.d. pairs $(y_i , \bx_i)_{i \in [n]} $ with covariates $\bx_i \sim \Unif (\cA_d)$ and responses $y_i = f_* (\bx_i) + \eps_i$, where the target function is chosen $f_* \in L^2 (\cA_d)$ and $\eps_i$ are independent noise with mean $0$ and variance $\E [ \eps_i^2 ] = \sigma_{\eps}^2$. The KRR solution with inner-product kernel $H_d(\bx_1,\bx_2) := h (\< \bx_1 , \bx_2 \>/d)$ and regularization parameter $\lambda >0$ is given by
\[
\hf_\lambda := \argmin_{f } \Big\{ \sum_{i = 1}^n \big( y_i - f ( \bx_i ) \big)^2 + \lambda \| f \|_\cH^2 \Big\} \, ,
\]
where $\| \cdot \|_\cH$ is the RKHS norm associated to kernel $H_d$ in $L^2 (\cA_d)$. The test error (or prediction error) of KRR is given by
\begin{equation*}
R_{\text{test}} ( f_* ; \bX , \beps, \lambda) := \E_{\bx} \Big[ \Big( f_* (\bx) - \hf_\lambda ( \bx) \Big)^2 \Big] \, .
\end{equation*}

Our goal is to characterize the asymptotic test error in the polynomial regime $n = \Theta_d (d^\kappa)$ for any fixed $\kappa \in \naturals$. In \cite{ghorbani2021linearized,mei21generalization}, it was proved that when $\kappa \not\in \naturals$, 
\begin{equation}\label{eq:kappa_notnaturals}
R_{\text{test}} ( f_* ; \bX , \beps, \lambda) = \| \proj_{> \lfloor \kappa \rfloor } f_* \|_{L^2}^2 + o_{d,\P}(1)\, , 
\end{equation}
where $\proj_{>\lfloor \kappa \rfloor}$ is the projection on the subspace orthogonal to polynomials of degree $\leq \lfloor \kappa \rfloor$, and we recall that $o_{d,\P}(\cdot)$ is the little-o in probability notation (i.e., a sequence of random variables $X_d = o_{d,\P}(1)$ if and only if $X_d \to 0$ in probability). In words, KRR fits the best degree-$\lfloor \kappa \rfloor$ polynomial approximation to the target function, and none of the high-degree part $\proj_{>\lfloor \kappa \rfloor} f_*$. For $\kappa = \ell \in \naturals$ and $n = \Theta_d ( d^\ell )$, KRR only partially fits  $\proj_\ell f_*$ (the projection on $\cV_\ell$ the subspace of degree-$\ell$ polynomials orthogonal to degree $\leq \ell - 1$ polynomials), while completely fitting $\proj_{\leq \ell -1}f_*$ (the degree-$(\ell -1 )$ approximation) and none of $\proj_{>\ell} f_*$. 

In the next theorem, we use the limiting spectral distribution of the kernel matrix to compute the precise asymptotics of the prediction error when $n/B(\cA_d,\ell) \to \psi$, where we recall that $B(\cA_d , \ell) = \Theta_d (d^\ell)$ is the  dimension of $\cV_\ell$. We assume that $h$ is a universal kernel with $\mu_{d,\ell} \to \mu_\ell >0$ and $\mu_{d,>\ell} \to \mu_{>\ell} >0$ (where $\mu_{d,\ell},\mu_{d,>\ell}$ are the coefficients in the decomposition~\eqref{eq:poly_approx_kernel_matrix}).

\begin{theorem*}[KRR test error in polynomial regime] Denote $\zeta_\ell = (\mu_{>\ell} + \lambda)/ \mu_\ell$ the effective regularization at level $\ell \in \naturals$. With the assumptions and $\cB(  \psi , \zeta  )$ and $\cV (  \psi , \zeta )$ defined in Theorem \ref{thm:KRR}, we have 
\begin{equation}\label{eq:asymp_test_intro}
R_{\text{test}} ( f_{*} ; \bX , \beps , \lambda) = \| \proj_\ell f_* \|_{L^2}^2 \cdot \cB(  \psi , \zeta_\ell  ) + ( \| \proj_{>\ell} f_* \|_{L^2}^2  + \sigma_\eps^2 ) \cdot \cV (  \psi , \zeta_\ell )  + \| \proj_{>\ell} f_* \|_{L^2}^2  + o_{d,\P}(1)\, ,
\end{equation}
as $ n / B(\cA_d , \ell) \to \psi$, where the convergence in probability is over the randomness in $\bX, \beps, f_*$.
\end{theorem*}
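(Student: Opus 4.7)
The plan is to substitute the polynomial approximation $\overline{\bH}_{\text{poly-}\ell}$ from \eqref{eq:poly_approx_kernel_matrix} into the closed-form KRR predictor $\hf_\lambda (\bx) = \bh(\bx)^\sT (\bH + \lambda \id_n)^{-1} (\boldf_* + \beps)$, use the Mercer/Gegenbauer expansion of $H_d$ to split the test error into contributions from the three target components $\proj_{\leq \ell-1} f_*$, $\proj_\ell f_*$, and $\proj_{>\ell} f_*$, and evaluate each term using the spike structure of $\bH_{\leq \ell-1}$ and the Marchenko-Pastur limit of $\bQ_\ell$. Since $\lambda + \mu_{d,>\ell}$ stays bounded away from $0$, the approximation error $\| \bH - \overline{\bH}_{\text{poly-}\ell}\|_\op = o_{d,\P}(1)$ passes through the resolvent and contributes only $o_{d,\P}(1)$ to the risk; the same remark applies when replacing the test-sample kernel vector $\bh(\bx)$ by its degree-$\ell$ polynomial approximation.

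\textbf{Handling the low- and high-degree components.} The matrix $\bH_{\leq \ell-1}$ has rank $\Theta_d (d^{\ell-1}) \ll n$ and smallest nonzero eigenvalue $\Omega_{d,\P}(d)$, so a Woodbury identity applied to the splitting $\overline{\bH}_{\text{poly-}\ell} + \lambda \id_n = [\mu_{d,\ell} \bQ_\ell + (\mu_{d,>\ell} + \lambda) \id_n] + \bH_{\leq \ell-1}$ shows that KRR fits $\proj_{\leq \ell - 1} f_*$ essentially perfectly, contributing $o_{d,\P}(1)$ to the test error. For $\proj_{>\ell} f_*$, the restriction to the sample $\boldf_{*,>\ell} := (\proj_{>\ell} f_*(\bx_i))_{i \in [n]}$ lies in the `high-degree' block where the kernel acts (up to $o_{d,\P}(1)$) as a scaled isometry, so $\proj_{>\ell} f_*$ plays the same structural role as the noise $\beps$: its component orthogonal to the data contributes the standalone $\| \proj_{>\ell} f_* \|_{L^2}^2$ term, while its projected part is absorbed into the variance contribution of the degree-$\ell$ step below.

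\textbf{Degree-$\ell$ component and the MP law.} After the reductions above, the problem on $\cV_\ell$ reduces to a linear ridge regression with design matrix $\bPsi_\ell \in \R^{n \times B(\cA_d,\ell)}$ of normalized degree-$\ell$ Gegenbauer features, normalized Gram matrix $\bQ_\ell$, and effective ridge $\zeta_\ell = (\mu_{>\ell} + \lambda)/\mu_\ell$, because $\mu_{d,\ell} \bQ_\ell + (\mu_{d,>\ell} + \lambda)\id_n = \mu_{d,\ell}[\bQ_\ell + \zeta_\ell \id_n] + o(1)$. Expanding the degree-$\ell$ part of $\|f_* - \hf_\lambda\|_{L^2}^2$ in the Gegenbauer basis reduces the expected risk to deterministic functionals of the resolvent of $\bQ_\ell$, in particular normalized traces of $(\bQ_\ell + \zeta_\ell \id_n)^{-1}$ and $(\bQ_\ell + \zeta_\ell \id_n)^{-2}$. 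By the MP theorem of Section \ref{sec:spectrum_Kernel_matrix}, these converge almost surely to integrals against $\nu_{\MP,\psi}$ against the Stieltjes transform of $\nu_{\MP,\psi}$ at $-\zeta_\ell$ and its derivative, yielding exactly the quantities $\cB(\psi,\zeta_\ell)$ and $\cV(\psi,\zeta_\ell)$ defined in Theorem \ref{thm:KRR}.

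\textbf{Main obstacle.} The principal difficulty is not the MP law itself, which is imported from the earlier theorem, but the anisotropic concentration needed to turn the bias and variance contributions into the claimed closed forms. Concretely, one must prove that quadratic forms of the type $\bzeta^\sT (\bQ_\ell + \zeta_\ell \id_n)^{-1} \bzeta$, where $\bzeta$ is $\bPsi_\ell$ applied to the Gegenbauer coefficients of $\proj_\ell f_*$, or the noise vector $\beps$, or the sample vector $\boldf_{*,>\ell}$, concentrate around $\| \bzeta \|^2$ times the appropriate Stieltjes-transform value at $-\zeta_\ell$. This is a deterministic-equivalent (anisotropic local law) statement that must be established from the structure of the Gegenbauer features on $\cA_d$: a natural route is a leave-one-out expansion together with the Yaskov-type decoupling condition already verified in the proof of the MP theorem, combined with hypercontractivity of Gegenbauer polynomials and the vanishing-quadratic estimates of Proposition \ref{prop:vanishing_quadratic} to control the required fluctuations.
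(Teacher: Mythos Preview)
Your structural decomposition into the three subspaces and the reduction on $\cV_\ell$ to an effective ridge problem with regularization $\zeta_\ell$ match the paper's argument closely, including the Woodbury treatment of the spike $\bH_{\leq \ell-1}$ and the replacement $(\bH+\lambda\id_n)^{-1} \approx \mu_\ell^{-1}(\bQ_\ell+\zeta_\ell\id_n)^{-1}$ on the degree-$\ell$ block.

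However, your identification of the ``main obstacle'' is off, and the route you propose for it is one the paper explicitly declines to take. You frame the key difficulty as proving an anisotropic (deterministic-equivalent) concentration for quadratic forms $\bzeta^\sT(\bQ_\ell+\zeta_\ell\id_n)^{-1}\bzeta$, and suggest attacking this via leave-one-out plus the Yaskov condition. But Section~\ref{sec:pointwise} of the paper states that establishing such an isotropic local law for $\bQ_\ell$ with $\ell\geq 2$ is a significant open problem, precisely because the rows of $\bY_\ell$ do not have independent entries; it is left for future work. The paper instead \emph{exploits the randomness of $f_*$}: under Assumption~\ref{ass:dist_f} the degree-$\ell$ coefficients $\tbbeta_\ell$ are isotropic with mean zero, so taking $\E_{f_*,\beps}[\cdot]$ first collapses every quadratic form of the type $\tbbeta_\ell^\sT \bY_\ell^\sT\bXi(\cdots)\bXi\bY_\ell\tbbeta_\ell$ to a \emph{trace} $F_\ell^2\Tr[\bQ_\ell\bXi(\cdots)\bXi]$, for which the Marchenko--Pastur theorem alone suffices (Lemmas~\ref{lem:replace_resolvent} and~\ref{lem:calculus_resolvent}). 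The genuine remaining obstacle is then Proposition~\ref{prop:risk_concentration}: showing $R_{\text{test}}$ concentrates around $\E_{f_*,\beps}[R_{\text{test}}]$ in $L^1$, which is done by bounding $\E_\bX[\Var_{f_*,\beps}(R_{\text{test}}\mid\bX)]$ term by term using operator-norm estimates in expectation (Section~\ref{sec:technical_bound_expectation}) and Assumption~\ref{ass:technical_ass} on the tail of $F_{d,k}^2/\mu_{d,k}$. You are missing this two-step ``average-then-concentrate'' architecture, and without it your proposed anisotropic route would require proving something strictly harder than what the theorem needs.
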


Let us make some comments about this asymptotic formula for the test error. First, it only depends on the kernel through an effective regularization $\zeta_\ell = (\mu_{>\ell} + \lambda)/ \mu_\ell$. We see that the high-degree part of the kernel plays the role of an effective self-induced regularization $\mu_{>\ell}$ which is added to the ridge parameter $\lambda$. In particular, $\zeta_\ell >0$ even when $\lambda \to 0^+$ (KRR solution interpolates the data) which explains the `benign overfitting' phenomenon in this model, i.e., the interpolating solution can still generalize well as noticed previously \cite{liang2020just,ghorbani2021linearized,liang2020multiple}. Secondly, the bias term $\cB (\psi, \zeta_\ell)$ is decreasing with $\psi$, while the variance term $\cV (\psi, \zeta_\ell)$ presents a peak at $\psi = 1$, with the value at the peak increasing as $\zeta_\ell$ decreases (see Figure \ref{fig:test_error_asymp}). Hence, a double descent in the test error can occur if $\zeta_\ell$ or the effective signal-to-noise ratio $\text{SNR}_\ell := \| \proj_\ell f_* \|_{L^2}^2/ ( \| \proj_{>\ell} f_* \|_{L^2}^2  + \sigma_\eps^2 )$ is sufficiently small. Note that this bias-variance decomposition is different than in the classical sense (only taking the label noise $\beps$): the high-degree part $\proj_{>\ell} f_*$ plays the role of an effective additive noise to the target function $\proj_{\leq \ell }f_*$, and we take here the bias variance decomposition over the `effective noise' $\proj_{>\ell} f_* (\bx_i) + \eps_i$. In particular, this means that a double descent can occur even without label noise $\eps_i = 0$ (no variance term in the classical sense).

Finally, the convergence to the asymptotic formula \eqref{eq:asymp_test_intro} is proven in probability over a class of `typical functions' in $L^2(\cA_d)$. The pointwise convergence result (for a fixed function $f_*$) holds if we assume that the random matrix $\bQ_\ell$ satisfies an isotropic local law \cite{alex2014isotropic}. Proving such a result for $\ell \geq 2$ (without independence of the entries of the feature matrix) is a significant challenge and is left for future work. See Section \ref{sec:pointwise} for a discussion.

With this theorem, we finish the task started in \cite{ghorbani2021linearized} and get a complete characterization of the prediction error of KRR in the polynomial regime, i.e., for $n/d^\kappa \to \psi$ for any $\kappa,\psi >0$, in the case of data uniformly distributed on the sphere and hypercube. Figure \ref{fig:illustation} illustrates these theoretical results. In particular, each time $\kappa$ crosses an integer, a peak can occur depending on $\zeta_\ell$ and $\text{SNR}_\ell$ at $\kappa = \ell \in \naturals$, which explains the multiple descent behavior with different sized peaks observed numerically in previous works \cite{liang2020multiple,canatar2021spectral}. KRR with an inner-product kernel and isotropic data offers a first natural example that rigorously shows a complex non-monotonic generalization curve, which has been observed in many machine learning studies \cite{chen2021multiple}.

\subsubsection{Equivalence with a Gaussian covariates model}\label{sec:Gaussian_equivalence}

A recent string of work started showing equivalence between non-linear regression models and simpler Gaussian covariates models in high-dimension \cite{mei2022generalization,goldt2020gaussian,hu2020universality}. These results hint at some general universality phenomena in high-dimensional models, where the test error only depends on the covariance of the features \cite{montanari2022universality} and a few properties of the non-linearity.

Here, we will simply make the following observation: the kernel ridge regression model has the same asymptotic prediction error in the polynomial regime as a simpler linear regression model with Gaussian covariates. Consider a target function:
\begin{equation}\label{eq:target_beta}
\begin{aligned}
&f_* (\bx) = \sum_{k =0 }^\infty \sum_{s \in [B(\cA_d,k)]} \beta_{ks} Y_{ks} (\bx) = \< \bPhi (\bx) , \btheta_* \>\, ,&\\
&\btheta_* = \left( \sqrt{\frac{B(\cA_d, k)}{\mu_{d,k}}}\beta_{ks}\right)_{k \geq 0, s \in [B(\cA_d, k )]} \, , \qquad \bPhi (\bx) = \left( \sqrt{\frac{\mu_{d,k}}{B(\cA_d, k)}}Y_{ks} (\bx) \right)_{k \geq 0, s \in [B(\cA_d, k )]} \, ,&
\end{aligned}
\end{equation}
where $\{Y_{ks}\}$ is the polynomial basis that diagonalizes inner-product kernels on $L^2(\cA_d)$, i.e., $Y_{ks}$ is an eigenfunction of the kernel operator with eigenvalue $\mu_{d,k} /B(\cA_d,k)$. Let us now state the equivalent linear regression model: we are given $n$ i.i.d.~pairs $(\bz_i , y_i )_{i \in [n]}$ with 
\begin{itemize}
    \item[1.] Covariates $\bz = (z_{ks})_{k \geq 0 , s \in [B(\cA_d,k)]} \in \R^\infty$ with $z_0 =\mu_{d,0}$ and $z_{ks} \sim \normal ( 0 , \mu_{d,k} / B(\cA_d , k) )$ independently.
    \item[2.] The linear response $y = \< \theta_* , \bz \> + \Tilde\eps $ with $\theta_*$ given in Eq.~\eqref{eq:target_beta} and independent noise $\Tilde\eps \sim \normal (0,\sigma_\eps^2)$.
\end{itemize}
We fit this model using ridge regression with ridge parameter $\lambda >0$:
\begin{equation}\label{eq:G_model}
\hat \btheta_\lambda = \argmin_{\btheta} \Big\{ \| \by - \bZ \btheta \|_2^2 + \lambda \| \btheta \|_2^2 \Big\} = \bZ^\sT ( \bZ \bZ^\sT + \lambda \id_n )^{-1} \by \, , 
\end{equation}
where $\bZ = [z_1 , \ldots , z_n ]^\sT \in \R^{n \times \infty}$ and $\by = (y_1 , \ldots , y_n)$. We denote the test error:
\[
R_{\text{Gauss}} ( \btheta_* ; \bZ , \Tilde \beps , \lambda) = \E_{\bz} \Big[ \big(\< \bz , \btheta_*\>  - \<\bz, \hat \btheta_\lambda \> \big)^2 \Big] \, .
\]
Such models were studied in the overfitted regime in \cite{bartlett2020benign,tsigler2020benign,richards2021asymptotics}. Here, we show that kernel ridge regression has the same asymptotic test error as the Gaussian covariates model \eqref{eq:G_model}:

\begin{theorem}[Gaussian equivalent model]\label{thm:Gaussian_equivalence} Under the same assumptions as Theorem \ref{thm:KRR}, for any $\kappa,\psi >0$ and $n/d^\kappa \to \psi$ as $d,n \to \infty$, we have
\begin{equation}
    R_{\text{test}} (f_* ; \bX , \beps , \lambda) = R_{\text{Gauss}} ( \btheta_* ; \bZ , \Tilde \beps , \lambda) + o_{d,\P}(1)\, .
\end{equation}
\end{theorem}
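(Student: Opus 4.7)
\textbf{Proof plan for Theorem \ref{thm:Gaussian_equivalence}.} The most natural route is to establish the same asymptotic formula as in Theorem \ref{thm:KRR} for the Gaussian covariates model, and then conclude by subtraction. Write $\bZ = [\bZ_{\leq \ell-1}, \bZ_\ell, \bZ_{>\ell}]$ according to the three degree ranges and $\btheta_* = (\btheta_{\leq\ell-1},\btheta_\ell,\btheta_{>\ell})$, so that the Gram matrix of the Gaussian model splits as
\[
\bK := \bZ \bZ^\sT = \bK_{\leq \ell-1} + \bK_\ell + \bK_{>\ell}, \qquad \bK_k := \bZ_k \bZ_k^\sT.
\]
This decomposition plays exactly the role of the polynomial decomposition \eqref{eq:poly_approx_kernel_matrix} of $\bH$: one should show (i) $\bK_{\leq \ell-1}$ is low rank of dimension $O(d^{\ell-1})$ with diverging nonzero eigenvalues, so it acts as a spike; (ii) $\bK_\ell/\mu_{d,\ell}$ is, up to the scalar $B(\cA_d,\ell)$, a standard Wishart matrix with aspect ratio $n/B(\cA_d,\ell) \to \psi$, so its empirical spectrum converges to $\nu_{\MP,\psi}$ (here this is the classical Marchenko-Pastur theorem, directly available since the entries of $\bZ_\ell$ are i.i.d.~Gaussian); and (iii) $\bK_{>\ell}$ concentrates in operator norm to $\mu_{d,>\ell}\id_n$.

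Once these three facts are in place, I would repeat verbatim the analysis leading to Theorem \ref{thm:KRR}: write the resolvent
\[
(\bK + \lambda \id_n)^{-1} = \bigl(\bK_{\leq \ell-1} + \bK_\ell + (\bK_{>\ell} + \lambda \id_n)\bigr)^{-1},
\]
replace $\bK_{>\ell} + \lambda\id_n$ by $(\mu_{d,>\ell}+\lambda)\id_n = \mu_{d,\ell}\zeta_\ell \id_n + \lambda \id_n$ at a negligible operator-norm cost, and use the spike structure of $\bK_{\leq\ell-1}$ to show that $\btheta_{\leq\ell-1}$ is fitted perfectly (contributing nothing to the test error asymptotically). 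What remains is a Gaussian ridge regression problem with design $\bZ_\ell$, effective ridge $\mu_{d,\ell}\zeta_\ell$, true signal $\btheta_\ell$, and effective noise $\bZ_{>\ell}\btheta_{>\ell} + \Tilde\beps$ whose per-sample variance equals $\sum_{k>\ell}\sum_s \beta_{ks}^2 + \sigma_\eps^2 = \|\proj_{>\ell}f_*\|_{L^2}^2 + \sigma_\eps^2$ by orthonormality of $\{Y_{ks}\}$. Standard Marchenko--Pastur calculations for this Gaussian ridge problem then yield the same bias/variance functionals $\cB(\psi,\zeta_\ell)$ and $\cV(\psi,\zeta_\ell)$, plus the trivial bias $\|\proj_{>\ell}f_*\|_{L^2}^2$ from the unfit high-degree component. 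The asymptotic formula matches \eqref{eq:asymp_test_intro} termwise, so $R_{\text{Gauss}} - R_{\text{test}} = o_{d,\P}(1)$.

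The main obstacle is item (iii): controlling the infinite tail $\bK_{>\ell} = \sum_{k>\ell}\bK_k$ in operator norm. Each $\bK_k$ is a Wishart matrix with aspect ratio $n/B(\cA_d,k)$; for $k>\ell$ we have $B(\cA_d,k) \gg n$, so $\bK_k$ is close to its mean $\mu_{d,k}/B(\cA_d,k) \cdot \bZ_k\bZ_k^\sT$'s expectation $\mu_{d,k}\id_n$, but summability over $k$ must be verified under the universality assumption on $h$ and the growth of $B(\cA_d,k) \asymp d^k$. I would handle this with a two-step truncation: use a matrix Bernstein bound on the finite sum $\sum_{\ell < k \le K}(\bK_k - \mu_{d,k}\id_n)$ for a slowly growing cutoff $K=K(d)$, and bound the remaining tail $\sum_{k>K}\bK_k$ in operator norm by its trace, which scales with $n\sum_{k>K}\mu_{d,k} = o(1)$ when $h \in L^2(\cA_d)$ with controlled smoothness. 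A cleaner alternative, if one is content with a black-box argument, is to observe that $\bK$ is itself an inner-product-type kernel matrix for the Gaussian feature map $\bPhi$, and invoke an analogue of the polynomial approximation machinery developed in Section \ref{sec:spectrum_Kernel_matrix} directly on $\bPhi$; then the entire calculation subsumes under the same Marchenko-Pastur framework and the two test errors become two evaluations of the same deterministic limit.
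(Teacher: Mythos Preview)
Your approach is correct and matches the paper's intended strategy: the paper does not provide a standalone proof of Theorem \ref{thm:Gaussian_equivalence}, instead treating it as an immediate consequence of Theorem \ref{thm:KRR} together with the (referenced) fact that the Gaussian covariates model \eqref{eq:G_model} obeys the same asymptotic formula. Your plan to establish \eqref{eq:asymp_test} for $R_{\text{Gauss}}$ via the decomposition $\bK = \bK_{\leq \ell-1}+\bK_\ell+\bK_{>\ell}$, then subtract, is exactly the natural route and mirrors the structure of the proof of Theorem \ref{thm:KRR} step by step.

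One small correction on item (iii): your proposed tail bound ``$\|\sum_{k>K}\bK_k\|_{\op}\le \Tr(\sum_{k>K}\bK_k)\asymp n\sum_{k>K}\mu_{d,k}$'' does not give $o(1)$, since $\sum_{k>K}\mu_{d,k}$ is only $o(1)$, not $o(1/n)$; moreover, what you actually need is $\|\bK_{>\ell}-\mu_{d,>\ell}\id_n\|_{\op}=o_{d,\P}(1)$, and the centered matrix is not PSD, so a trace bound does not apply. The fix is straightforward in the Gaussian case and does not require truncation at all: the rows of $\bZ_{>\ell}$ are i.i.d.~$\normal(\bzero,\bSigma_{>\ell})$ with $\|\bSigma_{>\ell}\|_{\op}=\max_{k>\ell}\xi_{d,k}=O(d^{-\ell-1})$ and $\Tr(\bSigma_{>\ell})=\mu_{d,>\ell}=O(1)$, so standard Gaussian covariance concentration (e.g., Koltchinskii--Lounici) gives $\|\bK_{>\ell}-\mu_{d,>\ell}\id_n\|_{\op}=O_{d,\P}\big(\sqrt{n\,\|\bSigma_{>\ell}\|_{\op}\,\Tr(\bSigma_{>\ell})}\,\big)=O_{d,\P}\big(\sqrt{n/d^{\ell+1}}\,\big)=o_{d,\P}(1)$. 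This is precisely the Gaussian analogue of Proposition \ref{prop:GMMM_prop3}, and is in fact easier than the spherical/hypercube version because subgaussianity is free. With this in hand, the rest of your argument goes through.
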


We remark that we can rewrite our inner-product kernel $ h(\< \bx_1 , \bx_2 \>/d) = \< \bPhi (\bx_1) , \bPhi (\bx_2) \>$ in terms of the feature map $\bPhi :\cA_d \to \R^\infty$ defined in Eq.~\eqref{eq:target_beta}. The entries of $\bPhi(\bx)$ are uncorrelated (by orthogonality of the $Y_{ks}$'s), $0$ mean and variance $\mu_{d,k}/ B(\cA_d, k)$ (except the first coordinate $Y_{00} = 1$). Theorem \ref{thm:Gaussian_equivalence} shows that in the polynomial high-dimensional regime, we can replace the entries of $\bPhi (\bx)$ by independent Gaussian variables with same covariance structure. This Gaussian covariates model (sometimes called `Gaussian design model') was used as a proxy to study kernel ridge regression with an implicit or explicit equivalence conjecture, or statistical physics heuristics \cite{jacot2020kernel,canatar2021spectral,cui2021generalization,cui2022error}. Of course  rigorously showing such an equivalence is difficult, and the point of this and previous papers: the entries of $\bPhi (\bx)$ are neither independent nor subgaussian, and require  specific tools to analyze, such as hypercontractivity and heavy-tailed matrix concentration \cite{mei21generalization}.

\subsection{Notations}

We denote $\kappa$ the general exponent in the polynomial regime $n = \Theta_d (d^\kappa)$, $\kappa \in \R_{>0}$ and use $\kappa = \ell \in \naturals$ when $\kappa \in \naturals$. For any $k \in \naturals$, $\cV_{\leq k}$ refers to the subspace of polynomials of degree $\leq k$ in $L^2 (\cA_d)$. We will further denote $\cV_{>k}$ the orthogonal complement of $\cV_{\leq k}$ and $\cV_k = \cV_{>k - 1} \cap \cV_{\leq k}$. We denote $\proj_{\leq k}$, $\proj_{>k}$ and $\proj_{k}$ the orthogonal projections onto $\cV_{\leq k}$, $\cV_{>k}$ and $\cV_k$ respectively. We introduce $B(\cA_d , k) = \dim (\cV_k)$ the dimension of $\cV_k$ and $\{Y_{k s} \}_{s\in [B(\cA_d, k)]}$ an orthonormal basis of $\cV_k$.

For a positive integer, we denote by $[n]$ the set $\{1 ,2 , \ldots , n \}$. For vectors $\bu,\bv \in \R^d$, we denote $\< \bu, \bv \> = u_1 v_1 + \ldots + u_d v_d$ their scalar product, and $\| \bu \|_2 = \< \bu , \bu\>^{1/2}$ the $\ell_2$ norm. Given a matrix $\bA \in \R^{n \times m}$, we denote $\| \bA \|_{\op} = \max_{\| \bu \|_2 = 1} \| \bA \bu \|_2$ its operator norm and by $\| \bA \|_{F} = \big( \sum_{i,j} A_{ij}^2 \big)^{1/2}$ its Frobenius norm. If $\bA \in \R^{n \times n}$ is a square matrix, the trace of $\bA$ is denoted by $\Tr (\bA) = \sum_{i \in [n]} A_{ii}$.

We use $O_d(\, \cdot \, )$  (resp. $o_d (\, \cdot \,)$) for the standard big-O (resp. little-o) relations, where the subscript $d$ emphasizes the asymptotic variable. Furthermore, we write $f = \Omega_d (g)$ if $g(d) = O_d (f(d) )$, and $f = \omega_d (g )$ if $g (d) = o_d (f (d))$. Finally, $f =\Theta_d (g)$ if we have both $f = O_d (g)$ and $f = \Omega_d (g)$. We will sometimes write $f \asymp g$, instead, which will just mean that $f = \Theta_d (g)$.

We use $O_{d,\P} (\, \cdot \,)$ (resp. $o_{d,\P} (\, \cdot \,)$) the big-O (resp. little-o) in probability relations. Namely, for $h_1(d)$ and $h_2 (d)$ two sequences of random variables, $h_1 (d) = O_{d,\P} ( h_2(d) )$ if for any $\eps > 0$, there exists $C_\eps > 0 $ and $d_\eps \in \Z_{>0}$, such that
\[
\begin{aligned}
\P ( |h_1 (d) / h_2 (d) | > C_{\eps}  ) \le \eps, \qquad \forall d \ge d_{\eps},
\end{aligned}
\]
and respectively: $h_1 (d) = o_{d,\P} ( h_2(d) )$, if $h_1 (d) / h_2 (d)$ converges to $0$ in probability.  Similarly, we will denote $h_1 (d) = \Omega_{d,\P} (h_2 (d))$ if $h_2 (d) = O_{d,\P} (h_1 (d))$, and $h_1 (d) = \omega_{d,\P} (h_2 (d))$ if $h_2 (d) = o_{d,\P} (h_1 (d))$. Finally, $h_1(d) =\Theta_{d,\P} (h_2(d))$ if we have both $h_1(d) =O_{d,\P} (h_2(d))$ and $h_1(d) =\Omega_{d,\P} (h_2(d))$.

\section{Related work}
\label{sec:related_work}

The spectrum of kernel random matrices in high-dimension $n \asymp d$ was first studied in \cite{el2010spectrum}, which extends the analysis of covariance matrices $(\<\bx_i , \bx_j \>/d)_{ij \in [n]}$ started in \cite{marchenko1967distribution} to matrices with entries $f ( \<\bx_i , \bx_j \>/d)$ with $f$ twice differentiable on a neighborhood of $0$. This result was later generalized to non-smooth functions and other scalings of the kernel entries \cite{cheng2013spectrum,do2013spectrum}. In this paper, we consider instead general inner-product kernels in the polynomial high-dimensional regime with $n \asymp d^\ell$, which was not considered before. Note that \cite{el2010spectrum} allows for general anisotropic covariates with i.i.d. entries and bounded moments, while we restrict ourselves here to covariates uniformly distributed on the sphere and the hypercube (recall again that our analysis applies to a more abstract setting which requires having access to the kernel operator eigendecomposition). We consider that extending our results to the distributional assumptions of \cite{el2010spectrum} is an interesting and important problem. From a technical point, our proof reduces to showing a Marchenko-Pastur law for a matrix without independent entries. This problem has been studied in \cite{bai2008large,adamczak2011marchenko,pastur2011eigenvalue,o2012note,yaskov2016necessary} under different assumptions. In particular, \cite{yaskov2016necessary} considers a feature matrix with i.i.d.~isotropic rows and use a leave-one-out argument to show a necessary and sufficient condition for the Marchenko-Pastur theorem to hold. This condition states that a certain quadratic vanishes with high-probability, and is verified in our setting by Proposition \ref{prop:vanishing_quadratic}.

The prediction error of kernel ridge regression in the linear high-dimensional regime was studied in \cite{liang2020just,liu2021kernel,bartlett2021deep} using the linearization of the kernel in this regime \cite{el2010spectrum}. In particular, \cite{liang2020just} points out that the minimum RKHS norm interpolating solution (KRR with ridge penalty $\lambda \to 0^+$) can still generalize well. 
Another line of work considers linear regression models with Gaussian or sub-Gaussian covariates as in Eq.~\eqref{eq:G_model}, which are technically easier to study and allows to focus on the interaction between eigenvalue decay and target function in the prediction error \cite{bartlett2020benign,tsigler2020benign,richards2021asymptotics,cui2021generalization}. 

The polynomial high-dimensional regime was first considered in \cite{ghorbani2021linearized}. They take data uniformly distributed on the $d$-dimensional sphere and show a staircase decay phenomena on the prediction error of inner-product kernels: for $n \asymp d^\kappa, \kappa \not\in \naturals$, KRR fits the best degree-$\lfloor \kappa \rfloor$ polynomial approximation to the target function. This computation was later extended to a general abstract framework in \cite{mei21generalization} which apply to kernel operators with top eigenfunctions verifying a hypercontractivity inequality and eigenvalues a certain spectral decay property (the number of eigenvalues such that $d^\delta/n \geq \lambda_i \geq d^{-\delta}/n$ is smaller than $n^{1-\delta}$, for some $\delta >0$). In this regime, \cite{mei21generalization} shows that KRR effectively acts as a shrinkage operator with some effective regularization $\lambda^{\text{eff}} > \lambda $: denoting $(\psi_{d,j})_{j \geq 1}$ and $\{ \lambda_{d,j} )_{j \geq 1}$ the eigenfunctions and eigenvalues of the kernel operator, then
\begin{equation}\label{eq:shrinkage}
f_* (\bx) = \sum_{j = 1}^\infty c_j \psi_{d,j}(\bx) \,\,\, \mapsto \,\,\, \hf_\lambda (\bx) \approx \sum_{j = 1}^\infty \frac{\lambda_{d,j}}{\lambda_{d,j} + \frac{\lambda^{\text{eff}}}{n}} c_j \psi_{d,j}(\bx)\, .
\end{equation}
Applied to inner-product kernels on the sphere or hypercube, the spectral decay property only hold for $\kappa \not\in \naturals$. In that case, $\lambda^{\text{eff}} = \Theta_d(1)$ and $\lambda_i \gg n^{-1}$ (polynomials of degree $\leq \lfloor \kappa \rfloor$, exactly learned) or $\lambda_i \ll n^{-1}$ (polynomials of degree $> \lfloor \kappa \rfloor$, not learned at all). This framework was applied to data distributed on anisotropic spheres in \cite{ghorbani2020neural}, invariant kernels in \cite{mei2021learning} and convolutional kernels in \cite{misiakiewicz2021learning}. Our present paper relaxes the spectral decay assumption to the case when the number of eigenvalues $\lambda_i = \Theta (n^{-1})$ is of order $n$, with additional conditions on the concentration of eigenfunctions and eigenvalues for a Marchenko-Pastur type theorem to hold. In that case, KRR still acts as a shrinkage operator for eigenspaces with $\lambda_i \ll n^{-1}$ or $\lambda_i \gg n^{-1}$. However, it presents a more complex behavior on the eigenspace associated to the $\lambda_i = \Theta (n^{-1})$.

\cite{liang2020multiple} also considers the polynomial asymptotic regime. The authors upper bound the prediction error of KRR for inner-product kernels and isotropic data with i.i.d.~entries and a tail condition. However, they require a strong condition on the target function to bound the bias term which converges to $0$ as soon as $\kappa \not\in \naturals$, and they do not recover the staircase decay from \cite{ghorbani2021linearized}. Finally, \cite{jacot2020kernel,canatar2021spectral} provides precise asymptotic predictions for the test error of general KRR which applies to the polynomial regime, using a Gaussian equivalence conjecture (Section \ref{sec:Gaussian_equivalence}) or statistical physics heuristics. 

The double descent phenomenon \cite{belkin2019reconciling} has now been well studied in regression settings: random feature models \cite{mei2022generalization,liao2020random,gerace2020generalisation}, linear models \cite{hastie2022surprises,kobak2020optimal,wu2020optimal,richards2021asymptotics} and KRR in the linear high-dimensional asymptotics \cite{bartlett2021deep,liu2021kernel}. The multiple-descent phenomenon in the KRR prediction error was observed empirically in \cite{liang2020multiple,canatar2021spectral}. In particular, \cite{liang2020multiple} shows an upper bound on the variance term of the prediction error that vanishes when $\kappa \not\in \naturals$ and is vacuous when $\kappa \in \naturals$. To the best of our knowledge, our work is the first to prove and precisely describe the multiple-descent phenomenon in KRR. Note that \cite{chen2021multiple} showed that linear regression models can be explicitly constructed to present several peaks in their prediction error. However, they do not prove this phenomenon for a natural learning model.

\section{Inner-product kernel matrices in the polynomial regime}
\label{sec:spectrum_Kernel_matrix}

Recall that the covariates are taken $\{ \bx_i \}_{i \in [n]}  \sim_{iid} \Unif ( \cA_d) $, where $\cA_d$ is either the hypercube $\Cube^d = \{+1,-1\}^d$ or the hypersphere $\S^{d-1} (\sqrt{d}) = \{ \bx \in \R^d: \| \bx \|_2 = \sqrt{d}\}$ in dimension $d$. We consider the inner-product kernel $H_d$ associated to the kernel function $h:[-1,1] \to \R$, defined by $H_d ( \bx_1 , \bx_2 ) = h ( \< \bx_1 , \bx_2 \>/d)$ for any $\bx_1,\bx_2 \in \cA_d$. Denote $\bH$ the empirical kernel matrix, that is
\[
\bH = ( H_d ( \bx_i , \bx_j ) )_{ij \in [n]} \in \R^{n \times n}\, .
\]
The goal of this section is to study the limiting spectral distribution of $\bH$ in the polynomial regime, i.e., when $n,d \to \infty$, with $\lim_{n,d \to \infty} \frac{n}{d^\kappa} = \psi$ for some $\kappa,\psi \in \R_{>0}$.

\subsection{Definitions and spectral decomposition of inner-product kernels}
\label{sec:defs}

We start by recalling some basic properties of functional spaces over $\cA_d$ (see Appendix \ref{sec:technical_background} for a complete exposition). Let $L^2 ( \cA_d) := L^2 (\cA_d, \Unif)$ be the space of square-integrable functions on $\cA_d$ with scalar product and norm denoted by $\< \cdot , \cdot \>_{L^2}$ and $\| \cdot \|_{L^2}$ given by
\[
\< f, g \>_{L^2} = \E_{\bx \sim \Unif ( \cA_d)} \big[ f(\bx) g ( \bx) \big]\, .
\]
For $\cA_d \in \{ \Cube^d, \S^{d-1} \}$, $L^2 ( \cA_d)$ admits the following orthogonal decomposition\footnote{For example, on the hypercube $\cA_d = \Cube^d$, the sum is over $0 \leq k \leq d$ where $V_{d,k}$ is the span of degree-$k$ Fourier basis, i.e., $V_{d,k} = \text{span} \{ \bx \mapsto \prod_{i \in S} x_i | S \subseteq [d], |S| =k \}$ and $\dim (V_{d,k}) = {{d}\choose{k}}$.}
\[
L^2 ( \cA_d) = \bigoplus_{k = 0}^\infty V_{d,k}\, , 
\]
where $ V_{d,k}$ is the subspace of polynomials of degree $k$ orthogonal to polynomials of degree $k-1$. Denote $\cV_{\leq \ell} = \bigoplus_{k=0}^\ell V_{d,k}$ the subspace of polynomials of degree $\leq \ell$, its orthogonal complement $\cV_{>\ell} = \bigoplus_{k = \ell+1}^\infty \cV_{d,k}$ and $B(\cA_d , k) = \dim ( V_{d,k} )$ (note that $B(\cA_d , k) = \Theta_d (d^k)$ for $k$ fixed). We will define $\proj_\ell$, $\proj_{\leq \ell}$ and $\proj_{>\ell} = \id  -\proj_{\leq \ell}$ the orthogonal projections onto $\cV_{d,\ell}$, $\cV_{d,\leq \ell}$ and $\cV_{d, >\ell}$ respectively.

Consider $\{ Y_{ks} \}_{s \in [B(\cA_d, k)]}$ an orthonormal basis of $V_{d,k}$. The $Y_{ks}$'s are degree-$k$ polynomials: they correspond to degree-$k$ spherical harmonics for $\cA_d = \S^{d-1}$ and Fourier (parity) functions for $\cA_d = \Cube^d$. We will call $\{ Y_{ks} \}_{k \geq 0 , s \in [B(\cA_d ,k )]}$ the polynomial basis of $\cA_d$. We further introduce the orthogonal basis $\{ Q^{(d)}_k \}_{k \geq 0}$ of Gegenbauer polynomials on $\cA_d$ (orthogonal with respect to the marginal measure $\< \ones, \bx\>$, with $\bx \sim \Unif(\cA_d)$), where $Q^{(d)}_k : [ - d, d ] \to \R$ is a degree-$k$ polynomial defined by
\[
Q_k^{(d)} ( \< \bx_1 , \bx_2 \>) = \frac{1}{B(\cA_d , k)} \sum_{k \in [B(\cA_d , k) ] } Y_{ks} (\bx_1 ) Y_{ks} (\bx_2 ) \, .
\]
(Note in particular, $\E [ Q_k^{(d)} ( \< \ones , \bx \>) Q_l^{(d)} ( \< \ones , \bx \>) ] = \delta_{kl} B (\cA_d , k)^{-1} $.)

For any inner-product kernel $H_d$ on $L^2 (\cA_d)$, there exists $h_d : [-1,+1] \to \R$ such that $H_d ( \bx_1 , \bx_2 ) = h_d ( \< \bx_1 , \bx_2 \>/d)$ (we allow $h_d$ here to depend on $d$). These kernels have the following simple eigendecomposition in the polynomial basis (see Appendix \ref{sec:technical_background}):
\begin{equation}\label{eq:diagonalization_h}
\begin{aligned}
 h_d (\< \bx_1 , \bx_2 \>/d) =&~ \sum_{k = 0}^\infty \xi_{d,k} (h) \sum_{s \in [B(\cA_d , k ) ]} Y_{ks} (\bx_1 ) Y_{ks} (\bx_2) =\sum_{k = 0}^\infty \mu_{d,k} (h) \cdot Q_{k}^{(d)} ( \< \bx_1 , \bx_2 \>)\, , \\
 \xi_{d,k} (h) =&~ \E_{\bx \sim \Unif (\cA_d)} \left[ h_d (x_1 / \sqrt{d}) Q_k^{(d)} ( \sqrt{d} x_1 ) \right] \, ,
 \end{aligned}
\end{equation}
where we denoted $\mu_{d,k} (h) = \xi_{d,k} (h) B(\cA_d , k)$. For ease of notation, we will write $\xi_{d,k} := \xi_{d,k} (h)$
and $\mu_{d,k} := \mu_{d,k}(h)$. Note that $\xi_{d,k} \geq 0$ for any $k$, by assumption of $H_d$ being positive semi-definite.

Let us introduce some further notation. For any $ \ell \in \naturals$, denote $B_{\leq \ell} = \sum_{k=0}^\ell B(\cA_d,k)$ and
\[
\begin{aligned}
\bY_{k} = &~ ( Y_{ks} (\bx_i) )_{i \in [n], s \in [B(\cA_d,k)]} \in \R^{n \times B(\cA_d,k)},\\
\bY_{\leq \ell} = &~ [\bY_{0}^\sT , \ldots , \bY_{\ell}^\sT ]^\sT \in \R^{n \times B_{\leq \ell}}, \\
\bD_{\leq \ell} = &~ \diag ( \xi_{d,0}\id_{B(\cA_d , 0)} , \ldots , \xi_{d,\ell} \id_{B(\cA_d , \ell )}) \in \R^{ B_{\leq \ell} \times  B_{\leq \ell}}  .
\end{aligned}
\]
We define the empirical Gegenbauer matrices $\bQ_k = ( Q_k^{(d)} ( \< \bx_i, \bx_j \>) )_{ij \in [n]}$. Note that with the above notations $\bQ_k = B(\cA_d , k)^{-1} \bY_k \bY_k^\sT$. We can therefore decompose the empirical kernel matrix in terms of the Gegenbauer matrices:
\[
\bH = \sum_{k \geq 0} \mu_{d,k} \bQ_k \, .
\]
For any $\ell \in \naturals$, it will be useful to introduce $\mu_{d,>\ell} = \sum_{k >\ell} \mu_{d,k} $ and 
\[
\begin{aligned}
\bH_{\leq \ell} =&~ \sum_{k=0}^{\ell}  \mu_{d,k} \bQ_k = \bY_{\leq \ell } \bD_{\leq \ell} \bY_{\leq \ell}^\sT\, , \qquad\qquad
\bH_{> \ell} = \sum_{k>\ell} \mu_{d,k} \bQ_k \, .
\end{aligned}
\]

\subsection{Limiting spectral distribution of the empirical kernel matrix}\label{sec:spectral}

We will make the following genericity assumption on the kernel functions $\{ h_d \}_{d \geq 1}$:

\begin{assumption}[Genericity condition on $\{ h_d \}_{d \geq 1}$ at level $\ell \in \naturals$]\label{ass:genericity_kernel}
We assume that the sequence of kernel functions $h_d: [-1,+1] \to \R$ obeys the following:
\begin{enumerate}
    \item[(a)] Low-degree $\xi_{d,k}$: there exists $\delta>0$ such that $\min_{k = 0 , \ldots, \ell -1} \xi_{d,k} = \Omega_d (d^{-\ell+\delta} )$.
    
    \item[(b)] High-degree $\xi_{d,k}$: there exists $\mu_{\ell}, \mu_{>\ell}>0$ such that
    \[
    \lim_{d \to \infty} \mu_{d,\ell} = \mu_{\ell}\, , \qquad \lim_{d \to \infty} \mu_{d, >\ell} = \mu_{>\ell}\, .
    \]
    
    \item[(c)] For $\cA_d = \Cube^d$, there exists $\delta >0$ such that  $\min_{k = 0 , \ldots, \ell} \mu_{d,k} = O_d (d^{-\ell-\delta} )$.
\end{enumerate}
\end{assumption}

Note that these assumptions are mild. Assumption \ref{ass:genericity_kernel}.$(a)$ is a quantitative version of a universality condition and is verified by generic kernels (e.g., if $h_d = h$ is smooth and universal, then $\lim_{d \to \infty} \mu_{d,k} = h^{(k)} (0)>0$ where $h^{(k)}$ is the $k$-th derivative of $h$). Assumption \ref{ass:genericity_kernel}.$(b)$ requires $h_d$ to have a non-vanishing high-degree part ($h_d$ is not a degree-$\ell$ polynomial). Assumption \ref{ass:genericity_kernel}.$(c)$ is simply added for ease of presentation and can be removed (see Remark \ref{rmk:hf_hypercube}). Note that Assumption \ref{ass:genericity_kernel}.$(c)$ is verified if $h_d = h$ is $(\ell+1)$-times differentiable (see Appendix D.2 in \cite{mei21generalization}).

\cite{ghorbani2021linearized,mei21generalization} analyzed the kernel matrix $\bH$ under a `spectral gap' condition and characterized its behavior for $n = \Theta_d (d^\kappa),\kappa \not\in \naturals$. More precisely, assume $\{h_d \}_{d\geq 1}$ verifies Assumption \ref{ass:genericity_kernel} at level $\lfloor \kappa \rfloor$. They showed the following:
\begin{enumerate}
    \item[1.] If $n = \omega_d (d^\ell \log (d) )$, then 
    \begin{equation}\label{eq:spiked_matrix}
        \E_{\bX} \big[ \| \bY_{\leq \ell}^\sT \bY_{\leq \ell} / n - \id_{B_{\leq \ell}} \|_{\op}^2 \big] = o_d(1) \, .
    \end{equation}
    
    \item[2.] If $n = O_d (d^{\ell+1} e^{-a_d \sqrt{\log d}}) $ for a sequence $a_d \to \infty$, then
    \begin{equation}
        \E_{\bX} \big[ \| \bH_{>\ell} - \mu_{>\ell} \id_n \|_\op^2 \big] = o_d (1) \, .
    \end{equation}
\end{enumerate}
From Assumption \ref{ass:genericity_kernel}.$(a)$, the matrix $\bH_{\leq \lfloor \kappa \rfloor} = \bY_{\leq \lfloor \kappa \rfloor} \bD_{\leq \lfloor \kappa \rfloor} \bY^\sT_{\leq \lfloor \kappa \rfloor}$ is a low dimensional matrix of rank $B_{\lfloor \kappa \rfloor} =  \Theta_d( d^{\lfloor \kappa \rfloor}) \ll n \asymp d^\kappa$ with minimum non-zero eigenvalue $\lambda_{\min} (\bH_{\leq \lfloor \kappa \rfloor}) = O_{d,\P}(1) \cdot \min_{k \leq \lfloor \kappa \rfloor} n \xi_{d,k} = \omega_{d,\P}(1)$. Hence we can decompose the kernel matrix (with $\E [\| \bDelta \|_\op^2] = o_{d}(1)$)
\begin{equation}\label{eq:Hn_decompo}
\bH = \bH_{\leq \lfloor \kappa \rfloor} + \bH_{> \lfloor \kappa \rfloor} = \bY_{\leq \lfloor \kappa \rfloor} \bD_{\leq \lfloor \kappa \rfloor} \bY^\sT_{\leq \lfloor \kappa \rfloor} + \mu_{>\lfloor \kappa \rfloor} \id_n + \bDelta \, ,
\end{equation}
as the sum of a low-rank spiked matrix with diverging eigenvalues plus a multiple of the identity matrix (the high-degree part of the kernel plays the role of a `self-induced ridge regularization'). In particular, \cite{ghorbani2021linearized,mei21generalization} used this decomposition to show that kernel ridge regression with any target function $f_* \in L^2 (\cA_d)$ learns exactly the projection $\proj_{\leq\lfloor \kappa \rfloor} f_*$ on degree-$\lfloor \kappa \rfloor$ polynomials and none of the high-degree part $\proj_{>\lfloor \kappa \rfloor} f_*$.% (i.e., $\proj_{\leq\lfloor \kappa \rfloor}$ and $\proj_{>\lfloor \kappa \rfloor} = \id - \proj_{\leq\lfloor \kappa \rfloor}$ are the orthogonal projections onto $\cV_{\leq \lfloor \kappa \rfloor}$ and $\cV_{> \lfloor \kappa \rfloor}$ respectively).

For $\kappa = \ell \in \naturals$, the degree-$\ell$ Gegenbauer matrix is neither low-dimensional nor concentrates on identity. In this case, the kernel matrix has the following decomposition (with $\E [\| \bDelta \|_{\op}^2] = o_{d} (1)$)
\[
\bH = \bH_{\leq \ell - 1} + \mu_{d,\ell} \bQ_\ell + \bH_{>\ell} = \bY_{\leq \ell - 1} \bD_{\leq \ell -1 } \bY_{\leq \ell - 1}^\sT  + \mu_{d,\ell} \bQ_\ell + \mu_{>\ell} \id_n + \bDelta\, ,
\]
where $\bH_{\leq \ell - 1}$ is a low-dimensional spiked matrix with diverging eigenvalues and $\bH_{>\ell}$ plays the role of a self-induced regularization. Hence, it only remains to study the spectrum of $\bQ_\ell$. 

Assume that $\lim_{d \to \infty} \frac{n}{B(\cA_d,\ell)} = \psi$ and  recall that we can write $\bQ_\ell= \bY_\ell \bY_\ell^\sT / B(\cA_d, \ell)$ where the random matrix $\bY_\ell = [ \bY_{\ell} (\bx_1) , \ldots , \bY_{\ell} (\bx_n) ]^\sT \in \R^{n \times B(\cA_d, \ell)}$ has rows $\bY_\ell (\bx_i ) = ( Y_{\ell s} (\bx_i) )_{s \in [B(\cA_d , \ell )]}$ which are independent and isotropic $\E [\bY_\ell (\bx_i)  \bY_\ell (\bx_i)^\sT ] = \id_{B(\cA_d , \ell)}$. The case where $\bZ\in \R^{n \times p}$ has iid entries is well understood and $\bZ\bZ^\sT/p$ is a Wishart-type matrix with limiting Marchenko-Pastur (MP) distribution. However, in our setting, each row $\bY_\ell (\bx_i)$ does not have independent entries. In fact, $\bY_\ell (\bx) \in \R^{B(\cA_d , \ell)}$ is a mapping from a $d$-dimensional space to a $\Theta_d (d^\ell)$-dimensional space. In general, even if a mapping is isotropic, it might present long-range dependencies and the empirical spectral distribution of the associated covariance matrix might not converge to a MP distribution\footnote{Consider for example $\cA_d = \Cube^d$ and $\bphi(\bx)= (Y_S (\bx) )_{S \subseteq [d-1],|S| = \ell}$ (all Fourier basis of degree $\ell$ not containing the last coordinate) and take $\bY (\bx) = \frac{1}{\sqrt{2}} (x_d - 1) \bphi(\bx)$. In that case, $\bY (\bx)$ is indeed an isotropic random vector but ${{d-1}\choose{k}}^{-1} \| \bY (\bx)\|_2^2$ does not converge in probability to $1$, which contradicts a necessary condition for convergence to the MP law \cite[Theorem 2.1]{yaskov2016necessary}.}. In our case, however, we will use that the entries of $\bY_\ell (\bx)$ are low-degree polynomials: each entry approximately only depends on a low-number of other coordinates, so that no such long-range dependencies occur and the spectrum of $\bQ_\ell$ indeed converges to a MP law. The proof of the next theorem formalizes this intuition.

Recall the definition of the Marchenko-Pastur distribution: given a parameter $\psi>0$,
    \begin{equation}\label{eq:MarchenkoPastur}
    \nu_{\MP,\psi} (\de x) = (1 - 1/\psi)_+ \delta_0 + \frac{1}{2\pi } \frac{\sqrt{(\lambda_+ - x) (x - \lambda_-)}}{\psi x} 1_{[\lambda_-, \lambda_+]} \de x,
    \end{equation}
    where $\lambda_{\pm} =(1 \pm \sqrt{\psi} )^2$.

    \begin{theorem}\label{thm:spectrum_Gegenbauer}
    Fix $\ell \in \naturals$ and $\psi >0$. Let $(\bx_i )_{i \in [n]} \sim_{iid} \Unif ( \cA_d)$ and consider
    \[
    \bQ_{\ell} = ( Q^{(d)}_\ell ( \< \bx_i , \bx_j \> ) )_{ij \in [n]} \in \R^{n \times n} \, ,
    \]
    the $\ell$-th Gegenbauer kernel matrix. Denote $\hat \nu_n$ the empirical spectral distribution of $\bQ_\ell$. Then $\hat \nu_n$ converges in distribution to $\nu_{\MP,\psi}$, almost surely as $\lim_{d \to \infty} \frac{n}{B(\cA_d,\ell)} = \psi$.
    \end{theorem}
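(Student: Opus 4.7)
The plan is to recognize $\bQ_\ell$ as a sample covariance matrix of i.i.d.\ isotropic rows with dependent entries, and then apply the Marchenko--Pastur theorem of \cite{yaskov2016necessary} that governs this setting. Writing $\bQ_\ell = \bY_\ell \bY_\ell^\sT / B(\cA_d, \ell)$ with $\bY_\ell = (Y_{\ell s}(\bx_i))_{i,s}$, the rows $\bY_\ell(\bx_i)$ are i.i.d., and orthonormality of $\{Y_{\ell s}\}$ gives $\E[\bY_\ell(\bx) \bY_\ell(\bx)^\sT] = \id_{B(\cA_d, \ell)}$. With $n/B(\cA_d, \ell) \to \psi$, this is exactly Yaskov's setup; the only subtle point is that entries of each row are dependent low-degree polynomials of $\bx$, not independent scalars, so classical Wishart arguments do not apply directly.

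The second step is to invoke Yaskov's sufficient condition for the Marchenko--Pastur limit: under the i.i.d.\ isotropic-row assumption, it suffices to verify that for every deterministic sequence of symmetric matrices $\bA_d$ with $\|\bA_d\|_\op \leq 1$,
\[
\frac{1}{B(\cA_d, \ell)} \Big| \bY_\ell(\bx)^\sT \bA_d \bY_\ell(\bx) - \Tr(\bA_d) \Big| \longrightarrow 0 \quad \text{in probability}.
\]
This is exactly the content of Proposition \ref{prop:vanishing_quadratic}, which I would treat as a black box at the level of this theorem. To prove the proposition itself, the strategy is a Chebyshev bound on the centered polynomial $P(\bx) := \bY_\ell(\bx)^\sT \bA \bY_\ell(\bx) - \Tr(\bA)$, which has degree at most $2\ell$ and mean zero. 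Expanding $P$ in the orthonormal polynomial basis and using that each product $Y_{\ell s}(\bx) Y_{\ell s'}(\bx) - \delta_{ss'}$ lies in $\cV_{>0}$, one reduces $\Var(P)$ to a linear combination of mixed fourth moments of $\{Y_{\ell s}\}$. Bonami--Nelson hypercontractivity of degree-$\ell$ polynomials on $\cA_d$ then bounds these combinations by dimension-free constants, giving $\Var(P) \leq C_\ell \|\bA\|_F^2 \leq C_\ell B(\cA_d, \ell)$; dividing by $B(\cA_d, \ell)^2$ and applying Chebyshev finishes the argument.

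The main obstacle is precisely the variance estimate in step three: coordinates of $\bY_\ell(\bx)$ are highly dependent, and a naive Cauchy--Schwarz bound on the fourth moments produces cross terms scaling polynomially with $B(\cA_d, \ell)$, which is too crude. Hypercontractivity substitutes for independence and enforces a Gaussian-like cancellation $\E[Y_{\ell s} Y_{\ell s'} Y_{\ell t} Y_{\ell t'}] \approx \delta_{ss'}\delta_{tt'} + \delta_{st}\delta_{s't'} + \delta_{st'}\delta_{s't}$ up to $O_\ell(1)$ prefactors, which is exactly what is needed to reduce the variance to the Frobenius mass of $\bA$ and then to $O(B(\cA_d, \ell))$ using $\|\bA\|_\op \leq 1$. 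The remaining step of promoting in-probability concentration for each fixed $\bA$ to almost-sure convergence of the entire empirical spectral distribution is handled by the standard Stieltjes-transform/resolvent argument in Yaskov's paper, which is routine once the pointwise quadratic-form concentration is in place.
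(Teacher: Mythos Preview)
Your high-level reduction is exactly the paper's: write $\bQ_\ell = \bY_\ell\bY_\ell^\sT/B(\cA_d,\ell)$ with i.i.d.\ isotropic rows and invoke Yaskov's sufficient condition, which is precisely Proposition~\ref{prop:vanishing_quadratic}. So at the level of Theorem~\ref{thm:spectrum_Gegenbauer} itself, there is no disagreement.

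The gap is in your sketch of Proposition~\ref{prop:vanishing_quadratic}. Hypercontractivity does \emph{not} give $\Var(P)\le C_\ell\|\bA\|_F^2$, nor does it enforce the Wick-type identity $\E[Y_{\ell s}Y_{\ell s'}Y_{\ell t}Y_{\ell t'}]\approx \delta_{ss'}\delta_{tt'}+\delta_{st}\delta_{s't'}+\delta_{st'}\delta_{s't}$. It only bounds each fourth moment by $3^{2\ell}$, which after summing yields $\Var(P)\le C_\ell(\Tr|\bA|)^2$, far too weak. A concrete counterexample on $\Cube^d$: fix $T\subset[d]$ with $|T|=2$ and set $A_{S_1S_2}=\ind\{S_1\Delta S_2=T\}$. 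This is a partial permutation matrix with $\|\bA\|_\op=1$ and $\|\bA\|_F^2=N_T=\Theta(d^{\ell-1})$, while $P=\sum_{S_1\Delta S_2=T}Y_{S_1\Delta S_2}=N_T\,Y_T$ gives $\Var(P)=N_T^2=\Theta(d^{2\ell-2})$. So $\Var(P)/\|\bA\|_F^2=\Theta(d^{\ell-1})$, not $O_\ell(1)$. More generally, the paper's own footnote exhibits an isotropic feature map built from degree-$(\ell+1)$ polynomials (so hypercontractivity applies) for which even $\|\bY(\bx)\|_2^2/B$ fails to concentrate; hypercontractivity alone cannot supply the needed cancellation.

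What the paper actually does is exploit the \emph{local dependency structure} of the specific basis. On $\Cube^d$ one writes $\Var(\cQ_d)=B^{-2}\ba^\sT\bB\ba$ with $\bB_{(S_1S_2),(S_3S_4)}=\ind\{S_1\Delta S_2=S_3\Delta S_4\}$, and the crucial combinatorial step is the row-sum bound $\|\bB\|_{1,\infty}=O(d^{\ell-1})$: for fixed $S_1\neq S_2$ there are only $O(d^{\ell-1})$ pairs $(S_3,S_4)$ with matching symmetric difference. This yields $\Var(\cQ_d)\le O(d^{\ell-1})\|\bA\|_F^2/B^2\le O(d^{\ell-1}/B)\|\bA\|_\op^2=o(1)$. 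On the sphere the same idea works but requires the explicit spherical-coordinate representation of spherical harmonics and a substantially longer argument (Appendix~\ref{app:proof_sphere}). Hypercontractivity does appear there, but only as a crude $O_\ell(1)$ bound on individual four-fold products; the vanishing comes from counting, not from moment comparison.
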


\begin{figure}[t]
\begin{center}
    \includegraphics[width=16.5cm]{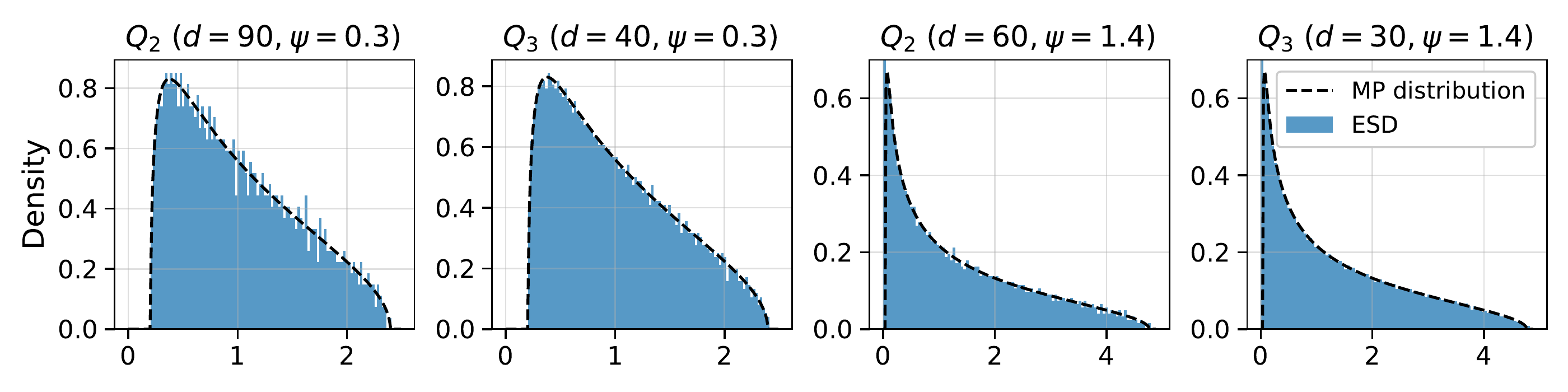}
\end{center}
\vspace{-15pt}

\caption{Empirical spectral density (ESD) for different Gegenbauer matrices and $\psi$ versus the Marchenko-Pastur density (black dashed curve). \label{fig:GegenMP}}
\end{figure}

See Figure \ref{fig:GegenMP} for a numerical illustration of this theorem. In Section \ref{thm:spectrum_Gegenbauer}, we will use Theorem \ref{thm:spectrum_Gegenbauer} and the decomposition \eqref{eq:Hn_decompo} of the kernel matrix to show that kernel ridge regression with $n = \Theta_d(d^\ell)$ samples and target function $f_* \in L^2 (\cA_d)$ learns exactly the projection $\proj_{\leq \ell -1} f_*$ on degree-$(\ell - 1)$ polynomials, partially the degree-$\ell$ component $\proj_\ell f_*$ and none of the high-degree part $\proj_{>\ell} f_*$.

\begin{remark}[High-frequency kernels on the hypercube]\label{rmk:hf_hypercube} As mentioned above, in the case of the hypercube, Assumption \ref{ass:genericity_kernel}.$(c)$ is verified for sufficiently smooth kernels. However, some natural non-smooth kernels can be constructed\footnote{For example, $h_d ( \<\bx_1 , \bx_2 \>/d) = \E_{\bw \sim \Unif (\Cube^d)} [(\<\bw,\bx_1 \>)_+ (\<\bw,\bx_2 \>)_+ ]$, the infinitely-wide linearized neural network associated to the ReLu activation function.} that do not satisfy Assumption \ref{ass:genericity_kernel}.$(c)$. In fact, we could have $\mu_{d,d-k} = \Omega_d(1)$ for all $k = 0 , \ldots, \ell$. In that case, the decomposition \eqref{eq:Hn_decompo} remains valid with the additional terms $\bH_{\geq d - \ell +1} + \mu_{d,d-\ell} \bQ_{d - \ell}$. The result \eqref{eq:spiked_matrix} still applies for $\bY_k$ replaced by $\bY_{d-k}$ and ridge regression would additionally learn exactly $\proj_{\geq d - \ell +1} f_*$. Furthermore, $\bQ_{d-\ell} = \bS \bQ_\ell \bS$ with $\bS = \diag (( Y_{[d]} (\bx_i) )_{i \in [n]})$ and $Y_{[d]} (\bx)=\prod_{i \in [d]} x_i$ is the full parity function. Hence $\bQ_{d - \ell}$ also verifies Theorem \ref{thm:spectrum_Gegenbauer} (however, the sum $\bQ_\ell+\bQ_{d - \ell}$ might be harder to analyze).
\end{remark}

\subsection{Proof of Theorem \ref{thm:spectrum_Gegenbauer}}

Several sufficient conditions for the Marchenko-Pastur law have been proved in the literature for matrices without independent entries \cite{bai2008large,adamczak2011marchenko,pastur2011eigenvalue,o2012note,yaskov2016necessary}. Here, we use a simple condition presented for example in \cite[Remark 2.2]{yaskov2016necessary} (see also \cite{bai2008large,pastur2011eigenvalue}), which applies to random matrices with iid isotropic rows, and which will directly imply Theorem \ref{thm:spectrum_Gegenbauer}.

\begin{proposition}\label{prop:vanishing_quadratic} Recall $\bx \sim \Unif(\cA_d)$ and $\bY_{\ell} ( \bx) = ( Y_{ks} (\bx) )_{s \in [B(\cA_d , \ell)]}$. Then, for all sequences $\{ \bA_d \}_{d \geq 1}$ of real symmetric positive semi-definite matrices $\bA_d \in \R^{B(\cA_d , \ell) \times B(\cA_d , \ell)}$ with $\| \bA_d \|_\op \leq 1$, we have 
\begin{equation}\label{eq:quad_form}
\cQ_d(\bx) = \frac{1}{B(\cA_d,\ell)} \bY_{\ell} (\bx)^\sT \bA_d \bY_{\ell} (\bx) - \frac{1}{B(\cA_d,\ell)} \Tr \left[  \bA_d \right] \stackrel{\P}{\longrightarrow} 0\, , \qquad \text{as }d \to \infty\, .
\end{equation}
\end{proposition}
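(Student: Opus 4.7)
The plan is to apply Chebyshev's inequality, which reduces the problem to showing $\E[\cQ_d(\bx)] = 0$ and $\Var(\cQ_d(\bx)) \to 0$ uniformly in $\bA_d$. The mean is immediate from orthonormality of $\{Y_{\ell s}\}_s$ in $L^2(\cA_d)$, giving $\E[\bY_\ell \bY_\ell^\sT] = \id_{B(\cA_d, \ell)}$ and hence $\E[\bY_\ell^\sT \bA_d \bY_\ell] = \Tr[\bA_d]$. Before the variance estimate, I would reduce to trace-zero $\bA_d$: on both sphere and hypercube, $\|\bY_\ell(\bx)\|_2^2 = \sum_s Y_{\ell s}(\bx)^2 = B(\cA_d, \ell) Q_\ell^{(d)}(d) = B(\cA_d, \ell)$ holds \emph{deterministically} (by the addition formula at $\|\bx\|^2 = d$), so writing $\bA_d = (B(\cA_d, \ell)^{-1}\Tr[\bA_d])\id + \tilde\bA_d$ with $\Tr[\tilde\bA_d] = 0$, the identity piece cancels in $\bY_\ell^\sT \bA_d \bY_\ell - \Tr[\bA_d]$. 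It suffices to treat centered symmetric $\tilde\bA_d$ with $\|\tilde\bA_d\|_\op \leq 2$ (dropping PSD but retaining $\|\tilde\bA_d\|_F^2 \leq B(\cA_d, \ell) \|\tilde\bA_d\|_\op^2 = O(d^\ell)$).

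The variance can then be written as a quadratic form $\Var(\bY_\ell^\sT \tilde\bA_d \bY_\ell) = \mathrm{vec}(\tilde\bA_d)^\sT \bC_d \mathrm{vec}(\tilde\bA_d) \leq \|\bC_d\|_\op \cdot \|\tilde\bA_d\|_F^2$, where $(\bC_d)_{(s,t),(u,v)} := \E[Y_{\ell s} Y_{\ell t} Y_{\ell u} Y_{\ell v}] - \delta_{st}\delta_{uv}$ is the centered 4-point correlator tensor. The whole proof then reduces to the sharp spectral bound $\|\bC_d\|_\op = O(d^{\ell - 1})$, which would yield $\Var(\cQ_d) = O(d^{-1}) \to 0$.

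Establishing this spectral bound is the main work. On $\cA_d = \Cube^d$, the Fourier product formula $\E[Y_S Y_T Y_U Y_V] = \ind_{S \triangle T \triangle U \triangle V = \emptyset}$ makes $\bC_d$ block-diagonal: blocks are indexed by nonempty $R = S \triangle T = U \triangle V$, each a rank-one $N_{|R|/2} \times N_{|R|/2}$ all-ones matrix with $N_j = \binom{2j}{j}\binom{d-2j}{\ell - j} = O(d^{\ell - j})$; the $|R| = 2$ block dominates and gives $\|\bC_d\|_\op = N_1 = O(d^{\ell - 1})$. On $\cA_d = \S^{d-1}(\sqrt d)$, $\bC_d$ is $O(d)$-equivariant on $V_{d, \ell}^{\otimes 2} \cong \bigoplus_{k \in \{0, 2, \ldots, 2\ell\}} V_{d, k}$, so by Schur's lemma it acts as a scalar $\lambda_k$ on each isotypic component; a direct computation via the Clebsch-Gordan matrices $E^{(k, u)}_{s, t} := \E[Y_{\ell s} Y_{\ell t} Y_{ku}]$ yields $\lambda_k = \|E^{(k, u)}\|_F^2 = B(\cA_d, \ell)^2 c_k / B(\cA_d, k)$, where the $c_k$ are the coefficients in the Gegenbauer linearization $Q_\ell^{(d)}(t)^2 = \sum_k c_k Q_k^{(d)}(t)$ (obtained by squaring the addition formula and integrating against $Q_k^{(d)}$). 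The principal obstacle is the sharp scaling $c_k = \Theta(d^{-(\ell - k/2)})$ on the sphere, which I would establish via the Hermite-polynomial limit of Gegenbauer polynomials as $d \to \infty$ combined with the classical Hermite product formula; this yields $\lambda_k = O(d^{\ell - k/2})$, maximized at $k = 2$, hence $\|\bC_d\|_\op = O(d^{\ell - 1})$ as required.
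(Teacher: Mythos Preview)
Your argument is correct and, on the hypercube, essentially coincides with the paper's: both reduce to bounding the operator norm of the four–point correlator matrix by $O(d^{\ell-1})$, the paper via the $\|\cdot\|_{1,\infty}$ row–sum bound, you via the explicit block structure (which is slightly sharper but lands in the same place).

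On the sphere, your route is genuinely different and considerably cleaner. The paper works in an explicit spherical–coordinate basis (Proposition~\ref{prop:spherical_representation}) and bounds the off–diagonal and diagonal pieces of the variance separately via tedious pointwise estimates on four–fold products, obtaining $\|\bC\|_\op = O(d^{\ell-1}\log(d)^{2\ell})$ and $O(d^{\ell-1/2})$ respectively. Your approach is coordinate–free: the key observation that $\sum_s Y_{\ell s}(\bx)^2 = B(\cA_d,\ell)$ deterministically (which the paper does not exploit on the sphere) kills the identity component, and then the factorization $G = M^\ast M$ through the pointwise–product map $M: V_\ell\otimes V_\ell\to L^2(\S^{d-1})$ together with the Gegenbauer linearization $Q_\ell^2 = \sum_k c_k Q_k$ gives the exact nonzero spectrum $\{\lambda_k\}$ and the sharp $\|\bC_d\|_\op = O(d^{\ell-1})$. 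This buys you a sharper variance bound $O(d^{-1})$ versus the paper's $O(d^{-1/2})$, and avoids all the coordinate analysis in Appendix~\ref{app:proof_sphere}.

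One point to tighten: the assertion ``$V_{d,\ell}^{\otimes 2}\cong\bigoplus_{k\in\{0,2,\dots,2\ell\}} V_{d,k}$'' is false for $d\ge 4$ (the dimensions do not match, and non–class–1 irreps appear in the tensor product), so you cannot apply Schur's lemma directly on the source side. The fix is already implicit in your computation: apply Schur to $MM^\ast$ acting on the \emph{target} $\bigoplus_k V_{d,k}$, which is multiplicity–free across $k$, so $MM^\ast = \bigoplus_k \lambda_k\,\id_{V_{d,k}}$; then $G = M^\ast M$ has nonzero eigenvalues exactly $\{\lambda_k\}$ with mutually orthogonal eigenspaces $W_k := M^\ast(V_{d,k})$, and your formula $\lambda_k = \|E^{(k,u)}\|_F^2 = B(\cA_d,\ell)^2 c_k/B(\cA_d,k)$ follows. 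Since only finitely many $k\le 2\ell$ are in play, the Hermite–limit argument for the $c_k$ scaling is fine as stated.
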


Note that if the rows had independent entries, the variance of the quadratic form \eqref{eq:quad_form} would be simply bounded by $ B(\cA_d, \ell)^{-2} \Tr( \bA_d^2) \leq B(\cA_d, \ell)^{-1} \| \bA_d \|_\op^2 = o_d(1)$. In our setting, we will use that each entry of $\bY_\ell (\bx)$ is a low-degree polynomial of $\bx$ (hence only depends on a small number of coordinates of $\bx$): the vector $\bY_\ell (\bx)$ only presents a local dependency structure and the variance still vanishes. Bellow, we present the proof on the hypercube, which is particularly simple: we have access to a simple basis (parity functions) and the entries exactly depend on a low-number of coordinates.  On the sphere, we will use an explicit representation of the spherical harmonics in terms of generalized spherical coordinates and we defer the proof to Appendix \ref{app:proof_sphere}. In this case, each entry only approximately depends on a low-number of coordinates, and the analysis is much more involved.

\begin{proof}[Proof of Proposition \ref{prop:vanishing_quadratic}]
We consider the case $\bx \sim \Unif ( \Cube^d) $ and $\bY_\ell ( \bx )  = ( Y_S (\bx) )_{S \subset [d], |S| = \ell}$, where $Y_{S} (\bx) = \prod_{i\in S} x_i$ corresponds to the orthogonal Fourier basis. We will denote $B = B(\Cube^d,\ell)$ and $\bA = \bA_d$ for convenience.

Consider the variance of the quadratic form \eqref{eq:quad_form} (the sum is over $S_i \subseteq [d], |S_i | = \ell$):
\[
\begin{aligned}
\Var_\bx (\cQ_d (\bx) ) = &~\frac{1}{B^2}\Var_{\bx} \Big( \bY_{\ell} (\bx)^\sT \bA \bY_{\ell} (\bx) -  \Tr \left[  \bA \right] \Big) \\
=&~ \frac{1}{B^2} \sum_{S_1 \neq S_2 } \sum_{S_3 \neq S_4} \E_\bx [ Y_{S_1} (\bx) Y_{S_2} (\bx) Y_{S_3} (\bx) Y_{S_4} (\bx) ] A_{S_1 S_2} A_{S_3 S_4} = \frac{1}{B^2} \ba^\sT \bB \ba \, ,
\end{aligned}
\]
where we denoted $\ba \in \R^{B(B-1)}$ the vector containing all off-diagonal elements of $\bA$, and $\bB \in \R^{B(B-1) \times B(B-1)}$ is the matrix with entries
\[
B_{S_1S_2,S_3S_4 } = \E_\bx [ Y_{S_1} (\bx) Y_{S_2} (\bx) Y_{S_3} (\bx) Y_{S_4} (\bx) ] = \begin{cases}
1 & \text{if } S_1 \Delta S_2 = S_3 \Delta S_4 \, , \\
0 & \text{o.w.}\, .
\end{cases}
\]
Note that $\bB$ is a symmetric matrix and we have
\[
\| \bB \|_{\op} \leq \| \bB \|_{\infty,1} = \max_{S_1 \neq S_2} \sum_{S_3 \neq S_4} \ind_{S_1 \Delta S_2 = S_3 \Delta S_4} = O_d ( d^{\ell -1} )\, ,
\]
where we used that if $|S_1 \Delta S_2 | = 2k$ with $1 \leq k \leq \ell$, then there are at most ${{2k}\choose{k}} {{d}\choose{\ell -k}}$ ways of choosing $S_3,S_4 \subseteq [d], |S_3|=|S_4|= \ell$ such that $|S_3 \cap S_4 | = \ell - k$ and $S_3 \Delta S_4 = S_1 \Delta S_2$. We deduce that
\[
\Var_\bx (\cQ_d (\bx) ) \leq \frac{1}{B^2} \| \bB \|_\op \|\ba \|_2^2 = O_d(1) \cdot \frac{d^{\ell -1}}{B^2} \Tr ( \bA^2 ) \leq O_d(1) \cdot \frac{d^{\ell -1}}{B}\| \bA \|_\op^2 = o_d (1) \, , 
\]
where we used that $B = \Theta_d(d^\ell)$.
\end{proof}

\section{Application: kernel ridge regression in the polynomial regime}
\label{sec:KRR}

As an application of the results in Section \ref{sec:spectral}, we consider kernel ridge regression in the polynomial regime. We observe $n$ i.i.d. pairs $(y_i , \bx_i)_{i \in [n]} $ with covariates $\bx_i \sim \Unif (\cA_d)$ and responses $y_i = f_* (\bx_i) + \eps_i$, where the target function $f_* \in L^2 (\cA_d)$ and $\eps_i$ are i.i.d. noise with mean $0$ and variance $\E [ \eps_i^2 ] = \sigma_{\eps}^2$. We consider doing kernel ridge regression (KRR) with inner-product kernel $H_d(\bx_1,\bx_2) := h_d (\< \bx_1 , \bx_2 \>/d)$ and regularization parameter $\lambda >0$. KRR outputs a model $\hf ( \bx ; \hba_\lambda ) = \sum_{i \in [n]} \ha_{\lambda,i} h_d ( \< \bx , \bx_i \> / d)$ with
\[
\hba_\lambda := \argmin_{\ba \in \R^n} \Big\{ \sum_{i = 1}^n \big( y_i - \hf ( \bx_i ; \ba ) \big)^2 + \lambda \| \hf ( \cdot ; \ba ) \|_\cH^2 \Big\} = ( \bH + \lambda \id_n)^{-1} \by \, ,
\]
where $\| \cdot \|_\cH$ is the RKHS norm associated to kernel $H_d$ and we denoted $\by = (y_1 , \ldots , y_n)$. The test error (or prediction error) of KRR is given by
\begin{equation}
R_{\text{test}} ( f_\star ; \bX , \beps, \lambda) := \E_{\bx} \Big[ \Big( f_\star (\bx) - \by^\sT ( \bH + \lambda \id_n)^{-1} \bh ( \bx) \Big)^2 \Big] \, ,
\end{equation}
where $\bh (\bx) = ( h_d (\<\bx , \bx_i \> /d) )_{i \in [n]}$. We will also consider the training error and the RKHS norm of the KRR solution, which are given by
\begin{align}
    R_{\text{train}} ( f_\star ; \bX , \beps, \lambda) := &~ \frac{1}{n} \sum_{i =1}^n \big( y_i - \hf ( \bx_i ; \hba_\lambda ) \big)^2 = \frac{1}{n} \big\| \by - \bH ( \bH + \lambda \id_n)^{-1} \by \big\|^2\, ,\\
    \| \hf ( \cdot ; \hba_\lambda ) \|_\cH = &~ \hba_\lambda^\sT \bH \hba_\lambda \, .
\end{align}

In \cite{ghorbani2021linearized,mei21generalization}, the asymptotic test error was characterized for $n =\Theta_d (d^\kappa),\kappa \not\in \naturals$, and it was shown that $R_{\text{test}} ( f_\star ; \bX , \beps, \lambda) = \| \proj_{> \lfloor \kappa \rfloor} f_* \|_{L^2}^2 + o_{d,\P}(1)$. In this section, the goal is to compute the asymptotic test error for any fixed integer $\ell \in \naturals$, when $d,n \to \infty$ with $n / B(\cA_d ; \ell) \to \psi$.

\subsection{Asymptotic test error for typical target functions}

In this section, we characterize the asymptotic test error with high probability over a class of random target functions. We will discuss in Section \ref{sec:pointwise} how to extend these results to fixed target functions.

We assume the following distribution over the sequence of target functions $\{ f_{*,d} \in L^2 ( \cA_d ) \}_{d \geq 1}$:

\begin{assumption}[Function distribution at level $\ell \in \naturals$]\label{ass:dist_f}
Let $\{ f_{*,d} \in L^2 ( \cA_d ) \}_{d \geq 1}$ be a sequence of target functions with decomposition in the polynomial basis
\begin{equation*}
f_{*,d} (\bx) = \sum_{k =0 }^{\ell -1} \sum_{s \in [B(\cA_d,k)]}  \beta^*_{d,ks} Y_{ks} (\bx) +  \sum_{k \geq \ell } \sum_{s \in [B(\cA_d,k)]}  \tbeta_{d,ks} Y_{ks} (\bx)\, .
\end{equation*}
such that $\bbeta_{d}^* = (\beta^*_{d,ks})_{k \in [\ell-1], s \in [B(\cA_d, k)]}$ is a sequence of deterministic vectors with $\| \bbeta_{d}^* \|_2^2 = O_d(1)$, and $\tbbeta_d = (\tbeta_{d,ks})_{k \geq \ell, s \in [B(\cA_d,k)]}$ are sequences of zero-mean independent random variables with
\begin{equation*}
\E [ \tbeta_{d,ks}^2 ] = \frac{F_{d,k}^2}{B(\cA_d, k)}\, , \qquad \E [ \tbeta_{d,ks}^4 ] \leq C \frac{F_{d,k}^4}{B(\cA_d, k)^2}\, , 
\end{equation*}
where the sequences $(F_{d,k})_{k \geq \ell}$ verify
\[
\lim_{d \to \infty} F_{d,\ell}^2 = F_\ell^2 \, , \qquad \lim_{d \to \infty} \sum_{k \geq \ell+1} F_{d,k}^2 = F_{>\ell}^2 \, .
\]
for some positive constants $F_\ell,  F_{>\ell}> 0$.
\end{assumption}

We will denote $\E_{f_*}$ the expectation over the random sequence $\tbbeta_d = (\tbeta_{d,ks})_{k \geq \ell, s \in [B(\cA_d,k)]}$. We will further make the following technical assumption about the tail of the coefficients $(F_{d,k})_{k \geq \ell}$ which will simplify the proofs.

\begin{assumption}[Decay of $(F_{d,k})_{k \geq \ell}$ at level $\ell \in \naturals$]\label{ass:technical_ass}
Let $(F_{d,k})_{k \geq \ell}$ be the sequences of coefficients from Assumption \ref{ass:dist_f}. We will assume that there exists an integer $m \geq \ell$ and $\delta <1$ such that
\[
\sum_{k > m} \frac{F_{d,k}^2}{\mu_{d,k}} = O_d(d^\delta)\, ,
\]
where we recall that $\mu_{d,k} = \xi_{d,k} B(\cA_d,k)$ with $\xi_{d,k}$ the $k$-th Gegenbauer coefficient of $h_d$ (see Eq.~\eqref{eq:diagonalization_h}).
\end{assumption}

Assumption \ref{ass:technical_ass} implies that the RKHS norm of the high-degree part of the target functions does not diverge too quickly, i.e., $\E_{f_*} [ \| \proj_{> m} f_{*,d} \|_{\cH}^2] = \sum_{k > m} F_{d,k}^2 / \mu_{d,k} = O_d(d^\delta)$. This assumption is used to simplify the proof of the concentration of $R_{\text{test}} ( f_{*,d} ; \bX , \beps, \lambda)$ on $\E_{f_*}  [R_{\text{test}} ( f_{*,d} ; \bX , \beps, \lambda) ] $.

\begin{figure}[t]
\begin{center}
    \includegraphics[width=17cm]{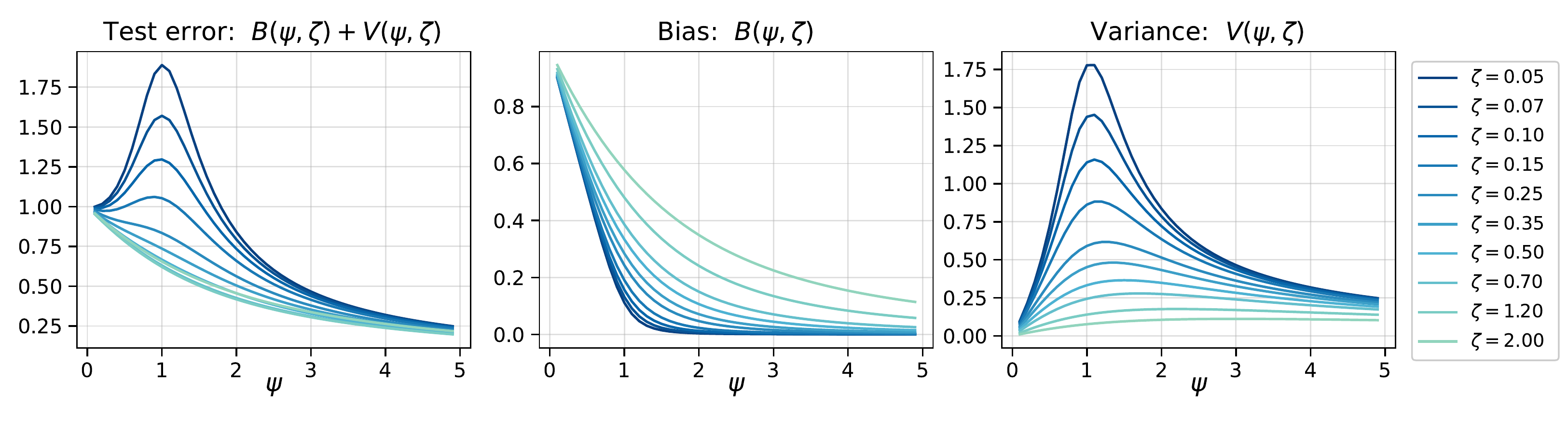}
\end{center}

\vspace{-19pt}

\caption{Asymptotic curves for the test error $R_{\text{test}} = \cB(\psi, \zeta)+\cV(\psi, \zeta)$, the bias part $\cB(\psi, \zeta)$ and the variance part $\cV(\psi, \zeta)$ for different values of the effective regularization $\zeta$.} \label{fig:test_error_asymp}
\end{figure}

We are now in position to state our main theorem:

\begin{theorem}[Asymptotic characterization of KRR in the polynomial regime]\label{thm:KRR}
Fix an integer $\ell \geq 1$ and $\psi >0$ and assume $\lim_{d \to \infty} n / B(\cA_d , \ell) = \psi$. Let $\{ h_d \}_{d \geq 1}$ be a sequence of inner-product kernel functions satisfying Assumption \ref{ass:genericity_kernel} at level $\ell$. Let $\{ f_{*,d} \in L^2 (\cA_d) \}_{d\geq 1}$ be a sequence of target functions satisfying Assumptions \ref{ass:dist_f} and \ref{ass:technical_ass} at level $\ell$. Denote 
\[
\begin{aligned}
\cB(  \psi ,\zeta) = &~ 1 - \psi + \psi \zeta^2 r' (- \zeta) \, , \\
\cV (  \psi , \zeta) =&~ \psi \big[ r_\psi (- \zeta) - \zeta r_\psi' (-\zeta)\big] \, ,
\end{aligned}
\]
where $r_\psi (-\zeta) = \int (x + \zeta)^{-1} \nu_{\MP,\psi} (\de x)$ is the Stieljes transform of the Marchenko-Pastur distribution (see Eq.~\eqref{eq:MarchenkoPastur}).

Let $\bX = (\bx_i)_{i \in [n]} \sim_{iid} \Unif(\cA_d)$ and $y_i = f_{*,d} (\bx_i) + \eps_i$ where $\eps_i$ independent zero-mean noise with$\E[ \eps_i^2] = \sigma_{\eps}^2$ and $\E[ \eps_i^4] < \infty$. Fix $\lambda >0$ and denote $\zeta_* = (\lambda + \mu_{>\ell} )/\mu_\ell$. Then we have
\begin{equation}\label{eq:asymp_test}
R_{\text{test}} ( f_{*,d} ; \bX , \beps , \lambda) \stackrel{\P}{\longrightarrow} F_\ell^2 \cdot \cB(  \psi , \zeta_*  ) + ( F_{>\ell}^2 + \sigma_\eps^2 ) \cdot \cV (  \psi , \zeta_* )  + F_{>\ell}^2 \, ,
\end{equation}
as $d \to \infty$, where the convergence in probability is over the randomness in $\bX, \beps , f_*$. Furthermore,
\begin{align}
    R_{\text{train}} ( f_{*,d} ; \bX , \beps , \lambda) \stackrel{\P}{\longrightarrow}&~  \frac{\lambda^2}{\mu_\ell^2} \Big\{ F_\ell^2 \big[ r_\psi (- \zeta_*) - \zeta_* r_\psi' (-\zeta_*) \big] + (F_{>\ell}^2 + \sigma_\eps^2) \cdot r_\psi' (-\zeta_*) \Big\} \, , \\
    \frac{1}{n} \| \hf ( \cdot ; \hba_\lambda ) \|_{\cH}^2 \stackrel{\P}{\longrightarrow}&~ \frac{F_{\ell}^2}{\mu_\ell} \Big\{ 1 - \zeta_* r_\psi ( - \zeta_* ) - \frac{\lambda}{\mu_\ell} \big[ r_\psi ( - \zeta_* )  - \zeta_* r_\psi ' (- \zeta_*) \big]  \Big\} \\
    &~+  \frac{F_{>\ell}^2 + \sigma_\eps^2}{\mu_\ell} \Big\{ r_\psi (- \zeta_*) - \frac{\lambda}{\mu_\ell} r_\psi ' ( - \zeta_* ) \Big\}\, .
\end{align}
%Denote $\zeta = \mu_{\ell} / (\ell! \mu_{>\ell})$, $r =   \xi^2 / ( \bar \lambda + 1 / \psi )$, $v = r (1 - 1/\psi) - 1$, $\omega = \frac{1}{2} \sqrt{v^2 + 4 r } + \frac{v}{2} $, and \notate{Change with the final formula}
%\[
%\begin{aligned}
%B( \xi , \psi , \bar \lambda ) = &~ \frac{1}{(\omega +1)^2 - \omega^2 / \psi}\, , \\
%V ( \xi , \psi , \bar \lambda ) =&~ \frac{\omega^2 / \psi}{(\omega +1)^2 - \omega^2 / \psi}\, .
%\end{aligned}
%\]
%Then for any regularization parameter $\lambda > 0$, the asymptotic prediction error of KRR when $n/ B(\cA_d,\ell) \to \psi$ is given by 
%\[
% \E_{f_\star,\beps} \big[R_{\text{test}} ( f_\star ; \bX , \beps , \lambda)  \big] = F_\ell^2 \cB(  \psi , \xi_* ) + ( F_{>\ell}^2 + \sigma_\eps^2 ) \cV (  \psi , \xi_* )  + F_{>\ell}^2 + o_{d,\P}(1)\, .
%\]
%\notate{Do also train error and RKHS norm.}
\end{theorem}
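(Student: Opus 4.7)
The plan is to combine the structural decomposition of $\bH$ from Section~\ref{sec:spectrum_Kernel_matrix}, the Marchenko--Pastur limit of Theorem~\ref{thm:spectrum_Gegenbauer}, and the concentration of Proposition~\ref{prop:vanishing_quadratic}. First, I would write
\[
\bH + \lambda\id_n \;=\; \bY_{\leq \ell-1}\bD_{\leq \ell-1}\bY_{\leq \ell-1}^\sT \;+\; \bM \;+\; \bDelta, \qquad \bM := \mu_{d,\ell}\bQ_\ell + (\mu_{>\ell}+\lambda)\id_n,
\]
with $\|\bDelta\|_\op = o_{d,\P}(1)$ (the $\kappa=\ell$ refinement of \eqref{eq:Hn_decompo}), and correspondingly split $\by = \bY_{\leq \ell-1}\bbeta^{*}_{d,\leq \ell-1} + \bY_\ell \tbbeta_{d,\ell} + \by_{>\ell} + \beps$, with $(\by_{>\ell})_i = (\proj_{>\ell} f_{*,d})(\bx_i)$. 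The first step is to eliminate the low-degree spike: since $\bY_{\leq \ell-1}\bD_{\leq \ell-1}\bY_{\leq \ell-1}^\sT$ has rank $\Theta_d(d^{\ell-1})$ and smallest non-zero eigenvalue $\Omega_{d,\P}(d^{\delta})$ by Assumption~\ref{ass:genericity_kernel}(a), the Woodbury identity collapses its contribution to $(\bH+\lambda\id_n)^{-1}$ into an orthogonal projector that fits $\proj_{\leq \ell-1} f_{*,d}$ exactly. This reduces the analysis to an effective KRR with kernel matrix $\bM$ and effective regularization $\zeta_* = (\mu_{>\ell}+\lambda)/\mu_\ell$.

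Next, I would decompose the remaining test error by degree. The degree-$\ell$ piece reads $\|\tbbeta_{d,\ell}-\bc\|_2^2$ with $\bc := \mu_{d,\ell} B(\cA_d,\ell)^{-1} \bY_\ell^\sT (\bH+\lambda\id_n)^{-1}\by$. Applying the push-through identity
\[
\mu_{d,\ell}\bY_\ell^\sT\bM^{-1}\bY_\ell/B(\cA_d,\ell) \;=\; \bT_\ell(\bT_\ell+\zeta_*\id_B)^{-1}, \qquad \bT_\ell := \bY_\ell^\sT\bY_\ell/B(\cA_d,\ell),
\]
and the approximate orthogonality $\bY_\ell^\sT\bY_{\leq \ell-1}/n = o_{d,\P}(1)$ in operator norm to kill the low-degree cross terms, I obtain
\[
\|\tbbeta_{d,\ell}-\bc\|_2^2 \;=\; \zeta_*^2\,\tbbeta_{d,\ell}^\sT (\bT_\ell+\zeta_*\id_B)^{-2}\tbbeta_{d,\ell} \;+\; \bigl\|\mu_{d,\ell}\bY_\ell^\sT\bM^{-1}(\by_{>\ell}+\beps)/B(\cA_d,\ell)\bigr\|_2^2 \;+\; o_{d,\P}(1),
\]
with the cross term vanishing by independence of $\tbbeta_{d,\ell}$ from the noise. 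Averaging the first summand over $\tbbeta_{d,\ell}$ (independent entries, variance $F_{d,\ell}^2/B(\cA_d,\ell)$) gives $F_{d,\ell}^2 \zeta_*^2 B(\cA_d,\ell)^{-1}\Tr[(\bT_\ell+\zeta_*\id_B)^{-2}]$; since $\bT_\ell$ and $\bQ_\ell$ share their non-zero spectra, Theorem~\ref{thm:spectrum_Gegenbauer} (together with the MP atom at $0$) gives the limit $F_\ell^2[(1-\psi)+\psi\zeta_*^2 r_\psi'(-\zeta_*)] = F_\ell^2 \cB(\psi,\zeta_*)$. Averaging the second summand over the effective noise (conditional second moment $F_{d,>\ell}^2+\sigma_\eps^2$ per coordinate, with nearly diagonal covariance on the relevant spectral scale) gives $(F_{d,>\ell}^2+\sigma_\eps^2) \cdot (n/B(\cA_d,\ell)) \cdot n^{-1}\Tr[\bQ_\ell(\bQ_\ell+\zeta_*\id_n)^{-2}]$, which via $x/(x+\zeta)^2 = (x+\zeta)^{-1} - \zeta(x+\zeta)^{-2}$ and Theorem~\ref{thm:spectrum_Gegenbauer} converges to $(F_{>\ell}^2+\sigma_\eps^2)\psi[r_\psi(-\zeta_*)-\zeta_* r_\psi'(-\zeta_*)] = (F_{>\ell}^2+\sigma_\eps^2)\cV(\psi,\zeta_*)$. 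The degree $>\ell$ piece is controlled by $\proj_{>\ell}\hf_\lambda \to 0$ in $L^2$, which follows from $\|\sum_{k>\ell}\xi_{d,k}\mu_{d,k}\bQ_k\|_\op = O_d(d^{-1})$ combined with $\|(\bH+\lambda\id_n)^{-1}\by\|_2^2 = O_{d,\P}(n)$; hence this subspace contributes exactly $\|\proj_{>\ell}f_{*,d}\|_{L^2}^2 \to F_{>\ell}^2$, completing \eqref{eq:asymp_test}. The training-error and RKHS-norm identities $\by-\bH\hba_\lambda = \lambda(\bH+\lambda\id_n)^{-1}\by$ and $\|\hf\|_\cH^2 = \by^\sT(\bH+\lambda\id_n)^{-1}\bH(\bH+\lambda\id_n)^{-1}\by$ reduce to linear combinations of the same spectral traces, giving the stated formulas.

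The main obstacle will be controlling the concentration of two families of quadratic forms. The first, of the form $\tbbeta_{d,\ell}^\sT \bA_d \tbbeta_{d,\ell}$ with $\bA_d$ a resolvent polynomial of $\bT_\ell$, has independent entries in $\tbbeta_{d,\ell}$, so Hanson--Wright yields concentration provided $\|\bA_d\|_\op$ is bounded (which is ensured by $\|(\bT_\ell+\zeta_*\id_B)^{-1}\|_\op \leq \zeta_*^{-1}$); the fourth-moment bound in Assumption~\ref{ass:dist_f} is exactly what is needed. The second and harder family is of the form $\bY_k(\bx)^\sT \bA_d \bY_k(\bx)$, arising whenever I replace an expectation over a fresh test sample by a trace. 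Here the coordinates of $\bY_k(\bx)$ are \emph{not} independent, so standard tools do not apply, and this is exactly the content of Proposition~\ref{prop:vanishing_quadratic}. Finally, Assumption~\ref{ass:technical_ass} is used to truncate the infinite sum over high degrees: beyond some finite level $m$ the residual RKHS norm $\|\proj_{>m}f_{*,d}\|_\cH^2$ is $O_{d,\P}(d^{\delta'})$ for $\delta'<1$, which via $\|(\bH+\lambda\id_n)^{-1}\|_\op \leq \lambda^{-1}$ produces a negligible contribution to $R_{\text{test}}$ and allows the spectral arguments to be closed at a finite level.
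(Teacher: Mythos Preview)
Your proposal is correct and follows essentially the same route as the paper: the three-subspace decomposition $\cV_{\leq\ell-1}\oplus\cV_\ell\oplus\cV_{>\ell}$, the Woodbury elimination of the low-degree spike, the reduction of the degree-$\ell$ contribution to resolvent traces of $\bQ_\ell$ evaluated via Theorem~\ref{thm:spectrum_Gegenbauer}, and the two-step structure (first compute $\E_{f_*,\beps}[R_{\text{test}}]$ in probability over $\bX$, then bound $\E_\bX[\Var_{f_*,\beps}(R_{\text{test}}\mid\bX)]$). One minor clarification: Proposition~\ref{prop:vanishing_quadratic} is not invoked directly in the KRR analysis---the fresh-test-sample expectation is handled \emph{exactly} by orthonormality, $\E_\bx[\bY_k(\bx)\bY_k(\bx)^\sT]=\id$, so no concentration of $\bY_k(\bx)^\sT\bA_d\bY_k(\bx)$ is needed there, and Proposition~\ref{prop:vanishing_quadratic} enters only through the Marchenko--Pastur law; the concentration step is closed by direct variance bounds using operator-norm estimates on the $\bQ_k$ (Lemma~\ref{lem:bound_Ql}, Corollary~\ref{cor:bound_exp_op}, Proposition~\ref{prop:E_bound_for_var}) together with the independence and fourth-moment control on $\tbbeta$ and $\beps$.
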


\begin{table}[t]
\centering
\renewcommand{\arraystretch}{1.5}
\begin{tabular}{ |c|c|c|  }
\hline
Subspace & Bias & Variance \\
\hline
$\cV_{\leq \ell - 1}$ & $o_{d,\P}(1)$ & $o_{d,\P}(1)$ \\
$\cV_{\ell }$ & $F_\ell^2 \cdot \cB ( \psi , \zeta_*) +F_{>\ell}^2 \cdot \cV(\psi, \zeta_*) + o_{d,\P}(1)$   & $\sigma_\eps^2 \cdot \cV(\psi, \zeta_*)+ o_{d,\P}(1)$ \\
$\cV_{> \ell }$ & $F_{>\ell}^2 + o_{d,\P}(1)$ &  $o_{d,\P}(1)$ \\
\hline

\end{tabular}
\caption{Bias-variance decomposition of KRR over each subspace. } \label{tab:bias-variance}
\end{table}

A detailed proof of Theorem \ref{thm:KRR} can be found in Appendix \ref{app:proof_main_KRR}. Let us give some intuition on the asymptotic formula for the test error \eqref{eq:asymp_test}. It will be instructive to consider the contribution to the test error of the three subspaces
\[
L^2(\cA_d) = \cV_{\leq \ell - 1} \oplus \cV_{\ell} \oplus \cV_{>\ell}\, .
\]
We compute the classical bias-variance decomposition of the test error on each of these subspaces $\cV \in \{\cV_{\leq \ell-1} , \cV_\ell , \cV_{>\ell} \}$:
\[
\E_{\beps} \big[ \| \proj_{\cV}( f_* - \hf(\cdot , \hba_\lambda )) \|_{L^2}^2 \big] =  \| \proj_{\cV}( f_* - \E_\beps \hf(\cdot , \hba_\lambda )) \|_{L^2}^2 + E_{\bx} \big[ \Var_\beps ( \proj_{\cV} \hf(\bx , \hba_\lambda ) | \bx ) \big] \,.
\]
We gathered the results in Table \ref{tab:bias-variance}:
\begin{description}
\item[Subspace $\cV_{\leq \ell -1}$:] asymptotically, KRR learns exactly $\proj_{\leq \ell - 1} f_*$. Intuitively, we can effectively replace $\bH$ by $\bH_{\leq \ell -1}$ on this subspace. Hence, KRR reduces to the easy low-dimensional problem of fitting a degree-$(\ell -1)$ polynomial ($\Theta_d (d^{\ell - 1})$ parameters) with a degree-$(\ell -1)$ polynomial kernel with $n = \Theta_d (d^\ell)$ samples.

\item[Subspace $\cV_{>\ell}$:] KRR does not learn $\proj_{>\ell} f_*$ at all. This subspace contributes to the phenomenon of `benign overfitting' \cite{bartlett2020benign,bartlett2021deep}. Indeed consider $\lambda = 0^+$, i.e., the KRR solution interpolates the training data $\hf (\bx_i ; \hba_{\lambda}) = y_i, i \leq n$. The KRR solution can be decomposed as the sum of a regular solution $\proj_{\leq \ell} \hf (\cdot ; \hba_{\lambda}) $ (degree-$\ell$ polynomial) useful for prediction and a spiky component $ \proj_{> \ell} \hf (\cdot ; \hba_{\lambda})$. This second component helps to interpolate with spikes $\proj_{> \ell} \hf (\bx_i; \hba_{\lambda}) = y_i -  \proj_{\leq \ell} \hf(\bx_i , \hba_\lambda ) $ but do not contribute overall to the test error $\| \proj_{>\ell} \hf(\cdot , \hba_\lambda ) \|_{L^2}^2 = o_{d,\P}(1)$.

\item[Subspace $\cV_\ell$:] from the decomposition \eqref{eq:Hn_decompo} and rescaling the kernel, we can effectively replace $\bH + \lambda \id_n$ by $ \bQ_\ell + [( \mu_{>\ell} +  \lambda)/\mu_\ell]\id_n$ on $\cV_\ell$. We therefore obtain a high-dimensional regression problem with $B(\cA_d , n)$ parameters, effective regularization $\zeta_* = (\mu_{>\ell} +\lambda)/\mu_\ell$ and effective noise $\proj_{>\ell} f_* (\bx_i) +\eps_i$.
\end{description}

We plotted in Figure \ref{fig:test_error_asymp} the asymptotic test error \eqref{eq:asymp_test} with $F_\ell^2 = F_{>\ell}^2 + \sigma_\eps^2 = 1$ for different value of the effective regularization parameter $\zeta_\ell$. For sufficiently small effective regularization $\zeta_\ell$ or effective signal-to-noise ratio $\text{SNR}_\ell = F_\ell^2/(F_{>\ell}^2 + \sigma_\eps^2) $, a double descent behavior can appear.

\subsection{Pointwise asymptotic test error}
\label{sec:pointwise}

The convergence in Theorem \ref{thm:KRR} holds \textit{in probability} over the class of target functions described in Assumption \ref{ass:dist_f}, while the results in \cite{ghorbani2021linearized,mei21generalization} for $n \asymp d^\kappa, \kappa \not\in \naturals$ hold \textit{pointwise}, i.e., for any deterministic sequence $\{ f_{*,d} \in L^2(\cA_d) \}_{d \geq 1}$. In this section, we briefly describe what would be needed to strengthen Theorem \ref{thm:KRR} to hold pointwise.

Recall $\bY_\ell (\bx) = (Y_{\ell s} (\bx) )_{s \in [B(\cA_d , \ell)]} \in \R^{B(\cA_d,\ell)}$ and $\bY_\ell = [\bY_\ell (\bx_1) , \ldots , \bY_\ell (\bx_n) ]^\sT \in \R^{n \times B(\cA_d,\ell)}$. For simplicity, consider a target function $f_* (\bx) = \< \bbeta_* , \bY_\ell ( \bx) \>$, no noise $\eps_i = 0$, and $\bH = \mu_\ell \bQ_\ell + \mu_{>\ell} \id_n$ (i.e., $\mu_{d,k} = 0$ for $k\leq \ell -1$). We focus on the contribution of $\cV_\ell$ to the test error (noting $B = B(\cA_d , \ell)$ for convenience)
\begin{equation}
\begin{aligned}
\| \proj_{\cV_\ell} (f_* - \hf(\cdot , \hat \ba_\lambda )) \|_{L^2}^2 = &~ \| \bbeta_* - \xi_\ell \bY_\ell^\sT (\bH + \lambda \id_n )^{-1} \bY_{\ell} \bbeta_* \|_2^2  \\
=&~ \zeta_\ell^2 \cdot \bbeta_*^\sT \big( \bY_\ell^\sT \bY_\ell / B+ \zeta_\ell \id_n  \big)^{-2} \bbeta_* \, .
\end{aligned}
\end{equation}
In order to prove the asymptotic test error formula, we would need to show that 
\[
\bbeta_*^\sT \big( \bY_\ell^\sT \bY_\ell / B + \zeta_\ell \id_n  \big)^{-2} \bbeta_* - \frac{1}{B}\Tr \big[( \bY_\ell^\sT \bY_\ell / B + \zeta_\ell \id_n  )^{-2}\big] \cdot \|  \bbeta_* \|_2^2 \stackrel{\P}{\longrightarrow} 0 \, .
\]
As shown in \cite{hastie2022surprises}, this can be reduced to showing that 
\begin{equation}\label{eq:iso_law}
\bbeta_*^\sT \big( \bY_\ell^\sT \bY_\ell / B + \zeta_\ell \id_n  \big)^{-1} \bbeta_* - \frac{1}{B}\Tr \big[( \bY_\ell^\sT \bY_\ell / B + \zeta_\ell \id_n  )^{-1}\big] \cdot \|  \bbeta_* \|_2^2 \stackrel{\P}{\longrightarrow} 0 \, ,
\end{equation}
which is known as an `isotropic local law' \cite{alex2014isotropic} (in fact we would need to show that Eq.~\eqref{eq:iso_law} is $O_{d,\P} (B^{-1/2 +\delta})$ to bound the cross terms in the test error, as in \cite{bartlett2021deep}). \cite{alex2014isotropic} shows Eq.~\eqref{eq:iso_law} for matrices $\bY_\ell$ with i.i.d.~entries, which implies the pointwise version of Theorem \ref{thm:KRR} for $\ell = 1$ (this was already proved in \cite{bartlett2021deep} for more general covariates distributions). It is a significant challenge to extend \cite{alex2014isotropic} to our setting, with non-independent entries in the rows of $\bY_\ell$, and we leave it for future work.

\section*{Acknowledgements}

This work was supported by NSF through award DMS-2031883 and the Simons Foundation
through Award 814639 for the Collaboration on the Theoretical Foundations of Deep Learning. We
also acknowledge the NSF grant CCF-2006489 and the ONR grant N00014-18-1-2729.

\addcontentsline{toc}{section}{Bibliography}
\bibliographystyle{amsalpha}
\bibliography{KRR_polyRegime.bbl}

\newcommand{\etalchar}[1]{$^{#1}$}
\providecommand{\bysame}{\leavevmode\hbox to3em{\hrulefill}\thinspace}
\providecommand{\MR}{\relax\ifhmode\unskip\space\fi MR }
% \MRhref is called by the amsart/book/proc definition of \MR.
\providecommand{\MRhref}[2]{%
  \href{http://www.ams.org/mathscinet-getitem?mr=#1}{#2}
}
\providecommand{\href}[2]{#2}
\begin{thebibliography}{GMMM21}

\bibitem[Ada11]{adamczak2011marchenko}
Radoslaw Adamczak, \emph{On the marchenko-pastur and circular laws for some
  classes of random matrices with dependent entries}, Electronic Journal of
  Probability \textbf{16} (2011), 1065--1095.

\bibitem[AEK{\etalchar{+}}14]{alex2014isotropic}
Bloemendal Alex, Laszlo Erd{\"o}s, Antti Knowles, Horng-Tzer Yau, and Jun Yin,
  \emph{Isotropic local laws for sample covariance and generalized wigner
  matrices}, Electronic Journal of Probability \textbf{19} (2014), 1--53.

\bibitem[Ave12]{avery2012hyperspherical}
John~S Avery, \emph{Hyperspherical harmonics: applications in quantum theory},
  vol.~5, Springer Science \& Business Media, 2012.

\bibitem[AZLS19]{allen2019convergence}
Zeyuan Allen-Zhu, Yuanzhi Li, and Zhao Song, \emph{On the convergence rate of
  training recurrent neural networks}, Advances in Neural Information
  Processing Systems \textbf{32} (2019), 6676--6688.

\bibitem[Bec75]{beckner1975inequalities}
William Beckner, \emph{{Inequalities in Fourier analysis}}, Annals of
  Mathematics (1975), 159--182.

\bibitem[Bec92]{beckner1992sobolev}
\bysame, \emph{{Sobolev inequalities, the Poisson semigroup, and analysis on
  the sphere $S^n$}}, Proceedings of the National Academy of Sciences
  \textbf{89} (1992), no.~11, 4816--4819.

\bibitem[BHMM19]{belkin2019reconciling}
Mikhail Belkin, Daniel Hsu, Siyuan Ma, and Soumik Mandal, \emph{Reconciling
  modern machine-learning practice and the classical bias--variance trade-off},
  Proceedings of the National Academy of Sciences \textbf{116} (2019), no.~32,
  15849--15854.

\bibitem[BLLT20]{bartlett2020benign}
Peter~L Bartlett, Philip~M Long, G{\'a}bor Lugosi, and Alexander Tsigler,
  \emph{Benign overfitting in linear regression}, Proceedings of the National
  Academy of Sciences \textbf{117} (2020), no.~48, 30063--30070.

\bibitem[BMM18]{belkin2018understand}
Mikhail Belkin, Siyuan Ma, and Soumik Mandal, \emph{To understand deep learning
  we need to understand kernel learning}, International Conference on Machine
  Learning, PMLR, 2018, pp.~541--549.

\bibitem[BMR21]{bartlett2021deep}
Peter~L Bartlett, Andrea Montanari, and Alexander Rakhlin, \emph{Deep learning:
  a statistical viewpoint}, Acta numerica \textbf{30} (2021), 87--201.

\bibitem[Bon70]{bonami1970etude}
Aline Bonami, \emph{{Etude des coefficients de Fourier des fonctions de
  $L^p(G)$}}, Annales de l'institut Fourier, vol.~20, 1970, pp.~335--402.

\bibitem[BTA11]{berlinet2011reproducing}
Alain Berlinet and Christine Thomas-Agnan, \emph{Reproducing kernel hilbert
  spaces in probability and statistics}, Springer Science \& Business Media,
  2011.

\bibitem[BZ08]{bai2008large}
Zhidong Bai and Wang Zhou, \emph{Large sample covariance matrices without
  independence structures in columns}, Statistica Sinica (2008), 425--442.

\bibitem[CBP21]{canatar2021spectral}
Abdulkadir Canatar, Blake Bordelon, and Cengiz Pehlevan, \emph{Spectral bias
  and task-model alignment explain generalization in kernel regression and
  infinitely wide neural networks}, Nature communications \textbf{12} (2021),
  no.~1, 1--12.

\bibitem[CDV07]{caponnetto2007optimal}
Andrea Caponnetto and Ernesto De~Vito, \emph{Optimal rates for the regularized
  least-squares algorithm}, Foundations of Computational Mathematics \textbf{7}
  (2007), no.~3, 331--368.

\bibitem[Chi11]{chihara2011introduction}
Theodore~S Chihara, \emph{An introduction to orthogonal polynomials}, Courier
  Corporation, 2011.

\bibitem[CLKZ21]{cui2021generalization}
Hugo Cui, Bruno Loureiro, Florent Krzakala, and Lenka Zdeborov{\'a},
  \emph{Generalization error rates in kernel regression: The crossover from the
  noiseless to noisy regime}, Advances in Neural Information Processing Systems
  \textbf{34} (2021).

\bibitem[CLKZ22]{cui2022error}
\bysame, \emph{Error rates for kernel classification under source and capacity
  conditions}, arXiv preprint arXiv:2201.12655 (2022).

\bibitem[CMBK21]{chen2021multiple}
Lin Chen, Yifei Min, Mikhail Belkin, and Amin Karbasi, \emph{Multiple descent:
  Design your own generalization curve}, Advances in Neural Information
  Processing Systems \textbf{34} (2021).

\bibitem[COB19]{chizat2019lazy}
Lenaic Chizat, Edouard Oyallon, and Francis Bach, \emph{On lazy training in
  differentiable programming}, NeurIPS 2019-33rd Conference on Neural
  Information Processing Systems, 2019, pp.~2937--2947.

\bibitem[CS13]{cheng2013spectrum}
Xiuyuan Cheng and Amit Singer, \emph{The spectrum of random inner-product
  kernel matrices}, Random Matrices: Theory and Applications \textbf{2} (2013),
  no.~04, 1350010.

\bibitem[DV13]{do2013spectrum}
Yen Do and Van Vu, \emph{The spectrum of random kernel matrices: universality
  results for rough and varying kernels}, Random Matrices: Theory and
  Applications \textbf{2} (2013), no.~03, 1350005.

\bibitem[DX13]{dai2013spherical}
Feng Dai and Yuan Xu, \emph{Spherical harmonics}, Approximation theory and
  harmonic analysis on spheres and balls, Springer, 2013, pp.~1--27.

\bibitem[DZPS18]{du2018gradient}
Simon~S Du, Xiyu Zhai, Barnabas Poczos, and Aarti Singh, \emph{Gradient descent
  provably optimizes over-parameterized neural networks}, International
  Conference on Learning Representations, 2018.

\bibitem[EF14]{costas2014spherical}
Costas Efthimiou and Christopher Frye, \emph{Spherical harmonics in p
  dimensions}, World Scientific, 2014.

\bibitem[EK10]{el2010spectrum}
Noureddine El~Karoui, \emph{The spectrum of kernel random matrices}, The Annals
  of Statistics \textbf{38} (2010), no.~1, 1--50.

\bibitem[GLK{\etalchar{+}}20]{gerace2020generalisation}
Federica Gerace, Bruno Loureiro, Florent Krzakala, Marc M{\'e}zard, and Lenka
  Zdeborov{\'a}, \emph{Generalisation error in learning with random features
  and the hidden manifold model}, International Conference on Machine Learning,
  PMLR, 2020, pp.~3452--3462.

\bibitem[GLR{\etalchar{+}}20]{goldt2020gaussian}
Sebastian Goldt, Bruno Loureiro, Galen Reeves, Florent Krzakala, Marc
  M{\'e}zard, and Lenka Zdeborov{\'a}, \emph{The gaussian equivalence of
  generative models for learning with shallow neural networks}, arXiv preprint
  arXiv:2006.14709 (2020).

\bibitem[GMMM20]{ghorbani2020neural}
Behrooz Ghorbani, Song Mei, Theodor Misiakiewicz, and Andrea Montanari,
  \emph{When do neural networks outperform kernel methods?}, Advances in Neural
  Information Processing Systems \textbf{33} (2020), 14820--14830.

\bibitem[GMMM21]{ghorbani2021linearized}
\bysame, \emph{Linearized two-layers neural networks in high dimension}, The
  Annals of Statistics \textbf{49} (2021), no.~2, 1029--1054.

\bibitem[Gro75]{gross1975logarithmic}
Leonard Gross, \emph{Logarithmic sobolev inequalities}, American Journal of
  Mathematics \textbf{97} (1975), no.~4, 1061--1083.

\bibitem[HL20]{hu2020universality}
Hong Hu and Yue~M Lu, \emph{Universality laws for high-dimensional learning
  with random features}, arXiv preprint arXiv:2009.07669 (2020).

\bibitem[HMRT22]{hastie2022surprises}
Trevor Hastie, Andrea Montanari, Saharon Rosset, and Ryan~J Tibshirani,
  \emph{Surprises in high-dimensional ridgeless least squares interpolation},
  The Annals of Statistics \textbf{50} (2022), no.~2, 949--986.

\bibitem[JGH18]{jacot2018neural}
Arthur Jacot, Franck Gabriel, and Cl{\'e}ment Hongler, \emph{Neural tangent
  kernel: Convergence and generalization in neural networks}, Advances in
  neural information processing systems, 2018, pp.~8571--8580.

\bibitem[JSS{\etalchar{+}}20]{jacot2020kernel}
Arthur Jacot, Berfin Simsek, Francesco Spadaro, Cl{\'e}ment Hongler, and Franck
  Gabriel, \emph{Kernel alignment risk estimator: Risk prediction from training
  data}, Advances in Neural Information Processing Systems \textbf{33} (2020),
  15568--15578.

\bibitem[KLS20]{kobak2020optimal}
Dmitry Kobak, Jonathan Lomond, and Benoit Sanchez, \emph{The optimal ridge
  penalty for real-world high-dimensional data can be zero or negative due to
  the implicit ridge regularization.}, J. Mach. Learn. Res. \textbf{21} (2020),
  169--1.

\bibitem[LCM20]{liao2020random}
Zhenyu Liao, Romain Couillet, and Michael~W Mahoney, \emph{A random matrix
  analysis of random fourier features: beyond the gaussian kernel, a precise
  phase transition, and the corresponding double descent}, Advances in Neural
  Information Processing Systems \textbf{33} (2020), 13939--13950.

\bibitem[LL18]{li2018learning}
Yuanzhi Li and Yingyu Liang, \emph{Learning overparameterized neural networks
  via stochastic gradient descent on structured data}, Advances in Neural
  Information Processing Systems, 2018, pp.~8157--8166.

\bibitem[LLS21]{liu2021kernel}
Fanghui Liu, Zhenyu Liao, and Johan Suykens, \emph{Kernel regression in high
  dimensions: Refined analysis beyond double descent}, International Conference
  on Artificial Intelligence and Statistics, PMLR, 2021, pp.~649--657.

\bibitem[LR20]{liang2020just}
Tengyuan Liang and Alexander Rakhlin, \emph{Just interpolate: Kernel
  “ridgeless” regression can generalize}, The Annals of Statistics
  \textbf{48} (2020), no.~3, 1329--1347.

\bibitem[LRZ20]{liang2020multiple}
Tengyuan Liang, Alexander Rakhlin, and Xiyu Zhai, \emph{On the multiple descent
  of minimum-norm interpolants and restricted lower isometry of kernels},
  Conference on Learning Theory, PMLR, 2020, pp.~2683--2711.

\bibitem[LXS{\etalchar{+}}19]{lee2019wide}
Jaehoon Lee, Lechao Xiao, Samuel Schoenholz, Yasaman Bahri, Roman Novak, Jascha
  Sohl-Dickstein, and Jeffrey Pennington, \emph{Wide neural networks of any
  depth evolve as linear models under gradient descent}, Advances in neural
  information processing systems \textbf{32} (2019), 8572--8583.

\bibitem[MM21]{misiakiewicz2021learning}
Theodor Misiakiewicz and Song Mei, \emph{Learning with convolution and pooling
  operations in kernel methods}, arXiv preprint arXiv:2111.08308 (2021).

\bibitem[MM22]{mei2022generalization}
Song Mei and Andrea Montanari, \emph{The generalization error of random
  features regression: Precise asymptotics and the double descent curve},
  Communications on Pure and Applied Mathematics \textbf{75} (2022), no.~4,
  667--766.

\bibitem[MMM21a]{mei21generalization}
Song Mei, Theodor Misiakiewicz, and Andrea Montanari, \emph{Generalization
  error of random feature and kernel methods: Hypercontractivity and kernel
  matrix concentration}, Applied and Computational Harmonic Analysis (2021).

\bibitem[MMM21b]{mei2021learning}
Song Mei, Theodor Misiakiewicz, and Andrea Montanari, \emph{Learning with
  invariances in random features and kernel models}, Conference on Learning
  Theory, PMLR, 2021, pp.~3351--3418.

\bibitem[MP67]{marchenko1967distribution}
Vladimir~Alexandrovich Marchenko and Leonid~Andreevich Pastur,
  \emph{Distribution of eigenvalues for some sets of random matrices},
  Matematicheskii Sbornik \textbf{114} (1967), no.~4, 507--536.

\bibitem[MS22]{montanari2022universality}
Andrea Montanari and Basil Saeed, \emph{Universality of empirical risk
  minimization}, arXiv preprint arXiv:2202.08832 (2022).

\bibitem[O'D14]{o2014analysis}
Ryan O'Donnell, \emph{Analysis of boolean functions}, Cambridge University
  Press, 2014.

\bibitem[O'R12]{o2012note}
Sean O'Rourke, \emph{A note on the marchenko-pastur law for a class of random
  matrices with dependent entries}, Electronic Communications in Probability
  \textbf{17} (2012), 1--13.

\bibitem[PS11]{pastur2011eigenvalue}
Leonid~Andreevich Pastur and Mariya Shcherbina, \emph{Eigenvalue distribution
  of large random matrices}, no. 171, American Mathematical Soc., 2011.

\bibitem[RMR21]{richards2021asymptotics}
Dominic Richards, Jaouad Mourtada, and Lorenzo Rosasco, \emph{Asymptotics of
  ridge (less) regression under general source condition}, International
  Conference on Artificial Intelligence and Statistics, PMLR, 2021,
  pp.~3889--3897.

\bibitem[RZ19]{rakhlin2019consistency}
Alexander Rakhlin and Xiyu Zhai, \emph{Consistency of interpolation with
  laplace kernels is a high-dimensional phenomenon}, Conference on Learning
  Theory, PMLR, 2019, pp.~2595--2623.

\bibitem[{Sze}39]{szego1939orthogonal}
{Szeg\"o, Gabor}, \emph{Orthogonal polynomials}, vol.~23, American Mathematical
  Soc., 1939.

\bibitem[TB20]{tsigler2020benign}
Alexander Tsigler and Peter~L Bartlett, \emph{Benign overfitting in ridge
  regression}, arXiv preprint arXiv:2009.14286 (2020).

\bibitem[Ver10]{vershynin2010introduction}
Roman Vershynin, \emph{Introduction to the non-asymptotic analysis of random
  matrices}, arXiv preprint arXiv:1011.3027 (2010).

\bibitem[Wai19]{wainwright2019high}
Martin~J Wainwright, \emph{High-dimensional statistics: A non-asymptotic
  viewpoint}, vol.~48, Cambridge University Press, 2019.

\bibitem[WX20]{wu2020optimal}
Denny Wu and Ji~Xu, \emph{On the optimal weighted $ell\_2 $ regularization in
  overparameterized linear regression}, Advances in Neural Information
  Processing Systems \textbf{33} (2020), 10112--10123.

\bibitem[Yas16]{yaskov2016necessary}
Pavel Yaskov, \emph{Necessary and sufficient conditions for the
  marchenko-pastur theorem}, Electronic Communications in Probability
  \textbf{21} (2016), 1--8.

\end{thebibliography}

\clearpage
\appendix

\section{Proof of Proposition \ref{prop:vanishing_quadratic}: the case of spherical harmonics}\label{app:proof_sphere}

In this section, it will be convenient to consider $\S^{d-1}:= \S^{d-1}(1)$ the unit sphere instead of $\S^{d-1} (\sqrt{d})$ the sphere of radius $\sqrt{d}$ as in the main text. In particular, $\| \cdot \|_{L^2}$ and $\< \cdot , \cdot \>_{L^2}$ will correspond here to the standard scalar product and norm associated to the $L^2 $ space with $\bx \sim \Unif ( \S^{d-1})$.

\subsection{Explicit representation of spherical harmonics}

The proof of Proposition \ref{prop:vanishing_quadratic} will rely on an explicit representation of spherical harmonics in terms of the generalized spherical coordinate system in dimension $d$. See for example \cite{avery2012hyperspherical,dai2013spherical}.

Recall the definition of the spherical coordinate system: for $\bx \in \S^{d-1}$, 
\begin{equation}\label{eq:spherical_coordinates}
\begin{cases}
x_1 = \sin ( \theta_{d-1} ) \cdots \sin (\theta_{2}) \sin (\theta_{1})\, , \\
x_2 = \sin ( \theta_{d-1} ) \cdots \sin (\theta_{2}) \cos(\theta_{1})\, , \\
\cdots \\
x_{d-1} = \sin ( \theta_{d-1} )  \cos (\theta_{d-2})\, , \\
x_d = \cos ( \theta_{d-1} ) , \\
\end{cases}
\end{equation}
where $0 \leq \theta_{1} \leq 2\pi$ and $0 \leq \theta_i \leq \pi$ for $i = 2, \ldots, d-1$. The uniform probability measure on the unit sphere is given by
\[
\mu (\de \sigma) = \frac{\Gamma (d/2)}{2 \pi^{d/2}} \prod_{j=1}^{d-2} \sin ( \theta_{d-j})^{d-j - 1} \de \theta_{d-1} \cdots \de \theta_{2} \de \theta_{1} \, .
\]
For convenience, we introduce the normalized Gegenbauer polynomials $\tQ_{k}^{(d)} : [-1,1] \to \R$ on the sphere of radius $1$, such that for $\bx \sim \Unif ( \S^{d-1})$, 
\[
\begin{aligned}
\E_{\bx} [ \tQ_{k}^{(d)} (x_1)\tQ_{k'}^{(d)} (x_1) ] =&~ \frac{\Gamma (d/2) }{\sqrt{\pi} \Gamma ((d-1)/2)}\int_{-1}^1 \tQ_k^{(d)}(x_1)  \tQ_{k'}^{(d)}(x_1)  (1 - x_1^2)^{(d-3)/2} \de x_1 \\
=&~ \frac{\Gamma (d/2) }{\sqrt{\pi} \Gamma ((d-1)/2)} \int_{0}^\pi \tQ_k^{(d)}(\cos(\theta))  \tQ_{k'}^{(d)}(\cos(\theta) )  \sin(\theta)^{d-2} \de \theta \\
=&~ \delta_{k,k'}\, .
\end{aligned}
\]

Introduce the set of $d$ indices
\[
\cA_{d,\ell} = \big\{ \balpha = ( \alpha_1 , \ldots , \alpha_d) \in \naturals^d \big\vert \alpha_1 + \ldots + \alpha_{d-1} = \ell , \text{if $\alpha_{d-1} > 0$, then $\alpha_d \in \{0,1\}$, o.w. $\alpha_d = 0$} \big\}\, .
\]
Notice that 
\begin{equation}
| \cA_{d,\ell} | = 2 {{d+\ell - 2}\choose{d-2}} - {{d+\ell-3}\choose{d-3}} = \frac{2\ell+d-2}{d-2}  {{d+\ell-3}\choose{\ell}} = B(d,\ell)\, .
\end{equation}

\begin{proposition}\label{prop:spherical_representation}
For integers $d > 2$ and $\ell \geq 0$,  and $\balpha \in \cA_{d,\ell}$, define
\begin{equation}\label{eq:spherical_harmonics_explicit}
Y_{\balpha} (\bx) = C_{\balpha}^{1/2} g_{\balpha} (\theta_{1} ) \prod_{j = 1}^{d-2} \Big\{ \tQ^{(d_j )}_{\alpha_j} ( \cos (\theta_{d-j})) \sin (\theta_{d-j})^{|\alpha^{j+1}|} \Big\} \, ,
\end{equation}
where $| \alpha^{j+1} | = \alpha_{j+1} + \ldots + \alpha_{d-1}$, $d_j = 2 | \alpha^{j+1} | +  d-j + 1$, 
\[
g_{\balpha } (\theta_1) = \begin{cases}
\cos (\alpha_{d-1} \theta_1) & \text{if $\alpha_d = 0$ and $\alpha_{d-1} > 0$,} \\
1 / \sqrt{2} & \text{if $\alpha_d = 0$ and $\alpha_{d-1} = 0$,} \\
\sin (\alpha_{d-1} \theta_1) & \text{if $\alpha_d = 1$ and $\alpha_{d-1} > 0$,}
\end{cases}
\]
and
\[
\begin{aligned}
C_{\balpha} =&~ \frac{2 \pi^{d/2}}{\Gamma(d/2)} \cdot \frac{1}{\pi} \cdot \prod_{j = 1}^{d-2} \frac{\Gamma (d_j/2)}{\sqrt{\pi} \Gamma((d_j - 1)/2)}  \\
=&~ \frac{2 \Gamma (d_{1}/2)}{\Gamma(d/2) \Gamma((d_{d-2} - 1)/2)} \prod_{2\leq j \leq d-2: \alpha_j>0} \frac{\Gamma (d_{j}/2)}{\Gamma ( (d_{j-1} - 1)/2)}\, .
\end{aligned}
\]
Then $Y_{\balpha}$ is an homogeneous polynomial of degree $\ell$. Furthermore, $\{ Y_{\balpha} \}_{\balpha \in \cA_{d,\ell}}$ is an orthonormal basis of $V_{d,\ell}$, the space of degree $\ell$ spherical harmonics. 
\end{proposition}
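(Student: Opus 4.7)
The plan is to prove both claims simultaneously by induction on the dimension $d$, exploiting the product structure of $Y_{\balpha}$. The key decomposition is geometric: write $\bx = (\sin\theta_{d-1}\,\tilde\bx, \cos\theta_{d-1})$ with $\tilde\bx\in\S^{d-2}$ and $\by = (x_1,\ldots,x_{d-1}) = \sin(\theta_{d-1})\tilde\bx$. The formula for $Y_{\balpha}$ then factors as an outer piece depending only on $\theta_{d-1}$ and $\alpha_1$, times an inner factor $\tilde Y_{\tilde\balpha}$ with $\tilde\balpha = (\alpha_2,\ldots,\alpha_d)\in\cA_{d-1,\tilde\ell}$, $\tilde\ell := \ell-\alpha_1$. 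By induction, $\tilde Y_{\tilde\balpha}$ is an orthonormal spherical harmonic of degree $\tilde\ell$ on $\S^{d-2}$. The base case $d=2$ is a direct check: $\cA_{2,\ell}$ has the correct cardinality and the formula reduces to the familiar $\{\cos(\ell\theta_1), \sin(\ell\theta_1)\}$ (plus the constant for $\ell=0$), which are homogeneous harmonic polynomials of degree $\ell$ in $(x_1,x_2)$.

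For the inductive step I would establish three items. \emph{(i) Homogeneity and polynomial nature:} since $|\by| = \sin\theta_{d-1}$ and $\tilde Y_{\tilde\balpha}$ is homogeneous of degree $\tilde\ell$, the factor $\sin(\theta_{d-1})^{\tilde\ell}\,\tilde Y_{\tilde\balpha}(\tilde\bx)$ equals $\tilde Y_{\tilde\balpha}(\by)$, a polynomial of degree $\tilde\ell$ in $(x_1,\ldots,x_{d-1})$. For the remaining factor one uses the parity property of Gegenbauer polynomials: $\tQ_{\alpha_1}^{(d_1)}$ has the same parity as $\alpha_1$, so $|\bx|^{\alpha_1}\tQ_{\alpha_1}^{(d_1)}(x_d/|\bx|)$ is a homogeneous polynomial in $\bx$ of degree $\alpha_1$, and restricted to the sphere equals $\tQ_{\alpha_1}^{(d_1)}(\cos\theta_{d-1})$. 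The product gives a degree-$\ell$ homogeneous polynomial. \emph{(ii) Harmonicity:} I would invoke the classical identity that, for $\tilde H$ a harmonic polynomial of degree $\tilde\ell$ on $\R^{d-1}$, the polynomial $|\bx|^{\alpha_1} C_{\alpha_1}^{((d-2)/2+\tilde\ell)}(x_d/|\bx|)\,\tilde H(\by)$ is harmonic on $\R^d$; the Gegenbauer parameter $(d-2)/2 + \tilde\ell$ exactly matches the dimensional index $d_1 = 2\tilde\ell + d$ of $\tQ_{\alpha_1}^{(d_1)}$. This identity is verified by a short computation using $\Delta_{\R^d} = \partial_{x_d}^2 + \Delta_{\R^{d-1}}$ and the Gegenbauer ODE, and is standard in the spherical harmonics literature (e.g.\ Dai--Xu), so I would cite rather than reproduce it.

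\emph{(iii) Orthonormality:} the spherical measure factorizes as $\mu(\de\sigma)\propto \sin^{d-2}(\theta_{d-1})\,\de\theta_{d-1}\otimes\mu_{d-2}(\de\tilde\sigma)$. For $\balpha\neq\balpha'$, if $\alpha_1\neq\alpha_1'$ then the inner factors $\tilde Y_{\tilde\balpha}$ and $\tilde Y_{\tilde\balpha'}$ lie in $V_{d-1,\ell-\alpha_1}$ and $V_{d-1,\ell-\alpha_1'}$ respectively; by the inductive hypothesis these subspaces are orthogonal in $L^2(\S^{d-2})$, killing the inner integral regardless of the $\theta_{d-1}$ factor (this elegantly bypasses the apparent mismatch that the outer Gegenbauer indices $d_1(\balpha)\neq d_1(\balpha')$ would otherwise create). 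If $\alpha_1=\alpha_1'$, the outer integral reduces to the normalization of $\tQ_{\alpha_1}^{(d_1)}$, giving $\sqrt{\pi}\,\Gamma((d_1-1)/2)/\Gamma(d_1/2)$, and the inner inner-product is $\delta_{\tilde\balpha,\tilde\balpha'}$ by induction. Tracking the prefactors telescopes into the formula for $C_\balpha$. Since $|\cA_{d,\ell}| = B(d,\ell) = \dim V_{d,\ell}$, orthonormality yields that $\{Y_\balpha\}$ is a basis.

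The main obstacle I anticipate is the bookkeeping of normalizations in step (iii): showing that the telescoped product of the ratios $\Gamma(d_j/2)/\Gamma((d_{j-1}-1)/2)$ combined with the outer measure factor $\Gamma(d/2)/(2\pi^{d/2})$ and the $1/\pi$ from $\int_0^{2\pi} g_\balpha^2$ collapses to the closed form given for $C_\balpha$. This is tedious but mechanical; the only subtle point is handling the degenerate case $\alpha_{d-1}=0$ (and thus $\alpha_d=0$) where $g_\balpha = 1/\sqrt{2}$ instead of a sinusoid, which changes the azimuthal normalization by a factor of $2$ — this must be accounted for consistently with the restriction in the definition of $\cA_{d,\ell}$.
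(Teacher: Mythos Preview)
Your inductive approach is correct and genuinely different from the paper's. The paper proves everything by direct computation: for orthogonality of $Y_{\balpha}$ and $Y_{\bbeta}$ with $\balpha\neq\bbeta$, it identifies the \emph{largest} index $j$ with $\alpha_j\neq\beta_j$, observes that then $|\alpha^{j+1}|=|\beta^{j+1}|$ so the two Gegenbauer factors at that level share the same weight $d_j$ and are orthogonal (or, if $j\in\{d-1,d\}$, the $g_{\balpha},g_{\bbeta}$ are orthogonal). Normalization is a one-line product over the angular variables, and the polynomial/homogeneity claim is obtained by rewriting each factor in Cartesian coordinates using Gegenbauer parity. Notably, the paper does not prove harmonicity directly; membership in $V_{d,\ell}$ is implicit via dimension counting once orthonormality across all degrees is in hand. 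Your route --- peel off $\theta_{d-1}$, recognize the remaining factor as $\tilde Y_{\tilde\balpha}$ on $\S^{d-2}$, and induct --- is the classical separation-of-variables construction in the spirit of Dai--Xu. It buys you an explicit proof of harmonicity through the Gegenbauer ODE identity, and your handling of the $\alpha_1\neq\alpha_1'$ case (orthogonality of different-degree subspaces on the lower sphere, sidestepping the mismatched weights) is clean. The paper's approach is shorter and avoids the inductive scaffolding; yours is more structural. One small point: the statement assumes $d>2$, so your base case $d=2$ is formally outside its scope, but the formula does make sense there and serves as a valid anchor for the induction.
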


A proof of this proposition can be found for example in \cite{dai2013spherical}. For completeness, we include here the proof with our notations and normalization choice.

\begin{proof}[Proof of Proposition \ref{prop:spherical_representation}]
We have for $\bx \sim \Unif (\S^{d-1})$,
\[
\begin{aligned}
&~\E_{\bx} [Y_{\balpha} (\bx)^2]\\
= &~ \frac{\Gamma (d/2)}{2 \pi^{d/2}} \int Y_{\balpha} (\bx)^2 \prod_{j=1}^{d-2} \sin ( \theta_{d-j})^{d-j - 1} \de \theta_{d-1} \cdots \de \theta_{2} \de \theta_{1} \\
=&~ \Big( \int_0^{2\pi} g_{\balpha} (\theta_1 )^2 \frac{\de \theta_1}{\pi} \Big) \cdot \prod_{j=1}^{d-2}  \Big\{ \frac{\Gamma (d_j/2)}{\sqrt{\pi} \Gamma((d_j - 1)/2)}  \int_0^\pi \tQ_{\alpha_j}^{(d_j)} (\cos (\theta_{d-j}))^2 \sin (\theta_{d-j} )^{d_j - 2} \de \theta_{d-j} \Big\} \\
=&~ 1 \, .
\end{aligned} 
\]
Furthermore, for $\bbeta \neq \balpha$, we have $\< Y_{\balpha} , Y_{\bbeta} \>_{L^2} = \E_{\bx} [Y_{\balpha} (\bx) Y_{\bbeta} (\bx)] = 0$. Indeed, take the largest $j \in [d]$ such that $\beta_j \neq \alpha_j$, then either $j = d, (d-1)$ and in that case $\< Y_{\balpha} , Y_{\bbeta } \>_{L^2} = 0$ because of the orthogonality of $g_{\balpha}$ and $g_{\bbeta}$, or $j < d-1$, and $| \alpha^{j+1}| = | \beta^{j+1}|$ and we have $\< Y_{\balpha} , Y_{\bbeta } \>_{L^2} = 0$ by orthogonality of the Gegenbauer polynomials $\tQ^{(d_j)}_{\alpha_j}$ and $\tQ^{(d_j)}_{\beta_j}$. 

In order to check that Eq.~\eqref{eq:spherical_harmonics_explicit} is a homogeneous polynomial, recall that in the spherical coordinates \eqref{eq:spherical_coordinates}, we have 
\[
\cos \theta_k = \frac{x_{k+1} }{ \sqrt{x_1^2 + \ldots + x_{k+1}^2}}\, \qquad \sin \theta_k = \sqrt{\frac{x_1^2 + \ldots + x_{k}^2}{ x_1^2 + \ldots + x_{k+1}^2}}\, ,
\]
and therefore
\[
\prod_{j = 1}^{d-2} \sin (\theta_{d-j} )^{|\alpha^{j+1}|} = (x_1^2 + x_2^2)^{\alpha_{d-1}/2} \prod_{j = 1}^{d-2} \big( x_1^2 + \ldots + x_{d-j+1}^2 \big)^{\alpha_j / 2} \, .
\]
Furthermore, notice that the degree-$\alpha_j$ polynomial $\tQ^{(d_j)}_{\alpha_j}$ is even when $\alpha_j$ is even and odd when $\alpha_j$ is odd. Therefore 
\[
\big( x_1^2 + \ldots + x_{d-j+1}^2 \big)^{\alpha_j / 2} \tQ^{(d_j)}_{\alpha_j} \left( \frac{x_{d-j+1} }{ \sqrt{x_1^2 + \ldots + x_{d-j+1}^2}} \right)\, ,
\]
is a polynomial of degree $\alpha_j$. We can further write $h_{\balpha} (x_1,x_2) := (x_1^2 + x_2^2)^{\alpha_{d-1}/2} g_{\balpha} (\theta_1)$ as the real part or the imaginary part of the polynomial $(x_2 + i x_1 )^{\alpha_{d-1}}$ (or a constant if $\alpha_{d-1} = \alpha_d =0$), depending on $\alpha_{d-1}, \alpha_d$. We deduce that
\begin{equation}\label{eq:expression_spherical_x}
Y_{\balpha} (\bx) = C_{\balpha}^{1/2} h_{\balpha} (x_1,x_2) \prod_{j = 1}^{d-2} \left\{  \big( x_1^2 + \ldots + x_{d-j+1}^2 \big)^{\alpha_j / 2} \tQ^{(d_j)}_{\alpha_j} \left( \frac{x_{d-j+1} }{ \sqrt{x_1^2 + \ldots + x_{d-j+1}^2}} \right) \right\} \, ,
\end{equation}
is a polynomial of degree $\alpha_1 +  \ldots + \alpha_{d-1} = \ell $. Furthermore, from the expression \eqref{eq:expression_spherical_x}, we can directly check that $Y_{\balpha} (\bx)$ is a homogeneous polynomial.
\end{proof}

\subsection{Proof of Proposition \ref{prop:vanishing_quadratic}}

Similarly to the case on the hypercube, we see from the expression \eqref{eq:spherical_harmonics_explicit} that the spherical harmonics $Y_{\balpha} ( \bx) $ are approximately represented as a product of at most $\ell$ independent zero-mean variables $Q^{(d_j)}_{\alpha_j} (\cos ( \theta_{d-j}))$, corresponding to the $\alpha_j >0$ (at most $\ell$ of them). We expect product of spherical harmonics that do not have overlap of their support $\{ j : \alpha_j >0 \}$ to be approximately uncorrelated, and a proof similar to the hypercube case outlined in the main text to extend to the sphere. The main difficulty comes from the fact that $Y_{\balpha} (\bx)$ depends on every spherical coordinates through $\sin ( \theta_{d-j} )^{|\alpha^{j+1}|}$. However carefully bounding the contribution of each coordinates allows to show that the variance still vanishes (see Section \ref{sec:technical_lemmas} for technical bounds).

Consider $\bx \sim \Unif ( \S^{d-1} )$ and let $\bY_\ell ( \bx )  = ( Y_{\balpha} (\bx) )_{\balpha \in \cA_{d,\ell}}$ be the spherical harmonics basis given in Proposition \ref{prop:spherical_representation}. Denote $B = B(\S^{d-1},\ell)$ and $\bA  = \bA_d$ for convenience. Again we consider bounding the variance of the quadratic form, which we decompose in two terms
\[
\begin{aligned}
 \Var_\bx ( \cQ (\bx) ) =  \E_{\bx}  \left[ \left( \frac{1}{B} \bY_{\ell} (\bx)^\sT\bA \bY_{\ell} (\bx)    - \frac{1}{B }  \tr (\bA) \right)^2 \right]  \leq ({\rm I}) + ({\rm II})\, ,
\end{aligned}
\]
where 
\[
\begin{aligned}
({\rm I}) = &~ \frac{1}{B^2} \Big\vert  \sum_{\balpha \neq \bbeta , \bgamma \neq \bdelta } \E_{\bx} \big[ Y_{\balpha} ( \bx) Y_{\bbeta} ( \bx) Y_{\bgamma} ( \bx) Y_{\bdelta} ( \bx) \big] A_{\balpha, \bbeta} A_{\bgamma, \bdelta }\Big\vert \, , \\
({\rm II}) = &~ \frac{1}{B^2}\Big\vert  \sum_{\balpha , \bbeta  } \left( \E_{\bx} \big[ Y_{\balpha} ( \bx)^2 Y_{\bbeta} ( \bx)^2  \big] - 1 \right) A_{\balpha, \balpha} A_{\bbeta, \bbeta } \Big\vert \, .
\end{aligned}
\]
We will show that both these terms are $o_d (1)$, which implies the concentration in probability of the quadratic form.

\noindent
{\bf Step 1. Bounding term $({\rm I})$.}

We proceed similarly than in the main text. Consider $\bC$ the square matrix of size $B(B - 1)$ such that for any $\balpha \neq \bbeta$ and $\bgamma \neq \bdelta$,
\[
\bC_{(\balpha, \bbeta) , ( \bgamma , \bdelta)} =  \E_{\bx} \big[ Y_{\balpha} ( \bx) Y_{\bbeta} ( \bx) Y_{\bgamma} ( \bx) Y_{\bdelta} ( \bx) \big] \, .
\]
Denote $\ba \in \R^{B (B - 1)}$ the vector that contains all off-diagonal entries of $\bA$. Then the first term can be bounded by
\[
({\rm I}) = \frac{1}{B^2} \Big\vert  \ba^\sT \bC \ba \Big\vert  \leq \frac{1}{B^2}  \| \bC \|_{\op} \| \ba \|_2^2 \leq \frac{\| \bC \|_{\op}}{B}   \| \bA \|_{\op}^2 \, .
\]
By assumption $\| \bA \|_{\op} \leq 1$. Hence, it is sufficient to show
\begin{equation}\label{eq:bound_op_C_I}
\| \bC \|_{\op} \leq \| \bC \|_{1, \infty} =  \max_{\balpha \neq \bbeta } \sum_{\bgamma \neq \bdelta} \Big\vert \E_{\bx} \big[ Y_{\balpha} ( \bx) Y_{\bbeta} ( \bx) Y_{\bgamma} ( \bx) Y_{\bdelta} ( \bx) \big]  \Big\vert = o_d ( B) \, .
\end{equation}

Denote $S_{\balpha} = \{ j \in [d] : \alpha_j > 0 \}$ (similarly $S_{\bbeta}, S_{\bgamma} , S_{\bdelta}$) and $r ( \balpha, \bbeta , \bgamma , \bdelta ) \subseteq [d-1]$ the subset of indices $j \in [d-1]$ such that $j$ belongs to exactly one of the sets $S_{\balpha}, S_{\bbeta}, S_{\bgamma} , S_{\bdelta}$ (e.g., $\alpha_j >0$ and $\beta_j = \gamma_j = \delta_j =0$). In the rest of this step, we fix $\balpha \neq \bbeta$ arbitrary and consider $v ( \bgamma , \bdelta) = r ( \balpha, \bbeta , \bgamma , \bdelta ) \cap (S_{\bgamma} \cup S_{\bdelta})$, i.e., the subset of indices where only $\gamma_j$ or $\delta_j$ are non-zero. For convenience denote $T(\bgamma, \bdelta) := \Big\vert \E_{\bx} \big[ Y_{\balpha} ( \bx) Y_{\bbeta} ( \bx) Y_{\bgamma} ( \bx) Y_{\bdelta} ( \bx) \big]  \Big\vert$. By Lemma \ref{lem:technical_termI_bound},
\begin{equation*}
    \begin{aligned}
     \sum_{\bgamma \neq \bdelta}  T(\bgamma, \bdelta) =&~ \sum_{ u = 0}^{2\ell}  \sum_{S \subseteq [d-1], |S| =u } \sum_{\bgamma \neq \bdelta, v ( \bgamma , \bdelta) = S} T(\bgamma, \bdelta)  \\
     \leq &~ C_\ell \sum_{ u = 0}^{2\ell} \sum_{S \subseteq [d-1], |S| =u } \sum_{\bgamma \neq \bdelta, v ( \bgamma , \bdelta) = S} \prod_{j \in S} \frac{1}{d-j} \, .
    \end{aligned}
\end{equation*}
For fixed $S$, let us bound the number of $\bgamma \neq \bdelta$ such that $v(\bgamma, \bdelta) = S$. If $u >0$, it means that there are at most $\ell -u$ other coordinates $j \in [d-1]$ where $\gamma_j >0$, and $\ell-u$ coordinates where $\delta_j >0$, and either both $\delta_j, \gamma_j >0$ or $j \in S_{\balpha} \cup S_{\bbeta}$: we deduce that there is at most $d^{\ell - u}$ ways of choosing coordinates for $S_{\bgamma} \Delta S_{\bdelta} \setminus (S_{\balpha} \cup S_{\bbeta})$, and then at most $(4\ell)^{2\ell}$ ways of choosing $\bgamma,\bdelta$ compatible with this support. For $u = 0$, because $\bgamma \neq \bdelta$, we can't have $S_{\bgamma} \cup S_{\bdelta} \setminus (S_{\balpha} \cup S_{\bbeta}) = \ell$, hence again there is at most $O(d^{\ell-1})$ such $\bgamma, \bdelta$. We deduce that there exists a constant $C_\ell '$ such that 
\begin{equation}\label{eq:termI_bbd}
    \begin{aligned}
     \sum_{\bgamma \neq \bdelta}  T(\bgamma, \bdelta) \leq &~ C_\ell' d^{\ell-1} \sum_{ u = 0}^{2\ell} \sum_{S \subseteq [d-1], |S| =u }  \prod_{j \in S} \frac{1}{d-j} \\ 
     \leq &~ C_\ell' d^{\ell-1} \sum_{ u = 0}^{2\ell} \left( \sum_{j \in [d-1]} \frac{1}{d-j} \right)^u \\
     \leq &~ \tC_{\ell} d^{\ell-1} \log (d)^{2 \ell} \, .
    \end{aligned}
\end{equation}
Using that $B = \Theta (d^{\ell})$ and that the inequality \eqref{eq:termI_bbd} is uniform over $\balpha,\bbeta$, the bound $\| \bC \|_{\op} = o_d (B)$ result from Eq.~\eqref{eq:bound_op_C_I}.

\noindent
{\bf Step 2. Bounding term $({\rm II})$.}

We introduce a new symmetric matrix $\tbC \in \R^{B \times B}$ such that for any $\balpha, \bgamma \in \cA_{d,\ell}$,
\[
\tbC_{\balpha, \bbeta} = \E_{\bx} \big[ Y_{\balpha} (\bx)^2 Y_{\bbeta} (\bx)^2 \big] - 1 \, .
\]
Denote $\tba \in \R^B$ the vector that contains all the diagonal elements of $\bA$, i.e., $\tba = ( \bA_{\balpha,\balpha} )_{\balpha \in \cA_{d,\ell}}$. Similarly to the previous step, we bound the second term by
\[
({\rm II}) = \frac{1}{B^2} \big\vert \tba^\sT \tbC \tba \big\vert \leq \frac{1}{B^2}  \| \tbC \|_{\op} \| \tba \|_2^2 \leq \frac{ \| \tbC \|_{\op}}{B} \| \bA \|_{\op}^2 \, ,
\]
where we used that $\| \tba \|_2^2  \leq \tr ( \bA^\sT \bA ) \leq B \| \bA \|_{\op}^2$. Again, it is sufficient to show
\begin{equation}\label{eq:bound_op_C_II}
\| \tbC \|_{\op} \leq \| \tbC \|_{1, \infty} =  \max_{\balpha } \sum_{\bbeta } \Big\vert \E_{\bx} \big[ Y_{\balpha} ( \bx)^2 Y_{\bbeta} ( \bx)^2 \big]  \Big\vert = o_d ( B) \, .
\end{equation}
Let us fix $\balpha$ arbitrary, and denote $\cU_{\balpha} \subseteq \cA_{d,\ell}$ the set
\[
\cU_{\balpha} = \Big\{ \bbeta \in \cA_{d,\ell}: \text{ either }S_{\bbeta}\cup S_{\balpha} \neq \emptyset \text{ or } \max\{ j : j \in S_{\bbeta} \} \geq d - \sqrt{d}  \Big\} \, .
\]
One can easily check that $| \cU_{\balpha} | = O_d (d^{\ell - 1/2} )$. Using Lemma \ref{lem:technical_termI_bound} (which shows that terms in $\cU_{\balpha}$ are bounded by a constant), and Lemma \ref{lem:technical_termI_bound} (to bound the terms $\bbeta \not\in \cU_{\balpha}$), we get
\begin{equation}
\begin{aligned}
    \sum_{\bbeta } \Big\vert \E_{\bx} \big[ Y_{\balpha} ( \bx)^2 Y_{\bbeta} ( \bx)^2 \big]  = &~ \sum_{\bbeta \in \cU_{\balpha} } \Big\vert \E_{\bx} \big[ Y_{\balpha} ( \bx)^2 Y_{\bbeta} ( \bx)^2 \big] + \sum_{\bbeta \not\in \cU_{\balpha} } \Big\vert \E_{\bx} \big[ Y_{\balpha} ( \bx)^2 Y_{\bbeta} ( \bx)^2 \big] \\
    =&~ O_d ( d^{\ell - 1/2}) \, .
    \end{aligned}
\end{equation}
Noting that this bound is uniform over $\balpha$ and that $B = \Theta (d^{\ell})$ concludes the proof.

\subsection{Technical lemmas}
\label{sec:technical_lemmas}

We first prove the following useful bound on the expectation of Gegenbauer polynomials $Q_{\alpha}^{(k)}$ over input $\bx \sim \Unif (\S^{d-1} )$ with mismatched dimension $k \neq d$.

\begin{lemma}\label{lem:mismatched_dimension}
Fix $\ell \in \naturals$. Consider integers $d,k_1,k_2, \alpha$ such that $k_1 , k_2 \leq 4 \ell$ and $\alpha>0$, and $\bx \sim \Unif (\S^{d-1} )$. There exists a universal constant $C_{\ell}>0$ such that 
\begin{align}
    \Big\vert \E_{\bx} \Big[ \tQ^{(d+k_1)}_{\alpha} (x_1 ) \big\{ 1 - x_1^2 \big\}^{k_2/2} \Big] \Big\vert \leq&~ \frac{C_{\ell}}{d}\, , \\
    \Big\vert \E_{\bx} \Big[ \tQ^{(d)}_{\alpha} (x_1 )^2 \big\{ 1 - x_1^2 \big\}^{k_2/2} \Big]  - 1\Big\vert \leq&~ C_\ell \frac{k_2}{d}\, .
\end{align}
\end{lemma}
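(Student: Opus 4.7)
Both bounds reduce to controlling integrals against the marginal density $p_d(x) = c_d(1-x^2)^{(d-3)/2}$ of $x_1$ under $\bx \sim \Unif(\S^{d-1})$, where $c_d = \Gamma(d/2)/(\sqrt{\pi}\,\Gamma((d-1)/2))$. The change of measure
\begin{equation*}
    \E_{\bx \sim \Unif(\S^{d-1})}\big[g(x_1)(1-x_1^2)^{s/2}\big] = \frac{c_d}{c_{d+s}}\,\E_{\bx' \sim \Unif(\S^{d+s-1})}[g(x_1')]
\end{equation*}
absorbs any $(1-x^2)^{s/2}$ factor into a sphere-dimension shift, and Stirling gives $c_d/c_{d+s} = 1 + O(s/d)$ uniformly in $|s| \leq C\ell$. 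The second key tool is the three-term recurrence for the normalized Gegenbauer polynomials on $\S^{d-1}$, $x\tQ_\beta^{(d)}(x) = A_\beta^{(d)}\tQ_{\beta+1}^{(d)}(x) + B_\beta^{(d)}\tQ_{\beta-1}^{(d)}(x)$, whose explicit form $A_\beta^{(d)} = \sqrt{(\beta+1)(\beta+d-2)/[(2\beta+d-2)(2\beta+d)]}$ (and the analogous formula for $B_\beta^{(d)}$) yields $A_\beta^{(d)}, B_\beta^{(d)} = O(d^{-1/2})$ uniformly in $\beta \leq C\ell$. Iterating the recurrence produces the two moment estimates $\E_\bx[\tQ_\alpha^{(d)}(x_1)^2 x_1^{2j}] = O(d^{-j})$ and $\E_\bx[\tQ_\alpha^{(d)}(x_1)\, x_1^{2j}] = O(d^{-j})$ for $\alpha,j \leq C\ell$, the latter vanishing by parity for odd $\alpha$ and by orthogonality for $\alpha > 2j$.

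\textbf{Proof of part 2.} Subtracting $1 = \E_\bx[\tQ_\alpha^{(d)}(x_1)^2]$ from both sides of the claim,
\begin{equation*}
    \E_\bx[\tQ_\alpha^{(d)}(x_1)^2(1-x_1^2)^{k_2/2}] - 1 = \E_\bx\big[\tQ_\alpha^{(d)}(x_1)^2\big\{(1-x_1^2)^{k_2/2}-1\big\}\big].
\end{equation*}
Expand $(1-x_1^2)^{k_2/2} - 1 = \sum_{j\geq 1}\binom{k_2/2}{j}(-x_1^2)^j$ uniformly on the central region $|x_1|\leq 1/2$, integrate term by term, and apply the first moment bound above. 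The leading $j=1$ contribution is $-(k_2/2)\cdot O(1/d) = O(k_2/d)$, and higher-order terms are $O((k_2/d)^j)$, summing to the same order for $d$ large. The tail $|x_1|>1/2$ contributes at most $e^{-\Omega(d)}$ via sphere concentration $\P(|x_1|>1/2) \leq 2e^{-cd}$ combined with a crude pointwise bound on $\tQ_\alpha^{(d)}$; this is negligible compared to $C_\ell k_2/d$.

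\textbf{Proof of part 1.} Applying the change of measure twice, once to absorb $(1-x_1^2)^{k_2/2}$ and once to map to $\S^{d+k_1-1}$,
\begin{equation*}
    \E_\bx\big[\tQ_\alpha^{(d+k_1)}(x_1)(1-x_1^2)^{k_2/2}\big] = \frac{c_d}{c_{d+k_1}}\,\E_{\bx''\sim\Unif(\S^{d+k_1-1})}\big[\tQ_\alpha^{(d+k_1)}(x_1'')(1-(x_1'')^2)^{(k_2-k_1)/2}\big],
\end{equation*}
on which $\tQ_\alpha^{(d+k_1)}$ is $L^2$-orthonormal and, since $\alpha \geq 1$, orthogonal to constants. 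Expand the weight binomially on $|x_1''|\leq 1/2$; the constant term vanishes by orthogonality, and the remaining series $\sum_{j\geq 1}\binom{(k_2-k_1)/2}{j}(-1)^j\E_{\bx''}[\tQ_\alpha^{(d+k_1)}(x_1'')(x_1'')^{2j}]$ is identically zero for odd $\alpha$ (parity) and, for even $\alpha\geq 2$, is a sum of terms each bounded by $O_\ell(1)\cdot O(d^{-j})$ by the second moment bound, with first nonzero contribution at $j = \alpha/2 \geq 1$ of order $O(d^{-\alpha/2}) \leq O(d^{-1})$. The tail $|x_1''|>1/2$ is again exponentially negligible.

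\textbf{Main obstacle.} The only non-routine ingredient is the uniform-in-$\beta$ scaling $A_\beta^{(d)}, B_\beta^{(d)} = O(d^{-1/2})$ of the normalized recurrence, which follows from a direct Stirling computation on the explicit formulas. Beyond that, the argument is a combination of binomial expansion, orthogonality, and iteration of the recurrence. The one subtle technical point is handling the region $|x_1|$ near $1$ where the binomial series may fail to converge (in particular when $k_2 < k_1$ in part~1, producing a negative exponent); this is controlled by Cauchy--Schwarz combined with exponential sphere concentration of the first coordinate.
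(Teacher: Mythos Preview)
Your argument is correct and leads to the same conclusion, but it takes a different route from the paper. Both begin with the same change of measure $\E_{\S^{d-1}}[g(x_1)(1-x_1^2)^{s/2}] = (c_d/c_{d+s})\,\E_{\S^{d+s-1}}[g(x_1')]$ to reduce Part~1 to an expectation on $\S^{d+k_1-1}$, and both subtract the constant term using orthogonality of $\tQ_\alpha$ to $1$. From there the paper applies Cauchy--Schwarz once,
\[
\Big|\E\big[\tQ_\alpha(x_1)\{(1-x_1^2)^{(k_2-k_1)/2}-1\}\big]\Big|^2 \le \E[\tQ_\alpha^2]\cdot\E\big[\{(1-x_1^2)^{(k_2-k_1)/2}-1\}^2\big],
\]
and bounds the second factor by $C\E[x_1^4]=O(d^{-2})$ on the bulk $|x_1|\le 1/2$ plus an exponentially small tail. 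For Part~2 the paper uses the same Cauchy--Schwarz step, now with $\|\tQ_\alpha\|_{L^4}^2$ controlled by hypercontractivity. Your approach instead expands $(1-x_1^2)^{s}-1$ binomially and evaluates the resulting mixed moments $\E[\tQ_\alpha(x_1)^p x_1^{2j}]$ (for $p=1,2$) via iteration of the three-term recurrence, using the explicit scaling $A_\beta^{(d)},B_\beta^{(d)}=O(d^{-1/2})$.

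What each buys: the paper's argument is shorter and, for Part~1, is uniform in $\alpha$ (only the $L^2$ normalization of $\tQ_\alpha$ enters). Your recurrence-based moment bounds, by contrast, need $\alpha$ (and the path length $\beta$) to stay $O_\ell(1)$ for the $O(d^{-1/2})$ scaling of the recurrence coefficients to hold; this is harmless in the paper's application (where $\alpha_j\le\ell$ throughout), and in fact the paper's own Part~2 argument carries the same implicit restriction through the hypercontractivity constant $3^\alpha$. On the other hand, your approach avoids hypercontractivity entirely and, for even $\alpha\ge 2$ in Part~1, extracts the sharper bound $O(d^{-\alpha/2})$ rather than $O(d^{-1})$. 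The bookkeeping with the binomial series on the bulk versus the tail (especially when $k_2<k_1$ gives a negative exponent) is slightly more delicate in your version, but you have identified the correct remedy.
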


\begin{proof}[Proof of Lemma \ref{lem:mismatched_dimension}]
Let us write explicitly the expectation:
\[
\begin{aligned}
&~ \E_{\bx} \Big[ \tQ^{(d+k_1)}_{\alpha} (x_1 ) \big\{ 1 - x_1^2 \big\}^{k_2/2} \Big]\\
=&~ \frac{\Gamma(d/2)}{\sqrt{\pi} \Gamma ((d-1)/2)} \int_{-1}^{+1} \tQ_\alpha^{(d+k_1)} (x_1) \big\{ 1 - x_1^2 \big\}^{(k_2 + d - 3)/2} \de x_1 \\
= &~\frac{\Gamma(d/2) \Gamma((k_1 + d - 1)/2)}{ \Gamma((k_1 + d)/2) \Gamma ((d-1)/2)} \E_{\bx \sim \Unif ( \S^{k_1 + d -1} )} \Big[ \tQ^{(d+k_1)}_{\alpha} (x_1 )  \Big\{ (1 - x_1^2 )^{(k_2 - k_1)/2} - 1 \Big\}  \Big] \, ,
\end{aligned}
\]
where we used in the last line that $Q_{\alpha}^{(d+k_1)}$ is orthogonal to the constant polynomial. By assumption, $k_1 \leq 4\ell$ and the multiplicative factor can be bounded by a constant independent of $d$. 

Let us bound the expectation. First, note that we can assume $d $ sufficiently large (at the price of taking a larger but still constant $C_{\ell}$). By Cauchy-Schwarz inequality, (here $\bx \sim \Unif ( \S^{k_1 + d -1} )$)
\[
\begin{aligned}
&~ \Big\vert \E_{\bx} \Big[ \tQ^{(d+k_1)}_{\alpha} (x_1 )  \Big\{ (1 - x_1^2 )^{(k_2 - k_1)/2} - 1 \Big\}  \Big]^2 \Big\vert \\
\leq&~ \E_{\bx} \Big[ \tQ^{(d+k_1)}_{\alpha} (x_1 )^2    \Big] \E_{\bx} \Big[  \Big\{ (1 - x_1^2 )^{(k_2 - k_1)/2} - 1 \Big\}^2  \Big] \\
= &~ \E_{\bx} \Big[  \Big\{ (1 - x_1^2 )^{(k_2 - k_1)/2} - 1 \Big\}^2  \ind_{|x_1| \leq 1/2} \Big] + \E_{\bx} \Big[  \Big\{ (1 - x_1^2 )^{(k_2 - k_1)/2} - 1 \Big\}^2  \ind_{|x_1| >1/ 2} \Big] \\
\leq&~ C \E_{\bx} [ x_1^4] + C (3/4)^{d/2} \leq C/d^2 \, .
\end{aligned}
\]
The second inequality can be obtained following similar argument. First notice that for $k_2 = 0$, we have equality by normalization of Gegenbauer polynomials. For $k_2 >0$, we can absorb the dependency on $k_2$ of the right-hand-side by multiplying $C_\ell$ by $4\ell$. Therefore we have
\begin{equation}
\begin{aligned}
&~ \Big\vert \E_{\bx} \Big[ \tQ^{(d)}_{\alpha} (x_1 )^2 \big\{ 1 - x_1^2 \big\}^{k_2/2} \Big]  - 1\Big\vert^2 \\
=&~  \Big\vert \E_{\bx} \Big[ \tQ^{(d)}_{\alpha} (x_1 )^2 \Big\{ (1 - x_1^2 )^{k_2/2} - 1 \Big\}\Big]  \Big\vert^2 \\
 \leq&~ \|  \tQ^{(d)}_{\alpha} \|_{L^4}^2  \E_{\bx} \Big[  \Big\{ (1 - x_1^2 )^{k_2/2} - 1 \Big\}^2\Big] \leq C \E_{\bx} [x_1^4] \leq \frac{C}{d^2}\, ,
\end{aligned}
\end{equation}
where we used hypercontractivity on the sphere to bound $\|  \tQ^{(d)}_{\alpha} \|_{L^4}^2$ and that $\big\{ (1 - x_1^2 )^{k_2/2} - 1 \big\}^2  \leq C x_1^4$ for some constant $C$.
\end{proof}

\begin{lemma}\label{lem:technical_termI_bound}
For $ \balpha, \bbeta , \bgamma , \bdelta \in \cA_{d,\ell}$, denote $r (\balpha, \bbeta , \bgamma , \bdelta ) \subseteq [d-1]$ the subset of indices $j \in [d-1]$ such that only one of the $\alpha_j, \beta_j, \gamma_j , \delta_j$ is non-zero. There exists a universal constant $C_{\ell} >0$ such that for any $ \balpha, \bbeta , \bgamma , \bdelta \in \cA_{d,\ell}$, 
\begin{equation}\label{eq:bound_prod_Y}
    \Big\vert \E_{\bx} \big[ Y_{\balpha} ( \bx) Y_{\bbeta} ( \bx) Y_{\bgamma} ( \bx) Y_{\bdelta} ( \bx) \big]  \Big\vert \leq C_{\ell} \prod_{j \in  r (\balpha, \bbeta , \bgamma , \bdelta )} \frac{1}{d-j }\, .
\end{equation}
\end{lemma}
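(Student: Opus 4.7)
The plan is to exploit the explicit representation of spherical harmonics from Proposition~\ref{prop:spherical_representation} and factorize the expectation over the independent spherical coordinates $\theta_1, \ldots, \theta_{d-1}$. Since the uniform measure on $\S^{d-1}$ decomposes as $\frac{\Gamma(d/2)}{2\pi^{d/2}}\prod_j \sin(\theta_{d-j})^{d-j-1}\de\theta_{d-j}$, and each $Y_\xi$ factors as $g_\xi(\theta_1) \prod_j \tQ^{(d_j^\xi)}_{\xi_j}(\cos\theta_{d-j})\sin(\theta_{d-j})^{|\xi^{j+1}|}$ with $d_j^\xi = 2|\xi^{j+1}|+d-j+1$, the expectation becomes
\begin{equation*}
\E_\bx[Y_\balpha Y_\bbeta Y_\bgamma Y_\bdelta] = (C_\balpha C_\bbeta C_\bgamma C_\bdelta)^{1/2} \cdot \frac{\Gamma(d/2)}{2\pi^{d/2}} \cdot I_0 \cdot \prod_{j=1}^{d-2} J_j\, ,
\end{equation*}
where $I_0 = \int_0^{2\pi} g_\balpha g_\bbeta g_\bgamma g_\bdelta\, \de\theta_1$ is $O(1)$ and $J_j = \int_0^\pi \prod_\xi \tQ^{(d_j^\xi)}_{\xi_j}(\cos\theta)\sin(\theta)^{d-j-1+\sum_\xi |\xi^{j+1}|} \de\theta$.

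I would then classify each coordinate $j \in [d-2]$ by the multiplicity $m_j := |\{\xi : \xi_j > 0\}|$. Since each multi-index sums to $\ell$, we have $\sum_j m_j \leq 4\ell$, and hence at most $4\ell$ coordinates satisfy $m_j \geq 1$; the set $r(\balpha,\bbeta,\bgamma,\bdelta)$ is precisely the set with $m_j = 1$. For $m_j = 0$ the four Gegenbauer factors all equal $1$, and $J_j$ reduces to an explicit Beta integral that cancels the corresponding measure prefactor. For $m_j = 1$, say only $\alpha_j > 0$, the change of variable $u = \cos\theta$ identifies $J_j$ (up to a ratio of Gamma functions) with $\E_{\Unif(\S^{d-j})}[\tQ^{(d-j+1+2|\alpha^{j+1}|)}_{\alpha_j}(x_1)(1-x_1^2)^{k_2/2}]$ for $k_2 = |\beta^{j+1}|+|\gamma^{j+1}|+|\delta^{j+1}| \leq 3\ell$, and Lemma~\ref{lem:mismatched_dimension} yields the target factor $1/(d-j)$. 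For $m_j \geq 2$, I would bound $J_j$ (still up to the same Gamma ratio) by a constant using Cauchy--Schwarz together with hypercontractivity on the sphere, $\|\tQ^{(d')}_k\|_{L^4(\Unif(\S^{d'-1}))} \leq C_\ell$, plus the trivial bound $\sin\theta \leq 1$ to absorb any mismatch between the natural weight $(1-u^2)^{(d'-3)/2}$ and the actual integrand.

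The main obstacle is the book-keeping of Gamma-function normalizations: the product $(C_\balpha C_\bbeta C_\bgamma C_\bdelta)^{1/2}$, the overall prefactor $\Gamma(d/2)/(2\pi^{d/2})$, and the Gamma ratios extracted when casting each $J_j$ in the standard form of Lemma~\ref{lem:mismatched_dimension} must telescope into an $\ell$-dependent constant. The cleanest way to organize this is to process the coordinates from $j = d-2$ down to $j = 1$, absorbing at each step the residual Gamma ratio into the ``effective dimension'' of an auxiliary lower-dimensional sphere; consistency of this recursion is guaranteed by the degenerate check $\E[Y_\balpha^4] \leq C_\ell$ coming from hypercontractivity. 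One minor extension is also required: Lemma~\ref{lem:mismatched_dimension} is stated for nonnegative dimension shifts $k_1, k_2$, but its proof (which uses only orthogonality of $\tQ^{(\cdot)}_\alpha$ to the constant function together with Cauchy--Schwarz) goes through verbatim whenever $|k_1|, |k_2| \leq 4\ell$, which is all that is needed here.
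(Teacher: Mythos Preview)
Your approach---factorizing the expectation over the spherical coordinates $\theta_1,\dots,\theta_{d-1}$, classifying each coordinate by the multiplicity $m_j$, invoking Lemma~\ref{lem:mismatched_dimension} at the coordinates with $m_j=1$, and using hypercontractivity at the others---is exactly the paper's strategy. The difference is in how the Gamma-function bookkeeping you flag as the ``main obstacle'' is organized. Rather than tracking the normalizations $C_\balpha,C_\bbeta,C_\bgamma,C_\bdelta$ and the measure prefactor through an explicit coordinate-by-coordinate recursion, the paper first records the global H\"older/hypercontractivity bound $c_\ell := \|Y_\balpha\|_{L^4}\|Y_\bbeta\|_{L^4}\|Y_\bgamma\|_{L^4}\|Y_\bdelta\|_{L^4} \le 3^{2\ell}$ and then writes
\[
\big|\E_\bx[Y_\balpha Y_\bbeta Y_\bgamma Y_\bdelta]\big| \;\le\; c_\ell \prod_{j\in r(\balpha,\bbeta,\bgamma,\bdelta)} R_j\, ,
\]
where $R_j$ is the \emph{ratio} of the actual coordinate-$j$ integral to its coordinate-wise H\"older bound. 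Since numerator and denominator of each $R_j$ carry the same marginal measure of $\cos\theta_{d-j}$, all Gamma factors cancel inside the ratio, and one is left with showing $R_j \le C'_\ell/(d-j)$: the numerator is bounded by Lemma~\ref{lem:mismatched_dimension}, and the four $L^4$ factors in the denominator are bounded below by elementary Beta-integral estimates. This sidesteps your recursion entirely and turns the heuristic consistency check $\E[Y_\balpha^4]\le C_\ell$ into the actual mechanism of the proof. Finally, your proposed extension of Lemma~\ref{lem:mismatched_dimension} to negative shifts is not needed: in the $m_j=1$ case the relevant parameters are $k_1 = 2|\alpha^{j+1}| \in [0,2\ell]$ and $k_2 = \sum_{\xi} |\xi^{j+1}| \in [0,4\ell]$, both nonnegative.
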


\begin{proof}[Proof of Lemma \ref{lem:technical_termI_bound}]
First note that by H\"older inequality followed by hypercontractivity on the sphere (Lemma \ref{lem:hypercontractivity_sphere}),
\begin{equation}\label{eq:bound_4Y_hyper}
\begin{aligned}
\Big\vert \E_{\bx} \big[ Y_{\balpha} ( \bx) Y_{\bbeta} ( \bx) Y_{\bgamma} ( \bx) Y_{\bdelta} ( \bx) \big]  \Big\vert \leq&~ \| Y_{\balpha} \|_{L^4} \| Y_{\bbeta} \|_{L^4}  \| Y_{\bgamma} \|_{L^4}  \| Y_{\bdelta} \|_{L^4}  \\
\leq&~ 3^{2\ell} \| Y_{\balpha} \|_{L^2} \| Y_{\bbeta} \|_{L^2}  \| Y_{\bgamma} \|_{L^2}  \| Y_{\bdelta} \|_{L^2} = 3^{2\ell}  =: c_{\ell} \,.
\end{aligned}
\end{equation}
Consider the representation \eqref{eq:spherical_harmonics_explicit}. If $d-1 \in r (\balpha, \bbeta , \bgamma , \bdelta )$, then the expectation \eqref{eq:bound_prod_Y} is simply $0$. Assume that $d-1 \not\in r (\balpha, \bbeta , \bgamma , \bdelta )$, then from the bound \eqref{eq:bound_4Y_hyper}, we can decompose 
\begin{equation}\label{eq:prod4Y_Rj}
\Big\vert \E_{\bx} \big[ Y_{\balpha} ( \bx) Y_{\bbeta} ( \bx) Y_{\bgamma} ( \bx) Y_{\bdelta} ( \bx) \big]  \Big\vert  \leq c_{\ell} \cdot \prod_{ j \in r (\balpha, \bbeta , \bgamma , \bdelta )} R_j  (\balpha, \bbeta , \bgamma , \bdelta )\, ,
\end{equation}
where 
\begin{equation}\label{eq:Rj_def}
\begin{aligned}
R_j  (\balpha, \bbeta , \bgamma , \bdelta ) = \frac{\Big\vert \E_{\bz^{(j)}} \Big[ \prod_{\btheta \in \{ \balpha, \bbeta , \bgamma , \bdelta \} }  \tQ_{\theta_j}^{(d^\theta_j)} (z^{(j)}_1)  \big\{ 1 - (z^{(j)}_1 )^2\big\}^{|\theta^{j+1}|/2 } \Big] \Big\vert }{ \prod_{\btheta \in \{ \balpha, \bbeta , \bgamma , \bdelta \} } \E_{\bz^{(j)}} \Big[ \tQ_{\theta_j}^{(d^\theta_j)} (z^{(j)}_1)^4  \big\{ 1 - (z^{(j)}_1 )^2\big\}^{2|\theta^{j+1}| } \Big]^{1/4} } \, ,
\end{aligned}
\end{equation}
with $\bz^{(j)} \sim \Unif ( \S^{d-j} )$. Without loss of generality, assume that $\alpha_j > 0 $ (hence, $\beta_j = \gamma_j = \delta_j = 0$). For $\btheta \neq \balpha$, we have
\begin{equation}\label{eq:Rj_bound_1}
\begin{aligned}
\E_{\bz^{(j)}} \Big[  \big\{ 1 - (z^{(j)}_1 )^2\big\}^{2|\theta^{j+1}| } \Big] =&~ \frac{\Gamma ((d-j+1)/2) }{\sqrt{\pi}\Gamma ((d-j)/2)} \int_{-1}^1 (1 - x_1^2 )^{(4 | \theta^{j+1} | + d - j -2 )/2 } \de x_1 \\
=&~ \frac{\Gamma ((d-j+1)/2) }{\Gamma ((d-j)/2)} \cdot \frac{\Gamma ((4 | \theta^{j+1} | + d-j)/2) }{\Gamma ((4 | \theta^{j+1} | + d-j+1)/2)}\\
=&~ \prod_{0 \leq k \leq 2 | \btheta^{j+1} | -1} \frac{d- j + k}{d- j +k +1/2}  \geq (2/3)^{2\ell}\, ,
\end{aligned}
\end{equation}
where we used that $ | \btheta^{j+1} | \leq \ell$.
Consider now when $\btheta = \balpha$. First, the denominator is lower bounded by
\begin{equation}\label{eq:Rj_bound_2}
\begin{aligned}
    &~ \E_{\bz^{(j)}} \Big[ \tQ_{\alpha_j}^{(d^\alpha_j)} (z^{(j)}_1)^4  \big\{ 1 - (z^{(j)}_1 )^2\big\}^{2|\alpha^{j+1}| } \Big]^{1/2} \\
    \geq&~ \E_{\bz^{(j)}} \Big[ \tQ_{\alpha_j}^{(d^\alpha_j)} (z^{(j)}_1)^2  \big\{ 1 - (z^{(j)}_1 )^2\big\}^{|\alpha^{j+1}| } \Big] \\
     =&~ \frac{\Gamma ((d-j+1)/2) }{\sqrt{\pi}\Gamma ((d-j)/2)} \int_{-1}^1 \tQ_{\alpha_j}^{(d^\alpha_j)} (x_1)^2 (1 - x_1^2 )^{(d_j^\alpha - 3 )/2 } \de x_1 \\
     =&~  \frac{\Gamma ((d-j+1)/2) }{\Gamma ((d-j)/2)} \cdot \frac{\Gamma ((d_j^\alpha - 1)/2) }{\Gamma (d_j^\alpha/2)} \cdot \E_{\bx \sim \Unif (\S^{d_j^\alpha - 1})} \big[ \tQ_{\alpha_j}^{(d^\alpha_j)} (x_1)^2 \big] \\
     =&~ \prod_{0 \leq k \leq 2 | \alpha^{j+1} | -1} \frac{d- j + k}{d- j +k +1/2}  \geq (2/3)^{2\ell} \, ,
     \end{aligned}
\end{equation}
while the numerator is upper bounded using Lemma \ref{lem:mismatched_dimension} and that $| \alpha^{j+1} |, \ldots , | \delta^{j+1} | \leq \ell$,
\begin{equation}\label{eq:Rj_bound_3}
\begin{aligned}
\Big\vert \E_{\bz^{(j)}} \Big[ \tQ_{\theta_j}^{(d^\alpha_j)} (z^{(j)}_1)  \big\{ 1 - (z^{(j)}_1 )^2\big\}^{(|\alpha^{j+1}|+m)/2 } \Big] \Big\vert  \leq \frac{\tC_{\ell}}{d-j+1}\, .
\end{aligned}
\end{equation}
Combining bounds \eqref{eq:Rj_bound_1}, \eqref{eq:Rj_bound_2} and \eqref{eq:Rj_bound_3} in the definition \eqref{eq:Rj_def} of $R_j$ yields $R_j  (\balpha, \bbeta , \bgamma , \bdelta )  \leq C_{\ell}' / (d-j+1)$. Noting that $| r (\balpha, \bbeta , \bgamma , \bdelta ) | \leq 4\ell $, we conclude by using this inequality in \eqref{eq:prod4Y_Rj}.
\end{proof}

\begin{lemma}\label{lem:technical_termII_bound}
Recall that for $ \balpha\in \cA_{d,\ell}$, we denote $S_{\balpha} = \{ j \in [d-1]: \alpha_j >0\}$. There exists a universal constant $C_{\ell} >0$ such that for any $ \balpha, \bbeta  \in \cA_{d,\ell}$ with $S_{\balpha} \cap S_{\bbeta} = \emptyset$ and $\max \{ j : j \in S_{\bbeta} \} \leq d - \sqrt{d}$, 
\begin{equation}\label{eq:bound_prod_YII}
    \Big\vert \E_{\bx} \big[ Y_{\balpha} ( \bx)^2 Y_{\bbeta} ( \bx)^2 \big]  - 1 \Big\vert \leq \frac{C_{\ell}}{\sqrt{d}} \, . 
\end{equation}
\end{lemma}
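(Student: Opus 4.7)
The plan is to mimic the factorization strategy of Lemma~\ref{lem:technical_termI_bound}, but to exploit the disjoint-support hypothesis $S_\balpha \cap S_\bbeta = \emptyset$ and the $L^2$-normalization $\E[Y_\balpha^2] = \E[Y_\bbeta^2] = 1$ in order to express the deviation $\E[Y_\balpha^2 Y_\bbeta^2] - 1$ as a product of \emph{local} one-angle ratios. Using the explicit representation from Proposition~\ref{prop:spherical_representation} and the fact that the spherical angles $(\theta_1,\theta_2,\ldots,\theta_{d-1})$ are independent under $\mu$, I would first write
\[
\E\big[Y_\balpha^2 Y_\bbeta^2\big] \,=\, \rho_{d-1} \cdot \prod_{j=1}^{d-2} \rho_j,
\]
where for $j \in [1,d-2]$ the ratio $\rho_j$ compares the joint expectation over $\theta_{d-j}$ to the product of the $\balpha$-only and $\bbeta$-only marginal expectations at that angle, and $\rho_{d-1}$ is the analogous ratio in $\theta_1$; the normalization $\E[Y_\balpha^2]=1$ cancels the prefactors $C_\balpha,C_\bbeta$ together with all one-variable marginals.

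The key structural remark is that $\rho_j = 1$ as soon as either the $\balpha$-factor $\tQ_{\alpha_j}^{(d_j^\alpha)}(\cos\theta)^2\sin(\theta)^{2|\alpha^{j+1}|}$ or its $\bbeta$-analogue is identically $1$. The $\balpha$-factor is trivial iff $\alpha_j=0$ and $|\alpha^{j+1}|=0$, i.e.\ iff $j\geq \max S_\balpha$; similarly for $\bbeta$. Hence $\rho_j\neq 1$ forces $j \leq \min(\max S_\balpha,\max S_\bbeta) \leq \max S_\bbeta \leq d-\sqrt{d}$, so $d-j\geq\sqrt{d}$ at every ``active'' coordinate---this is exactly where the hypothesis $\max S_\bbeta\leq d-\sqrt{d}$ enters. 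For $\rho_{d-1}$, the disjoint supports forbid $\alpha_{d-1}$ and $\beta_{d-1}$ from both being positive, so orthogonality of $\{\cos(n\theta_1),\sin(n\theta_1)\}$ in $L^2([0,2\pi])$ yields $\rho_{d-1}=1$ exactly.

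For an active $j\in[1,d-2]$, the disjoint-support hypothesis splits into two cases. In Case~A ($\alpha_j=\beta_j=0$, with $a:=|\alpha^{j+1}|$ and $b:=|\beta^{j+1}|$ both positive) one has $\rho_j = \E[\sin^{2(a+b)}\theta]/(\E[\sin^{2a}\theta]\,\E[\sin^{2b}\theta])$ under the density $\propto \sin^{d-j-1}(\theta)\de\theta$, and a second-order expansion via $\int_0^\pi \sin^N\de\theta = \sqrt{\pi}\,\Gamma(\tfrac{N+1}{2})/\Gamma(\tfrac{N}{2}+1)$ gives $\E[\sin^{2k}\theta] = 1 - k/(d-j) + O_\ell((d-j)^{-2})$; the leading $O(1/(d-j))$ corrections cancel in the ratio, leaving $|\rho_j - 1| = O_\ell((d-j)^{-2})$. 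In Case~B (exactly one of $\alpha_j,\beta_j$ positive, say $\alpha_j>0$), a change of measure writes $\rho_j$ as a gamma-function ratio (again $1+O_\ell((d-j)^{-2})$) times $\E_{\bx\sim\Unif(\S^{d_j^\alpha+2b-1})}[\tQ_{\alpha_j}^{(d_j^\alpha)}(x_1)^2]$; I would then expand $\tQ_{\alpha_j}^{(d_j^\alpha)}(x_1)^2-1$ in the Gegenbauer basis $\{\tQ_k^{(d_j^\alpha)}\}_{k\geq 1}$ (finitely many terms, coefficients bounded uniformly in $d$ via hypercontractivity and Cauchy--Schwarz) and apply Lemma~\ref{lem:mismatched_dimension}(a) with $k_1=0$ and $k_2=2b$ after the dual change of variables back to $\S^{d_j^\alpha-1}$, obtaining $|\rho_j-1| = O_\ell(1/(d-j))$. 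The number of Case~B coordinates is at most $|S_\balpha\cup S_\bbeta|\leq 2\ell$.

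Finally I would sum the two estimates. Since $d-j\geq\sqrt{d}$ at every active $j$, Case~A contributes $\sum_{d-j\geq\sqrt{d}}O_\ell((d-j)^{-2}) = O_\ell(1/\sqrt{d})$ by tail comparison with $\sum_{k\geq\sqrt{d}}k^{-2}$, and Case~B contributes $2\ell\cdot O_\ell(1/\sqrt{d}) = O_\ell(1/\sqrt{d})$; then the elementary bound $|\prod_j\rho_j - 1|\leq \exp(\sum_j|\rho_j-1|)-1$ gives $|\E[Y_\balpha^2 Y_\bbeta^2]-1| = O_\ell(1/\sqrt{d})$. I expect the main technical obstacle to be Case~B: one must carry out the gamma-ratio accounting precisely enough to see the $O((d-j)^{-2})$ cancellation so that it does not dominate, and then handle the dimension-mismatched integral $\E_{\S^{d_j^\alpha+2b-1}}[\tQ_{\alpha_j}^{(d_j^\alpha)}(x_1)^2]$ via the change-of-measure-plus-Gegenbauer-expansion trick, which is slightly delicate because the dimension mismatch goes in the opposite direction from the setup of Lemma~\ref{lem:mismatched_dimension}. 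The Case~A cancellation of the $1/(d-j)$ term is equally essential: without it, the sum over the up-to-$(d-\sqrt{d})$ Case~A coordinates would only give $O(1)$ and the lemma would fail.
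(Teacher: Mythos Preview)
Your approach is correct and takes a genuinely different route from the paper's proof. The paper computes $\E[Y_{\balpha}^2 Y_{\bbeta}^2]$ directly as a product of one-angle integrals and then \emph{separates} the answer into an explicit gamma-function product $M_{\balpha,\bbeta}$ (collecting all exact normalizations, including those from the pure-sine integrals at $j\notin S_{\balpha}\cup S_{\bbeta}$) times $\prod_{j\in S_{\balpha}\cup S_{\bbeta}}(1+O_\ell(1/(d-j)))$ coming from Lemma~\ref{lem:mismatched_dimension}(b). The main effort there is a lengthy telescoping of gamma ratios to show $|M_{\balpha,\bbeta}-1|=O(d^{-1/2})$. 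Your ratio-based organization sidesteps $M_{\balpha,\bbeta}$ entirely: by dividing by the marginal factorizations $\E[Y_{\balpha}^2]=\E[Y_{\bbeta}^2]=1$ coordinate-by-coordinate, all exact gamma normalizations cancel automatically, and the role of the paper's telescoping is played by your Case~A second-order cancellation $|\rho_j-1|=O_\ell((d-j)^{-2})$, whose sum over $d-j\geq\sqrt{d}$ is $O_\ell(1/\sqrt{d})$. This is cleaner conceptually and avoids the explicit $M_{\balpha,\bbeta}$ bookkeeping.

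One simplification for your Case~B: the detour through the Gegenbauer expansion of $\tQ_{\alpha_j}^{(d_j^\alpha)}(x_1)^2-1$ and Lemma~\ref{lem:mismatched_dimension}(a) is unnecessary. After your change of measure back to $\S^{d_j^\alpha-1}$, you are left with $\E_{\S^{d_j^\alpha-1}}[\tQ_{\alpha_j}^{(d_j^\alpha)}(x_1)^2(1-x_1^2)^{b}]$, which is \emph{exactly} the quantity controlled by the second inequality of Lemma~\ref{lem:mismatched_dimension} (with $k_2=2b$), giving $1+O_\ell(b/(d-j))$ in one step. This is what the paper does; your expansion route also works but is more labor.
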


\begin{proof}[Proof of Lemma \ref{lem:technical_termII_bound}]
Let us decompose the expectation using the representation \eqref{eq:spherical_harmonics_explicit}:
\begin{equation}\label{eq:Y2Y2}
\begin{aligned}
&~ \E_{\bx} \big[ Y_{\balpha} ( \bx)^2 Y_{\bbeta} ( \bx)^2 \big] \\
= &~ \frac{\Gamma (d/2)}{2 \pi^{d/2}} C_{\balpha} C_{\bbeta} \int_0^{2\pi} g_{\balpha} (\theta_1)^2 g_{\bbeta} (\theta_1)^2 \de \theta_1 \\
&~ \times  \prod_{j = 1}^{d-2} \left\{ \int_0^\pi \tQ^{(d^\alpha_j)}_{\alpha_j} (\cos ( \theta_{d-j}))^2  \tQ^{(d^\beta_j)}_{\beta_j} (\cos ( \theta_{d-j}))^2 \sin (\theta_{d-j})^{2| \alpha^{j+1} | + 2 | \beta^{j+1} | + d - j - 1 } \de \theta_{d-j} \right\} \, .
\end{aligned}
\end{equation}
The different terms contribute as follows in the above product. First, we can't have $\alpha_{d-1} >0$ and $\beta_{d-1} >0$ at the same time, hence
\[
\int_0^{2\pi} g_{\balpha} (\theta_1)^2 g_{\bbeta} (\theta_1)^2 \de \theta_1  = \frac{\pi}{2}\, .
\]
For $j \in [d-2]$, we have
\begin{itemize}
    \item If $j \not\in S_{\balpha} \cup S_{\bbeta}$,
\[
\int_0^{\pi}\sin (\theta_{d-j})^{2| \alpha^{j+1} | + 2 | \beta^{j+1} | + d - j - 1 } \de \theta_{d-j} = \sqrt{\pi} \frac{\Gamma( | \alpha^{j+1} | + | \beta^{j+1} | + (d-j)/2 )}{\Gamma( | \alpha^{j+1} | + | \beta^{j+1} | + (d-j+1)/2 )} \, .
\]
\item If $j \in S_{\balpha}$, from Lemma \ref{lem:mismatched_dimension}, there exists a universal constant $C_{\ell}$ such that 
\[
\begin{aligned}
&~ \int_0^{\pi} \tQ^{(d^\alpha_j)}_{\alpha_j}  (\cos ( \theta_{d-j}))^2 \sin (\theta_{d-j})^{2| \alpha^{j+1} | + 2 | \beta^{j+1} | + d - j - 1 } \de \theta_{d-j}\\
= &~ \sqrt{\pi} \frac{\Gamma( | \alpha^{j+1} | + (d-j)/2 )}{\Gamma( | \alpha^{j+1} | + (d-j+1)/2 )} \left( 1 + \frac{| \beta^{j+1}|}{d-j+1} \cdot K_j \right) \, ,
\end{aligned}
\]
with $|K_j| \leq C_{\ell}$. 

\item If $j \in S_{\bbeta}$, similarly
\[
\begin{aligned}
&~\int_0^{\pi} \tQ^{(d^\beta_j)}_{\beta_j} (\cos ( \theta_{d-j}))^2  \sin (\theta_{d-j})^{2| \alpha^{j+1} | + 2 | \beta^{j+1} | + d - j - 1 } \de \theta_{d-j} \\
=&~ \sqrt{\pi} \frac{\Gamma( | \beta^{j+1} | + (d-j)/2 )}{\Gamma( | \beta^{j+1} | + (d-j+1)/2 )} \left( 1 + \frac{| \alpha^{j+1}|}{d-j+1} \cdot K_j\right) \, ,
\end{aligned}
\]
with $|K_j| \leq C_{\ell}$. 
\end{itemize}

Combining these contributions in Eq.~\eqref{eq:Y2Y2} yields
\begin{equation*}
    \E_{\bx} \big[ Y_{\balpha} ( \bx)^2 Y_{\bbeta} ( \bx)^2 \big] = M_{\balpha, \bbeta} \cdot  \prod_{j \in S_{\balpha}} \left( 1 + \frac{| \beta^{j+1}|}{d-j+1} \cdot K_j \right)  \cdot   \prod_{j \in S_{\bbeta}} \left( 1 + \frac{| \alpha^{j+1}|}{d-j+1} \cdot K_j \right)  \, ,
\end{equation*}
where $M_{\balpha,\bbeta}$ is an explicit multiplicative factor. By assumption, $d-j \geq \sqrt{d}$ for any $j \in S_{\beta}$ and $| \beta^{j+1} | =0$ for $j \geq d - \sqrt{d}$. We deduce that there exists a constant $C'_{\ell}$ such that
\begin{equation*}
     \Big\vert \E_{\bx} \big[ Y_{\balpha} ( \bx)^2 Y_{\bbeta} ( \bx)^2 \big] - M_{\balpha, \bbeta} \Big\vert \leq C'_{\ell} M_{\balpha, \bbeta} \frac{1}{\sqrt{d}}\, .
\end{equation*}
Hence to prove the lemma, it is sufficient to show that $|M_{\balpha, \bbeta} - 1 | = O(d^{-1/2})$.

Expanding $M_{\balpha, \bbeta}$ yields
\begin{equation}\label{eq:decompo_Malphabeta}
    \begin{aligned}
    &~ M_{\balpha, \bbeta} \\
    =&~ \frac{1}{\Gamma (d/2)} \cdot \prod_{j \in [d-2]} \frac{\Gamma(d^{\alpha}_j/2) \Gamma( d^{\beta}_j /2 ) \Gamma( | \alpha^{j+1} | + | \beta^{j+1} | + (d-j)/2 ) }{\Gamma((d^{\alpha}_j -1 )/2) \Gamma( (d^{\beta}_j-1) /2) \Gamma( | \alpha^{j+1} | + | \beta^{j+1} | + (d-j+1)/2 )}  \\
    &~ \times \prod_{ j \in S_{\balpha} \cup S_{\beta} }\left( \frac{\Gamma( d_j^{\alpha} /2 )}{\Gamma( (d_j^\alpha - 1)/2 )} \right)^{c_j^{\balpha}} \left( \frac{\Gamma( d_j^{\beta} /2 )}{\Gamma( (d_j^\beta - 1)/2 )} \right)^{c_j^{\bbeta}} \frac{\Gamma( | \alpha^{j+1} | + | \beta^{j+1} | + (d-j+1)/2 )}{\Gamma( | \alpha^{j+1} | + | \beta^{j+1} | + (d-j)/2 )} \, ,
    \end{aligned}
\end{equation}
where $c_j^{\balpha} = 1 $ if $\alpha_j > 0 $ and $=0$ is $\alpha_j = 0$ (similarly for $c_j^{\bbeta}$). Note that on the first line, the product can be simplified by telescoping the terms and we obtain
\begin{equation*}
\begin{aligned}
&~ \frac{\Gamma(d^{\alpha}_1/2) \Gamma(d^{\beta}_1/2) \Gamma (|\alpha^{d-1}| + |\beta^{d-1}| + 1) }{\Gamma (d/2) \Gamma ((d^\alpha_{d-2} -1 )/2) \Gamma ((d^\beta_{d-2} -1 )/2) \Gamma ( | \alpha^{2}| + | \beta^{2}| + d/2) } \\
&~\times  \prod_{j \in [d-2]: \alpha_j>0 \text{ or } \beta_j >0}  \frac{\Gamma(d^{\alpha}_j/2) \Gamma( d^{\beta}_j /2 ) \Gamma( | \alpha^{j} | + | \beta^{j} | + (d-j -1)/2 ) }{\Gamma((d^{\alpha}_{j-1} -1 )/2) \Gamma( (d^{\beta}_{j-1}-1) /2) \Gamma( | \alpha^{j+1} | + | \beta^{j+1} | + (d-j+1)/2 )}
\\
=&~
    \frac{\Gamma(| \alpha^2 | + d/2) \Gamma(| \beta^2 | + d/2) \Gamma (\alpha_{d-1} + \beta_{d-1} +1)}{\Gamma (d/2) \Gamma (\alpha_{d-1} +1 ) \Gamma (\beta_{d-1}+1) \Gamma ( | \alpha^{2}| + | \beta^{2}| + d/2)} \\
    &~\times  \prod_{j \in [d-2]: \alpha_j>0 \text{ or } \beta_j >0}  \frac{\Gamma(|\alpha^{j+1} | + (d-j+1)/2) \Gamma( |\beta^{j+1} | + (d-j+1) /2 )  }{\Gamma(| \alpha^j| + (d-j +1  )/2) \Gamma( | \beta^j| + (d-j+1) /2) }\\
    &~ \phantom{AAAAAAAAAAAAAAAAAAAAAAAAA} \cdot \frac{\Gamma( | \alpha^{j} | + | \beta^{j} | + (d-j -1)/2 )}{\Gamma( | \alpha^{j+1} | + | \beta^{j+1} | + (d-j+1)/2 )}\, .
    \end{aligned}
\end{equation*}
We see that because at most one of the $\alpha_{d-1},\beta_{d-1}$ is non-zero, and $| \alpha^2 |, | \beta^2 | \leq \ell$, the first term is $1 + O_d(d^{-1})$. There are at most $2 \ell$ terms in the rest of the product and each is of order $1 + O_d((d-j)^{-1})$. Noting that by assumption $| \beta^{j}| = 0$ for $ j > d - \sqrt{d}$, and the corresponding term cancel out. We deduce that this product is of order $1 + O_d (d^{-1/2})$. 

Similarly, on the second line of Eq.~\eqref{eq:decompo_Malphabeta}, there are at most $2\ell$ terms that are of order $1 + O_d((d-j)^{-1})$, with terms cancelling each other as soon as $j > d-\sqrt{d}$. We deduce that this second term is also of order $1 + O_d (d^{-1/2})$. We conclude that $M_{\alpha,\bbeta} = 1 + O_d (d^{-1/2})$, which finishes the proof.
\end{proof}

\clearpage

\section{Proof of Theorem \ref{thm:KRR}: asymptotic characterization of KRR}\label{app:proof_main_KRR}

In this Appendix, we prove the asymptotic characterization of kernel ridge regression in the polynomial regime, described in Theorem \ref{thm:KRR}. Throughout the proof, we will denote $\bDelta$ any matrix with $\| \bDelta \|_{\op} = o_{d,\P}(1)$. In particular, $\bDelta$ can change from line to line. 

In Section \ref{sec:outline}, we outline the proof for the asymptotic prediction risk which is split into two parts: convergence in probability over $\bX$ of $\E_{\beps, f_*} [ R_{\text{test}} (f_{*,d}; \bX , \beps , \lambda )]$ to the asymptotic risk (proved in Section \ref{sec:proof_risk_decompo}) and convergence in $L^1$ over $\bX,\beps,f_*$ of $R_{\text{test}} (f_{*,d}; \bX , \beps , \lambda )$ to $\E_{\beps, f_*} [ R_{\text{test}} (f_{*,d}; \bX , \beps , \lambda )]$ (proved in Section \ref{sec:proof_risk_concentration}). The proofs for the training error and RKHS norm are very similar and we outline the main steps in Section \ref{sec:training_error}. Finally, the proof of some of the more technical claims are deferred to Section \ref{sec:technical_KRR}.

\subsection{Outline of the proof}\label{sec:outline}

In this section, we focus on the test error (we will write $f_* = f_{*,d}$ for simplicity):
\[
\begin{aligned}
 R_{\text{test}} (f_\star ; \bX , \beps,\lambda ) =&~ \E_{\bx} \big[ \big( f_* (\bx) - \hf ( \bx ; \hba_\lambda) \big)^2 \big] \, ,
\end{aligned}
\]
where we recall that the kernel ridge regression solution is given by
\[
\hf ( \bx ; \hba_\lambda) = \by^\sT ( \bH + \lambda \id_n)^{-1} \bh ( \bx) \, ,
\]
with $\bH = ( h ( \< \bx_i , \bx_j \>/d ))_{ij \in [n]}$, $\by = (y_1 , \ldots , y_n)$ and $\bh (\bx) = ( h (\<\bx , \bx_i \> /d) )_{i \in [n]}$.

We will decompose $\cA_d$ into three orthogonal subspaces and bound the risk along each of them: 
\begin{equation}\label{eq:Ad_decompo}
L^2(\cA_d) = \cV_{\leq \ell - 1} \oplus \cV_{\ell} \oplus \cV_{>\ell}\, ,
\end{equation}
where $\cV_{\leq \ell -1}$ is the subspace spanned by polynomials of degree $\leq \ell -1$, $\cV_{\ell}$ is the subspace spanned by polynomials of degree $\ell$ orthogonal to $\cV_{\leq \ell -1}$, and $\cV_{>\ell}$ is the subspace of all functions in $L^2(\cA_d)$ orthogonal to polynomials of degree $\leq \ell$. Recall that we denote $\proj_{\leq\ell}:L^2(\cA_d) \to L^2(\cA_d)$ the orthogonal projection onto $\cV_{\leq \ell -1}$. Define $\proj_{> \ell} = \id - \proj_{\leq \ell}$, and $\proj_{\ell} = \proj_{>\ell - 1} \proj_{\leq \ell}$ the orthogonal projections onto $\cV_{>\ell}$ and $\cV_{\ell}$ respectively. By the orthogonal decomposition \eqref{eq:Ad_decompo} of $\cA_d$ (in $L^2 (\cA_d)$ sense), we have directly:
% We will characterize the contribution to the prediction error of each of the three subspaces $\proj_{\leq \ell-1}, \proj_{\ell}$ and $\proj_{>\ell}$:
\[
\begin{aligned}
 R_{\text{test}} (f_\star ; \bX , \beps,\lambda ) =&~ 
 \| \proj_{\leq \ell-1}(f_* - \hf ( \cdot ; \hba_\lambda))\|_{L^2}^2 + \| \proj_{\ell}(f_* - \hf ( \cdot ; \hba_\lambda))\|_{L^2}^2 + \| \proj_{> \ell}(f_* - \hf ( \cdot ; \hba_\lambda))\|_{L^2}^2\, .
\end{aligned}
\]

Let us recall and introduce some new notations. Define $B_{\ell - 1} = \sum_{k =0}^{\ell-1} B(\cA_d, k) $ (note that $B_{\leq \ell - 1} = \Theta_d (d^{\ell - 1})$) and
\[
\begin{aligned}
\bD_k =&~ \xi_{d,k} ( h) \id_{B(\cA_d , k)}\, , \\
\bY_k ( \bx) = &~ ( Y_{ks} (\bx) )_{s \in [B(\cA_d,k)]} \in \R^{B(\cA_d,k)} \, , \\
\bY_{k} = &~ ( Y_{ks} (\bx_i) )_{i \in [n], s \in [B(\cA_d,k)]} \in \R^{n \times B(\cA_d,k)}\, ,\\
\bY_{\leq \ell-1} (\bx) = &~ ( \bY_k (\bx) )_{k \leq \ell -1} \in \R^{B_{\ell-1}} \, ,\\
\bY_{\leq \ell - 1} = &~ [\bY_{0}^\sT , \ldots , \bY_{\ell-1}^\sT ]^\sT \in \R^{n \times B_{\ell-1}}, \\
\bD_{\leq \ell - 1} = &~ \diag ( \bD_0 , \ldots , \bD_{\ell-1} ) \in \R^{B_{\ell-1} \times B_{\ell-1}} \, .
\end{aligned}
\]
Furthermore, introduce $\bbeta_* = ( \beta^*_{ks} )_{k \leq \ell -1, s \in [B(\cA_d , k) ]} \in \R^{B_{\ell -1}} $ and $\tbbeta_k = ( \tbeta_{ks} )_{s \in [B(\cA_d , k )]} \in \R^{B(\cA_d , k)}$ for $k \geq \ell$. Denote $\boldf_* = ( f_* (\bx_1 ) , \ldots , f_* ( \bx_n) )$ and $\beps = (\eps_1 , \ldots, \eps_n)$ so that $\by = \boldf_* + \beps$. We can rewrite $\boldf_* = \bY_{\leq \ell - 1} \bbeta_* +  \bY_{\ell} \tbbeta_{\ell} + \boldf_{ > \ell}$ where  
\[
\boldf_{> \ell} = \sum_{ k = \ell+1}^\infty  \bY_k \tbbeta_k \, .
\]

Recall that we can decompose the inner-product kernel in terms of Gegenbauer polynomials associated to $\cA_d$:
\[
h ( \< \bx_1 , \bx_2 \> / d) = \sum_{k = 0}^\infty \xi_{d,k} (h) B(\cA_d, k) Q_k^{(d)} ( \< \bx , \by \>)  = \sum_{k = 0}^\infty \mu_{d,k} (h) Q_k^{(d)} ( \< \bx , \by \>) \, ,
\]
where we recall we denoted $\mu_{d,k} = \xi_{d,k} (h) B(\cA_d, k)$.
We introduce the matrix $\bM := \E_{\bx} [ \bh (\bx) \bh (\bx)^{\sT}]$, and we denote below $\xi_k := \xi_{d,k}(h)$ and $\mu_{d,k} = \mu_{d,k} (h)$ for simplicity. The vector and matrices can be decomposed in the polynomial basis as
\[
\begin{aligned}
\bh ( \bx) = &~ \bY_{\leq \ell -1 } \bD_{\leq \ell -1 }\bY_{\leq \ell -1 } (\bx) +\xi_{\ell} \bY_{\ell } \bY_{\ell } (\bx) + \bh_{>\ell} ( \bx) \, , \\
\bH =&~ \bY_{\leq \ell -1 } \bD_{\leq \ell -1 }\bY_{\leq \ell -1 }^\sT +\xi_{\ell} \bY_{\ell } \bY_{\ell }^\sT + \bH_{>\ell} \, , \\
\bM =&~ \bY_{\leq \ell -1 } \bD_{\leq \ell -1 }^2\bY_{\leq \ell -1 }^\sT +\xi_{\ell}^2 \bY_{\ell } \bY_{\ell }^\sT + \bM_{>\ell} \, ,
\end{aligned}
\]
where
\[
\bh_{>\ell} ( \bx) = \sum_{k = \ell+1}^\infty \xi_k  \bY_{k } \bY_{k} (\bx)\, , \qquad \bH_{>\ell} = \sum_{k = \ell+1}^\infty \xi_k  \bY_{k } \bY_{k}^\sT\, ,  \qquad \bM_{>\ell} = \sum_{k = \ell+1}^\infty \xi_k^2  \bY_{k } \bY_{k}^\sT \, .
\]
Recall that we denote $\bQ_k = B(\cA_d , k)^{-1} \bY_k \bY_k^\sT$ the matrix of the $k$-th Gegenbauer polynomial evaluated on the inner-product of the inputs. We will further denote:
\[
\bXi = ( \bH + \lambda \id_n )^{-1}\, .
\]

By Theorem 6 in \cite{mei21generalization} (see also Proposition \ref{prop:GMMM_prop3} and Corollary \ref{cor:bound_exp_op} in Section \ref{sec:technical_bound_expectation}), the high-degree component of the kernel matrices satisfy
\begin{equation}
    \begin{aligned}
    \bH_{>\ell} = &~ \kappa_H (\id + \Delta_H ) \, , \\
    \bM_{>\ell}  = &~ \kappa_M (\id + \Delta_M ) \, , \\
    \end{aligned}
\end{equation}
where $\max ( \| \Delta_H \|_{\op} , \| \Delta_M \|_{\op} ) = o_{d,\P}(1) $, and
\[
\kappa_H = \sum_{k \geq \ell+1} \xi_k B(\cA_d , k) =  \sum_{k \geq \ell+1} \mu_{d,k} \, , \qquad \kappa_M = \sum_{k \geq \ell+1} \xi_k^2 B(\cA_d , k)\, .
\]

We first compute $\E_{\beps, f_*} [ R_{\text{test}} (f_* ; \bX , \beps, \lambda )]$, the expected test error with the expectation taken over $f_*$ and $\beps$. The following proposition characterizes the convergence of the test error on each of the three subspaces \eqref{eq:Ad_decompo} as $d \to \infty$, where the convergence is in probability with respect to $\bX$: 

\begin{proposition}\label{prop:risk_decompo}
Follow the assumptions and notations of Theorem \ref{thm:KRR}. We get the following expressions for the test error on the different subspaces: (where $o_{d,\P}(\cdot)$ is with respect to the randomness on $\bX$)
\begin{enumerate}
    \item[1.] On $\cV_{<\ell}$:
    \[
    \E_{f_*,\eps} \big[ \| \proj_{\leq \ell - 1} (f_* - \hf (\cdot ; \hba_{\lambda} ) )\|_{L^2}^2 \big] = o_{d,\P} (1) \, .
    \]
    \item[2.] On $\cV_{>\ell}$:
    \[
    \E_{f_*,\eps} \big[ \| \proj_{> \ell} (f_* - \hf (\cdot ; \hba_{\lambda} ) )\|_{L^2}^2 \big] = F_{>\ell}^2 + o_{d,\P} (1) \, .
    \]
    \item[3.] On $\cV_{\ell}$: denoting $\zeta_* = \frac{\mu_{>\ell} + \lambda}{\mu_\ell}$, we have
    \[
    \E_{f_*,\eps} \big[ \| \proj_{ \ell} (f_* - \hf (\cdot ; \hba_{\lambda} ) )\|_{L^2}^2 \big] = F_\ell^2 \cdot \cB ( \psi , \zeta_*)  +  (F_{>\ell}^2 + \sigma_\eps^2) \cdot \cV(\psi, \zeta_*)+o_{d,\P} (1) \, .
    \]
\end{enumerate}
\end{proposition}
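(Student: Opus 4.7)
The plan is as follows. I would first replace the resolvent $\bXi = (\bH+\lambda\id_n)^{-1}$ by the ``effective'' resolvent
\[
\widetilde{\bXi} := \bigl( \bY_{\leq \ell-1}\bD_{\leq \ell-1}\bY_{\leq \ell-1}^\sT + \mu_{d,\ell}\bQ_\ell + (\mu_{>\ell}+\lambda)\id_n \bigr)^{-1},
\]
using $\|\bH_{>\ell}-\mu_{>\ell}\id_n\|_\op = o_{d,\P}(1)$ (Theorem~6 of \cite{mei21generalization}) together with the fact that $\bH+\lambda\id_n$ has smallest eigenvalue bounded below by $\mu_{>\ell}+\lambda-o_{d,\P}(1)>0$, so that bilinear forms $\bv^\sT(\bXi-\widetilde{\bXi})\bw$ of controlled norm incur only $o_{d,\P}(1)$ error. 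Then I would apply the Woodbury identity with bulk $\bA := \mu_{d,\ell}\bQ_\ell+(\mu_{>\ell}+\lambda)\id_n$ and low-rank perturbation $\bY_{\leq \ell-1}\bD_{\leq \ell-1}\bY_{\leq \ell-1}^\sT$, using $\bY_{\leq \ell-1}^\sT\bY_{\leq \ell-1}/n=\id+o_{d,\P}(1)$ and Assumption~\ref{ass:genericity_kernel}(a) (which makes $\bD_{\leq \ell-1}^{-1}$ negligible next to $\bY_{\leq \ell-1}^\sT\bA^{-1}\bY_{\leq \ell-1}$). This effectively decouples the three subspaces $L^2(\cA_d) = \cV_{\leq \ell-1}\oplus\cV_\ell\oplus\cV_{>\ell}$.

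Two of the three contributions are then essentially bookkeeping. On $\cV_{\leq \ell-1}$, I would write $\|\proj_{\leq \ell-1}(f_*-\hf(\cdot;\hba_\lambda))\|_{L^2}^2 = \|\bbeta_*-\bD_{\leq \ell-1}\bY_{\leq \ell-1}^\sT\bXi\by\|_2^2$; the Woodbury expansion, combined with the operator-norm bounds $\bY_{\leq \ell-1}^\sT\bY_k/n\to \bzero$ for $k\neq \leq \ell-1$ and $\bY_{\leq \ell-1}^\sT\beps/n = o_{d,\P}(1)$ (after $\E_{f_*,\eps}$), gives the first claim. On $\cV_{>\ell}$, the identity $\|\proj_{>\ell}\hf(\cdot;\hba_\lambda)\|_{L^2}^2 = \hba_\lambda^\sT\bM_{>\ell}\hba_\lambda$ combined with $\bM_{>\ell}=\kappa_M(\id+\Delta_M)$, the smallness $\kappa_M = \sum_{k>\ell}\mu_{d,k}^2/B(\cA_d,k) = o_d(n^{-1})$, and the trivial bound $\|\hba_\lambda\|_2^2 = O_\P(n)$, yields $\|\proj_{>\ell}\hf\|_{L^2}^2 = o_{d,\P}(1)$; a Cauchy--Schwarz argument on the cross term and $\E_{f_*}\|\proj_{>\ell}f_*\|_{L^2}^2\to F_{>\ell}^2$ close the second claim.

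The substantive work is on $\cV_\ell$, where $\|\proj_\ell(f_*-\hf(\cdot;\hba_\lambda))\|_{L^2}^2 = \|\tbbeta_\ell - \xi_\ell\bY_\ell^\sT\bXi\by\|_2^2$. The push-through identity gives $\bA^{-1}\bY_\ell = \bY_\ell(\mu_{d,\ell}\widetilde\bQ_\ell+(\mu_{>\ell}+\lambda)\id)^{-1}$ with $\widetilde{\bQ}_\ell := \bY_\ell^\sT\bY_\ell/B(\cA_d,\ell)$, so after absorbing the Woodbury correction (which is $o_{d,\P}(1)$ by $\bY_\ell^\sT\bY_{\leq \ell-1}/n\to\bzero$) one has $\id - \xi_\ell\bY_\ell^\sT\bA^{-1}\bY_\ell = \zeta_*(\widetilde{\bQ}_\ell+\zeta_*\id)^{-1}+o_{d,\P}(1)$. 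Substituting $\by = \bY_{\leq \ell-1}\bbeta_*+\bY_\ell\tbbeta_\ell+\boldf_{>\ell}+\beps$, expanding the square and taking $\E_{f_*,\eps}$ (so that all cross products of the independent zero-mean vectors $\tbbeta_\ell, \tbbeta_{>\ell}, \beps$ vanish) splits the expected risk into a bias piece $\frac{F_{d,\ell}^2}{B(\cA_d,\ell)}\zeta_*^2\Tr[(\widetilde{\bQ}_\ell+\zeta_*\id)^{-2}]$ and a variance piece $(F_{>\ell}^2+\sigma_\eps^2)\cdot\xi_\ell^2\Tr[\bY_\ell^\sT\bA^{-2}\bY_\ell]+o_{d,\P}(1)$, where the coefficient $F_{>\ell}^2+\sigma_\eps^2$ arises from $\sum_{k>\ell}F_{d,k}^2\bQ_k+\sigma_\eps^2\id$ concentrating in operator norm on $(F_{>\ell}^2+\sigma_\eps^2)\id_n$. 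The eigenvalue relation $\Tr[(\widetilde{\bQ}_\ell+\zeta_*\id)^{-2}] = \Tr[(\bQ_\ell+\zeta_*\id)^{-2}] + (B(\cA_d,\ell)-n)\zeta_*^{-2}$ and the identity $\bQ_\ell(\bQ_\ell+\zeta_*\id)^{-2} = (\bQ_\ell+\zeta_*\id)^{-1} - \zeta_*(\bQ_\ell+\zeta_*\id)^{-2}$ reduce both traces to the Stieltjes functionals $\frac{1}{n}\Tr[(\bQ_\ell+\zeta_*\id)^{-k}]$, $k=1,2$, which by Theorem~\ref{thm:spectrum_Gegenbauer} converge in probability to $r_\psi(-\zeta_*)$ and $r_\psi'(-\zeta_*)$. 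Assembling yields $F_\ell^2\cdot\cB(\psi,\zeta_*) + (F_{>\ell}^2+\sigma_\eps^2)\cdot\cV(\psi,\zeta_*)$.

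The main obstacles are: (i) upgrading the weak Marchenko--Pastur convergence to convergence of the derivative functional $\frac{1}{n}\Tr[(\bQ_\ell+\zeta_*\id)^{-2}]$, which reduces to a uniform-integrability argument using $\zeta_*>0$ to bound the integrand; (ii) establishing the operator-norm concentration $\|\sum_{k>\ell}F_{d,k}^2\bQ_k - F_{>\ell}^2\id_n\|_\op = o_{d,\P}(1)$, which requires rerunning the matrix-concentration argument of Theorem~6 of \cite{mei21generalization} with coefficients $F_{d,k}^2$ in place of $\mu_{d,k}$ and invoking Assumption~\ref{ass:technical_ass} to control the tail $k > m$ uniformly; and (iii) controlling the Woodbury cross-terms between the spike $\bY_{\leq \ell-1}$ and the degree-$\ell$ direction at an operator-norm rate sharp enough that they remain negligible after multiplication by the $O_\P(n)$ factors inherent in the relevant quadratic forms.
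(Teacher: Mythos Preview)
Your proposal is correct and follows essentially the same route as the paper: Woodbury with respect to the low-rank spike $\bY_{\leq \ell-1}\bD_{\leq \ell-1}\bY_{\leq \ell-1}^\sT$, concentration $\bH_{>\ell}\approx \mu_{>\ell}\id_n$ and $\sum_{k>\ell}F_{d,k}^2\bQ_k\approx F_{>\ell}^2\id_n$, and the Marchenko--Pastur law for the remaining traces. The organization on $\cV_\ell$ differs only cosmetically: you use the push-through identity to rewrite everything in terms of $\widetilde{\bQ}_\ell=\bY_\ell^\sT\bY_\ell/B$ and then the eigenvalue correspondence $\Tr[(\widetilde{\bQ}_\ell+\zeta_*)^{-2}]=\Tr[(\bQ_\ell+\zeta_*)^{-2}]+(B-n)\zeta_*^{-2}$, whereas the paper stays on the $n\times n$ side and computes $B_{21}$ via $\Tr(\bR\bQ_\ell)$ and $\Tr(\bR\bQ_\ell\bR\bQ_\ell)$ with $\bR=(\bQ_\ell+\zeta_*\id_n)^{-1}$ (its Lemmas on replacing $\bXi$ by $\bR/\mu_\ell$ and on resolvent calculus). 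These are the same computation unwound in two equivalent ways; your version is arguably cleaner for the bias term since $\id_B-\xi_\ell\bY_\ell^\sT\bA^{-1}\bY_\ell=\zeta_*(\widetilde{\bQ}_\ell+\zeta_*\id_B)^{-1}$ exactly.

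Two minor corrections to your list of obstacles. Obstacle~(i) is not really an obstacle: since $\zeta_*>0$, the test function $x\mapsto(x+\zeta_*)^{-2}$ is bounded and continuous on $[0,\infty)$, so $\tfrac{1}{n}\Tr[(\bQ_\ell+\zeta_*\id)^{-2}]\to r_\psi'(-\zeta_*)$ follows directly from the almost-sure weak convergence in Theorem~\ref{thm:spectrum_Gegenbauer}; no uniform-integrability argument is needed. Obstacle~(ii) does not require Assumption~\ref{ass:technical_ass}: the paper obtains $\big\|\sum_{k>\ell}F_{d,k}^2\bQ_k-F_{>\ell}^2\id_n\big\|_\op=o_{d,\P}(1)$ straight from Theorem~6 of \cite{mei21generalization} (equivalently $\sup_{k>\ell}\|\bQ_k-\id_n\|_\op=o_{d,\P}(1)$) together with $\sum_{k>\ell}F_{d,k}^2\to F_{>\ell}^2$. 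Assumption~\ref{ass:technical_ass} is only invoked later, in the proof of Proposition~\ref{prop:risk_concentration}, to control the conditional variance. Obstacle~(iii) is real and is exactly the content of the paper's $B_{23}$ bound and the associated technical lemmas.
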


In order to get the convergence in probability with respect to $f_*,\beps$, we show that the test error $R_{\text{test}} (f_* ; \bX , \beps, \lambda )$ converges to $\E_{\beps, f_*} [ R_{\text{test}} (f_* ; \bX , \beps, \lambda )]$ in $L^1$ over $f_*,\beps,\bX$:

\begin{proposition}[Convergence to expectation]\label{prop:risk_concentration}
Under the assumptions of Theorem \ref{thm:KRR}, we have:
\[
\E_{\bX, f_*, \beps} \Big[ \Big\vert R_{\text{test}} (f_* ; \bX , \beps , \lambda ) - \E_{f_*,\beps} \big[R_{\text{test}} (f_* ; \bX , \beps , \lambda )\big] \Big\vert \Big] =  o_{d}(1)\, .
\]
\end{proposition}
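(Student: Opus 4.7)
The plan is to reduce the $L^1$ concentration statement to an $L^2$ variance bound via Cauchy--Schwarz: since $\E_{f_*,\beps}[R_{\text{test}} - \E_{f_*,\beps}R_{\text{test}}] = 0$ for each $\bX$, Jensen's inequality gives
\[
\E_{\bX,f_*,\beps}\bigl|R_{\text{test}} - \E_{f_*,\beps}R_{\text{test}}\bigr| \;\leq\; \bigl(\E_\bX\,\Var_{f_*,\beps}(R_{\text{test}})\bigr)^{1/2},
\]
so it suffices to show $\E_\bX\Var_{f_*,\beps}(R_{\text{test}}) = o_d(1)$. The structural observation I will exploit is that, after expanding $\by = \bY_{\leq \ell-1}\bbeta_* + \sum_{k\geq \ell}\bY_k\tbbeta_k + \beps$ (and the analogous expansions of $f_*(\bx)$ and $\E_\bx[f_*(\bx)\bh(\bx)]$) in $R_{\text{test}} = \|f_*\|_{L^2}^2 - 2\by^\sT\bXi\,\E_\bx[f_*(\bx)\bh(\bx)] + \by^\sT\bXi\bM\bXi\by$, the test error is a polynomial of degree at most $2$ in the random vector $T := \bigl((\tbeta_{d,ks})_{k\geq \ell,s},\,(\eps_i)_i\bigr)$ whose coordinates are independent, mean zero, with variances $\sigma_i^2\in\{F_{d,k}^2/B(\cA_d,k)\}\cup\{\sigma_\eps^2\}$ and fourth moments bounded by $C\sigma_i^4$ by Assumption \ref{ass:dist_f}.

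Writing this expansion as $R_{\text{test}} = R_0(\bX) + \bc(\bX)^\sT T + T^\sT\bA(\bX)\,T$ with $\bA$ symmetric and $\bSigma := \diag(\sigma_i^2)$, the standard variance formulas for linear and quadratic forms in independent coordinates with bounded fourth moment, together with Cauchy--Schwarz on the bilinear cross term, yield
\[
\Var_{f_*,\beps}(R_{\text{test}}) \;\leq\; C\bigl(\bc^\sT\bSigma\bc \;+\; \|\bSigma^{1/2}\bA\,\bSigma^{1/2}\|_F^2\bigr).
\]
The linear coefficient $\bc$ collects contributions of the form $\sigma_\eps^2\|\bXi\bv_0\|^2$ and $(F_{d,k}^2/B(\cA_d,k))\|(\bc_0-\bv_0)^\sT\bXi\bY_k\|^2$ (together with analogous cross pieces), where $\bc_0 = \bY_{\leq\ell-1}\bbeta_*$ and $\bv_0 = \bY_{\leq\ell-1}\bD_{\leq\ell-1}\bbeta_*$ are deterministic. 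Using the kernel decomposition \eqref{eq:Hn_decompo} to obtain $\|\bXi\|_\op = O_{d,\P}(1)$, the concentration $\bY_{\leq\ell-1}^\sT\bY_{\leq\ell-1}/n = \id + o_{d,\P}(1)$ from Eq.~\eqref{eq:spiked_matrix}, and Theorem \ref{thm:spectrum_Gegenbauer} for the degree-$\ell$ block, each of these pieces vanishes in expectation. The same tools control the "easy" part of $\|\bSigma^{1/2}\bA\bSigma^{1/2}\|_F^2$, namely the pure-noise contribution $\sigma_\eps^4\|\bXi\bM\bXi\|_F^2$, which is bounded via $\|\bXi\|_\op^2\|\bM\|_\op\,\Tr(\bXi\bM)/n$ with $\|\bM\|_\op = O_{d,\P}(1)$ coming from $\bM_{>\ell} = \kappa_M(\id + \Delta_M)$.

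The genuine difficulty is the quadratic-in-$\tbbeta$ contribution, i.e.\ the sum $\sum_{k,k'\geq \ell}(F_{d,k}^2 F_{d,k'}^2/(B(\cA_d,k)B(\cA_d,k')))\|\bY_k^\sT\bXi\bM\bXi\bY_{k'}\|_F^2$. For the finitely many degrees $\ell\leq k,k'\leq m$, I would use $\bXi\bM\bXi \preceq C\bXi$ (a consequence of $\bM \preceq C(\bH+\id_n)$, which follows by inspection of the Gegenbauer expansion and $\|\bM_{>\ell}\|_\op = O_{d,\P}(1)$) together with the bound $\Tr(\bY_k^\sT\bXi\bY_k) = O_{d,\P}(n)$, already used in the proof of Proposition \ref{prop:risk_decompo}, so that the prefactor $1/(B(\cA_d,k)B(\cA_d,k')) \asymp d^{-k-k'}$ makes each contribution $o_d(1)$. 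The principal obstacle is the infinite tail $k > m$, where no Marchenko--Pastur concentration is available and a crude operator-norm estimate would diverge; this is precisely the role of Assumption \ref{ass:technical_ass}. The PSD inequality $\bH \succeq \mu_{d,k}\bQ_k$ (valid for every $k$ since each Gegenbauer matrix is PSD and the Gegenbauer coefficients of a PSD kernel are non-negative) implies $\bY_k^\sT\bXi\bY_k \preceq (B(\cA_d,k)/\mu_{d,k})\id$, whence the tail contributions to both $\bc^\sT\bSigma\bc$ and the Frobenius sum are bounded by a constant multiple of $\sum_{k > m}F_{d,k}^2/\mu_{d,k} = O(d^\delta)$ with $\delta < 1$, producing an $o_d(1)$ contribution after the $1/B(\cA_d,k)$ or $1/n$ prefactor. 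Combining the low-degree concentration with this Assumption~\ref{ass:technical_ass} tail bound yields $\E_\bX\Var_{f_*,\beps}(R_{\text{test}}) = o_d(1)$ and completes the proof.
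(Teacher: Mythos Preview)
Your overall reduction---Jensen to pass from $L^1$ to conditional variance, followed by expanding $R_{\text{test}}$ as a degree-two polynomial in the independent coordinates $(\tbbeta,\beps)$ and bounding $\bc^\sT\bSigma\bc + \|\bSigma^{1/2}\bA\bSigma^{1/2}\|_F^2$---is exactly the paper's strategy (Proposition~\ref{prop:risk_bound_variance}). The paper just expands into the six terms $T_1,\ldots,T_6$ instead of packaging everything as a single quadratic form, but the content is the same. Your identification of Assumption~\ref{ass:technical_ass} as the tool for the tail $k>m$, via the PSD inequality $\bY_k^\sT\bXi\bY_k\preceq (B(\cA_d,k)/\mu_{d,k})\id$, is also the paper's device (it writes it as $\|\bA^{-1/2}\bF_{\geq m}\bA^{-1/2}\|_\op\le\sum_{k>m}F_{d,k}^2/\mu_{d,k}$).

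There is, however, a genuine gap in your handling of the ``finitely many degrees $\ell\le k,k'\le m$'' contribution to the Frobenius sum. The claim $\Tr(\bY_k^\sT\bXi\bY_k)=O_{d,\P}(n)$ is false for $k\ge\ell$: one has $\Tr(\bY_k^\sT\bXi\bY_k)=B(\cA_d,k)\,\Tr(\bXi\bQ_k)$ and $\Tr(\bXi\bQ_k)=\Theta_{d,\P}(n)$ (since $\bQ_k\approx\id_n$ for $k>\ell$ and $\|\bXi\|_\op\asymp 1$), so the trace is $\Theta(nB(\cA_d,k))$, not $O(n)$. Consequently the global bound $\bXi\bM\bXi\preceq C\bXi$ is too crude: it gives each Frobenius term of order $n$, and the prefactor $1/(B_kB_{k'})$ only cancels $B_kB_{k'}$, leaving $O(n)$. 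The paper fixes this by splitting $\bM=\bM_{<\ell}+\bM_{\geq\ell}$. For $\bM_{\geq\ell}$ one uses not $\|\bXi^{1/2}\bM\bXi^{1/2}\|_\op=O(1)$ but the much stronger $\|\bXi^{1/2}\bM_{\geq\ell}\bXi^{1/2}\|_\op\le\sup_{k\ge\ell}\xi_{d,k}=O(n^{-1})$, supplying the missing factor of $n^{-2}$. For $\bM_{<\ell}$ (and for all linear terms involving $\bv_0,\bc_0$), one needs the Sherman--Morrison--Woodbury identity~\eqref{eq:SMW_useful} to rewrite $\bD_{\leq\ell-1}\bY_{\leq\ell-1}^\sT\bXi=\bL\bY_{\leq\ell-1}^\sT\bA^{-1}/n$ and then the expectation bounds of Proposition~\ref{prop:E_bound_for_var}; the ingredients you list (concentration of $\bY_{\leq\ell-1}^\sT\bY_{\leq\ell-1}/n$ and $\|\bXi\|_\op=O(1)$) do not by themselves show $\|\bXi\bv_0\|_2^2=o(1)$, since $\|\bv_0\|_2^2=\Theta(n)$. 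Finally, because you need $\E_\bX[\Var_{f_*,\beps}]$ and not just a high-probability bound, the in-probability estimates you cite must be upgraded to $L^q$ bounds over $\bX$; the paper does this in Section~\ref{sec:technical_bound_expectation} (e.g.\ Lemma~\ref{lem:bound_Ql} and Proposition~\ref{prop:E_bound_for_var}), and this step is not automatic.
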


The proofs of Propositions \ref{prop:risk_decompo} and \ref{prop:risk_concentration} can be found in Sections \ref{sec:proof_risk_decompo} and \ref{sec:proof_risk_concentration} respectively. The characterization of the test error in Theorem \ref{thm:KRR} follows directly from these two propositions.

\subsection{Proof of Proposition \ref{prop:risk_decompo}}\label{sec:proof_risk_decompo}

\noindent
{\bf Step 1. Bounding the contribution of $\proj_{\leq \ell -1} ( f_* - \hf ( \cdot ; \hba_\lambda))$.}

We decompose the contribution along $\proj_{\leq \ell -1}$ as follows:
\begin{equation}\label{eq:first_term_KRR}
\begin{aligned}
\E_{f_* , \beps} \Big[  \big\| \proj_{\leq \ell-1}(f_* - \hf ( \cdot ; \hba_\lambda))\big\|_{L^2}^2\Big]  = &~\E_{f_* , \beps} \Big[  \big\|\bbeta_* - \by^\sT \bXi \bY_{\leq \ell -1} \bD_{\leq \ell -1} \big\|_2^2\Big] \\
=&~ B_{11} + B_{12} + V_{1} \, ,
\end{aligned}
\end{equation}
where
\[
\begin{aligned}
B_{11} =&~ \big\| ( \id -  \bY_{\leq \ell-1}  \bXi \bY_{\leq \ell -1} \bD_{\leq \ell -1}  ) \bbeta_* \big\|_2^2 \, ,\\
B_{12} =&~ \sum_{k = \ell}^\infty  F_{d,k}^2 \Tr \big[ \bQ_{k} \bXi \bY_{\leq \ell -1} \bD_{\leq \ell -1}^2  \bY_{\leq \ell -1} \bXi \big] \, , \\
V_{1} = &~ \sigma_\eps^2 \Tr \big[ \bXi^2 \bY_{\leq \ell -1} \bD_{\leq \ell -1}^2  \bY_{\leq \ell -1} \big] \, .
\end{aligned}
\]
The terms $B_{11}+ B_{12}$ and $V_1$ correspond respectively to the bias and variance of the kernel estimator along the subspace $\proj_{\leq \ell -1}$. 

First from Lemma \ref{lem:YXiYD}, we get
\begin{equation}\label{eq:B11}
B_{11} = \| \bbeta_* \|_2^2 \cdot o_{d,\P} (1) = o_{d,\P} (1) \, .
\end{equation}
For the second term, notice that by Theorem 6 in \cite{mei21generalization} (see also Proposition \ref{prop:GMMM_prop3} and Corollary \ref{cor:bound_exp_op} in Section \ref{sec:technical_bound_expectation}), we have
\[
\bF_{\geq \ell} := \sum_{k = \ell}^\infty  F_{d,k}^2  \bQ_{k} = F_{d,\ell}^2 \bQ_{\ell} + F_{>\ell}^2 \cdot ( \id_n + \Delta) \, .
\]
Using $\| \bQ_\ell \|_\op = O_{d,\P}(d^\delta)$ for any $\delta >0$ by Lemma \ref{lem:bound_Ql}, we can apply Eq.~\eqref{eq:XiMXi_3} of Lemma \ref{lem:XiMXi} and obtain
\begin{equation}\label{eq:B12}
B_{12} = \Tr \big[ \bA \bXi \bY_{\leq \ell -1} \bD_{\leq \ell -1}^2  \bY_{\leq \ell -1} \bXi \big] = o_{d,\P} (1) \, .
\end{equation}
Similarly, by Eq.~\eqref{eq:XiMXi_3} of Lemma \ref{lem:XiMXi} with $\bA = \id_n$, we get
\begin{equation}\label{eq:V1}
\begin{aligned}
V_1 = \sigma_\eps^2 \Tr \big[ \bXi \bY_{\leq \ell -1} \bD_{\leq \ell -1}^2  \bY_{\leq \ell -1} \bXi \big] = o_{d,\P} (1) \, .
\end{aligned}
\end{equation}

Combining Eqs.~\eqref{eq:B11}, \eqref{eq:B12} and \eqref{eq:V1} in Eq.~\eqref{eq:first_term_KRR} yields the first of the three contributions:
\begin{equation}\label{eq:bound_leqell}
\E_{f_* , \beps} \Big[  \big\| \proj_{\leq \ell-1}(f_* - \hf ( \cdot ; \hba_\lambda))\big\|_{L^2}^2\Big] = o_{d,\P}(1) \, .
\end{equation}

% \notate{Find out what to do with below}
% \[
% B_{12} = \sum_{k = \ell}^\infty  F_{d,k}^2 \Tr \big[ \bQ_{k}  \bY_{\leq \ell -1}  \bY_{\leq \ell -1}^\sT \big]/n^2 + o_{d,\P}(1) \cdot \left( F_\ell^2 + F_{>\ell}^2 \right) \, ,
% \]
% where we used that $\Tr [ \bQ_{k} ] = n$. Notice further that 
% \[
% \E_{\bX} \left[ \sum_{k = \ell}^\infty  \frac{F_{d,k}^2}{B(\cA_d,k)} \Tr \big[ \bY_k \bY_k^\sT  \bY_{\leq \ell -1}  \bY_{\leq \ell -1}^\sT \big]/n^2 \right] = \sum_{s = 0 }^{\ell-1} \sum_{k = \ell}^\infty F_{d,k}^2  \frac{B(\cA_d, s)}{n} =  o_{d,\P}(1)\, ,
% \]
% where we used that $B(\cA_d , s)/n = O_d( d^{-1})$ for $s\leq \ell -1$, and that for $k \neq s$ (see for example Lemma E.2 in \cite{ghorbani2021linearized})
% \begin{equation}\label{eq:exp_YYYY}
% \E_{\bX} \big[  \bY_k^{\sT} \bY_{s} \bY_s^\sT \bY_k \big] = n B(\cA_d,s) \cdot \id_{B(\cA_d , k)} \, .
% \end{equation}
% \notae{above}

\noindent
{\bf Step 3. Bounding the contribution of $\proj_{>\ell} ( f_* - \hf ( \cdot ; \hba_\lambda))$.}

We will in fact show that $\E_{f_*, \beps} \big[ \| \proj_{>\ell} \hf ( \cdot ; \hba_\lambda)) \|_{L^2}^2 \big] = o_{d,\P}(1)$. This implies that
\begin{equation}\label{eq:bound_geqell}
\begin{aligned}
\E_{f_* , \beps} \Big[  \big\| \proj_{>\ell}(f_* - \hf ( \cdot ; \hba_\lambda))\big\|_{L^2}^2\Big] =&~ \E_{f_* , \beps} \Big[  \big\| \proj_{>\ell}f_* \big\|_{L^2}^2\Big]  + \E_{f_*, \beps} \big[ \| \proj_{>\ell} \hf ( \cdot ; \hba_\lambda)) \|_{L^2}^2 \big] \\
&~ + 2 \cdot \E_{f_* , \beps} \Big[  \big\| \proj_{>\ell}f_* \big\|_{L^2}^2\Big]^{1/2} \E_{f_*, \beps} \big[ \| \proj_{>\ell} \hf ( \cdot ; \hba_\lambda)) \|_{L^2}^2 \big]^{1/2} \\
=&~ \E_{f_* , \beps} \Big[  \big\| \proj_{>\ell}f_* \big\|_{L^2}^2\Big]  + o_{d,\P} (1) \\
=&~ F_{>\ell}^2 + o_{d,\P} (1) \, .
\end{aligned}
\end{equation}
We have simply 
\[
\begin{aligned}
\E_{f_* , \beps} \Big[  \big\| \proj_{>\ell} \hf ( \cdot ; \hba_\lambda)\big\|_{L^2}^2\Big]  = &~ \sum_{k = \ell+1}^\infty \E_{f_* , \beps}  \Big[  \big\| \xi_k \by^\sT \bXi \bY_{k}  \big\|_2^2\Big] \\
=&~ \E_{f_* , \beps}  \Big[  \by^\sT  \bXi \bM_{>\ell} \bXi \by  \Big] \leq \|  \bXi \bM_{>\ell} \bXi \|_{\op} \E_{f_* , \beps}  \cdot \big[  \| \by \|_2^2 \big] \, .
\end{aligned}
\]
Note that
\[
\E_{f_* , \beps, \bX}  \big[  \| \by \|_2^2 \big] = n \left( \sigma_\eps^2 + \E_{f_*} \big[ \| f_* \|_{L^2}^2 \big] \right) = O_d ( n ) \, ,
\]
and therefore $\E_{f_* , \beps}  \big[  \| \by \|_2^2 \big] = n \cdot O_{d,\P}(1)$ by Markov's inequality. We deduce using bound \eqref{eq:XiMXi_2} from Lemma \ref{lem:XiMXi},
\[
\E_{f_* , \beps} \Big[  \big\| \proj_{>\ell} \hf ( \cdot ; \hba_\lambda)\big\|_{L^2}^2\Big] = n\|  \bXi \bM_{>\ell} \bXi \|_{\op} \cdot O_{d,\P} (1) = o_{d,\P} (1) \, .
\]

\noindent
{\bf Step 4. Bounding the contribution of $\proj_{\ell } ( f_* - \hf ( \cdot ; \hba_\lambda))$.}

We decompose the contribution along $\proj_{\ell}$ as follows:
\[
\begin{aligned}
\E_{f_* , \beps} \Big[  \big\| \proj_{ \ell}(f_* - \hf ( \cdot ; \hba_\lambda))\big\|_{L^2}^2\Big]  = &~\E_{f_* , \beps} \Big[  \big\|\tbbeta_\ell - \xi_{\ell} \by^\sT \bXi \bY_{\ell} \big\|_2^2\Big]
= B_{21} + B_{22} + B_{23} + V_{2} \, ,
\end{aligned}
\]
where
\[
\begin{aligned}
B_{21} =&~ F_{d,\ell}^2 \Big\{ 1 - 2 \xi_\ell \cdot \Tr ( \bXi \bQ_{\ell} ) + \xi_{\ell}^2 B(\cA_d, \ell) \cdot \Tr ( \bXi \bQ_{\ell} \bXi \bQ_\ell ) \Big\}   \, ,\\
B_{22} =&~  \sum_{k =\ell+1}^\infty F_{d,k}^2 \xi_{\ell}^2 B(\cA_d, \ell) \cdot \Tr ( \bXi \bQ_{\ell} \bXi \bQ_k ) \, , \\
B_{23} =&~ \xi_{\ell}^2 \| \bY^\sT_{\ell} \bXi \bY_{\leq \ell -1 } \bbeta_* \|_2^2  \, , \\
V_{2} = &~ \sigma_\eps^2 \xi_{\ell}^2 B (\cA_d, \ell) \cdot \Tr \big[ \bXi^2 \bQ_{\ell}  \big] \, .
\end{aligned}
\]

Let us first show that $B_{23}$ goes to zero in probability. Using $\| \bQ_\ell \|_{\op} = O_{d,\P}(d^\delta)$ for any $\delta >0$,  
\[
    \begin{aligned}
    B_{23} =&~ \xi_\ell^2 \bbeta_*^\sT \bY_{\leq \ell-1}^\sT \bXi \bY_{\ell} \bY_{\ell}^\sT\bXi \bY_{\leq \ell - 1} \bbeta_* \\
    \leq &~ \frac{\mu_{d,\ell}^2}{B(\cA_d, \ell)} \| \bQ_\ell \|_{\op} \| \bbeta_* \|_2^2  \| \bY_{\leq \ell-1}^\sT \bXi^2 \bY_{\leq \ell - 1} \|_{\op} \\
    =&~ O_{d,\P} (n^{-1}d^\delta) \cdot \| \bY_{\leq \ell-1}^\sT \bXi^2 \bY_{\leq \ell - 1} \|_{\op} \, .
    \end{aligned}
\]
From Eq.~\eqref{eq:SMW_useful}, we have
\[
 \bY_{\leq \ell-1}^\sT \bXi^2 \bY_{\leq \ell - 1} = \bD_{\leq \ell - 1}^{-1} \bL \bY_{\leq \ell-1}^\sT \bA^{-2} \bY_{\leq \ell-1} \bL  \bD_{\leq \ell - 1}^{-1} / n^2 \, ,
\]
 where we denoted $\bL := \big[ (n\bD_{\leq \ell - 1})^{-1} + \bY_{\leq \ell - 1 }^\sT \bA^{-1} \bY_{\leq \ell - 1}/ n \big]^{-1}$ and $\bA := \sum_{k \geq \ell} \mu_{d,k} \bQ_k + \lambda \id_n$. From Eq.~\eqref{eq:bound_LYAAYLop} in Proposition \ref{prop:E_bound_for_var}, we obtain
 \begin{equation}
 \begin{aligned}
    B_{23} = &~  O_{d,\P} (d^\delta n^{-1})\cdot \| \bD_{\leq \ell -1}^{-1} \|_\op^2 \cdot \|\bL \bY_{\leq \ell-1}^\sT \bA^{-2} \bY_{\leq \ell-1} \bL \|_\op / n^2 \\
    =&~ O_{d,\P} (d^\delta n^{-2}) \cdot O_d (d^{2(\ell -1)}) = o_{d,\P}(1)\, ,
    \end{aligned}
 \end{equation}
where we used that $\| \bD_{\leq \ell -1 } \|_\op= \max_{k<\ell} \xi_k^{-1} = O_d (d^{\ell - 1})$.

Let us now simplify $B_{22}$: applying Theorem 6 in \cite{mei21generalization}, we have 
\[
\sum_{k = \ell+1}^\infty F_{d,k}^2 \bQ_k = \left( \sum_{k > \ell} F_{d,k}^2 \right)\cdot \big[ \id_n + \Delta \big] = F_{>\ell}^2 \big[ \id_n + \Delta \big] \, ,
\]
and therefore 
\[
B_{22} = \xi_{\ell}^2 B(\cA_d , \ell) \cdot (1 + o_{d,\P}(1)) \cdot \Tr ( \bXi \bQ_\ell \bXi ) \, .
\]

Denote $\bR ( \zeta) = ( \bQ_\ell + \zeta \id_n)^{-1}$ and $\zeta_* = (\lambda + \mu_{>\ell})/\mu_{\ell}$. Notice that because 
\[
\| \bXi \|_{\op}, \| \bQ_\ell \bXi \|_{\op}, \| \bQ_\ell \bR(\zeta) \|_{\op}, \| \bR(\zeta) \|_{\op}  = O_{d,\P}(1)\, ,
\]
we can use Lemma \ref{lem:replace_resolvent} and simplify the expression of the different terms:
\[
\begin{aligned}
B_{21} =&~ F_{d,\ell}^2 \Big\{ 1 - 2 \frac{\xi_\ell}{\mu_{d,\ell}} \cdot \Tr \big[ \bR(\zeta_*) \bQ_{\ell} \big]  + \frac{\xi_{\ell}^2 B(\cA_d, \ell)}{\mu_{d,\ell}^2} \cdot \Tr \big[ \bR(\zeta_*) \bQ_{\ell} \bR(\zeta_*) \bQ_\ell \big]   + o_{d,\P}(1)\Big\}  \\
=&~ F_{\ell}^2 \Big\{ 1 - 2 \frac{\psi}{n} \Tr \big[ \bR(\zeta_*) \bQ_{\ell} \big]  + \frac{\psi}{n}\Tr \big[ \bR(\zeta_*) \bQ_{\ell} \bR(\zeta_*) \bQ_\ell \big]   \Big\} + o_{d,\P}(1) \, , \\
B_{22} =&~ F_{>\ell}^2 \frac{\psi}{n} \Tr \big[ \bR(\zeta_*) \bQ_{\ell} \bR(\zeta_*) \big] + o_{d,\P}(1) \, , \\
V_2 =&~ \sigma_\eps^2 \frac{\psi}{n} \Tr \big[ \bR(\zeta_*) \bQ_{\ell} \bR(\zeta_*) \big] + o_{d,\P}(1) \, .
\end{aligned}
\]
Finally, Lemma \ref{lem:calculus_resolvent} shows that 
\[
\begin{aligned}
\Big\{ 1 - 2 \frac{\psi}{n} \Tr \big[ \bR(\zeta_*) \bQ_{\ell} \big]  + \frac{\psi}{n}\Tr \big[ \bR(\zeta_*) \bQ_{\ell} \bR(\zeta_*) \bQ_\ell \big]   \Big\} =&~ \cB ( \psi , \zeta_* ) +o_{d,\P}(1)\, , \\
\frac{\psi}{n} \Tr \big[ \bR(\zeta_*) \bQ_{\ell} \bR(\zeta_*) \big] = &~ \cV ( \psi , \zeta_* ) +o_{d,\P}(1) \, ,
\end{aligned}
\]
which concludes the proof.

\subsubsection{Technical results: bounds in probability}
\label{sec:technical_KRR}

\begin{lemma}\label{lem:YXiYD} Follow the assumptions and notations in the proof of Theorem \ref{thm:KRR}. We have
\begin{equation}
\big\|  \bY_{\leq \ell-1}^\sT  \bXi \bY_{\leq \ell -1} \bD_{\leq \ell -1} - \id_{B_{\ell-1}}  \big\|_{\op} = o_{d,\P} (1) \, .
\end{equation}
\end{lemma}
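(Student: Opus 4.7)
The natural approach is to split the kernel matrix into its low-degree spike and high-degree/bulk parts and then apply the Sherman--Morrison--Woodbury formula to $\bXi$. Write
\[
\bH + \lambda \id_n = \bY_{\leq \ell - 1} \bD_{\leq \ell -1} \bY_{\leq \ell -1}^\sT + \bA, \qquad \bA := \sum_{k \geq \ell} \mu_{d,k} \bQ_k + \lambda \id_n,
\]
which matches the matrix $\bA$ already used in the proof of Theorem \ref{thm:KRR}. Woodbury then gives
\[
\bY_{\leq \ell -1}^\sT \bXi \bY_{\leq \ell - 1} = \bG - \bG\bigl(\bD_{\leq \ell - 1}^{-1} + \bG\bigr)^{-1} \bG, \qquad \bG := \bY_{\leq \ell - 1}^\sT \bA^{-1} \bY_{\leq \ell - 1}.
\]

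A short algebraic manipulation reduces the claim to a resolvent estimate. Multiplying on the right by $\bD_{\leq \ell - 1}$, using $\bG(\bD_{\leq \ell - 1}^{-1} + \bG)^{-1} = \id - \bD_{\leq \ell - 1}^{-1}(\bD_{\leq \ell - 1}^{-1} + \bG)^{-1}$, one finds
\[
\bY_{\leq \ell - 1}^\sT \bXi \bY_{\leq \ell - 1} \bD_{\leq \ell -1} - \id_{B_{\ell - 1}} = - \bD_{\leq \ell -1}^{-1}\bigl(\bD_{\leq \ell -1}^{-1} + \bG\bigr)^{-1}.
\]
Since both $\bG$ and $\bD_{\leq \ell -1}^{-1}$ are PSD, one has $(\bD_{\leq \ell -1}^{-1} + \bG)^{-1} \preceq \bG^{-1}$, so it suffices to show that $\| \bD_{\leq \ell -1}^{-1}\|_\op \cdot \|\bG^{-1}\|_\op = o_{d,\P}(1)$.

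The two factors are then controlled separately. By Assumption \ref{ass:genericity_kernel}(a), $\|\bD_{\leq \ell - 1}^{-1}\|_\op = (\min_{k \leq \ell - 1} \xi_{d,k})^{-1} = O_d(d^{\ell - \delta})$ for some fixed $\delta > 0$. For the second factor, note that $\bA \preceq \bigl(\mu_{d,\ell}\|\bQ_\ell\|_\op + \|\bH_{>\ell}\|_\op + \lambda\bigr)\id_n$, and both $\|\bH_{>\ell} - \mu_{>\ell}\id_n\|_\op = o_{d,\P}(1)$ (by the GMMM high-degree estimate cited in \eqref{eq:Hn_decompo}) and $\|\bQ_\ell\|_\op = O_{d,\P}(d^{\delta'})$ for any $\delta' > 0$ (by Lemma \ref{lem:bound_Ql}), so $\|\bA\|_\op = O_{d,\P}(d^{\delta'})$. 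Hence
\[
\bG \succeq \|\bA\|_\op^{-1}\, \bY_{\leq \ell - 1}^\sT \bY_{\leq \ell -1} = \Omega_{d,\P}(n d^{-\delta'}) \cdot \id_{B_{\ell-1}},
\]
using the spike concentration \eqref{eq:spiked_matrix} to replace $\bY_{\leq \ell-1}^\sT \bY_{\leq \ell -1}$ by $n\,\id_{B_{\ell - 1}}(1 + o_{d,\P}(1))$, which is valid since $n \asymp d^\ell \gg d^{\ell -1}\log d$. Combining the bounds and using $n = \Theta_d(d^\ell)$ yields
\[
\|\bD_{\leq \ell -1}^{-1}\|_\op \cdot \|\bG^{-1}\|_\op = O_{d,\P}\bigl(d^{\ell - \delta}\cdot d^{\delta'}/n\bigr) = O_{d,\P}(d^{-\delta + \delta'}),
\]
which tends to zero after choosing $\delta' < \delta$.

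The only step that requires care is ensuring the constants $\delta,\delta'$ can be separated cleanly; this is where Assumption \ref{ass:genericity_kernel}(a), namely that the low-degree spike has a strictly better decay than $d^{-\ell}$, is crucial. All the other inputs (Woodbury, the spike concentration \eqref{eq:spiked_matrix}, and the operator-norm bound on $\bQ_\ell$) are already available, so there is no further obstacle.
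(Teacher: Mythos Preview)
Your proof is correct and follows essentially the same route as the paper: split off the low-degree spike, apply Sherman--Morrison--Woodbury, and reduce the claim to the identity $\bY_{\leq \ell-1}^\sT \bXi \bY_{\leq \ell-1}\bD_{\leq \ell-1} - \id = -\bD_{\leq \ell-1}^{-1}(\bD_{\leq \ell-1}^{-1}+\bG)^{-1}$, then bound the right-hand side using Assumption~\ref{ass:genericity_kernel}(a), the spike concentration \eqref{eq:spiked_matrix}, and the $O_{d,\P}(d^{\delta'})$ bound on $\|\bQ_\ell\|_\op$. The only cosmetic difference is that the paper works with the normalized bulk matrix $\bE=\bA/(\kappa_H+\lambda)$ and the rescaled resolvent $\bR$, while you keep everything unnormalized; your explicit separation of the exponents $\delta$ and $\delta'$ is in fact cleaner than the paper's slightly loose ``$\|\bR\|_\op=O_{d,\P}(1)$''.
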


\begin{proof}[Proof of Lemma \ref{lem:YXiYD}]
Recall that we can decompose
\[
\bH + \lambda \id_n = \bY_{\leq \ell - 1} \bD_{\leq \ell - 1} \bY_{\leq \ell - 1}^\sT + \xi_\ell\bY_{\ell} \bY_{\ell}^\sT  + ( \kappa_H + \lambda) (\id_n + \bDelta) \, ,
\]
where $\| \bDelta \|_{\op} = o_{d,\P}(1)$. By the Sherman-Morrison-Woodbury formula, we have
\[
 \bY_{\leq \ell-1}^\sT  \bXi \bY_{\leq \ell -1} \bD_{\leq \ell -1} =  \bY_{\leq \ell-1}^\sT \bE^{-1} \bY_{\leq \ell -1} \bR / n \, ,
\]
where $\bE = \xi_\ell/( \kappa_H + \lambda) \cdot  \bY_{\ell} \bY_{\ell}^\sT +  \id_n + \bDelta$ and 
\[
\bR = \big[ ( \kappa_H + \lambda) (n \bD_{\leq \ell -1} )^{-1} + \bY_{\leq \ell -1}^\sT \bE^{-1} \bY_{\leq \ell - 1} /n  \big]^{-1}\, .
\]
From the above formula, we deduce that
\[
 \bY_{\leq \ell-1}^\sT  \bXi \bY_{\leq \ell -1} \bD_{\leq \ell -1}  - \id = - ( \kappa_H + \lambda) (n \bD_{\leq \ell -1} )^{-1} \bR \, .
\]
Notice that $\sigma_{\min} ( \bE^{-1} ) \geq \Omega_{d,\P} (1 / \| \bQ_\ell \|_\op ) = \Omega_{d,\P} (1)$ and recall $\| ( \kappa_H + \lambda) (n \bD_{\leq \ell -1} )^{-1} \|_{\op} = o_{d,\P}(1)$. We have $\| \bY^\sT_{\leq \ell-1} \bY_{\leq \ell -1} / n - \id_{B-1} \|_{\op} = o_{d,\P}(1)$ (see Lemma \ref{lem:YY_concentration}). We deduce that $\| \bR \|_{\op} = O_{d,\P}(1)$ and
\[
\| \bY_{\leq \ell-1}^\sT  \bXi \bY_{\leq \ell -1} \bD_{\leq \ell -1}  - \id  \|_{\op} = o_{d,\P} (1)\, ,
\]
as desired.

% It will be useful (for the next lemma) to show the stronger result $\| \bY_{\leq \ell -1}^\sT \bE^{-1} \bY_{\leq \ell - 1} /n  - \id_{B_{\ell - 1}} \|_{\op} = o_{d,\P} (1) $. Using again Sherman-Morrison-Woodbury formula, we have
% \[
% \begin{aligned}
% \bY_{\leq \ell -1}^\sT \bE^{-1} \bY_{\leq \ell - 1} /n  =&~ \bY_{\leq \ell -1}^\sT \big[ \id_{B_{\ell - 1}} + \xi_\ell/( \kappa_H + \lambda) \cdot  \xi_\ell\bY_{\ell} \bY_{\ell}^\sT \big]^{-1} \bY_{\leq \ell - 1} /n  + \bDelta \\
% =&~ \bY_{\leq \ell -1}^\sT  \bY_{\leq \ell - 1} /n - \bY_{\leq \ell -1}^\sT \bY_{\ell} \big[ ( \kappa_H + \lambda) / \xi_{\ell} \cdot \id_{B_{\ell - 1}} + \bY_{\ell}^\sT \bY_{\ell} \big]^{-1} \bY_{\ell}^\sT \bY_{\leq \ell - 1} /n   + \bDelta \, ,
% \end{aligned}
% \]
% where we used that $\| b$

\end{proof}

\begin{lemma}\label{lem:XiMXi} Follow the assumptions and notations in the proof of Theorem \ref{thm:KRR}. We have
\begin{align}
    \big\|  \bY_{\leq \ell -1}^\sT \bXi \bY_{\leq \ell -1} \bD_{\leq \ell -1}^2 \bY_{\leq \ell -1}^\sT  \bXi \bY_{\leq \ell -1}  - \id_{B_{\ell - 1}} \big\|_{\op} =&~ o_{d,\P} (1)\label{eq:XiMXi_1} \, , \\
   n  \big\|    \bXi \bM_{>\ell}  \bXi \big\|_{\op} =&~ O_{d,\P} (d^{-1}) \label{eq:XiMXi_2} \, .
\end{align}
and for any symmetric matrix $\bA \in \R^{n \times n}$ such that $\| \bA \|_{\op} = O_{d,\P}(d^\delta)$ for some $\delta <1$, 
\begin{align}
\Tr \big[  \bA \bXi \bY_{\leq \ell -1} \bD_{\leq \ell -1}^2 \bY_{\leq \ell -1}^\sT  \bXi \big] =&~ o_{d,\P} (1) \, .\label{eq:XiMXi_3} 
\end{align}
\end{lemma}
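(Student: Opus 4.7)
The three bounds are of increasing difficulty, and I would treat them in order. For \eqref{eq:XiMXi_1}, I would observe that with $\bU := \bY_{\leq \ell - 1}^\sT \bXi \bY_{\leq \ell - 1} \bD_{\leq \ell - 1}$, symmetry of $\bXi$ and $\bD_{\leq \ell - 1}$ gives $\bU^\sT = \bD_{\leq \ell - 1} \bY_{\leq \ell - 1}^\sT \bXi \bY_{\leq \ell - 1}$, so the expression inside the operator norm in \eqref{eq:XiMXi_1} is exactly $\bU\bU^\sT - \id_{B_{\ell-1}}$. Since Lemma \ref{lem:YXiYD} gives $\bU = \id + \bDelta$ with $\|\bDelta\|_\op = o_{d,\P}(1)$, the bound $\|\bU\bU^\sT - \id\|_\op \leq 2\|\bDelta\|_\op + \|\bDelta\|_\op^2 = o_{d,\P}(1)$ is immediate.

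For \eqref{eq:XiMXi_2}, Proposition \ref{prop:GMMM_prop3} (Corollary \ref{cor:bound_exp_op}) gives $\bM_{>\ell} = \kappa_M(\id_n + \Delta_M)$ and $\bH_{>\ell} = \kappa_H(\id_n + \Delta_H)$ with $\|\Delta_M\|_\op, \|\Delta_H\|_\op = o_{d,\P}(1)$. Since $\bH + \lambda\id_n \succeq \bH_{>\ell} + \lambda\id_n$, eventually $\bH + \lambda\id_n \succeq ((\kappa_H+\lambda)/2)\id_n$, giving $\|\bXi\|_\op = O_{d,\P}(1)$ (the denominator tends to $(\mu_{>\ell} + \lambda)/2 > 0$). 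Combining, $n\|\bXi \bM_{>\ell} \bXi\|_\op = O_{d,\P}(n\kappa_M)$, and it remains to show $n\kappa_M = O(d^{-1})$. Computing $\kappa_M = \sum_{k > \ell} \mu_{d,k}^2/B(\cA_d, k) \leq \mu_{d,>\ell} \cdot \max_{k > \ell}\mu_{d,k}/B(\cA_d, k) = O(d^{-\ell-1})$, where the final bound uses $B(\cA_d, k) = \Omega(d^{\ell+1})$ uniformly in $k > \ell$ (automatic on the sphere, and on the hypercube using Assumption \ref{ass:genericity_kernel}(c) to control the high-frequency regime), I get the claim since $n = \Theta(d^\ell)$.

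For \eqref{eq:XiMXi_3}, I first reduce to a Frobenius-norm bound. Since $\bC := \bXi \bY_{\leq \ell -1} \bD_{\leq \ell -1}^2 \bY_{\leq \ell -1}^\sT \bXi \succeq 0$, the estimate $|\Tr[\bA \bC]| \leq \|\bA\|_\op \Tr[\bC]$ reduces the problem to showing $\Tr[\bC] = \|\bXi \bY_{\leq \ell-1} \bD_{\leq \ell-1}\|_F^2 = O_{d,\P}(d^{-1})$, which combined with $\|\bA\|_\op = O_{d,\P}(d^\delta)$, $\delta < 1$, gives $o_{d,\P}(1)$. To obtain this sharper bound I would apply the Sherman--Morrison--Woodbury identity to the decomposition $\bH + \lambda\id_n = \bY_{\leq \ell-1} \bD_{\leq \ell-1} \bY_{\leq \ell-1}^\sT + \bN$ with $\bN := \bH_{\geq \ell} + \lambda\id_n$, yielding
\begin{equation*}
\bXi \bY_{\leq \ell-1} \bD_{\leq \ell-1} = \bN^{-1} \bY_{\leq \ell-1} \bT, \qquad \bT := \bigl(\bD_{\leq \ell-1}^{-1} + \bY_{\leq \ell-1}^\sT \bN^{-1} \bY_{\leq \ell-1}\bigr)^{-1}.
\end{equation*}
Since $\bN \succeq (\kappa_H/2)\id_n$ eventually and $\bY_{\leq \ell-1}^\sT \bY_{\leq \ell-1}/n = \id + o_{d,\P}(1)$ (Lemma \ref{lem:YY_concentration}), I find $\lambda_{\min}(\bY_{\leq \ell-1}^\sT \bN^{-1} \bY_{\leq \ell-1}) = \Omega_{d,\P}(n)$, hence $\|\bT\|_\op = O_{d,\P}(1/n)$. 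Using the algebraic identity $\bT (\bY_{\leq \ell-1}^\sT \bN^{-1} \bY_{\leq \ell-1}) \bT = \bT - \bT \bD_{\leq \ell-1}^{-1} \bT \preceq \bT$ and cyclic invariance of the trace, I bound $\Tr[\bC] \leq \|\bN^{-1}\|_\op \Tr[\bT] \leq \|\bN^{-1}\|_\op B_{\ell-1} \|\bT\|_\op = O_{d,\P}(B_{\ell-1}/n) = O_{d,\P}(d^{-1})$, as required.

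The main obstacle is \eqref{eq:XiMXi_3}: naive bounds such as $\Tr[\bC] \leq B_{\ell-1}\|\bC\|_\op$ only give $O_{d,\P}(d^{\ell-1})$ or $O_{d,\P}(1)$, which cannot overcome the growing $\|\bA\|_\op = O_{d,\P}(d^\delta)$. The critical quantitative input is that $\bT$ (defined via SMW) has operator norm $O_{d,\P}(1/n)$ due to the near-isotropy of $\bY_{\leq \ell-1}^\sT \bY_{\leq \ell-1}/n$, and this extra factor of $1/n$ (beyond the $B_{\ell-1} \asymp d^{\ell-1}$ contribution from the rank of $\bC$) is what produces the final $O_{d,\P}(d^{-1})$ and closes the argument.
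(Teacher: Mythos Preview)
Your argument is essentially the paper's proof, and parts \eqref{eq:XiMXi_1} and \eqref{eq:XiMXi_2} are correct as written. For \eqref{eq:XiMXi_3}, your SMW reduction and the algebraic identity $\bT(\bY_{\leq\ell-1}^\sT\bN^{-1}\bY_{\leq\ell-1})\bT=\bT-\bT\bD_{\leq\ell-1}^{-1}\bT\preceq\bT$ are a clean way to organize the bound, slightly tidier than the paper's version (which pulls out $\|\bA\|_\op$, $\|\bR\|_\op$, $\|\bE^{-1}\|_\op$ directly and uses $\Tr[\bY_{\leq\ell-1}\bY_{\leq\ell-1}^\sT]=nB_{\ell-1}$).

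There is, however, one slip in your justification of $\|\bT\|_\op = O_{d,\P}(1/n)$. You invoke $\bN \succeq (\kappa_H/2)\id_n$, but that lower bound on $\bN$ only yields $\bN^{-1}\preceq (2/\kappa_H)\id_n$, an \emph{upper} bound on $\bN^{-1}$; to lower-bound $\lambda_{\min}(\bY_{\leq\ell-1}^\sT\bN^{-1}\bY_{\leq\ell-1})$ you need a \emph{lower} bound on $\bN^{-1}$, i.e.\ an upper bound on $\|\bN\|_\op$. Since $\bN = \mu_{d,\ell}\bQ_\ell + \bH_{>\ell} + \lambda\id_n$ and only $\|\bQ_\ell\|_\op = O_{d,\P}(d^{\delta'})$ for arbitrary $\delta'>0$ is available (Lemma~\ref{lem:bound_Ql}), you get $\|\bN\|_\op = O_{d,\P}(d^{\delta'})$ and hence $\|\bT\|_\op = O_{d,\P}(d^{\delta'}/n)$ rather than $O_{d,\P}(1/n)$. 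This still closes the argument: $\Tr[\bC] = O_{d,\P}(1)\cdot B_{\ell-1}\cdot O_{d,\P}(d^{\delta'}/n) = O_{d,\P}(d^{\delta'-1})$, and since $\|\bA\|_\op = O_{d,\P}(d^\delta)$ with $\delta<1$, choosing $\delta'<1-\delta$ gives $|\Tr[\bA\bC]|=o_{d,\P}(1)$. (The paper's own proof has the analogous imprecision, asserting $\|\bR\|_\op = O_{d,\P}(1)$ where only $O_{d,\P}(d^{\delta'})$ is actually established.)
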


\begin{proof}[Proof of Lemma \ref{lem:XiMXi}] The first bound \eqref{eq:XiMXi_1} follows simply from Lemma \ref{lem:YXiYD}. For the second bound \ref{eq:XiMXi_2}, we have
\[
n  \big\|    \bXi \bM_{>\ell}  \bXi \big\|_{\op}  = O_{d,\P} (1) \cdot \frac{n \kappa_M}{(\kappa_H + \lambda)^2}= O_{d,\P} (1)  \cdot \sup_{k \geq \ell +1} n \xi_k^2 = O_{d,\P} (d^{-1}) \, ,
\]
where we used that $\bM_{>\ell} = \kappa_M (\id + \Delta )$ and $\| \bXi \|_{\op} = (\kappa_H + \lambda)^{-1} \cdot O_{d,\P}(1)$.

For the third bound \eqref{eq:XiMXi_3}, we follow some of the notations introduced in the proof of Lemma \ref{lem:YXiYD}. In particular, by Sherman-Morrison-Woodbury formula, we have
\[
\begin{aligned}
\Tr \big[  \bA \bXi \bY_{\leq \ell -1} \bD_{\leq \ell -1}^2 \bY_{\leq \ell -1}^\sT  \bXi \big] =&~ \Tr \big[  \bA \bE \bY_{\leq \ell -1} \bR^2 \bY_{\leq \ell -1}^\sT \bE \big]/n^2 \\
=&~ O_{d,\P} (d^\delta) \cdot \Tr \big[   \bY_{\leq \ell -1}\bY_{\leq \ell -1}^\sT \big]/n^2\\
=&~ O_{d,\P} (d^\delta) \cdot \frac{n B_{\ell-1}}{n^2} = o_{d,\P} (1) \, ,
\end{aligned}
\]
where we used $\| \bA \|_{\op} , \| \bE \|_{\op} , \| \bR \|_{\op} = O_{d,\P} (1)$ in the second line, and Markov's inequality on the last line with $\E_{\bX} \big[ \Tr [   \bY_{\leq \ell -1}\bY_{\leq \ell -1}^\sT ] \big] = n B_{\ell-1}$.
\end{proof}

\begin{lemma}\label{lem:replace_resolvent}
Follow the assumptions and notations in the proof of Theorem \ref{thm:KRR}. Let $\bB \in \R^{n \times n}$ a matrix such that $\| \bB \|_{\op} = O_{d,\P}(1)$. Then
\begin{equation}
     \frac{1}{n}\Tr \big[ (\bH + \lambda \id_n)^{-1} \bQ_\ell \bB \big] = \frac{1}{\mu_{\ell} n} \Tr \big[ (\bQ_{\ell} +\zeta_* \id_n)^{-1} \bQ_\ell  \bB \big] + o_{d,\P} (1) \, ,
\end{equation}
where we recall that $\zeta_* = ( \lambda + \mu_{>\ell})/\mu_{\ell} $.
\end{lemma}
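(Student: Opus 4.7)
The plan is to reduce $(\bH + \lambda\id_n)^{-1}$ to a resolvent of $\bQ_\ell$ in two stages: first peel off the low-rank spike contribution $\bY_{\leq \ell-1}\bD_{\leq \ell-1}\bY_{\leq \ell-1}^\sT$ via Sherman-Morrison-Woodbury, then absorb the isometric perturbation of $\bH_{>\ell}$ via a resolvent-identity perturbation argument. Concretely, I write $\bH + \lambda \id_n = \bS + \bA$ with $\bS := \bY_{\leq \ell-1}\bD_{\leq \ell-1}\bY_{\leq \ell-1}^\sT$ and $\bA := \mu_{d,\ell}\bQ_\ell + (\mu_{d,>\ell}+\lambda)\id_n + \mu_{d,>\ell}\bDelta_H$, where $\|\bDelta_H\|_\op = o_{d,\P}(1)$ by Proposition \ref{prop:GMMM_prop3}. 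Since $\bQ_\ell \succeq 0$, this gives $\|\bA^{-1}\|_\op = O_{d,\P}(1)$.

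The first step applies SMW to write $(\bH+\lambda\id_n)^{-1} = \bA^{-1} - \bA^{-1}\bY_{\leq \ell-1}\bL \bY_{\leq \ell-1}^\sT \bA^{-1}$, where $\bL := (\bD_{\leq \ell-1}^{-1} + \bY_{\leq \ell-1}^\sT \bA^{-1}\bY_{\leq \ell-1})^{-1}$ has $\|\bL\|_\op = O_{d,\P}(n^{-1})$ (the second term dominates and is $\Theta_{d,\P}(n)$ by Lemma \ref{lem:YY_concentration}). The correction term has rank $\leq B_{\ell-1}$, and von Neumann's trace inequality gives $\tfrac{1}{n}|\Tr[\bA^{-1}\bY_{\leq \ell-1}\bL\bY_{\leq \ell-1}^\sT\bA^{-1}\bQ_\ell\bB]| \leq \tfrac{B_{\ell-1}}{n}\|\bA^{-1}\|_\op^2 \|\bQ_\ell\|_\op \|\bB\|_\op \cdot \|\bY_{\leq \ell-1}\|_\op^2 \|\bL\|_\op = O_{d,\P}(B_{\ell-1}/n) = O_{d,\P}(d^{-1})$, using $\|\bQ_\ell\|_\op = O_{d,\P}(1)$ (from Theorem \ref{thm:spectrum_Gegenbauer} and Lemma \ref{lem:bound_Ql}). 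The second step uses the resolvent identity $\bA^{-1} - \bA_0^{-1} = -\bA^{-1}(\mu_{d,>\ell}\bDelta_H)\bA_0^{-1}$ with $\bA_0 := \mu_{d,\ell}(\bQ_\ell + \zeta_{d,*}\id_n)$ and $\zeta_{d,*} := (\mu_{d,>\ell}+\lambda)/\mu_{d,\ell}$, so $\|\bA^{-1} - \bA_0^{-1}\|_\op = o_{d,\P}(1)$. Then $\tfrac{1}{n}|\Tr[(\bA^{-1} - \bA_0^{-1})\bQ_\ell\bB]| \leq \|\bA^{-1} - \bA_0^{-1}\|_\op \|\bQ_\ell\|_\op \|\bB\|_\op = o_{d,\P}(1)$ by the same nuclear-norm / operator-norm trace bound. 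Finally, Assumption \ref{ass:genericity_kernel}(b) gives $\mu_{d,\ell}\to \mu_\ell$ and $\zeta_{d,*}\to \zeta_*$, so one more resolvent identity passes from $\bA_0^{-1}$ to $\tfrac{1}{\mu_\ell}(\bQ_\ell + \zeta_*\id_n)^{-1}$ with $o_{d,\P}(1)$ error.

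The main obstacle, modest though it is, lies in verifying the operator-norm bounds uniformly in probability. In particular, Step 1 requires $\|\bA^{-1}\|_\op = O_{d,\P}(1)$, which follows from $(\mu_{d,>\ell}+\lambda)\id_n$ being a positive baseline plus $\bQ_\ell \succeq 0$ plus the small perturbation $\bDelta_H$; and Step 1 also requires $\|\bY_{\leq \ell-1}\|_\op^2 = \Theta_{d,\P}(n)$, which is exactly Eq.~\eqref{eq:spiked_matrix}. Beyond these checks, the argument is purely linear-algebraic and deterministic; no isotropic-local-law machinery is needed because we are tracing against an arbitrary $\bB$ only at the level of operator norms, not entrywise.
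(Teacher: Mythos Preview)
Your two-stage reduction (SMW to strip the low-rank spike, then a resolvent identity to absorb $\bDelta_H$, then pass to the limits $\mu_{d,\ell}\to\mu_\ell$, $\zeta_{d,*}\to\zeta_*$) is exactly the paper's approach, and the rank-$B_{\ell-1}$ bound on the SMW correction is handled the same way.

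There is one inaccuracy worth flagging: you invoke $\|\bQ_\ell\|_\op = O_{d,\P}(1)$, citing Theorem~\ref{thm:spectrum_Gegenbauer} and Lemma~\ref{lem:bound_Ql}. Neither result gives this. Theorem~\ref{thm:spectrum_Gegenbauer} only controls the empirical spectral distribution, not the top eigenvalue, and Lemma~\ref{lem:bound_Ql} only yields $\|\bQ_\ell\|_\op = O_{d,\P}(d^\delta)$ for arbitrary $\delta>0$. In Step~1 this is harmless, since the $B_{\ell-1}/n = O_d(d^{-1})$ factor absorbs any $d^\delta$ loss. In Step~2, however, your bound $\|\bA^{-1}-\bA_0^{-1}\|_\op\|\bQ_\ell\|_\op\|\bB\|_\op$ would read $o_{d,\P}(1)\cdot O_{d,\P}(d^\delta)$, which is not automatically $o_{d,\P}(1)$. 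The paper avoids this by keeping $\bQ_\ell$ paired with its resolvent: writing $(\bA^{-1}-\bA_0^{-1})\bQ_\ell = -\bA^{-1}(\mu_{d,>\ell}\bDelta_H)\,\bA_0^{-1}\bQ_\ell$ and using $\|\bA_0^{-1}\bQ_\ell\|_\op = \mu_{d,\ell}^{-1}\|\id_n - \zeta_{d,*}(\bQ_\ell+\zeta_{d,*}\id_n)^{-1}\|_\op = O(1)$ deterministically. With that regrouping your Step~2 goes through cleanly.
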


\begin{proof}[Proof of Lemma \ref{lem:replace_resolvent}]
Recall that, one can decompose $\bH + \lambda \id_n = \bY_{\leq \ell -1} \bD_{\leq \ell -1} \bY_{\leq \ell -1} + \bA$, where $\bA = \mu_{d,\ell} \bQ_\ell + (\kappa_H + \lambda) (\id_n + \Delta)$. By Sherman-Morrison-Woodbury formula, we have
\[
\begin{aligned}
 &~\frac{1}{n}\Tr \big[ \bXi \bQ_\ell \bB \big]\\
 =&~ \frac{1}{n}\Tr \big[ \bA^{-1} \bQ_\ell \bB \big] - \frac{1}{n^2}\Tr \big[ \bA^{-1} \bY_{\leq \ell - 1} [ (n \bD_{\leq \ell -1} )^{-1} + \bY_{\leq \ell -1}^\sT \bA^{-1} \bY_{\leq \ell -1}/n ]^{-1} \bY_{\leq \ell - 1}^\sT \bA^{-1} \bQ_\ell \bB \big]
\end{aligned}
\]
Using that $\| \bA^{-1} \bY_{\leq \ell - 1} [ (n \bD_{\leq \ell -1} )^{-1} + \bY_{\leq \ell -1}^\sT \bA^{-1} \bY_{\leq \ell -1}/n ]^{-1} \bY_{\leq \ell - 1}^\sT \|_{\op} /n \leq 1$ is of rank $B_{\leq \ell -1 }$ and $\| \bA^{-1} \bQ_{\ell} \|_{\op}, \| \bB \|_\op = O_{d,\P}(1)$, we deduce that 
\[
\frac{1}{n}\Tr \big[ \bXi \bQ_\ell \bB \big] = \frac{1}{n}\Tr \big[ \bA^{-1} \bQ_\ell \bB \big] + O_{d,\P } ( B_{\ell - 1} / n) = \frac{1}{n}\Tr \big[ \bA^{-1} \bQ_\ell \bB \big] +o_{d,\P}(1)\, .
\]
Note that $\bA = \mu_{d,\ell}  [\tbA + \bDelta]$ with $\tbA := \bQ_\ell + \xi_* \id_n $ and $\| \bDelta \|_\op = o_{d,\P}(1)$. We have
\[
\begin{aligned}
\frac{1}{n}\Tr \big[ \bA^{-1} \bQ_\ell \bB \big] - \frac{1}{n \mu_{d,\ell}} \Tr \big[ \tbA^{-1} \bQ_\ell \bB \big] =&~ \frac{1}{n \mu_{d,\ell}} \Tr \big[ (\tbA + \bDelta )^{-1} \bDelta \tbA^{-1} \bQ_\ell \bB \big] \\
=&~ O_{d} (1) \| \bDelta \|_{\op} \| (\tbA + \bDelta )^{-1} \|_{\op} \| \tbA^{-1} \bQ_\ell  \|_\op \| \bB \|_\op = o_{d,\P}(1)\, .
\end{aligned}
\]
Finally $\Tr \big[ \tbA^{-1} \bQ_\ell \bB \big] / n =O_{d,\P}(1)$, hence 
\[
\frac{1}{n \mu_{d,\ell}} \Tr \big[ \tbA^{-1} \bQ_\ell \bB \big] = \frac{1}{n \mu_{\ell}} \Tr \big[ \tbA^{-1} \bQ_\ell \bB \big] + o_{d,\P}(1) \, ,
\]
which concludes the proof.
\end{proof}

Introduce $\bR ( \zeta) = (\bQ_{\ell} + \zeta \id_n )^{-1}$ the resolvent of the Gegenbauer matrix. Denote $r_\psi ( \zeta)$ the Stieljes transform of the Marchenko-Pastur distribution (Eq.~\eqref{eq:MarchenkoPastur}). From Theorem \ref{thm:spectrum_Gegenbauer}, we have $\frac{1}{n} \Tr ( \bR ( \zeta)) = r_\psi (- \zeta) +o_{d,\P}(1)$. Furthermore, $\frac{1}{n} \Tr ( \bR ( \zeta)^2) =  r_\psi '  (-\zeta) +o_{d,\P}(1)$. We recall that $r_\psi ( \zeta)$ is the only positive solution (for $\zeta <0$) of 
\[
\zeta \psi r_\psi ( \zeta)^2 + (\zeta + \psi - 1) r_\psi ( \zeta) + 1 = 0\, .
\]
We get the formula:
\begin{align}
   r_\psi( \zeta)  =&~  \frac{ - \big[(\zeta - \psi - 1)^2 - 4\psi \big]^{1/2} +  1 - \zeta - \psi}{2\zeta \psi} \, , \\
   r_\psi' (\zeta) = &~ - \frac{r_\psi (\zeta)^2 + \psi r_\psi (\zeta)^3 }{\zeta \psi r_\psi (\zeta)^2 - 1}  \, .% - \frac{\xi +1 - \psi}{2 \psi \xi^2} + \frac{1}{2\psi\xi^2}\cdot \frac{\xi +1 + \psi}{1 +2\psi r_\psi( \xi)}\, .
   %\frac{1}{\xi} \cdot \frac{\psi r_\psi( \xi)^2}{1 +2\psi r_\psi( \xi)}\, .
\end{align}

\begin{lemma}\label{lem:calculus_resolvent}
Follow the assumptions of Theorem \ref{thm:KRR}. We have
\begin{align}
    \frac{1}{n} \Tr ( \bR(\zeta) \bQ_\ell ) = &~  1 - \zeta r_\psi (- \zeta) + o_{d,\P}(1) \, , \\
    \frac{1}{n} \Tr ( \bR(\zeta) \bQ_\ell  \bR(\zeta))=  &~ r_\psi (- \zeta) - \zeta r_\psi' (-\zeta) +o_{d,\P}(1)\, , \\
    \frac{1}{n} \Tr ( \bR(\zeta) \bQ_\ell  \bR(\zeta) \bQ_\ell )=  &~1 - 2 \zeta r_\psi(- \zeta) + \zeta^2 r_\psi'( - \zeta) +o_{d,\P}(1)\, .
\end{align}
\end{lemma}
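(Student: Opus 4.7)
\textbf{Proof plan for Lemma \ref{lem:calculus_resolvent}.} The three identities are essentially algebraic manipulations based on the resolvent identity combined with Theorem \ref{thm:spectrum_Gegenbauer}. The key observation is that $\bQ_\ell = \bR(\zeta)^{-1} - \zeta \id_n$, which immediately gives the operator identity
\[
\bR(\zeta) \bQ_\ell \;=\; \id_n - \zeta \bR(\zeta).
\]
My plan is to first establish, as a preliminary, that $\tfrac{1}{n}\Tr \bR(\zeta) = r_\psi(-\zeta) + o_{d,\P}(1)$ and $\tfrac{1}{n}\Tr \bR(\zeta)^2 = r_\psi'(-\zeta) + o_{d,\P}(1)$, and then derive the three claimed formulas by simple trace manipulations.

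For the preliminary, observe that since $\bQ_\ell$ is positive semidefinite, its eigenvalues all lie in $[0,\infty)$, and the maps $x \mapsto (x+\zeta)^{-1}$ and $x \mapsto (x+\zeta)^{-2}$ (for fixed $\zeta > 0$) are bounded and continuous on $[0,\infty)$. Theorem \ref{thm:spectrum_Gegenbauer} asserts that the empirical spectral distribution $\hat\nu_n$ of $\bQ_\ell$ converges almost surely in distribution to the Marchenko-Pastur law $\nu_{\MP,\psi}$. By the Portmanteau theorem, integrating against these two bounded continuous functions gives a.s.\ convergence of $\tfrac{1}{n}\Tr \bR(\zeta) = \int (x+\zeta)^{-1}\hat\nu_n(\de x)$ to $r_\psi(-\zeta)$, and likewise $\tfrac{1}{n}\Tr \bR(\zeta)^2 = \int (x+\zeta)^{-2}\hat\nu_n(\de x)$ to $\int (x+\zeta)^{-2}\nu_{\MP,\psi}(\de x)$, which equals $r_\psi'(-\zeta)$ by differentiating the definition of $r_\psi$ under the integral sign (justified since $(x+\zeta)^{-2}$ is integrable against $\nu_{\MP,\psi}$ uniformly in a neighborhood of $\zeta$). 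Almost sure convergence implies convergence in probability.

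For the three identities, I simply substitute the resolvent identity and take normalized traces. The first follows directly:
\[
\tfrac{1}{n}\Tr \bigl[\bR(\zeta) \bQ_\ell\bigr] \;=\; 1 - \zeta \cdot \tfrac{1}{n}\Tr \bR(\zeta) \;=\; 1 - \zeta\, r_\psi(-\zeta) + o_{d,\P}(1).
\]
For the second, I multiply the resolvent identity on the left by $\bR(\zeta)$ to get $\bR(\zeta) \bQ_\ell \bR(\zeta) = \bR(\zeta) - \zeta \bR(\zeta)^2$, so
\[
\tfrac{1}{n}\Tr \bigl[\bR(\zeta) \bQ_\ell \bR(\zeta)\bigr] \;=\; r_\psi(-\zeta) - \zeta\, r_\psi'(-\zeta) + o_{d,\P}(1).
\]
For the third, I multiply the resolvent identity on the right by $\bQ_\ell$ to get $\bR(\zeta) \bQ_\ell \bR(\zeta) \bQ_\ell = \bR(\zeta) \bQ_\ell - \zeta \bR(\zeta) \bQ_\ell \bR(\zeta)$, and combine the previous two results:
\[
\tfrac{1}{n}\Tr \bigl[\bR(\zeta) \bQ_\ell \bR(\zeta) \bQ_\ell\bigr] \;=\; \bigl(1 - \zeta r_\psi(-\zeta)\bigr) - \zeta \bigl(r_\psi(-\zeta) - \zeta r_\psi'(-\zeta)\bigr) + o_{d,\P}(1),
\]
which simplifies to the stated formula $1 - 2\zeta r_\psi(-\zeta) + \zeta^2 r_\psi'(-\zeta) + o_{d,\P}(1)$.

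There is no serious obstacle in this proof since Theorem \ref{thm:spectrum_Gegenbauer} is already doing the heavy lifting. The only mild subtlety is the passage from convergence in distribution of $\hat\nu_n$ to convergence of the Stieltjes transform and its derivative; this requires only that the integrands be bounded and continuous on the (nonnegative) spectrum, which holds for $\zeta > 0$. Note that $\zeta_* = (\lambda + \mu_{>\ell})/\mu_\ell > 0$ under the assumptions of Theorem \ref{thm:KRR}, so this regime is the relevant one.
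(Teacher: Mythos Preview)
Your proposal is correct and takes essentially the same approach as the paper: the paper's proof reads in its entirety ``This follows from $\bR(\zeta) \bQ_\ell = \id_n - \zeta \bR(\zeta)$ and simple algebra,'' with the preliminary facts $\tfrac{1}{n}\Tr \bR(\zeta) \to r_\psi(-\zeta)$ and $\tfrac{1}{n}\Tr \bR(\zeta)^2 \to r_\psi'(-\zeta)$ stated (without justification) just before the lemma. Your write-up is in fact more careful than the paper's, since you explicitly justify those preliminary convergences via the Portmanteau theorem applied to the bounded continuous integrands $(x+\zeta)^{-1}$ and $(x+\zeta)^{-2}$ on $[0,\infty)$.
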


\begin{proof}[Proof of Lemma \ref{lem:calculus_resolvent}]
This follows from $\bR(\zeta) \bQ_\ell = \id_n - \zeta \bR(\zeta)$ and simple algebra.
\end{proof}

From this lemma, we get
\[
\begin{aligned}
1 - 2 \frac{\psi}{n} \Tr \big[ \bR(\zeta_*) \bQ_{\ell} \big]  + \frac{\psi}{n}\Tr \big[ \bR(\zeta_*) \bQ_{\ell} \bR(\zeta_*) \bQ_\ell \big]  =&~ 1 - \psi + \psi \zeta^2 r' (- \zeta) = \cB ( \psi , \zeta_*)\, .
\end{aligned}
\]

\subsection{Convergence to expectation: proof of Proposition \ref{prop:risk_concentration}}\label{sec:proof_risk_concentration}

Proposition \ref{prop:risk_concentration} is a direct implication of the following proposition:

\begin{proposition}\label{prop:risk_bound_variance}
Under the assumptions of Theorem \ref{thm:KRR}, we have
\[
\E_{\bX} \big[ \Var_{f_*,\beps} ( R_{\text{test}} (f_* ; \bX , \beps , \lambda ) | \bX ) \big] = o_d (1)\, .
\]
\end{proposition}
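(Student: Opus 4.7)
The plan is to view $R_{\text{test}}(f_*;\bX,\beps,\lambda)$, for fixed $\bX$, as a quadratic polynomial in the independent, centered random vector
\[
\bxi = \big((\tbeta_{d,ks})_{k \geq \ell,\, s \in [B(\cA_d,k)]},\;(\eps_i)_{i \in [n]}\big),
\]
and to bound its conditional variance in expectation over $\bX$. Since $f_*(\bx) - \hf(\bx;\hba_\lambda)$ is an affine function of $\bxi$ whose coefficients depend only on $\bX$, $\bbeta_*$, and $\lambda$, expanding the $L^2(\bx)$-square yields
\[
R_{\text{test}} = c_0(\bX) + L(\bxi;\bX) + Q(\bxi;\bX),
\]
where $L$ is linear and $Q(\bxi;\bX) = \bxi^\sT \bM(\bX)\bxi$ is a positive semidefinite quadratic form. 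By the independence and centering of the entries of $\bxi$, we have $\Var_{\bxi}(R_{\text{test}}) \leq 2\Var_{\bxi}(L) + 2\Var_{\bxi}(Q)$, with $\Var_\bxi(L)$ given by a diagonal quadratic in the second moments of $\bxi$ and $\Var_\bxi(Q)$ controlled, via a Hanson--Wright-type identity, by $2\Tr[(\bM\bSigma)^2] + \sum_i (\E[\xi_i^4]-3\sigma_i^4)\bM_{ii}^2$, where $\bSigma$ is the diagonal covariance of $\bxi$ with entries $F_{d,k}^2/B(\cA_d,k)$ on the $\tbeta$-block and $\sigma_\eps^2$ on the noise block.

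Next, I would evaluate $\E_\bX[\Var_\bxi(L)]$ and $\E_\bX[\Var_\bxi(Q)]$ by splitting the defining vector of $L$ and the matrix $\bM$ into blocks indexed by degree $k \in \{\ell,\ell+1,\ldots\}$ and the noise block. The contributions from $L$ involve inner products with vectors of the form $\xi_k\,\bY_k^\sT \bXi \bY_{\leq \ell-1}\bbeta_*$ and $\bXi \bY_{\leq \ell-1}\bbeta_*$, whose quadratic forms are already controlled by Lemma~\ref{lem:XiMXi} together with $\|\bbeta_*\|_2 = O_d(1)$. For the $(\ell,\ell)$-block of $\bM$, the corresponding piece of $\Tr[(\bM\bSigma)^2]$ reduces, via Lemma~\ref{lem:replace_resolvent} and Lemma~\ref{lem:calculus_resolvent}, to a constant times $\Tr[(\bR(\zeta_*)\bQ_\ell)^2]/B(\cA_d,\ell)^2 = O_{d,\P}(1/B(\cA_d,\ell)) = o_d(1)$; the noise block is handled similarly using $\sigma_\eps^4/n$ times a bounded trace. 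The mixed blocks $(\ell,k')$ with $\ell<k'\leq m$ and the purely high-degree blocks $\ell<k,k'\leq m$ are handled by grouping $\sum_{\ell<k'\leq m} F_{d,k'}^2 \bQ_{k'}$ and applying Corollary~\ref{cor:bound_exp_op} together with the bound $n\|\bXi \bM_{>\ell}\bXi\|_{\op} = O_{d,\P}(d^{-1})$ from Lemma~\ref{lem:XiMXi}.

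The main obstacle is the infinite tail $k>m$, where operator-norm estimates alone do not close because the weights $F_{d,k}^2/B(\cA_d,k)$ are not absolutely summable in the relevant norm. This is precisely the role of Assumption~\ref{ass:technical_ass}: it supplies $\sum_{k>m} F_{d,k}^2/\mu_{d,k} = O_d(d^\delta)$ with $\delta<1$. I would rewrite the tail contribution by rescaling the $\tbbeta_k$'s into their RKHS-normalized form $\sqrt{\mu_{d,k}/B(\cA_d,k)}\,\tbbeta_k/\sqrt{F_{d,k}^2/\mu_{d,k}}$, absorbing a factor $\mu_{d,k}$ into the kernel matrix $\sum_{k>m}\mu_{d,k}\bQ_k = \bH_{>m} \preceq \bH + \lambda \id_n$, and using $\|\bXi (\bH+\lambda\id_n)\bXi\|_{\op} = \|\bXi\|_{\op} = O_{d,\P}(1)$. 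The remaining scalar factor is $\sum_{k>m} F_{d,k}^2/\mu_{d,k} = O_d(d^\delta)$, and the variance picks up an additional factor $1/n$ from the rank normalization, giving $O_{d,\P}(d^{\delta-\ell}) = o_{d,\P}(1)$ since $\ell \geq 1 > \delta$. Combining the three paragraphs above with uniform integrability of the relevant traces yields $\E_\bX[\Var_\bxi(R_{\text{test}})] = o_d(1)$, which, via Jensen's inequality and Markov's inequality, gives the $L^1$ convergence asserted in Proposition~\ref{prop:risk_concentration}.
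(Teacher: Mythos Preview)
Your overall decomposition of $R_{\text{test}}$ as a quadratic polynomial in the centered vector $\bxi=(\tbbeta,\beps)$, followed by a Hanson--Wright-type variance bound on the linear and quadratic parts, is exactly the structure the paper uses (its $T_1,\ldots,T_6$ are just another bookkeeping of the same pieces). The tail argument for $k>m$, absorbing a factor $\mu_{d,k}$ into the kernel to exploit $\bH_{>m}\preceq \bH+\lambda\id_n$ together with Assumption~\ref{ass:technical_ass}, is also the right idea and matches the paper's treatment of $T_4$.

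The genuine gap is the passage from \emph{probability} bounds to \emph{expectation} bounds. Throughout your argument you invoke Lemma~\ref{lem:XiMXi} (e.g.\ $n\|\bXi\bM_{>\ell}\bXi\|_{\op}=O_{d,\P}(d^{-1})$) and Lemma~\ref{lem:replace_resolvent} / Lemma~\ref{lem:calculus_resolvent} for the $(\ell,\ell)$-block, all of which are $o_{d,\P}$ or $O_{d,\P}$ statements, and then conclude $\E_{\bX}[\Var_{\bxi}(R_{\text{test}})]=o_d(1)$ by appealing to ``uniform integrability of the relevant traces''. That uniform integrability is precisely the technical content of the proof and is not automatic: $\Var_{\bxi}(R_{\text{test}}|\bX)$ involves $\|\bQ_\ell\|_{\op}$ and inverses like $(\bY_{\leq\ell-1}^\sT\bA^{-1}\bY_{\leq\ell-1}/n)^{-1}$, whose $L^q$ control requires separate work. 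The paper devotes an entire subsection (the ``bounds in expectation'' lemmas: Lemma~\ref{lem:bound_Ql}, Corollary~\ref{cor:bound_exp_op}, and especially Proposition~\ref{prop:E_bound_for_var}) to this, combining a deterministic bound on the ``bad'' event $\{\|\bY_{\leq\ell-1}^\sT\bY_{\leq\ell-1}/n-\id\|_{\op}\geq 1/2\}$ with the exponential tail of Lemma~\ref{lem:YY_concentration} and moment bounds on $\|\bQ_\ell\|_{\op}$ via hypercontractivity. Without these, your $O_{d,\P}(\cdot)$ steps do not upgrade to the $\E_{\bX}[\cdot]=o_d(1)$ you need. (As a side remark, the ``additional factor $1/n$ from the rank normalization'' in your tail paragraph is not clearly justified; in the Hanson--Wright term $\Tr[(\bM\bSigma)^2]$ there is no built-in $1/n$, and you should make explicit which trace inequality produces it.)
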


This proposition is proved in Section \ref{sec:proof_variance_risk}, while some more technical bounds in expectation (instead of in probability, as in Section \ref{sec:technical_lemmas}) are deferred to Section \ref{sec:technical_bound_expectation}.

\subsubsection{Proof of Proposition \ref{prop:risk_bound_variance}}\label{sec:proof_variance_risk}

We decompose the risk into
\[
\begin{aligned}
R_{\text{test}} ( f_\star ; \bX , \beps, \lambda) =&~ \E_{\bx} \big[ \big( f_* (\bx) - \by^\sT \bXi \bh(\bx) \big)^2 \big] \\
=&~ T_1 - 2 T_2 -2 T_3 + T_4 + T_5 + T_6 \, , 
\end{aligned}
\]
where
\[
\begin{aligned}
&T_1 = \E_{\bx} [ f_* ( \bx)^2 ]\, , & &T_2 = \boldf^\sT \bXi \bV\, , & &T_3 = \beps^\sT \bXi \bV \, ,\\
&T_4 = \boldf^\sT \bXi \bM \bXi \boldf \, , & &T_5 = \beps^\sT \bXi \bM \bXi \boldf\, , & &T_6 = \beps^\sT \bXi \bM \bXi \beps \, ,
\end{aligned}
\]
where we denoted $\bV = \E_\bx [ \bh(\bx) f_* (\bx) ] $ and $\bM = \E_{\bx} [ h(\bx) h(\bx)^\sT ]$.

Recall that we assumed that $\beps = (\eps_1 , \ldots , \eps_n)$ are iid with $\E [ \eps_i ] = 0$, $\E[\eps_i^2] = \sigma_\eps^2$ and $\E[\eps_i^4 ] = \tau_4 <\infty$, and $f_*$ is a random function in the sense of Assumption \ref{ass:dist_f}, with random independent coefficients $\tbbeta := (\tbeta_{ks})_{k \geq \ell , s \in [B(\cA_d , k)]}$ with $\tbeta_{ks} \sim \normal( 0 , F_{d,k}^2 / B(\cA_d , k ))$. Each of the terms $T_1 , \ldots , T_6$ are quadratic forms in the vectors $(\beps , \tbbeta)$ and we will bound their variance individually:
\[
\E_{\bX} [ \Var_{\tbbeta, \beps} ( T_i | \bX )] = o_d (1) \, , \qquad i = 1 , \ldots , 6 .
\]
This directly imply the claim in Proposition \ref{prop:risk_concentration}.

\noindent
{\bf Step 1. Bounding $T_1$.} 

We have $T_1 = \| \bbeta_* \|_2^2 + \sum_{k \geq \ell} \sum_{s \in [B(\cA_d , k)]} \tbeta_{ks}^2$ and therefore
\begin{equation}\label{eq:T1}
\begin{aligned}
   \E_{\bX} [  \Var_{\tbbeta} ( T_1 | \bX) ] = \sum_{k \geq \ell} \sum_{s \in [B(\cA_d , k)]} \Var (\tbeta_{ks}^2) \leq&~ C \sum_{k \geq \ell} \sum_{s \in [B(\cA_d , k)]} \frac{F_{d,k}^4}{B(\cA_d,k)^2} \\
    =&~ C \sum_{k \geq \ell} F_{d,k}^2 \cdot \max_{k \geq \ell} \frac{F_{d,k}^2}{B(\cA_d,k)} = o_d (1) \, ,
\end{aligned}
\end{equation}
where we used that $\sum_{k \geq \ell} F_{d,k}^2   = O_d(1)$ and $\sup_{k \geq 1} F_{d,k}^2/B(\cA_d,k) = o_d(1)$ by assumption (see for example Lemma \ref{lem:decay_eigenvalues}).

\noindent
{\bf Step 2. Bounding $T_2$.} 

Recall the decomposition $\boldf = \boldf_{< \ell} + \boldf_{\geq \ell} $ with $\boldf_{< \ell} = \bY_{\leq \ell - 1} \bbeta_*$ and $\boldf_{\geq \ell} = \sum_{k\geq \ell} \bY_k \tbbeta_k $. Denote furthermore $\bV = \bV_{<\ell} + \bV_{\geq \ell} $ with $\bV_{<\ell} = \E[ \bh(\bx) \proj_{<\ell} f_* (\bx) ] = \bY_{\leq \ell-1} \bD_{\leq \ell -1} \bbeta_*$ and $\bV_{\geq \ell} = \sum_{k \geq \ell}\xi_k \bY_k \tbbeta_k$. We can decompose the variance of $T_2$ as:
\[
\Var_{\tbbeta} ( T_2 | \bX )\leq 3\Var_{\tbbeta} (  \boldf_{\geq \ell} \bXi \bV_{<\ell}  | \bX ) +  3\Var_{\tbbeta} (  \boldf_{< \ell} \bXi \bV_{\geq \ell} | \bX) + 3 \Var_{\tbbeta} (  \boldf_{\geq \ell} \bXi \bV_{\geq \ell} | \bX )\, .
\]
Let us bound each term separately. First,
\begin{equation}\label{eq:prep_T2_1}
\begin{aligned}
&~ \E_{\bX} \big[ \Var_{\tbbeta} (  \boldf_{\geq \ell} \bXi \bV_{<\ell} |\bX ) \big]\\
= &~ \E_{\bX} \Big[\sum_{k \geq \ell} \frac{F_{d,k}^2}{B(\cA_d, k)} \bbeta_*^\sT \bD_{\leq \ell - 1} \bY_{\leq \ell -1}^\sT \bXi \bY_k \bY_k^\sT  \bY_{\leq \ell -1} \bD_{\leq \ell - 1}\bbeta_* \Big] \\
= &~ \E_{\bX} \big[ \bbeta_*^\sT \bD_{\leq \ell - 1} \bY_{\leq \ell -1 }^\sT \bXi \bF_{\geq \ell} \bXi \bY_{\leq \ell -1 } \bD_{\leq \ell - 1} \bbeta_* \big] \\
=&~ \| \bbeta_* \|_2^2 \cdot \E_{\bX} \big[\| \bF_{\geq \ell} \|_{\op}^2 \big]^{1/2} \E_{\bX} \big[\| \bD_{\leq \ell - 1} \bY_{\leq \ell -1 }^\sT \bXi^2 \bY_{\leq \ell -1 } \bD_{\leq \ell - 1} \|_{\op}^2 \big]^{1/2} \, ,
\end{aligned}
\end{equation}
where we denoted $\bF_{\geq \ell} = \sum_{ k \geq \ell} F_{d,k}^2 \bQ_k$ in the second line, and we used Cauchy-Scwartz on the second line. Using the identity \eqref{eq:SMW_useful}, we have for any $\delta >0$,
\begin{equation}\label{eq:bound_DYXiXiYD}
\begin{aligned}
    \E_{\bX} \big[\| \bD_{\leq \ell - 1} \bY_{\leq \ell -1 }^\sT \bXi^2 \bY_{\leq \ell -1 } \bD_{\leq \ell - 1} \|_{\op}^2 \big]^{1/2} = &~ \frac{1}{n} \E_{\bX}  \big[\|\bL \bY_{\leq \ell -1 }^\sT \bA^{-2}  \bY_{\leq \ell -1 } \bL / n\|_{\op}^2 \big]^{1/2}\\
    =&~ O_d(n^{-1}) \, ,
    \end{aligned}
\end{equation}
where we denoted $\bL := \big[ (n\bD_{\leq \ell - 1})^{-1} + \bY_{\leq \ell - 1 }^\sT \bA^{-1} \bY_{\leq \ell - 1}/ n \big]^{-1}$ and $\bA := \sum_{k \geq \ell} \mu_{d,k} \bQ_k + \lambda \id_n$, and we used the bound \eqref{eq:bound_LYAAYLop} in Proposition \ref{prop:E_bound_for_var}. From Corollary \ref{cor:bound_exp_op} and Lemma \ref{lem:bound_Ql}, we have $\E_{\bX} \big[\| \bF_{\geq \ell} \|_{\op}^2 \big] = O_d (d^{\delta})$ for any $\delta>0$. Taking $\delta$ sufficiently small combined with Eq.~\eqref{eq:bound_DYXiXiYD} in Eq.~\eqref{eq:prep_T2_1} yields
\begin{equation}\label{eq:T2_1}
\begin{aligned}
\E_{\bX} \big[ \Var_{\tbbeta} (  \boldf_{\geq \ell} \bXi \bV_{<\ell} |\bX ) \big] = o_d(1) \, .
\end{aligned}
\end{equation}

Similarly,
\begin{equation}\label{eq:T2_2}
\begin{aligned}
&~\E_{\bX} [ \Var_{\tbbeta} (  \boldf_{< \ell} \bXi \bV_{\geq \ell} |\bX )] \\
= &~\sum_{k \geq \ell} \frac{F_k^2 \xi_k^2}{B(\cA_d, k)} \bbeta_*^\sT  \bY_{\leq \ell -1}^\sT \bXi \bY_k \bY_k^\sT \bXi \bY_{\leq \ell -1} \bbeta_* \\
= &~ \Big[  \sup_{k \geq \ell} \xi_k^2 \Big] \cdot \| \bbeta_* \|_2^2 \cdot \E_{\bX} \big[ \| \bY_{\leq \ell -1}^\sT \bXi \bF_{\geq \ell}  \bXi\bY_{\leq \ell -1} \|_{\op} \big] \\
=&~ O_{d} (n^{-2}) \cdot  \E_{\bX} \big[\| \bF_{\geq \ell} \|_{\op}^2 \big]^{1/2} \cdot \frac{1}{n } \E_{\bX} \big[\| \bD_{\leq \ell - 1}^{-1} \bL \bY_{\leq \ell -1 }^\sT \bA^{-2}  \bY_{\leq \ell -1 } \bL  \bD_{\leq \ell - 1}^{-1} / n\|_{\op}^2 \big]^{1/2} \\
=&~ O_{d} (n^{-2}) \cdot O_d (d^\delta) \cdot n^{-1} \cdot o_d ( n^2) = o_d (1)\, ,
\end{aligned}
\end{equation}
where we used $ \sup_{k \geq \ell} \xi_k^2 = O_d (d^{-\ell}) = O_d (n^{-2})$ (Lemma \ref{lem:decay_eigenvalues}) and $\| \bD_{\leq \ell -1}^{-1} \|_{\op} = \max_{k \leq \ell-1} \xi_k^{-1} = O_d (d^{\ell -1}) = o_d(n)$.

Finally,
\begin{equation}\label{eq:T2_3}
\begin{aligned}
\E_{\bX} [ \Var_{\tbbeta} (  \boldf_{\geq \ell} \bXi \bV_{\geq \ell} |\bX)] \leq &~\sum_{k \neq l \geq \ell} F_k^2 F_l^2 \xi_l^2 \Tr ( \bQ_k \bXi \bQ_l \bXi) + C\sum_{ k \geq \ell} \xi_k^2 F_k^4 \Tr ( \bQ_k \bXi \bQ_k \bXi) \\
\leq &~ C n \cdot  \Big[  \sup_{k \geq \ell} \xi_k^2 \Big] \cdot \| \bXi \|_\op^2  \sum_{k , l \geq \ell}  F_k^2 F_l^2 \E_{\bX} [ \| \bQ_k \|_{\op}^2 ]^{1/2}   \E_{\bX} [ \| \bQ_l \|_{\op}^2 ]^{1/2}  \\
=&~ n \cdot O_d (n^{-2} ) \cdot O_d (d^\delta) = o_d(1) \, ,
\end{aligned}
\end{equation}
where we used that $\Tr ( \bQ_k \bXi \bQ_l \bXi) \leq n \| \bQ_k \|_{\op} \| \bQ_l \|_{\op} \| \bXi \|_{\op}^2 $, $\| \bXi \|_{\op} \leq \lambda^{-1}$, and Corollary \ref{cor:bound_exp_op} and Lemma \ref{lem:bound_Ql}. 

Combining the bounds Eqs.~\eqref{eq:T2_1}, \eqref{eq:T2_2} and \eqref{eq:T2_3} yields
\begin{equation}\label{eq:T2}
\begin{aligned}
   \E_{\bX} [ \Var_{\tbbeta} ( T_2 | \bX) ] = o_d (1) \, .
\end{aligned}
\end{equation}

\noindent
{\bf Step 3. Bounding $T_3$.}

Again we decompose $T_3 = \beps^\sT \bXi \bV_{<\ell} + \beps^\sT \bXi \bV_{\geq \ell}$ into two parts. We have first 
\begin{equation*}
\begin{aligned}
    \E_{\bX} [ \Var (\beps^\sT \bXi \bV_{<\ell}| \bX) ] = &~ \sigma_\eps^2 \E_{\bX} [ \bbeta_*^\sT \bD_{\leq \ell -1} \bY_{\leq \ell - 1}^\sT \bXi^2 \bY_{\leq \ell - 1}\bD_{\leq \ell -1}\bbeta_* ] \\
    =&~ O_d(1) \cdot n^{-1} \E_{\bX} [ \| \bL \bY_{\leq \ell -1 }^\sT \bA^{-2}  \bY_{\leq \ell -1 } \bL  \|_{\op}] = o_d(1) \, .
\end{aligned}
\end{equation*}
Similarly, 
\begin{equation*}
\begin{aligned}
    \E_{\bX} [ \Var (\beps^\sT \bXi \bV_{\geq\ell}| \bX) ] = &~ \sigma_\eps^2 \sum_{k \geq \ell} \xi_k^2 F_{d,k}^2 \E_{\bX} \big[ \Tr( \bQ_k \bXi^2 ) \big] \\
    =&~ n \cdot  \Big[  \sup_{k \geq \ell} \xi_k^2 \Big] \cdot \| \bXi \|_\op^2  \cdot \E_{\bX} [ \| \bF_{\geq \ell} \|_{\op}] = o_d(1) \, .
\end{aligned}
\end{equation*}
Combining the two above bounds yields 
\begin{equation}\label{eq:T3}
\begin{aligned}
   \E_{\bX} [ \Var_{\tbbeta} ( T_3 | \bX) ] = o_d (1) \, .
\end{aligned}
\end{equation}

\noindent
{\bf Step 4. Bounding $T_4$.}

We decompose $T_4 = 2T_{41} + T_{42} +T_{43}$ with $T_{41} = \boldf_{<\ell}^\sT \bXi \bM \bXi \boldf_{\geq \ell}$ and $T_{42} = \boldf_{\geq \ell}^\sT \bXi \bM \bXi \boldf_{\geq \ell}$, and $T_{43}$ is independent of $\beps,\tbbeta$. We further decompose each of this term into two parts using $\bM = \bM_{\leq \ell -1} + \bM_{\geq \ell}$, where $\bM_{\leq \ell -1} = \E_{\bx} [ h_{<\ell} (\bx) h_{<\ell} (\bx)^\sT] = \bY_{\leq \ell-1} \bD_{\leq \ell - 1}^2 \bY_{\leq \ell - 1}$, and $\bM_{\geq \ell} = \sum_{k \geq \ell} \xi_k^2 B(\cA_d , k) \bQ_k$. We bound the variance of each of these terms separately.

Using the identity \eqref{eq:SMW_useful}, we have 
\[
\begin{aligned}
 \E_{\bX} [ \Var (T_{41}| \bX) ] \leq&~ C \sum_{k,l \geq \ell} F_{d,k}^2 F_{d,l}^2 \E_{\bX} \big[ \Tr ( \bQ_k \bXi \bM_{<\ell} \bXi \bQ_l \bXi \bM_{<\ell} \bXi ) \big] \\
 =&~ 2  \E_{\bX} \big[ \Tr (\bL^2 \bY_{\leq \ell -1 }^\sT \bA^{-1} \bF_{\geq \ell} \bA^{-1} \bY_{\leq \ell -1 } \bL^2  \bY_{\leq \ell -1 }^\sT \bA^{-1} \bF_{\geq \ell}  \bA^{-1} \bY_{\leq \ell -1 }  ) \big]/n^4 \, .
\end{aligned}
\]
We will bound differently whether $k,l <m$ and $k,l \geq m$ (where $m$ is given in Assumption \ref{ass:technical_ass}). We decompose $\bF_{\geq \ell}=\bF_{\ell:m}+\bF_{\geq m}$ with $\bF_{\ell:m} = \sum_{\ell \leq k<m} F_{d,k}^2 \bQ_k$ and $\bF_{\geq m} = \sum_{k \geq m} F_{d,k}^2 \bQ_k$. First, by Cauchy-Schwarz inequality and using Proposition \ref{prop:GMMM_prop3} and Lemma \ref{lem:bound_Ql}, 
\[
\begin{aligned}
&~ \E_{\bX} \big[ \Tr (\bL^2 \bY_{\leq \ell -1 }^\sT \bA^{-1} \bF_{\ell:m} \bA^{-1} \bY_{\leq \ell -1 } \bL^2  \bY_{\leq \ell -1 }^\sT \bA^{-1} \bF_{\ell:m}  \bA^{-1} \bY_{\leq \ell -1 }  ) \big]/n^4 \\
\leq &~ \E_{\bX} [ \| \bF_{\ell:m} \|_{\op}^4]^{1/2} \cdot  \E_{\bX} \big[ \Tr (\bL^2 \bY_{\leq \ell -1 }^\sT \bA^{-2} \bY_{\leq \ell -1 } \bL^2  \bY_{\leq \ell -1 }^\sT \bA^{-2}  \bY_{\leq \ell -1 }  )^2 \big]^{1/2}/n^4 \\
= &~ O_d(d^\delta) \cdot \E [ \| \bL \bY_{\leq \ell -1 }^\sT \bA^{-2}  \bY_{\leq \ell -1 } \bL /n \|_\op^4]^{1/2} /n = o_d (1) \, ,
\end{aligned}
\]
where we used the bound \eqref{eq:bound_LYAAYLop} in Proposition \ref{prop:E_bound_for_var}.
For the high degree part, notice that $\bA^{-1} \preceq ( \mu_{d,k} \bQ_k + \lambda \id)^{-1} $, and therefore, by Assumption \ref{ass:technical_ass}, there exists $\delta>0$ such that 
\[
 \| \bA^{-1/2} \bF_{\geq m} \bA^{-1/2} \|_{\op} = \sum_{k \geq m} \frac{F_{d,k}^2}{\mu_{d,k}} = O_d(n^{1-\delta}) \, .
\]
Applying this bound, we get
\[
\begin{aligned}
&~ \E_{\bX} \big[ \Tr (\bL^2 \bY_{\leq \ell -1 }^\sT \bA^{-1} \bF_{\geq m} \bA^{-1} \bY_{\leq \ell -1 } \bL^2  \bY_{\leq \ell -1 }^\sT \bA^{-1} \bF_{\geq m}  \bA^{-1} \bY_{\leq \ell -1 }  ) \big]/n^4 \\
= &~ O_d(1) \cdot \E_{\bX} \big[ \Tr (\bL^2 \bY_{\leq \ell -1 }^\sT \bA^{-1} \bY_{\leq \ell -1 } \bL^2  \bY_{\leq \ell -1 }^\sT \bA^{-1}  \bY_{\leq \ell -1 }  ) \big]/n^4 \\
= &~ O_d(1) \cdot \E [ \| \bL \|_\op^2] /n = O_d (d^{ \delta'} /n^{\delta}) = o_d(1) \, ,
\end{aligned}
\]
where we used the bound \eqref{eq:bound_Lop} in Proposition \ref{prop:E_bound_for_var}.
The cross-terms can be bounded in a similar manner. We conclude that 
\begin{equation}\label{eq:bound_T41}
     \E_{\bX} [ \Var (T_{41}| \bX) ] = o_d(1) \, .
\end{equation}

The second term $T_{42}$ can be bounded more easily using that $\bM_{\geq \ell} = \big[ \sup_{k \geq \ell} \xi_k \big] \cdot \sum_{k \geq \ell} \mu_{d,k} \bQ_k$ and therefore $\| \bXi^{1/2} \bM_{\geq \ell} \bXi^{1/2} \|_\op \leq 1$:
\begin{equation}\label{eq:bound_T42}
\begin{aligned}
 \E_{\bX} [ \Var (T_{42}| \bX) ] \leq&~ C \E_{\bX} \big[ \Tr ( \bF_{\geq \ell} \bXi \bM_{\geq \ell} \bXi \bF_{\geq \ell} \bXi \bM_{\geq\ell} \bXi ) \big] \\
 =&~ O_d(1) \cdot \big[ \sup_{k \geq \ell} \xi_k \big]^2 \cdot \| \bXi \|_\op^2 n \E_{\bX} \big[ \|  \bF_{\geq \ell} \|_\op^2 \big] = O_d(n^{-2}) \cdot O_d(d^\delta) = o_d(1) \, .
\end{aligned}
\end{equation}
Combining bounds \eqref{eq:bound_T41} and \eqref{eq:bound_T42} yields 
\begin{equation}\label{eq:T4}
\begin{aligned}
   \E_{\bX} [ \Var_{\tbbeta} ( T_4 | \bX) ] = o_d (1) \, .
\end{aligned}
\end{equation}

\noindent
{\bf Step 5. Bounding $T_6$.}

We decompose again $\bM = \bM_{<\ell} + \bM_{\geq \ell}$ and proceed similarly to $T_4$:
\[
\begin{aligned}
\E_{\bX} [ \Var ( \beps^\sT \bXi \bM_{<\ell} \bXi \beps |\bX ) ] \leq &~ C \E_{\bX}  [ \Tr ( \bXi^2 \bY_{\leq \ell -1} \bD_{\leq \ell-1}^2 \bY_{\leq \ell -1}^\sT \bXi^2 \bY_{\leq \ell -1} \bD_{\leq \ell-1}^2 \bY_{\leq \ell -1}^\sT) ] \\
 =&~ C \E_{\bX} [ \Tr ( \bL^2 \bY_{\leq \ell -1}^\sT \bA^{-2}  \bY_{\leq \ell -1}^\sT \bL^2\bA^{-2}  \bY_{\leq \ell -1}^\sT) ]/n^4 \\
 = &~ O_d(1) \cdot \E_{\bX} [ \| \bL \bY_{\leq \ell -1}^\sT \bA^{-2}  \bY_{\leq \ell -1}^\sT \bL/n \|_{\op}^2 ]/n = o_d(1) \, ,
\end{aligned}
\]
and 
\[
\begin{aligned}
\E_{\bX} [ \Var ( \beps^\sT \bXi \bM_{\geq \ell} \bXi \beps |\bX ) ] \leq &~ C\big[ \sup_{k \geq \ell} \xi_k \big]^2 \cdot \E_{\bX} [ \Tr ( \bXi^2 \bA \bXi^2 \bA ) ] \\
 =&~ O_d(n^{-2}) \cdot n \| \bXi \|_\op^2 \cdot \E_{\bX} [ \| \bA \|_{\op}^2 ] = o_d(1) \, . 
\end{aligned}
\]
Combining the two above bounds yields 
\begin{equation}\label{eq:T6}
\begin{aligned}
   \E_{\bX} [ \Var_{\tbbeta} ( T_6 | \bX) ] = o_d (1) \, .
\end{aligned}
\end{equation}

\noindent
{\bf Step 6. Concluding.}

The term $T_5$ can be bounded similarly as $T_4$ and $T_6$. The proposition follows by combining bounds \eqref{eq:T1}, \eqref{eq:T2}, \eqref{eq:T3}, \eqref{eq:T4} and \eqref{eq:T6}.

\subsubsection{Technical results: bounds in expectation}\label{sec:technical_bound_expectation}

In this section, we gather some $L^q$-bounds necessary for the proof of Proposition \ref{prop:risk_bound_variance}  (instead of bounds in probability, as in Section \ref{sec:technical_lemmas}). We first recall some concentration results on matrices of spherical harmonics (or Fourier basis) proved in \cite{ghorbani2021linearized} and \cite{mei21generalization}. 

\begin{lemma}\label{lem:YY_concentration}
% Fix an integer $\ell \in \naturals$. Let $n = \Omega_d ( d^\ell)$ and $(\bx_i)_{i \in [n]} \sim_{iid} \Unif (\cA_d) $. Denote $B_{\ell - 1} = \sum_{k = 0}^{\ell -1 } $
Follow the assumptions of Theorem \ref{thm:KRR}. Recall $n = \Theta_d ( d^\ell)$ and $(\bx_i)_{i \in [n]} \sim_{iid} \Unif (\cA_d) $. We denote $B_{\ell - 1} = \sum_{k = 0}^{\ell -1 } B(\cA_d , k)$ and $\bY_{\leq \ell - 1} = (\bY_0 , \ldots , \bY_{\leq \ell - 1} ) \in \R^{n \times B_{\ell -1}}$, where 
\[
\bY_k = ( Y_{ks} (\bx_i))_{i \in [n], s \in [B(\cA_d , k)]} \in \R^{n \times B(\cA_d, k)}\, .
\]
Then, there exists a constant $C>0$ such that for any $t>0$,
\[
\P \big( \| \bY_{\leq \ell - 1}^\sT \bY_{\leq \ell - 1} / n - \id_{B_{\leq \ell - 1}} \|_{\op} \geq t \big) \leq d  \exp \big\{  - C d t^2/(1+t) \big\} \, .
\]
In the case of the hypercube $\cA_d = \Cube^d$, the same result holds with $\bY_{\leq \ell - 1}$ replaced by $\bY_{\geq d - \ell +1} := [ \bY_{d}, \bY_{d-1} , \ldots , \bY_{d- \ell +1} ] \in \R^{n \times B_{\ell - 1}}$.
\end{lemma}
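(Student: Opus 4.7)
The plan is to apply a matrix Bernstein inequality to the sum of i.i.d.\ rank-one matrices
\[
\bA_i := \bY_{\leq \ell-1}(\bx_i)\bY_{\leq \ell-1}(\bx_i)^\sT - \id_{B_{\leq \ell-1}}, \qquad i = 1, \ldots, n,
\]
whose mean is zero by orthonormality of $\{Y_{ks}\}_{k \leq \ell-1,\, s \in [B(\cA_d,k)]}$ in $L^2(\cA_d)$, and whose normalized sum is exactly $\bY_{\leq \ell-1}^\sT \bY_{\leq \ell-1}/n - \id$.

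The two key deterministic inputs to matrix Bernstein are a sharp pointwise bound on $\|\bY_{\leq \ell-1}(\bx)\|_2$ and on the matrix variance proxy. For both bounds the crucial identity is the reproducing (addition) formula $\sum_{s \in [B(\cA_d,k)]} Y_{ks}(\bx)^2 = B(\cA_d,k)\, Q_k^{(d)}(\< \bx,\bx\>) = B(\cA_d,k)$ for $\bx \in \cA_d$, which on the hypercube is the trivial observation $Y_S(\bx) \in \{\pm 1\}$, and on the sphere is the classical spherical-harmonics addition formula. Summing over $k \leq \ell-1$ yields $\|\bY_{\leq \ell-1}(\bx)\|_2^2 = B_{\leq \ell-1}$ almost surely, from which I get both $\|\bA_i\|_\op \leq B_{\leq \ell-1}$ and $\|\E[\bA_i^2]\|_\op \leq \|\E[\|\bY_{\leq \ell-1}(\bx_i)\|_2^2\, \bY_{\leq \ell-1}(\bx_i)\bY_{\leq \ell-1}(\bx_i)^\sT ]\|_\op \leq B_{\leq \ell-1}$.

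Plugging these into matrix Bernstein gives
\[
\P\Big(\Big\|\tfrac{1}{n}\textstyle\sum_i \bA_i\Big\|_\op \geq t\Big) \leq 2 B_{\leq \ell-1}\, \exp\Big(-\tfrac{nt^2/2}{B_{\leq \ell-1}(1 + t/3)}\Big),
\]
and the claimed bound follows from $n/B_{\leq \ell-1} = \Theta_d(d)$ (using $n = \Theta_d(d^\ell)$ and $B_{\leq \ell-1} = \Theta_d(d^{\ell-1})$), with the polynomial prefactor $B_{\leq \ell-1} = O_d(d^{\ell-1})$ absorbed into the stated $d$ factor (or a constant $C$ rescaling) since the exponential $\exp(-Cdt^2/(1+t))$ dominates any polynomial in $d$.

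For the hypercube variant with $\bY_{\geq d-\ell+1}$, the plan is to reduce to the low-degree case via the involution $S \leftrightarrow [d]\setminus S$ on Fourier indices: writing $Y_{[d]}(\bx) = \prod_{i\in[d]} x_i$, we have $Y_S(\bx) = Y_{[d]}(\bx)\, Y_{[d]\setminus S}(\bx)$, so $\bY_{\geq d-\ell+1}(\bx) = Y_{[d]}(\bx)\, \bP\, \bY_{\leq \ell-1}(\bx)$ for a fixed permutation matrix $\bP$. Because $Y_{[d]}(\bx)^2 \equiv 1$, the Gram matrix $\bY_{\geq d-\ell+1}^\sT \bY_{\geq d-\ell+1}$ equals $\bP\, \bY_{\leq \ell-1}^\sT \bY_{\leq \ell-1}\, \bP^\sT$, and the same bound transfers verbatim. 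There is no real obstacle in this argument; the only minor subtlety is the slight looseness of the $d$ prefactor, which is harmless since the exponential $\exp(-Cd\,t^2/(1+t))$ overwhelms any polynomial in $d$ in all downstream applications.
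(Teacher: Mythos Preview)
Your proposal is correct and follows essentially the same approach as the paper, which simply cites the matrix Bernstein argument of \cite[Lemma~11]{ghorbani2021linearized} together with the key ratio $n/B_{\leq \ell-1} = \Omega_d(d)$, and for the hypercube variant uses the identical involution $\bY_{\geq d-\ell+1} = \bS\, \bY_{\leq \ell-1}$ with $\bS = \diag((Y_{[d]}(\bx_i))_{i\in[n]})$. Your remark about the prefactor is apt: matrix Bernstein naturally produces $B_{\leq \ell-1}$ rather than $d$ in front, but as you note this is immaterial for the downstream applications, which all invoke the bound at a fixed $t$.
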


\begin{proof}[Proof of Lemma \ref{lem:YY_concentration}]
This follows from the proof of {\cite[Lemma 11]{ghorbani2021linearized}} by noting that $n/B_{\leq \ell - 1} = \Omega_d (d)$ (see also \cite[Proposition 3]{mei21generalization} for the general argument). In the case of the hypercube, note that for any $S \subseteq [d]$ with $|S| = d - k$, we have $Y_{S} ( \bx) = \prod_{i \in S} x_i = \big( \prod_{i \in [d]} x_i \big) Y_{S^c} (\bx)$, hence we can rewrite $\bY_{\geq d-\ell +1} = [ \bY_{d}, \bY_{d-1} , \ldots , \bY_{d- \ell +1} ]$ as 
\begin{equation}\label{eq:equiv_low_high_cube}
\bY_{\geq d-\ell +1} = \bS \bY_{\leq \ell -1}\, ,\qquad \bS = \diag (( Y_{[d]} (\bx_i ))_{i \in [n]})\, ,
\end{equation}
where $Y_{[d]} (\bx) = \prod_{i \in [d]} x_i \in \{\pm 1\}$ (hence $\bS^2 = \id_n$). Therefore $\bY_{\geq d-\ell +1}^\sT \bY_{\geq d-\ell +1}  = \bY_{\leq \ell -1}^\sT  \bY_{\leq \ell -1} $.
\end{proof}

The following is a reformulation of Proposition 3 proved in \cite{ghorbani2021linearized} (see also Proposition 4 in \cite{mei21generalization} for a more general proof).

\begin{proposition}[{\cite[Proposition 3]{ghorbani2021linearized}}]\label{prop:GMMM_prop3} 
Follow the assumptions of Theorem \ref{thm:KRR}. Recall $n = \Theta_d ( d^\ell)$ and $(\bx_i)_{i \in [n]} \sim_{iid} \Unif (\cA_d) $. Denote $\bQ_k = ( Q_k^{(d)} ( \< \bx_i , \bx_j \>) )_{ij \in [n]} \in \R^{n \times n}$ the $k$-th Gegenbauer empirical matrix. Fix $k > \ell$ and an integer $q \geq 1$, then 
\[
\E_{\bX} \Big[ \| \bQ_k - \id_n \|_{\op}^q \Big] = o_d (1) \, .
\]
In the case  of the hypercube $\cA_d = \Cube^d$, the same result holds for $\bQ_{d-k}$.

Furthermore, we have
\[
\begin{aligned}
&\text{for } \cA_{d} = \S^{d-1} (\sqrt{d}): & \qquad&\E_{\bX} \Big[ \sup_{k \geq \ell+1} \| \bQ_k - \id_n \|_{\op}^2 \Big] = o_d (1)\, ,\\
&\text{for } \cA_{d} = \Cube^d : & \qquad&\E_{\bX} \Big[ \sup_{d-\ell-1 \geq k \geq \ell+1} \| \bQ_k - \id_n \|_{\op}^2 \Big] = o_d (1)\, .
\end{aligned}
\]
\end{proposition}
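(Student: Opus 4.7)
The statement is that the high-degree Gegenbauer matrices concentrate around the identity, and the natural tool is the moment method combined with the reproducing (addition / Funk--Hecke) property of Gegenbauer polynomials on $\cA_d$,
\[
\E_{\bx}\bigl[Q_k^{(d)}(\<\bu,\bx\>)\, Q_k^{(d)}(\<\bv,\bx\>)\bigr] \;=\; \frac{1}{B(\cA_d,k)}\, Q_k^{(d)}(\<\bu,\bv\>), \qquad k \geq 1.
\]
As a first step I would observe that $Q_k^{(d)}(d)=1$ on $\cA_d$, so $\bM_k := \bQ_k - \id_n$ has zero diagonal and zero-mean off-diagonal entries, with $\E[(\bM_k)_{ij}^2] = 1/B(\cA_d,k)$. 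For the hypercube statement, I would immediately reduce $\bQ_{d-k}$ to $\bQ_k$ by the identity in Remark \ref{rmk:hf_hypercube}, $\bQ_{d-k} = \bS \bQ_k \bS$ with $\bS^2 = \id_n$.

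\textbf{Pointwise moment bound.} For fixed $k > \ell$ and integer $q \geq 1$, I would bound $\E[\|\bM_k\|_{\op}^{2q}] \leq \E[\Tr(\bM_k^{2q})]$ and expand the trace as a sum over closed walks $(i_1, i_2, \ldots, i_{2q}, i_1)$ with consecutive indices distinct:
\[
\E[\Tr(\bM_k^{2q})] \;=\; \sum \E\Bigl[\prod_{a=1}^{2q} Q_k^{(d)}(\<\bx_{i_a},\bx_{i_{a+1}}\>)\Bigr].
\]
Encoding each walk as a multigraph, I would iteratively integrate out ``leaf'' vertices using the reproducing property: each integration either produces a factor $1/B(\cA_d,k)$ and contracts two edges into one, or kills the term (when a leaf appears an odd number of times, so that $\E[Q_k^{(d)}(\<\bu,\bx\>)]=0$). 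A standard combinatorial count shows the dominant walks are double-covered trees on at most $q+1$ vertices, yielding
\[
\E[\|\bM_k\|_{\op}^{2q}] \;\leq\; C_q\, n^{q+1}/B(\cA_d,k)^q \;=\; C_q\, n\, \bigl(n/B(\cA_d,k)\bigr)^q.
\]
Since $n/B(\cA_d,k) = O_d(d^{\ell-k}) = O_d(d^{-1})$ for any $k \geq \ell + 1$, taking $q$ large enough (so that $n \cdot d^{-q} = d^{\ell-q} \to 0$) and then interpolating via Jensen gives $\E[\|\bM_k\|_{\op}^{q_0}]\to 0$ for any prescribed $q_0$.

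\textbf{Uniform bound over $k$.} For the second set of displays, I would split the range of $k$ into a moderate window $\ell+1 \leq k \leq K_d$ (with $K_d$ growing slowly, e.g.~$K_d = \log d$) and a tail $k > K_d$. On the window, the pointwise bound above gives $\E[\|\bM_k\|_{\op}^{2}] \leq C\, d^{\ell-k+o(1)}$ for each fixed $k$ after optimizing $q$, and a union-type sum over at most $K_d$ values of $k$ still yields $o_d(1)$. For the tail $k > K_d$ (and, in the hypercube case, also $k < d - K_d$ by the $\bS$-symmetry), I would use a crude Frobenius estimate $\|\bM_k\|_{\op}^2 \leq \|\bM_k\|_F^2$ combined with $\E[\|\bM_k\|_F^2] = n(n-1)/B(\cA_d,k)$ and the super-polynomial growth of $B(\cA_d,k)$ in $k$ (for $\cA_d = \S^{d-1}(\sqrt d)$, $B(\cA_d,k)$ grows as $k^{d-2}$; for $\Cube^d$, the restriction $k \leq d - \ell - 1$ together with the symmetry prevents the tail from being pathological), so that $\sum_{k > K_d} n^2/B(\cA_d,k)$ is summable and vanishes.

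\textbf{Main obstacle.} The hardest step is the combinatorial estimate in the pointwise moment bound: one must carefully identify, among all closed walks of length $2q$, which multigraphs contribute non-trivially after iterating the reproducing identity, and obtain the sharp count of $q+1$ free vertices times $1/B(\cA_d,k)^q$ from $q$ reproducing contractions. The hypercontractivity-type input that lets one handle low moments uniformly in $k$, plus the uniform tail control of $\sum_k \|\bM_k\|_F^2$ against the summability of $1/B(\cA_d,k)$, is the second technical point; both are worked out in detail in \cite{ghorbani2021linearized,mei21generalization}, so this proof essentially quotes those arguments.
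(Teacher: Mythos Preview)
The paper does not give its own proof of this proposition; it cites \cite[Proposition 3]{ghorbani2021linearized} and \cite[Proposition 4]{mei21generalization}, adding only the observation (which you also make) that on the hypercube $\bQ_{d-k}=\bS\bQ_k\bS$ so $\|\bQ_{d-k}-\id_n\|_{\op}=\|\bQ_k-\id_n\|_{\op}$. Your trace--moment sketch via the reproducing property and leaf elimination, together with the window/tail split for the uniform statement, is exactly the strategy of those cited proofs, so your proposal is correct and aligned with what the paper invokes.
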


Note that for $\cA_d = \Cube^d$, using Eq.~\eqref{eq:equiv_low_high_cube}, we have $\bQ_{d - k} = \bS \bQ_k  \bS $ and therefore the equality $\| \bQ_{d-k} - \id_n \|_{\op} = \| \bQ_{k} - \id_n \|_{\op}$. 

% The following proposition provides a uniform bound on the operator norm of the Gegenbauer matrices:

% \begin{proposition}\label{prop:unif_Gegenbauer}
% Following the same setting as Proposition \ref{prop:GMMM_prop3}. We have
% \[
% \begin{aligned}
% &\text{for } \cA_{d} = \S^{d-1} (\sqrt{d}): & \qquad&\E_{\bX} \Big[ \sup_{k \geq \ell+1} \| \bQ_k - \id_n \|_{\op}^2 \Big] = o_d (1)\, ,\\
% &\text{for } \cA_{d} = \Cube^d : & \qquad&\E_{\bX} \Big[ \sup_{d-\ell-1 \geq k \geq \ell+1} \| \bQ_k - \id_n \|_{\op}^2 \Big] = o_d (1)\, .
% \end{aligned}
% \]
% \end{proposition}

% \begin{proof}[Proof of Proposition \ref{prop:unif_Gegenbauer}]
% \notate{Do}
% \end{proof}
The above proposition consider the Gegenbauer matrices $\bQ_k$ for $k \geq \ell+1$. For $\bQ_{\ell}$, recall that in Theorem \ref{thm:spectrum_Gegenbauer}, we proved that its empirical spectral law converges in distribution to a Marchenko-Pastur law. However we do not bound the tail of its largest eigenvalue. The following lemma provides an easy (but loose) upper bound on the expected moment of the operator norm of $\bQ_{\ell}$. This will be enough for the purpose of our proof.

\begin{lemma}\label{lem:bound_Ql}
Follow the same setting as Proposition \ref{prop:GMMM_prop3}. We have
for any fixed $q \in [d]$ and any constant $\delta >0$,
\[
\E_{\bX} \big[ \| \bQ_{\ell} \|_{\op}^q \big] = O_d (d^\delta) \, .
\]
The same result holds for $\bQ_{d - \ell}$ in the case of the hypercube $\cA_d = \Cube^d$.
\end{lemma}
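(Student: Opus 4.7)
The matrix $\bQ_\ell = \bY_\ell \bY_\ell^\sT / B$, with $B = B(\cA_d, \ell) = \Theta_d(d^\ell)$, is positive semi-definite and shares its non-zero spectrum with the $B \times B$ Gram matrix $\bG/B$, where $\bG := \bY_\ell^\sT \bY_\ell = \sum_{i=1}^n \bY_\ell(\bx_i) \bY_\ell(\bx_i)^\sT$. The plan is to apply Tropp's matrix Bernstein inequality to the centered iid sum $\bG - n\id_B = \sum_{i=1}^n \bM_i$ with $\bM_i := \bY_\ell(\bx_i)\bY_\ell(\bx_i)^\sT - \id_B$, and then convert the resulting sub-exponential tail into an $L^q$-moment estimate.

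The key deterministic input is the identity $\|\bY_\ell(\bx)\|_2^2 = B$ for every $\bx \in \cA_d$, which follows from the addition formula together with $Q_\ell^{(d)}(\langle \bx,\bx\rangle) = 1$ (true by rotation invariance on the sphere, and by $Y_{\ell s}(\bx)^2 = 1$ on the hypercube). Consequently $\|\bM_i\|_\op = B - 1$ almost surely, and a direct calculation using $\E[\bY_\ell(\bx)\bY_\ell(\bx)^\sT] = \id_B$ gives $\E[\bM_i^2] = B\,\E[\bY_\ell(\bx)\bY_\ell(\bx)^\sT] - \id_B = (B-1)\id_B$, so the matrix variance proxy is $v = n(B-1)$. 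Matrix Bernstein then yields, for an absolute constant $c>0$,
\[
\P\!\left( \|\bG - n\id_B\|_\op \geq t \right) \leq 2B \exp\!\left( - \frac{c\, t^2}{nB + Bt} \right), \qquad t > 0,
\]
which, after dividing by $B$ and using $n/B = \psi + o_d(1)$, becomes
\[
\P\!\left( \|\bQ_\ell\|_\op \geq \psi + 1 + u \right) \leq 2B \exp\!\left( - c' \min(u, u^2) \right), \qquad u > 0 .
\]

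To turn this tail bound into the required moment estimate, I use the crude deterministic bound $\|\bQ_\ell\|_\op \leq \Tr(\bQ_\ell) = n$ (valid since $\bQ_\ell$ is PSD with unit diagonal, by the same evaluation $Q_\ell^{(d)}(\langle\bx,\bx\rangle)=1$) and split $\E[\|\bQ_\ell\|_\op^q] = \int_0^n q u^{q-1} \P(\|\bQ_\ell\|_\op > u)\, du$ at the threshold $T := \psi + 1 + C\log B$:
\[
\E[\|\bQ_\ell\|_\op^q] \;\leq\; T^q \;+\; \int_T^n q u^{q-1}\cdot 2B \exp(-c'(u - \psi - 1))\, du \;\leq\; T^q + C_q (\log B)^{q-1}\, B^{1-c'C}.
\]
Choosing $C = C(q)$ large enough that $c'C > 1$ makes the tail piece $o_d(1)$, leaving $\E[\|\bQ_\ell\|_\op^q] = O((\log d)^q) = O_d(d^\delta)$ for every $\delta > 0$. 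The statement for $\bQ_{d-\ell}$ on the hypercube is immediate from the identity $\bQ_{d-\ell} = \bS\bQ_\ell\bS$ of Remark \ref{rmk:hf_hypercube} with $\bS$ a diagonal $\pm 1$ matrix, since $\bS^2 = \id_n$ preserves operator norms.

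\paragraph{Main obstacle.} There is essentially no obstacle: the argument is a routine matrix Bernstein calculation, and the bound $O((\log d)^q)$ it yields is strictly stronger than the stated $O_d(d^\delta)$, consistent with the authors' description of the lemma as ``easy but loose''. The only point to verify with care is the deterministic identity $\|\bY_\ell(\bx)\|_2^2 = B$ on $\cA_d$, since it is precisely what makes matrix Bernstein apply with a clean a.s.-bounded summand $\bM_i$; once this is in hand the rest is bookkeeping.
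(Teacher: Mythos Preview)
Your argument is correct and in fact yields a sharper bound, $O_d((\log d)^q)$, than the paper's $O_d(d^\delta)$. The key observation you make---that the addition formula together with $Q_\ell^{(d)}(d)=1$ gives the \emph{deterministic} identity $\|\bY_\ell(\bx)\|_2^2 = B$ for every $\bx \in \cA_d$---is exactly what makes matrix Bernstein apply cleanly, with an almost-sure bound $\|\bM_i\|_\op = B-1$ and exact variance proxy $n(B-1)$. (One tiny algebraic slip: $\E[\bM_i^2] = (B-2)\,\E[\bY_\ell\bY_\ell^\sT] + \id_B = (B-1)\id_B$, not $B\,\E[\bY_\ell\bY_\ell^\sT]-\id_B$; the conclusion is unaffected.)

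The paper takes a different route. It invokes a Vershynin-style moment bound (its Proposition~\ref{prop:Vershynin_matrix}, via symmetrization and noncommutative Khintchine), which involves the quantity $\Gamma = \E[\max_i \|\bY_\ell(\bx_i)\|_2^{q_*}]^{2/q_*}$. The paper then bounds $\Gamma$ by $C n^{1/r} B$ for arbitrary $r$, using Jensen's inequality and \emph{hypercontractivity} on the moments $\E[Y_{\ell s}(\bx)^{2r}]$, and takes $r$ large to absorb the $n^{1/r}$ into $d^\delta$. Your deterministic identity shows $\Gamma = B$ exactly, so the hypercontractivity step is unnecessary in this setting, and the extra $n^{1/(2r)}$ factor (responsible for the looser $d^\delta$) never appears. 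What the paper's approach buys is robustness: it would still work in the abstract framework of \cite{mei21generalization} where $\|\bY_\ell(\bx)\|_2^2$ need not be constant, only hypercontractive. In the concrete sphere/hypercube setting of the lemma, however, your matrix-Bernstein argument is both simpler and sharper.
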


\begin{proof}[Proof of Lemma \ref{lem:bound_Ql}]
Recall that we can write $\bQ_\ell = \bY_{\ell}\bY_{\ell}^\sT / B(\cA_d, \ell)$. We can therefore apply Proposition \ref{prop:Vershynin_matrix} with $\bY = \bY_\ell$ and $B = B(\cA_d, \ell)$. We get
\[
\E_{\bX} \big[ \| \bQ_\ell \|_\op^q \big]^{1/2} = \frac{1}{\sqrt{B}}\E_{\bX} \big[ \| \bY \|_\op^{2q} \big]^{1/(2q)} \leq C \Big[ \sqrt{ \frac{n}{B}} + \sqrt{\frac{\Gamma \log(\min(n,B))}{B}} \Big]\,.
\]
Let us bound $\Gamma = \E_{\bY} [ \max_{i \leq n} \| \by_i \|_2^{q_*} ]^{2/q_*}$, where $\by_i = \bY_\ell ( \bx_i)$. For any $p \geq 1$, we have
\[
\begin{aligned}
\E_{\bY} \Big[ \max_{i \leq n} \| \by_i \|_2^{q_*} \Big]^{2/q_*} \leq \E_{\bY} \Big[ \max_{i \leq n} \| \by_i \|_2^{pq_*} \Big]^{2/(pq_*)}
\leq&~ \E_{\bY} \Big[ \sum_{i \in [n]} \| \by_i \|_2^{pq_*} \Big]^{2/(pq_*)} \\
=&~ n^{2/(pq_*)}\E_{\by_i} \big[ \| \by_i \|_2^{pq_*} \big]^{2/(pq_*)}\, .
\end{aligned}
\]
Denote $r = pq_*/2 \geq 1$. By Jensen's inequality, 
\[
\begin{aligned}
\E_{\by_i} \big[ \| \by_i \|_2^{2r} \big] =&~ \E_{\bx} \Big[ \Big( \sum_{ s \in [B]} Y_{\ell s} (\bx)^2 \Big)^r \Big] \\
\leq&~ B^{r-1} \E_{\bx} \Big[ \sum_{ s \in [B]} Y_{\ell s} (\bx)^{2r}  \Big] \\
\leq &~ C B^{r-1}\sum_{ s \in [B]} \E_{\bx} \big[ Y_{\ell s} (\bx)^2 \big]^{r} = C B^r\, ,
\end{aligned}
\]
where we used in the last line that $Y_{\ell s}$ are degree-$\ell$ polynomials, and the hypercontractivity property (see Section \ref{app:hypercontractivity}). We deduce that $\Gamma \leq C n^{1/r} B$. We deduce that
\[
\E_{\bX} \big[ \| \bQ_\ell \|_\op^q \big]^{1/2} \leq C + C n^{1/(2r)} \log (n)^{1/2} \, ,
\]
and conclude by taking $r$ sufficiently large (i.e., $p$ sufficiently large).
\end{proof}

It will be useful to state the following bound, which is a direct consequence of the above results.

\begin{corollary}\label{cor:bound_exp_op}
Follow the same setting as Proposition \ref{prop:GMMM_prop3}. Suppose there exists a sequence $(\mu_{d,k})_{d \geq 1, k \geq \ell +1}$ such that $\sum_{k \geq \ell +1} \mu_{d,k} = O_d(1)$. If $\cA_d = \Cube^d$, further assume that there exists $\delta >0$ such that $\sup_{0 \leq k \leq \ell} \mu_{d,d-k}/ B(\Cube^d , d-k) = O_d (d^{-\ell - \delta})$. Then, if we denote $\bH_{\bmu} = \sum_{k \geq \ell +1} \mu_{d,k} \bQ_k $, we have for any fixed integer $q \in [d]$, 
\[
\E_{\bX} \big[ \| \bH_{\bmu} \|_{\op}^2 \big] = O_d(1) \, .
\]
\end{corollary}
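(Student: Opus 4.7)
\textbf{Proof plan for Corollary \ref{cor:bound_exp_op}.} The plan is to split $\bH_{\bmu}$ into a ``middle-range'' piece whose summands concentrate near $\id_n$ and a ``top-range'' piece (present only for the hypercube) consisting of $O_d(1)$ many low-rank matrices with diverging eigenvalues, then bound each piece using Proposition~\ref{prop:GMMM_prop3} together with the hypothesis on $(\mu_{d,k})$.

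\textbf{Sphere case.} Here all indices $k \geq \ell+1$ are handled uniformly by Proposition~\ref{prop:GMMM_prop3}. By the triangle inequality,
\begin{equation*}
\|\bH_{\bmu}\|_{\op} \;\le\; \sum_{k \geq \ell+1} \mu_{d,k}\bigl(1 + \|\bQ_k - \id_n\|_{\op}\bigr) \;\le\; \Bigl(\sum_{k \geq \ell+1}\mu_{d,k}\Bigr)\cdot \bigl(1 + \sup_{k \geq \ell+1}\|\bQ_k - \id_n\|_{\op}\bigr).
\end{equation*}
Squaring, taking expectations, and invoking $\sum_k \mu_{d,k} = O_d(1)$ together with Proposition~\ref{prop:GMMM_prop3} (which gives $\E\bigl[\sup_{k\geq\ell+1}\|\bQ_k-\id_n\|_{\op}^2\bigr] = o_d(1)$) yields $\E[\|\bH_{\bmu}\|_{\op}^2] = O_d(1)$.

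\textbf{Hypercube case.} Write $\bH_{\bmu} = \bH^{\mathrm{mid}} + \bH^{\mathrm{top}}$, where $\bH^{\mathrm{mid}} = \sum_{k=\ell+1}^{d-\ell-1}\mu_{d,k}\bQ_k$ and $\bH^{\mathrm{top}} = \sum_{j=0}^{\ell}\mu_{d,d-j}\bQ_{d-j}$. The middle term is bounded exactly as in the sphere case, now using the uniform concentration $\E[\sup_{d-\ell-1\geq k\geq\ell+1}\|\bQ_k-\id_n\|_{\op}^2] = o_d(1)$ from Proposition~\ref{prop:GMMM_prop3}. For the top term, the key identity is $\bQ_{d-j} = \bS\bQ_j\bS$ with $\bS = \diag((Y_{[d]}(\bx_i))_i)$ and $\bS^2 = \id_n$ (see the proof of Lemma~\ref{lem:YY_concentration}), which gives $\|\bQ_{d-j}\|_{\op} = \|\bQ_j\|_{\op}$. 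Since $\bQ_j = \bY_j\bY_j^\sT/B(\Cube^d,j)$, Lemma~\ref{lem:YY_concentration} controls $\|\bY_j\|_{\op}^2$ with exponentially small tails, so for any fixed $q$, $\E[\|\bQ_j\|_{\op}^{2q}]^{1/q} \leq C\, n/B(\Cube^d,j)$. Using $B(\Cube^d,d-j) = B(\Cube^d,j)$ and the hypothesis $\mu_{d,d-j}/B(\Cube^d,d-j) = O_d(d^{-\ell-\delta})$, we get
\begin{equation*}
\E\bigl[\mu_{d,d-j}^2\|\bQ_{d-j}\|_{\op}^2\bigr] \;\le\; C\,\Bigl(\tfrac{\mu_{d,d-j}}{B(\Cube^d,d-j)}\Bigr)^2 n^2 \;=\; O_d\bigl(d^{-2\delta}\bigr),
\end{equation*}
so $\E[\|\bH^{\mathrm{top}}\|_{\op}^2] \leq (\ell+1)\sum_{j=0}^\ell \E[\mu_{d,d-j}^2\|\bQ_{d-j}\|_{\op}^2] = o_d(1)$. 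Combining the two pieces via $(a+b)^2 \le 2a^2+2b^2$ completes the proof.

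\textbf{Main obstacle.} The only non-routine step is the top-range bound on the hypercube, since $\bQ_{d-j}$ for small $j$ is a low-rank matrix with operator norm of order $n/B(\Cube^d,j) \to \infty$ and therefore does not concentrate on the identity. The trick that makes everything work is the involution $\bQ_{d-j} = \bS\bQ_j\bS$, which transfers the analysis to the low-degree Gegenbauer matrix $\bQ_j$, whose operator norm is controlled by Lemma~\ref{lem:YY_concentration}; the assumption on the decay of $\mu_{d,d-j}/B(\Cube^d,d-j)$ is precisely what is needed to absorb this diverging operator norm.
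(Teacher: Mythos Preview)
Your overall strategy matches the paper's: split $\bH_{\bmu}$ into a middle-range piece controlled by Proposition~\ref{prop:GMMM_prop3} and, on the hypercube, a top-range piece $\sum_{j=0}^{\ell}\mu_{d,d-j}\bQ_{d-j}$; then use the involution $\bQ_{d-j}=\bS\bQ_j\bS$ to transfer the analysis to the low-degree matrices $\bQ_j$.

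There is one genuine (but easily fixed) gap in the top-range argument. Your appeal to Lemma~\ref{lem:YY_concentration} to control $\|\bQ_j\|_{\op}$ is valid only for $j\le \ell-1$: that lemma concerns $\bY_{\le \ell-1}$ and relies crucially on $n/B_{\le \ell-1}=\Omega_d(d)\to\infty$. For $j=\ell$ we are in the regime $n\asymp B(\Cube^d,\ell)$, so $\bY_\ell^\sT\bY_\ell/n$ does \emph{not} concentrate on the identity (this is exactly the Marchenko--Pastur regime of Theorem~\ref{thm:spectrum_Gegenbauer}), and Lemma~\ref{lem:YY_concentration} yields no bound on $\|\bQ_\ell\|_{\op}$. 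The paper treats $j=\ell$ separately via Lemma~\ref{lem:bound_Ql}, which gives $\E_{\bX}[\|\bQ_\ell\|_{\op}^2]=O_d(d^{\delta'})$ for any $\delta'>0$. Since the hypothesis together with $B(\Cube^d,d-\ell)=\Theta_d(d^\ell)$ gives $\mu_{d,d-\ell}=O_d(d^{-\delta})$, one obtains
\[
\mu_{d,d-\ell}\,\E_{\bX}\big[\|\bQ_{d-\ell}\|_{\op}^2\big]^{1/2}=O_d(d^{-\delta})\cdot O_d(d^{\delta'})=o_d(1)
\]
by choosing $\delta'<\delta$. With this correction for $j=\ell$, and your Lemma~\ref{lem:YY_concentration} argument for $j\le\ell-1$ (which is exactly what the paper does for those indices via $\bY_{\ge d-\ell+1}$), the proof goes through.
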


\begin{proof}[Proof of Corollary \ref{cor:bound_exp_op}]
From Proposition \ref{prop:GMMM_prop3} and its proof, we know that (replacing $\sum_{k \geq \ell+1}$ by $\sum_{d- \ell - 1 \geq k \geq \ell +1}$ in the case of the hypercube)
\[
\begin{aligned}
\E_{\bX} \Big[ \Big\| \sum_{k \geq \ell +1} \mu_{d,k} \bQ_k  \Big\|_{\op}^2 \Big]^{1/2}  \leq \E_{\bX} \Big[ \Big\| \sum_{k \geq \ell +1} \mu_{d,k} (\bQ_k - \id_n)  \Big\|_{\op}^2 \Big]^{1/2} + \sum_{k \geq \ell +1} \mu_{d,k} = O_d(1) \, .
\end{aligned}
\]
The only terms that remain to be bounded are the $\bQ_{d-k}$ with $k \in \{0, \ldots, \ell\}$ for the hypercube $\cA_d = \Cube^d$. From Lemma \ref{lem:bound_Ql} and assumption on $\mu_{d,d-\ell}$, we have for $\delta '>0$ chosen sufficiently small,
\[
\mu_{d,d-\ell} \E_{\bX} [ \| \bQ_{\ell} \|_\op^2 ]^{1/2} = O_d (d^{-\delta}) \cdot O_d (d^{\delta '}) = o_d(1)\, .
\]
For $k < \ell$,
\begin{equation}\label{eq:Qlarge}
\begin{aligned}
&~\E_{\bX} \Big[ \Big\| \sum_{k \leq \ell -1 } \mu_{d,d-k} \bQ_{d-k}  \Big\|_{\op}^2 \Big]^{1/2} \\
=&~ \Big[ \sup_{k < \ell} \mu_{d,k}/B(\Cube^d,d-k) \Big] \cdot \ell \cdot \E_{\bX}  \big[ \| \bY_{\geq d-\ell +1 } \|_{\op}^4 \big]^{1/2} \\
=&~ O_d (d^{-\ell - \delta}) \cdot n \cdot \Big[ 1 + \E_{\bX}  \big[ \| \bY_{\geq d-\ell +1}^\sT \bY_{\geq d-\ell +1}/ n - \id_{B_{\ell - 1}} \|_{\op}^2 \big]^{1/2} \Big]\, .
\end{aligned}
\end{equation}
Denote the event $\cE = \{ \| \bY_{\geq d-\ell +1}^\sT \bY_{\geq d-\ell +1}/ n - \id_{B_{\ell - 1}} \|_{\op} \geq 1/2\}$. Note that 
\[
\| \bY_{\geq d-\ell +1} \|_{\op} \leq \Tr ( \bY_{\geq d-\ell +1}^\sT \bY_{\geq d-\ell +1})^{1/2} = \sqrt{B_{\leq \ell -1} n }\, .
\]
Hence $\| \bY_{\geq d-\ell +1}^\sT \bY_{\geq d-\ell +1}/ n - \id_{B_{\ell - 1}} \|_{\op} \leq 1 + B_{\leq \ell -1}$. From Lemma \ref{lem:YY_concentration}, we get
\[
\E_{\bX}  \big[ \| \bY_{\geq d-\ell +1}^\sT \bY_{\geq d-\ell +1}/ n - \id_{B_{\ell - 1}} \|_{\op}^2 \big] \leq \frac{1}{4 } +  (1 +  B_{\leq \ell -1})^2 \P ( \cE ) \leq C + C B^2 d \exp (-c d) = O_d(1) \, ,
\]
which combined with Eq.~\eqref{eq:Qlarge} concludes the proof.
\end{proof}

In the proof of Proposition \ref{prop:risk_bound_variance}, we will use repeatedly the following identity. Recall that $\bXi = ( \bY_{\leq \ell - 1} \bD_{\leq \ell -1} \bY_{\leq \ell -1}^\sT + \bA )^{-1}$ where $\bA = \sum_{k \geq \ell} \mu_{d,k} \bQ_k + \lambda \id_n$. By Sherman-Morrison-Woodbury formula, we have 
\begin{equation}\label{eq:SMW_useful}
    \bD_{\leq \ell -1 } \bY_{\leq \ell - 1}^\sT \bXi = \big[ (n\bD_{\leq \ell - 1})^{-1} + \bY_{\leq \ell - 1 }^\sT \bA^{-1} \bY_{\leq \ell - 1}/ n \big]^{-1} \bY_{\leq \ell -1}^\sT \bA^{-1} /n \, .
\end{equation}
We will denote 
\[
\bL = \big[ (n\bD_{\leq \ell - 1})^{-1} + \bY_{\leq \ell - 1 }^\sT \bA^{-1} \bY_{\leq \ell - 1}/ n \big]^{-1}\, . 
\]
In the proof of Proposition \ref{prop:risk_bound_variance}, we will use the following bounds on matrix $\bL$:

\begin{proposition}\label{prop:E_bound_for_var}
Assume the same setting as Proposition \ref{prop:risk_bound_variance}. For any fixed $\delta>0$, we have 
\begin{align}\label{eq:bound_Lop}
    \E_{\bX} \big[ \|\bL \|_{\op}^2 \big] = &~ O_d(d^\delta) \, .
\end{align}
Furthermore, for any fixed integer $q \geq 1$, we have
\begin{align}\label{eq:bound_LYAAYLop}
    \E_{\bX} \big[ \|\bL \bY_{\leq \ell - 1}^\sT \bA^{-2} \bY_{\leq \ell - 1} \bL /n \|_{\op}^q \big] =&~ O_d (1) \, .
\end{align}
\end{proposition}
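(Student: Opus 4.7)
The plan is to condition on a high-probability good event $\cE$ on which the relevant submatrices concentrate, and to control $\cE^c$ using the deterministic inequality $\bL \preceq n \bD_{\leq \ell - 1}$, which is immediate from $\bL^{-1} = \bW + (n\bD_{\leq \ell - 1})^{-1} \succeq (n\bD_{\leq \ell - 1})^{-1}$ where $\bW := \bY_{\leq \ell - 1}^\sT \bA^{-1} \bY_{\leq \ell - 1}/n$. Set
\[
\cE := \bigl\{ \| \bY_{\leq \ell - 1}^\sT \bY_{\leq \ell - 1}/n - \id_{B_{\leq \ell-1}} \|_\op \leq 1/2 \bigr\} \cap \bigl\{ \| \bH_{>\ell} - \mu_{>\ell} \id_n \|_\op \leq \mu_{>\ell}/4 \bigr\};
\]
Lemma \ref{lem:YY_concentration} and Proposition \ref{prop:GMMM_prop3} (applied at high moments) give $\P(\cE^c) = o_d(n^{-c})$ for every fixed $c$.

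For \eqref{eq:bound_Lop}, on $\cE$ we have $\bA \succeq \tbA := \bH_{>\ell} + \lambda \id \succeq c_1 \id$ with $c_1 := \mu_{>\ell}/2 + \lambda$, hence $\|\bA^{-1}\|_\op \leq c_1^{-1}$, and $\bW \succeq \|\bA\|_\op^{-1} \bY_{\leq \ell-1}^\sT\bY_{\leq\ell-1}/n \succeq (2\|\bA\|_\op)^{-1}\id$, which yields $\|\bL\|_\op \leq 2\|\bA\|_\op$. The inequality $\|\bA\|_\op \leq \mu_{d,\ell}\|\bQ_\ell\|_\op + \|\bH_{>\ell}\|_\op + \lambda$ combined with Lemma \ref{lem:bound_Ql} and Corollary \ref{cor:bound_exp_op} gives $\E[\|\bA\|_\op^2] = O_d(d^\delta)$ for any $\delta > 0$. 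The complement contributes at most $O_d(n^2) \P(\cE^c) = o_d(1)$, which finishes the first bound.

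For \eqref{eq:bound_LYAAYLop}, the crux is the deterministic identity
\[
\bL \bY_{\leq \ell - 1}^\sT \bA^{-1} \bY_{\leq \ell - 1} \bL / n = \bL \bW \bL = \bL - \bL (n \bD_{\leq \ell - 1})^{-1} \bL \preceq \bL,
\]
which combined with $\bA^{-2} \preceq \|\bA^{-1}\|_\op \bA^{-1}$ yields the PSD inequality
\[
\bL \bY_{\leq \ell - 1}^\sT \bA^{-2} \bY_{\leq \ell - 1} \bL / n \preceq \|\bA^{-1}\|_\op \bL.
\]
On $\cE$, the right-hand side is at most $(2/c_1)\|\bA\|_\op \id$, and the $L^q$ bound then follows from Lemma \ref{lem:bound_Ql} and Corollary \ref{cor:bound_exp_op}. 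The $\cE^c$ contribution is handled with the crude bound $\|\bY_{\leq \ell-1}\|_\op^2 \leq n B_{\leq \ell - 1}$ and $\|\bA^{-1}\|_\op \leq 1/\lambda$, producing a polynomial-in-$n$ estimate absorbed by the super-polynomial decay of $\P(\cE^c)$.

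The main technical obstacle is that Lemma \ref{lem:bound_Ql} only delivers $\E[\|\bQ_\ell\|_\op^q] = O_d(d^\delta)$ for every $\delta > 0$ (rather than $O_d(1)$), so the PSD inequality $\bR \preceq \|\bA^{-1}\|_\op \bL$ inherits this slack. To reach the sharp $O_d(1)$ constant one would exploit the $L^2$-orthogonality between $\bY_{\leq \ell - 1}$ and $\bY_\ell$ to show that $\bY_{\leq \ell - 1}^\sT \bQ_\ell \bY_{\leq \ell - 1}/n$ is small in operator norm --- an estimate of the same flavor as Proposition \ref{prop:GMMM_prop3} --- which would imply $\bW = c_1^{-1}\id + o_{d,\P}(1)$ and hence bounded $\bL, \bR$. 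At the coarser $O_d(d^\delta)$ level, the $d^\delta$ slack is harmless in every downstream application, since it is always multiplied by a factor $1/n = \Theta_d(d^{-\ell})$ (as in Eq.~\eqref{eq:bound_DYXiXiYD}).
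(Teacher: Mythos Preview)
Your argument for \eqref{eq:bound_Lop} is correct and follows the paper's strategy exactly: split on the event $\{\|\bY_{\leq\ell-1}^\sT\bY_{\leq\ell-1}/n - \id\|_\op \leq 1/2\}$; on it deduce $\|\bL\|_\op \leq 2\|\bA\|_\op$ (you via the direct PSD bound $\bW \succeq (2\|\bA\|_\op)^{-1}\id$, the paper via Lemma~\ref{lem:matrix_identity}); on the complement use $\bL \preceq n\bD_{\leq\ell-1}$ and the exponential tail from Lemma~\ref{lem:YY_concentration}. Your route is a bit simpler. One minor point: the second clause in your good event is unnecessary, since $\bA \succeq \lambda\id$ holds deterministically and already gives $\|\bA^{-1}\|_\op \leq \lambda^{-1}$.

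For \eqref{eq:bound_LYAAYLop} there is a gap relative to the stated $O_d(1)$, which you correctly flag. Your PSD chain $\bL\bY^\sT\bA^{-2}\bY\bL/n \preceq \|\bA^{-1}\|_\op\,\bL$ together with $\|\bL\|_\op \leq 2\|\bA\|_\op$ bounds the good-event contribution by a constant times the condition number of $\bA$, hence only $O_d(d^\delta)$ after Lemma~\ref{lem:bound_Ql}. The paper takes a genuinely different route here: it applies Lemma~\ref{lem:matrix_identity} to write $(\bY^\sT\bA^{-1}\bY/n)^{-1} = n\,\bY^\dagger\bA^{1/2}\bPi\bA^{1/2}(\bY^\dagger)^\sT$ with $\bPi$ an orthogonal projection, substitutes this for $\bL$, and after using $(\bY^\dagger)^\sT\bY^\sT = \bY\bY^\dagger$ and $\bPi\bA^{1/2}(\id-\bY\bY^\dagger)=0$ reduces the good-event bound to $2\|\bA^{1/2}\bPi\bA^{-1}\bPi\bA^{1/2}\|_\op$, which it asserts is $\leq 2$. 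The intended mechanism is that the outer $\bA^{1/2}$ factors from the identity cancel the inner $\bA^{-1}$, so the $\|\bA\|_\op$-dependence disappears entirely --- a cancellation your inequality $\bA^{-2} \preceq \|\bA^{-1}\|_\op\bA^{-1}$ cannot reproduce. Your observation that $O_d(d^\delta)$ already suffices for every downstream use (the bound is always paired with a $1/n$ factor) is correct, so the gap is cosmetic; but to match the proposition as stated you would need either the matrix-identity route or the orthogonality estimate you sketch in your last paragraph.
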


\begin{proof}[Proof of Proposition \ref{prop:E_bound_for_var}]
Denote the event $\cE = \{ \| \bY_{\geq d-\ell +1}^\sT \bY_{\geq d-\ell +1}/ n - \id_{B_{\ell - 1}} \|_{\op} \geq 1/2\}$. On $\cE^c$, we can use the matrix identity in Lemma \ref{lem:matrix_identity} and obtain:
\[
\begin{aligned}
\| \bL \|_{\op} \leq &~  \big\| (\bY_{\leq \ell - 1 }^\sT \bA^{-1} \bY_{\leq \ell - 1}/ n)^{-1} \big\|_\op \\
= &~n  \big\| \bY_{\leq \ell - 1 }^\dagger \bA^{1/2} \bPi \bA^{1/2} ( \bY_{\leq \ell - 1 }^\dagger )^\sT \big\|_{\op} \\ 
\leq &~ 2  \| \bA \|_{\op}\,. 
\end{aligned}
\]
On the event $\cE$, we use that $\| \bL \|_{\op} \leq n \| \bD_{\leq \ell -1 } \|_{\op} = O_d (d^\ell)$. Hence, using Lemma \ref{lem:YY_concentration}, we obtain
\[
\E [ \| \bL \|_{\op}^2 ] \leq C \E [ \| \bA \|_{\op}^2 ] + C d^\ell \P(\cE) \leq O_d (d^\delta) +C d^{\ell+1} \exp(-cd) = O_d (d^\delta) \, , 
\]
where we used Corollary \ref{cor:bound_exp_op} and Lemma \ref{lem:bound_Ql}.

For the second bound, we use again the matrix identity of Lemma \ref{lem:matrix_identity} on $\cE^c$:
\[
\begin{aligned}
&~ \|\bL \bY_{\leq \ell - 1}^\sT \bA^{-2} \bY_{\leq \ell - 1} \bL /n \|_{\op} \\
=&~ n \|\bY_{\leq \ell - 1 }^\dagger \bA^{1/2} \bPi \bA^{1/2} ( \bY_{\leq \ell - 1 }^\dagger )^\sT \bY_{\leq \ell - 1}^\sT \bA^{-2} \bY_{\leq \ell - 1} \bY_{\leq \ell - 1 }^\dagger \bA^{1/2} \bPi \bA^{1/2} ( \bY_{\leq \ell - 1 }^\dagger )^\sT  \|_{\op} \\
\leq&~ 2 \|\bA^{1/2} \bPi \bA^{1/2} \bA^{-2} \bA^{1/2} \bPi \bA^{1/2}  \|_{\op}
\leq 2 \, .
\end{aligned}
\]
On the event $\cE$, we use that 
\[
\begin{aligned}
 \|\bL \bY_{\leq \ell - 1}^\sT \bA^{-2} \bY_{\leq \ell - 1} \bL /n \|_{\op}
=&~ \| \bA^{-1}  \|_{\op} \|\bL \bY_{\leq \ell - 1}^\sT \bA^{-1} \bY_{\leq \ell - 1} \bL/n  \|_{\op} \\
\leq&~ \lambda^{-1}  \|\bL   \|_{\op} \leq \lambda^{-1} n \| \bD_{\leq \ell -1} \|_\op = O_d (d^{\ell}) \, ,
\end{aligned}
\]
where we used $\| \bA^{-1} \|_{\op} \leq \lambda$ and $\| \bL^{1/2} \bY_{\leq \ell - 1 }^\sT \bA^{-1} \bY_{\leq \ell - 1} \bL^{1/2}/ n  \|_{\op} \leq 1$.
Hence, using Lemma \ref{lem:YY_concentration}, we obtain
\[
\E [ \| \bL \bY_{\leq \ell - 1}^\sT \bA^{-2} \bY_{\leq \ell - 1} \bL /n \|_{\op}^q ] \leq C + C d^{q\ell+1} \exp(-cd) = O_d (1) \, , 
\]
which concludes the proof of this proposition.
\end{proof}

\subsection{Proof of the asymptotic formula of the training error and RKHS norm}\label{sec:training_error}

The proof is very similar to the proof for the prediction error and we will simply outline the main steps. First recall that the training error is given by
\[
R_{\text{train}} ( f_* ; \bX , \beps , \lambda ) = \frac{1}{n} \| \by - \hat \by ( \hba_\lambda ) \|_2^2 = \frac{1}{n} \| \by - \bH (\bH + \lambda \id_n )^{-1} \by \|_2^2 = \frac{\lambda^2}{n} \| (\bH + \lambda \id_n )^{-1} \by \|_2^2 \, .
\]
The RKHS norm of the KRR solution $\hf ( \cdot ; \hba_\lambda )$ is given by 
\[
\| \hf ( \cdot ; \hba_\lambda ) \|_{\cH}^2 = \hba_\lambda^\sT \bH \hba_\lambda = \by^\sT (\bH + \lambda \id_n )^{-1} \bH (\bH + \lambda \id_n )^{-1} \by \, .
\]

\noindent
{\bf Step 1. Computing the asymptotics formula of $\E_{\beps, f_*} [R_{\text{train}}] $ and $\E_{\beps, f_*} [\| \hf ( \cdot ; \hba_\lambda ) \|_{\cH}^2 ] $.}

We decompose
\[
\E_{\beps, f_*} [R_{\text{train}} ( f_* ; \bX , \beps , \lambda ) ] = B_{11} + B_{12} + V_1 \, , 
\]
where
\[
\begin{aligned}
B_{11} = &~ \frac{\lambda^2}{n} \bbeta_*^\sT \bY_{\leq \ell - 1}^\sT \bXi^2 \bY_{\leq \ell - 1}\bbeta_* \, ,\\
B_{12} = &~ \frac{\lambda^2}{n}  \sum_{k \geq \ell } F_{d,k}^2 \Tr ( \bXi^2 \bQ_k ) \, , \\
V_1 = &~ \frac{\lambda^2 \sigma_\eps^2}{n}  \Tr ( \bXi^2 ) \, .
\end{aligned}
\]
From Eq.~\eqref{eq:SMW_useful}, we have
\[
\begin{aligned}
B_{11} \leq&~  \frac{\lambda^2}{n} \| \bbeta_* \|_2^2 \|  \bY_{\leq \ell - 1}^\sT \bXi^2 \bY_{\leq \ell - 1}\|_{\op} \\
=&~ O_{d,\P}(n^{-2}) \cdot \| \bD_{\leq \ell -1}^{-1} \bL \bY_{\leq \ell - 1}^\sT\bA^{-2} \bY_{\leq \ell - 1} \bL \bD_{\leq \ell -1}^{-1} \|_{\op} = o_{d,\P}(1) \, .
\end{aligned}
\]
For the two last terms:
\[
\begin{aligned}
B_{12} =&~ \lambda^2 F_{\ell}^2 \cdot \frac{1}{n} \Tr ( \bXi^2 \bQ_\ell ) + \lambda^2 F_{>\ell}^2 \cdot \frac{1}{n} \Tr ( \bXi^2 ) +o_{d,\P}(1) \\
=&~ \lambda^2 \frac{F_{\ell}^2}{\mu_\ell^2} \big[  r_\psi ( - \zeta_* ) - \zeta_* r_\psi ' ( - \zeta_* ) \big] + \lambda^2 \frac{F_{>\ell}^2}{\mu_\ell^2} r_\psi ' ( - \zeta_* ) +o_{d,\P}(1)  \, , \\
V_1 = &~ \lambda^2 \frac{\sigma_\eps^2}{\mu_\ell^2} r_\psi ' ( - \zeta_* ) +o_{d,\P}(1) \, .
\end{aligned}
\]

Similarly, for the RKHS norm, we have the decomposition:
\[
\begin{aligned}
\frac{1}{n}\E_{\beps, f_*} [\| \hf ( \cdot ; \hba_\lambda ) \|_{\cH}^2 ] = B_{21} + B_{22} + V_2 \, , 
\end{aligned}
\]
where
\[
\begin{aligned}
B_{21} = &~ \frac{1}{n} \bbeta_*^\sT \bY_{\leq \ell - 1}^\sT \bXi \bH \bXi \bY_{\leq \ell - 1}\bbeta_* \, ,\\
B_{22} = &~ \frac{1}{n}  \sum_{k \geq \ell } F_{d,k}^2 \Tr ( \bXi \bH \bXi \bQ_k ) \, , \\
V_2 = &~ \frac{ \sigma_\eps^2}{n}  \Tr ( \bXi^2 \bH ) \, .
\end{aligned}
\]
For the first term, we have by Sherman-Morrison-Woodbury formula,
\[
\begin{aligned}
B_{21} = &~ \frac{1}{n} \bbeta_*^\sT \bY_{\leq \ell - 1}^\sT \bXi \bY_{\leq \ell - 1}\bbeta_* - B_{11}/\lambda \\
=&~ \bbeta_*^\sT (n \bD_{\leq \ell -1} )^{-1}  \big[ (n \bD_{\leq \ell -1} )^{-1} +\bY_{\leq \ell - 1}^\sT \bA^{-1} \bY_{\leq \ell - 1} /n \big]^{-1}  ( \bY_{\leq \ell - 1}^\sT \bA^{-1} \bY_{\leq \ell - 1} /n ) \bbeta_* +o_{d,\P}(1)\\
=&~ \bbeta_*^\sT (n \bD_{\leq \ell -1} )^{-1} \bbeta_* + o_{d,\P} (1) = o_{d,\P}(1) \, ,
\end{aligned}
\]
where we used that $\| (n \bD_{\leq \ell -1} )^{-1} \|_{\op} = o_{d} (1)$ and $\lambda_{\min} ( \bY_{\leq \ell - 1}^\sT \bA^{-1} \bY_{\leq \ell - 1} /n  ) = \Omega_{d,\P} (1) $. Similarly to the training error,
\[
\begin{aligned}
B_{22} = &~  F_{\ell}^2 \cdot \frac{1}{n} \Tr ( \bXi \bH \bXi \bQ_\ell ) + F_{>\ell}^2 \cdot \frac{1}{n} \Tr ( \bXi^2 \bH ) +o_{d,\P}(1) \\
=&~  F_{\ell}^2 \cdot \frac{1}{n}  \big[  \Tr ( \bXi  \bQ_\ell )  - \lambda  \Tr (  \bXi^2 \bQ_\ell )\big] +   F_{>\ell}^2 \cdot \frac{1}{n} \big[ \Tr ( \bXi ) - \lambda \Tr ( \bXi^2  ) \big] + o_{d,\P}(1) \\
=&~  \frac{F_{\ell}^2}{\mu_\ell} \Big[ 1 - (\zeta_* + \lambda/\mu_\ell)  r_\psi ( - \zeta_* )  + \frac{\lambda }{\mu_\ell} \zeta_* r_\psi ' (- \zeta_*) \Big] +  \frac{F_{>\ell}^2}{\mu_\ell} \Big[ r_\psi (- \zeta_*) - \frac{\lambda}{\mu_\ell} r_\psi ' ( - \zeta_* ) \Big] +o_{d,\P}(1)  \, ,
\end{aligned}
\]
and
\[
\begin{aligned}
V_1 = &~  \frac{\sigma_\eps^2}{\mu_\ell} \Big[ r_\psi (- \zeta_*) - \frac{\lambda}{\mu_\ell} r_\psi ' ( - \zeta_* ) \Big] +o_{d,\P}(1)\, .
\end{aligned}
\]

\noindent
{\bf Step 2. Bounding $\E_{\bX} [ \Var_{\beps, f_*}( R_{\text{train}} | \bX)] $ and $\E_{\bX} [ \Var_{\beps, f_*}(\| \hf ( \cdot ; \hba_\lambda ) \|_{\cH}^2 |\bX ) ] $.}

We see that it is sufficient to show that
\[
\begin{aligned}
\frac{1}{n^2} \E_{\bX} [ \Var_{\beps, f_*}( \by^\sT \bXi \by | \bX)] = o_d(1) \, .
\end{aligned}
\]
Similarly, we decompose $\by^\sT \bXi \by$ as in the proof of Proposition \ref{prop:risk_bound_variance} and bound each term separately:
\[
\begin{aligned}
\frac{1}{n^2} \E_{\bX} [ \Var_{\beps, f_*}( \boldf_{\leq \ell -1}^\sT \bXi \beps | \bX)] =&~ O_d(1) \cdot \frac{1}{n^2}\| \bbeta_* \|_2^2 \E_{\bX} \big[ \| \bY_{\leq \ell - 1}^\sT \bXi^2  \bY_{\leq \ell - 1} \|_\op \big] = o_{d} (1) \, ,\\
\frac{1}{n^2} \E_{\bX} [ \Var_{\beps, f_*}( \boldf_{\leq \ell -1}^\sT \bXi \boldf_{>\ell} | \bX)] =&~ O_d(1) \cdot  \frac{1}{n^2} \| \bbeta_* \|_2^2 \E_{\bX} \big[ \| \bF_{\geq \ell } \|^2_\op \big]^{1/2} \E_{\bX} \big[ \| \| \bY_{\leq \ell - 1}^\sT \bXi^2  \bY_{\leq \ell - 1}  \|^2_\op \big]^{1/2}  \\
=&~ O_d(d^\delta) \cdot \| (n \bD_{\leq \ell - 1})^{-1} \|_\op^2 = o_{d}(1)\, , \\
\frac{1}{n^2} \E_{\bX} [ \Var_{\beps, f_*}( \beps^\sT \bXi \beps | \bX)] =&~ O_d(1) \cdot \frac{1}{n^2} \Tr (\bXi^2 ) = o_d(1) \, , \\
\frac{1}{n^2} \E_{\bX} [ \Var_{\beps, f_*}( \boldf_{> \ell }^\sT \bXi \boldf_{> \ell } | \bX)] =&~ O_d(1) \cdot \frac{1}{n} \E_{\bX} [ \| \bF_{\geq \ell } \|_{\op}^2 ] = o_d(1) \, .
\end{aligned}
\]

Similarly to the proof in Section \ref{sec:outline}, combining the convergence in probability of the expectation in step 1, and the bound on the variance in step 2 yields the results for the training error and RKHS norm of Theorem \ref{thm:KRR}.

\subsection{Auxiliary lemmas}

\begin{lemma}\label{lem:decay_eigenvalues}
Let $h_d: \R \to \R$ be a sequence of inner-product kernel such that $h_d(1) =O_d(1)$. Let $(\xi_{d,k}(h_d))_{k\geq 0}$ be its Gegenbauer coefficients in $\cA_d$. Recall that $\xi_{d,k} (h) \geq 0$ are given by
\[
\xi_{d,k} (h_d) = \E_{\bx \sim \Unif (\cA_d)} [ h_d (\< \ones , \bx \>/d) Q_k^{(d)} ( \< \ones , \bx \> )  ] \, .
\]
If $\cA_d= \Cube^d$, assume that $\max_{0\leq k \leq \ell} \xi_{d,d-k} (h_d) = O_d (d^{-\ell - 1})$. Then we have
\begin{align}
    \sup_{k \geq \ell +1} \xi_{d,k} (h_d) = O_d ( d^{-\ell -1 }) \\, .
\end{align}
\end{lemma}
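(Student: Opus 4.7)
The plan hinges on a single elementary observation: once the kernel is evaluated at the diagonal, the Gegenbauer expansion reduces to a non-negative series summing to $h_d(1) = O_d(1)$, so each term is individually bounded by a constant. The proof is then just a matter of comparing $\xi_{d,k}$ to $B(\cA_d,k)^{-1}$ and lower-bounding the latter.

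First I would verify the normalization $Q_k^{(d)}(d) = 1$. By the addition formula stated after \eqref{eq:diagonalization_h}, $Q_k^{(d)}(\langle \bx,\bx\rangle) = B(\cA_d,k)^{-1} \sum_{s\in[B(\cA_d,k)]} Y_{ks}(\bx)^2$, and since the left-hand side is a constant (for $\bx \in \cA_d$, one has $\langle \bx,\bx\rangle = d$) while the right-hand side has mean $1$ under $\bx \sim \Unif(\cA_d)$, both equal $1$. Substituting $\bx_1=\bx_2$ in the Gegenbauer expansion then gives
\[
h_d(1) \;=\; \sum_{k\geq 0} \xi_{d,k}(h_d)\, B(\cA_d,k).
\]
Because every term is non-negative and the total is $O_d(1)$, I obtain the uniform bound $\xi_{d,k}(h_d) \leq h_d(1)/B(\cA_d,k)$ for all $k$.

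The second step is a dimension count for $k \geq \ell+1$. On the sphere, $B(\S^{d-1},k)$ is monotone increasing in $k$ for fixed $d$, so $B(\S^{d-1},k) \geq B(\S^{d-1},\ell+1) = \Theta_d(d^{\ell+1})$, giving $\xi_{d,k}(h_d) = O_d(d^{-\ell-1})$ uniformly. On the hypercube, for the ``bulk'' range $\ell+1 \leq k \leq d-\ell-1$, unimodality of $\binom{d}{\cdot}$ yields $B(\Cube^d,k) = \binom{d}{\min(k,d-k)} \geq \binom{d}{\ell+1} = \Theta_d(d^{\ell+1})$, so again $\xi_{d,k}(h_d) = O_d(d^{-\ell-1})$.

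The third step, and the only one requiring the extra hypothesis, handles the hypercube ``boundary'' indices $k \in \{d-\ell,\ldots,d\}$, where $B(\Cube^d,k) = \binom{d}{d-k}$ can be as small as $\Theta_d(1)$ and the bound of Step 2 is useless. These are exactly the finitely many (at most $\ell+1$) indices covered by the stated assumption $\max_{0\leq k \leq \ell}\xi_{d,d-k}(h_d) = O_d(d^{-\ell-1})$, which disposes of them directly. Combining the three steps yields $\sup_{k\geq \ell+1}\xi_{d,k}(h_d)=O_d(d^{-\ell-1})$. There is no real obstacle; the only point meriting a sentence of care is the verification of $Q_k^{(d)}(d)=1$ simultaneously on the sphere and the hypercube.
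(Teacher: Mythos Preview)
Your proposal is correct and follows essentially the same approach as the paper's proof: both use the identity $h_d(1)=\sum_{k\geq 0}\xi_{d,k}(h_d)\,B(\cA_d,k)$ together with non-negativity to get $\xi_{d,k}(h_d)\leq C/B(\cA_d,k)$, then bound $B(\cA_d,k)^{-1}$ via monotonicity on the sphere and unimodality of $\binom{d}{\cdot}$ on the hypercube, invoking the extra hypothesis only for the hypercube indices $k\in\{d-\ell,\dots,d\}$. Your additional sentence verifying $Q_k^{(d)}(d)=1$ is a nice explicit justification that the paper leaves implicit.
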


\begin{proof}[Proof of Lemma \ref{lem:decay_eigenvalues}]
From the assumption, there exists a constant $C>0$ such that
\[
h_d(1) = \sum_{k \geq 0} \xi_{d,k} (h_d) B (\cA_d , k) \leq C\, , 
\]
and therefore $\xi_{d,k} (h_d) \leq C/ B(\cA_d, k)$. The lemma follows from $\sup_{\ell+1 \leq k \leq d- \ell - 1} B (\Cube^d , k)^{-1} = {{d}\choose{\ell +1}}^{-1} = O_d (d^{-\ell -1})$ and $\sup_{k \geq \ell+1 } B (\S^d , k)^{-1} = B (\S^d , \ell +1)^{-1} = O_d (d^{-\ell -1})$, where we used that $B (\S^d , k)$ is non-decreasing (see for example Lemma 1 in \cite{ghorbani2021linearized}).
\end{proof}

\vspace{+10pt}

The following proposition is a simple modification of the proof of Theorem 5.48 in \cite{vershynin2010introduction}:

\begin{proposition}\label{prop:Vershynin_matrix}
Let $\bY \in \R^{n \times B}$ be a matrix whose rows $\by_i$ are independent random vectors in $\R^B$ with common second moment matrix $\bSigma = \E[\by_i \by_i^\sT]$. Fix an integer $q \geq 1$ and let $q_* = 2q/(2q - 1)$ and $\Gamma := \E_{\bY} [ \max_{i \leq n} \| \by_i \|_2^{q_*} ]^{2/q_*} $. Then, there exists a constant $C_q >0$ such that 
\begin{equation}\label{eq:Lq_bound_op}
\E \Big[ \Big\| \frac{1}{n} \bY^\sT \bY - \bSigma \Big\|_{\op}^q \Big]^{1/q} \leq \max ( \| \bSigma\|^{1/2}_\op \delta, \delta^2 )\quad \text{ where }\quad \delta = C_q \sqrt{\frac{\Gamma \log ( \min (n,B))}{n}}\, ,
\end{equation}
and
\begin{equation}\label{eq:Lq_bound_op_2}
\E[ \| \bY \|_{\op}^{2q} ]^{1/(2q)} \leq C_q \big[ \| \bSigma \|_\op^{1/2} \sqrt{n} + \sqrt{\Gamma \log ( \min (n,B))} \big] \, .
\end{equation}
\end{proposition}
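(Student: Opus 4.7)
The strategy will be a truncation argument combined with a matrix Bernstein inequality, adapted to yield $L^q$-moment bounds rather than a single high-probability bound. For a threshold $M>0$ to be chosen later, I will split each row as $\by_i=\by_i^b+\by_i^t$ with $\by_i^b:=\by_i\ind_{\|\by_i\|_2\le M}$. Because the two parts have disjoint support, $\by_i\by_i^\sT=\by_i^b(\by_i^b)^\sT+\by_i^t(\by_i^t)^\sT$; setting $\bSigma^b:=\E[\by_1^b(\by_1^b)^\sT]$ and $\bSigma^t:=\bSigma-\bSigma^b$ gives the decomposition
\[
\frac{\bY^\sT\bY}{n}-\bSigma=\bS+\bR,\qquad \bS:=\frac1n\sum_{i=1}^n\bW_i,\qquad \bR:=\frac1n\sum_{i=1}^n\big[\by_i^t(\by_i^t)^\sT-\bSigma^t\big],
\]
where $\bW_i:=\by_i^b(\by_i^b)^\sT-\bSigma^b$ and the remainder $\bR$ is supported on the event $\cE^c:=\{\max_i\|\by_i\|_2>M\}$.

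For the bounded sum $\bS$, the summands are independent, mean-zero, satisfy $\|\bW_i\|_{\op}\le 2M^2$, and $\bigl\|\sum_i\E\bW_i^2\bigr\|_{\op}\le nM^2\|\bSigma\|_{\op}$ since $\E[\|\by_i^b\|_2^2\by_i^b(\by_i^b)^\sT]\preceq M^2\bSigma$. Matrix Bernstein (Tropp) then gives
\[
\P(\|\bS\|_{\op}>s)\le 2\min(n,B)\exp\bigl(-cns^2/(M^2\|\bSigma\|_{\op}+M^2s)\bigr),
\]
where the prefactor $\min(n,B)$ instead of $B$ uses the rank bound on $\bY^\sT\bY/n$. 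Integrating this tail against $qs^{q-1}\d s$ over $[0,\infty)$ will yield
\[
\E[\|\bS\|_{\op}^q]^{1/q}\le C_q\bigl(M\|\bSigma\|_{\op}^{1/2}\sqrt{\log\min(n,B)/n}+M^2\log\min(n,B)/n\bigr).
\]

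For the remainder $\bR$, I will use the crude a posteriori bound $\|\bR\|_{\op}\le\max_i\|\by_i\|_2^2\ind_{\cE^c}+\|\bSigma^t\|_{\op}$. Markov's inequality gives $\P(\cE^c)\le\Gamma^{q_*/2}/M^{q_*}$, and H\"older's inequality with the conjugate pair $(q_*,2q)$ — whose exponents satisfy the defining identity $1/q_*+1/(2q)=1$, equivalent to $q_*=2q/(2q-1)$ — yields $\E[\|\bR\|_{\op}^q]^{1/q}\le C_q M^2\log\min(n,B)/n$ when $M^2=\Gamma$. This balances the bounded and tail contributions; setting $M^2=\Gamma$ and summing gives \eqref{eq:Lq_bound_op} with $\delta=C_q\sqrt{\Gamma\log\min(n,B)/n}$. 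The second bound \eqref{eq:Lq_bound_op_2} will follow from $\|\bY\|_{\op}^2=\|\bY^\sT\bY\|_{\op}\le n\|\bSigma\|_{\op}+n\|\bY^\sT\bY/n-\bSigma\|_{\op}$ by taking $L^{2q}$-norms and using $\sqrt{x+y}\le\sqrt{x}+\sqrt{y}$.

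The main obstacle will be the treatment of the truncation remainder in $L^q$ using only a $q_*$-th moment control on $\max_i\|\by_i\|_2$: the specific exponent $q_*=2q/(2q-1)$ is forced by the H\"older split that balances, at the optimal level $M^2=\Gamma$, the exponential Bernstein bound on the bounded sum against the polynomial Markov bound on the truncation probability; any other choice of exponent would either degrade the rate or demand stronger moment assumptions than stated.
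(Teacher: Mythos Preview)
Your truncation-plus-Bernstein route is different from the paper's, which follows Vershynin: symmetrize to $\frac1n\sum_i\eps_i\by_i\by_i^\sT$, apply noncommutative Khintchine conditionally on $\bY$ to get
\[
\E\Big[\Big\|\tfrac1n\sum_i\eps_i\by_i\by_i^\sT\Big\|_{\op}^q\Big]^{1/q}
\le C_q\,\frac{\sqrt{\log\min(n,B)}}{n}\;\E\Big[\max_i\|\by_i\|_2\cdot\|\bY^\sT\bY\|_{\op}^{1/2}\Big],
\]
then H\"older the \emph{pointwise product} $\max_i\|\by_i\|_2\cdot\|\bY^\sT\bY\|_{\op}^{1/2}$ with the conjugate pair $(q_*,2q)$, and close with the self-bound $V\le\delta(V+\|\bSigma\|_{\op})^{1/2}$. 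The exponent $q_*$ arises exactly here: Khintchine leaves one factor of $\max_i\|\by_i\|$ (not its square), so $q_*$ is all the moment that is needed.

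Your remainder step has a genuine gap. First, with $M^2=\Gamma$ the Markov bound is empty: $\P(\cE^c)\le\E[\max_i\|\by_i\|^{q_*}]/M^{q_*}=\Gamma^{q_*/2}/\Gamma^{q_*/2}=1$. Second, and decisively, for any truncation level the crude estimate $\|\bR\|_{\op}\le\max_i\|\by_i\|_2^2\ind_{\cE^c}+\|\bSigma^t\|_{\op}$ forces you to control $\E[\max_i\|\by_i\|_2^{2q}\ind_{\cE^c}]$, but you only have $\E[\max_i\|\by_i\|_2^{q_*}]$ with $q_*\le 2\le 2q$; the tail integral $\int t^{2q-1-q_*}\,dt$ diverges for every $q\ge 1$, so no H\"older pairing with $(q_*,2q)$ converts this into a finite $q$-th moment of a \emph{square}. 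Your line ``yields $\E[\|\bR\|_{\op}^q]^{1/q}\le C_qM^2\log\min(n,B)/n$'' is asserted rather than derived---no logarithm ever enters the $\bR$ analysis. A secondary issue: the $\min(n,B)$ prefactor in matrix Bernstein is not free for your summands $\bW_i=\by_i^b(\by_i^b)^\sT-\bSigma^b$, since $\bSigma^b$ can have full rank $B$ and the usual rank-restriction trick does not apply. In short, the specific pairing $(q_*,2q)$ is not a Markov-versus-Bernstein balance but the H\"older split on the post-Khintchine product; the symmetrization route is what makes the weak moment assumption sufficient.
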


\begin{proof}[Proof of Proposition \ref{prop:Vershynin_matrix}]
The proof follows from the same argument as in the proof of Theorem 5.45 in \cite{vershynin2010introduction}. We will denote $C>0$ a generic constant that only depends on $q$. By symmetrization, we have
\[
\E_{\bY} \Big[ \Big\| \frac{1}{n} \bY^\sT \bY - \bSigma \Big\|_{\op}^q \Big]^{1/q} \leq C \E_{\bY,\beps} \Big[ \Big\| \frac{1}{n} \sum_{i = 1} \eps_i \by_i \by_i^\sT   \Big\|_{\op}^q \Big]^{1/q} \, ,
\]
where the $\eps_i$ are $n$ independent Rademacher random variables. 

By noncommutative Khinchine's inequality, we have
\[
\begin{aligned}
\E_{\bY,\beps} \Big[ \Big\| \frac{1}{n} \sum_{i = 1} \eps_i \by_i \by_i^\sT   \Big\|_{\op}^q \Big]^{1/q} \leq&~ C \frac{\sqrt{\log(\min (n,B))}}{n} \E_{\bY} \Big[ \Big\| \sum_{i \in [n]} \| \by_i \|_2^2 \by_i \by_i^\sT \Big\|_{\op}^{1/2} \Big] \\
\leq &~ C \frac{\sqrt{\log(\min (n,B))}}{n} \E_{\bY}  \Big[ \max_{i \in [n} \| \by_i \|_2 \cdot \Big\| \sum_{i\in [n]} \by_i \by_i \Big\|^{1/2} \Big] \\
\leq &~ C \frac{\sqrt{\log(\min (n,B))}}{n} \E_{\bY}  \Big[ \max_{i \in [n} \| \by_i \|_2 \cdot \Big\| \sum_{i\in [n]} \by_i \by_i \Big\|^{1/2} \Big] \\
\leq&~ C \frac{\sqrt{\log(\min (n,B))}}{n} \E_{\bY}  \Big[ \max_{i \in [n} \| \by_i \|_2^{q_*} \Big]^{1/q_*} \cdot  \E_{\bY}  \Big[ \Big\| \sum_{i\in [n]} \by_i \by_i \Big\|^{q} \Big]^{1/(2q)} \, . 
\end{aligned}
\]
Denoting $V = \E_{\bY,\beps} \Big[ \Big\| \frac{1}{n} \bY^\sT \bY - \bSigma   \Big\|_{\op}^q \Big]^{1/q}$ and using $\E_{\bY}  \Big[ \Big\| \frac{1}{n}\sum_{i\in [n]} \by_i \by_i \Big\|^{q} \Big]^{1/q} \leq C (V+ \| \bSigma \|_{\op} )$, we get
\[
V \leq C \sqrt{\frac{\Gamma \log(\min (n,B))}{n}} (V +  \| \bSigma \|_{\op})^{1/2}\, .
\]
Denoting $\delta = 4 C \sqrt{\frac{\Gamma \log(\min (n,B))}{n}}$, this implies Eq.~\eqref{eq:Lq_bound_op}. Equation~\eqref{eq:Lq_bound_op_2} is a simple consequence of bound \eqref{eq:Lq_bound_op}.
\end{proof}

Finally, the following lemma provides a useful matrix algebra identity:

\begin{lemma}\label{lem:matrix_identity}
Let $\bU \in \R^{n \times B}$ and $\bV \in \R^{n \times n}$ be two matrices such that $B <n$, $\bU$ is full rank and $\bV$ is symmetric positive definite. We have the following identity:
\[
( \bU^\sT \bV \bU )^{-1} = \bU^\dagger \bV^{-1/2} \bPi \bV^{-1/2} (\bU^\dagger)^\sT \, , 
\]
where $\bPi = \id_n - [\bV^{-1/2} \bP]  [\bV^{-1/2} \bP]^\dagger$, $\bP =  \id_n - (\bU^\dagger)^\sT \bU^\sT  $ and $\bU^\dagger $ is the Moore-Penrose inverse of $\bU$ (here by assumption, $\bU^\dagger = ( \bU^\sT \bU)^{-1} \bU^\sT $).
\end{lemma}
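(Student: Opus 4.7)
\textbf{Proof plan for Lemma \ref{lem:matrix_identity}.} The natural move is to absorb $\bV$ into $\bU$. Set $\bW := \bV^{1/2}\bU \in \R^{n\times B}$; since $\bV\succ 0$ and $\bU$ has full column rank, so does $\bW$, and the left-hand side is simply $(\bW^\sT \bW)^{-1}$. The strategy is to rewrite the right-hand side entirely in terms of $\bW$ and its Moore--Penrose inverse, then collapse it.

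First I would identify $\bP$. Because $\bU$ has full column rank, $\bU\bU^\dagger$ is the orthogonal projector onto $\mathrm{Col}(\bU)$ (and in particular is symmetric), so $\bP = \id_n - (\bU^\dagger)^\sT\bU^\sT = \id_n - \bU\bU^\dagger$ is the orthogonal projector onto $\mathrm{Col}(\bU)^\perp$. Next I would identify $\bPi$. The claim is that $\bPi = \bW\bW^\dagger$, i.e.\ the orthogonal projector onto $\mathrm{Col}(\bW)$. For this, note $\bx\in\mathrm{Col}(\bW)^\perp \iff \bx^\sT \bV^{1/2}\bU = 0 \iff \bV^{1/2}\bx\in\mathrm{Col}(\bU)^\perp \iff \bx\in \bV^{-1/2}\mathrm{Col}(\bU)^\perp$. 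Thus $\mathrm{Col}(\bV^{-1/2}\bP) = \bV^{-1/2}\mathrm{Col}(\bU)^\perp = \mathrm{Col}(\bW)^\perp$, so $[\bV^{-1/2}\bP][\bV^{-1/2}\bP]^\dagger$ is the orthogonal projector onto $\mathrm{Col}(\bW)^\perp$, and therefore $\bPi = \id_n - [\bV^{-1/2}\bP][\bV^{-1/2}\bP]^\dagger = \bW\bW^\dagger$.

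With these two identifications in hand, I would just expand. Plugging $\bPi = \bV^{1/2}\bU\bW^\dagger$ into the right-hand side and using $\bU^\dagger\bU=\id_B$ yields
\[
\bU^\dagger \bV^{-1/2}\bPi \bV^{-1/2}(\bU^\dagger)^\sT = \bU^\dagger(\bU\bW^\dagger)\bV^{-1/2}(\bU^\dagger)^\sT = \bW^\dagger \bV^{-1/2}(\bU^\dagger)^\sT.
\]
Then substituting $\bW^\dagger = (\bU^\sT\bV\bU)^{-1}\bU^\sT\bV^{1/2}$ and $(\bU^\dagger)^\sT = \bU(\bU^\sT\bU)^{-1}$ gives $(\bU^\sT\bV\bU)^{-1}\bU^\sT\bU(\bU^\sT\bU)^{-1} = (\bU^\sT\bV\bU)^{-1}$, which is the left-hand side.

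There is no real obstacle; the only subtle point is recognizing that the somewhat opaque definition $\bPi = \id_n - [\bV^{-1/2}\bP][\bV^{-1/2}\bP]^\dagger$ is just the projector onto $\mathrm{Col}(\bV^{1/2}\bU)$ in disguise, after which the identity reduces to the trivial fact $(\bW^\sT\bW)^{-1} = \bW^\dagger(\bW^\dagger)^\sT$ for a tall full-rank matrix $\bW$.
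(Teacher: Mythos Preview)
Your proof is correct and takes a genuinely different route from the paper's. The paper proceeds by writing $(\bU^\sT \bV \bU)^{-1} = \lim_{\lambda \to 0^+} [\bU^\sT(\bV^{-1} + \lambda \id_n)^{-1}\bU]^{-1}$ and then applying the Sherman--Morrison--Woodbury formula three times in succession, tracking the $\lambda \to 0^+$ limit at the end. Your argument is more direct and more illuminating: by setting $\bW = \bV^{1/2}\bU$ and recognizing that $\bPi$ is nothing but the orthogonal projector onto $\mathrm{Col}(\bW)$, you reduce the identity to the trivial $(\bW^\sT\bW)^{-1} = \bW^\dagger(\bW^\dagger)^\sT$. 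Your approach exposes the geometric content of the formula (and in particular demystifies the definition of $\bPi$), whereas the paper's limiting SMW computation is more of a derivation --- it shows how one might \emph{discover} the identity from scratch rather than merely verify it. Both are valid; yours is shorter and arguably cleaner as a verification.
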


\begin{proof}[Proof of Lemma \ref{lem:matrix_identity}]
Denote for convenience $\bS = \bV^{-1/2}$. This identity comes from the observation that 
\[
( \bU^\sT \bV \bU )^{-1}= \lim_{\lambda \to 0^+} \big[ \bU^\sT (\bS \bS + \lambda \id_n)^{-1} \bU \big]^{-1} \, ,
\]
and by repeatedly applying Sherman-Morrison-Woodbury identity. First,
\[
\begin{aligned}
 \bU^\sT (\bS \bS + \lambda \id_n)^{-1} \bU =&~ \lambda^{-1} \bU^\sT \bU - \lambda^{-2} \bU^\sT \bS ( \id_n + \lambda^{-1} \bS\bS)^{-1} \bS \bU \, .
\end{aligned}
\]
We can then use the identity again on its inverse:
\[
\begin{aligned}
&~ \big[ \bU^\sT (\bS \bS + \lambda \id_n)^{-1} \bU \big]^{-1} \\
=&~ \lambda (\bU^\sT \bU)^{-1}  - (\bU^\sT \bU)^{-1}  \bU^\sT \bS \big[ - ( \id_n + \lambda^{-1} \bS \bS ) + \lambda^{-1} \bS \bU (\bU^\sT \bU)^{-1}  \bU^\sT \bS \big]^{-1} \bS \bU (\bU^\sT \bU)^{-1} \\
=&~ \lambda (\bU^\sT \bU)^{-1}  + (\bU^\sT \bU)^{-1}  \bU^\sT \bS \big[  \id_n + \lambda^{-1} \bS (\id -  \bU (\bU^\sT \bU)^{-1}  \bU^\sT) \bS \big]^{-1} \bS \bU (\bU^\sT \bU)^{-1} \, . 
\end{aligned}
\]
By the definition of the pseudo-inverse, $\bP = \id -  \bU (\bU^\sT \bU)^{-1}  \bU^\sT = \bP^2$. We apply a third time the SMW formula:
\[
\begin{aligned}
&~ \big[ \bU^\sT (\bS \bS + \lambda \id_n)^{-1} \bU \big]^{-1} \\
=&~ \lambda (\bU^\sT \bU)^{-1}  + \bU^\dagger \bS \big[  \id_n + \lambda^{-1} \bS \bP \bP  \bS \big]^{-1} \bS (\bU^\dagger)^\sT \\
=&~ \lambda (\bU^\sT \bU)^{-1}  + \bU^\dagger \bS \big[ \id_n - \bS \bP ( \lambda \id_n +\bP \bS  \bS \bP)^{-1}  \bP \bS \big] \bS (\bU^\dagger)^\sT \\
\xrightarrow[\lambda \to 0^+]{} &~\bU^\dagger \bS \bPi \bS (\bU^\dagger)^\sT\, ,
\end{aligned}
\]
which concludes the proof.
\end{proof}

\clearpage

\section{Technical background}\label{sec:technical_background}

\subsection{Functions on the sphere}\label{sec:functions_sphere}

In this section we introduce some notation and technical background on functional spaces on the sphere.
In particular, we review the decompositions in (hyper-)spherical harmonics on the  $\S^{d-1}(\sqrt{d})$ and in orthogonal polynomials
on the real line. 
We refer the reader to \cite{costas2014spherical,szego1939orthogonal,chihara2011introduction,ghorbani2021linearized}
for further information on these topics.

\subsubsection{Functional spaces over the sphere}

For $d \ge 3$, we let $\S^{d-1}(r) = \{\bx \in \R^{d}: \| \bx \|_2 = r\}$ denote the sphere with radius $r$ in $\reals^d$.
We will mostly work with the sphere of radius $\sqrt d$, $\S^{d-1}(\sqrt{d})$ and will denote by $\tau_{d}$  the uniform probability measure on $\S^{d-1}(\sqrt d)$. 
All functions in this section are assumed to be elements of $ L^2(\S^{d-1}(\sqrt d) ,\tau_d)$, with scalar product and norm denoted as $\<\,\cdot\,,\,\cdot\,\>_{L^2}$
and $\|\,\cdot\,\|_{L^2}$:
\begin{align}
\<f,g\>_{L^2} \equiv \int_{\S^{d-1}(\sqrt d)} f(\bx) \, g(\bx)\, \tau_d(\de \bx)\,.
\end{align}

For $\ell\in\integers_{\ge 0}$, let $\tilde{V}_{d,\ell}$ be the space of homogeneous harmonic polynomials of degree $\ell$ on $\reals^d$ (i.e. homogeneous
polynomials $q(\bx)$ satisfying $\Delta q(\bx) = 0$), and denote by $V_{d,\ell}$ the linear space of functions obtained by restricting the polynomials in $\tilde{V}_{d,\ell}$
to $\S^{d-1}(\sqrt d)$. With these definitions, we have the following orthogonal decomposition
\begin{align}
L^2(\S^{d-1}(\sqrt d) ,\tau_d) = \bigoplus_{\ell=0}^{\infty} V_{d,\ell}\, . \label{eq:SpinDecomposition}
\end{align}
The dimension of each subspace is given by
\begin{align}
\dim(V_{d,\ell}) = B(\S^{d-1}; \ell) = \frac{2 \ell + d - 2}{d - 2} { \ell + d - 3 \choose \ell} \, .
\end{align}
For each $\ell\in \integers_{\ge 0}$, the spherical harmonics $\{ Y_{\ell, j}^{(d)}\}_{1\le j \le B(\S^{d-1}; \ell)}$ form an orthonormal basis of $V_{d,\ell}$:
\[
\<Y^{(d)}_{ki}, Y^{(d)}_{sj}\>_{L^2} = \delta_{ij} \delta_{ks}.
\]
Note that our convention is different from the more standard one, that defines the spherical harmonics as functions on $\S^{d-1}(1)$.
It is immediate to pass from one convention to the other by a simple scaling. We will drop the superscript $d$ and write $Y_{\ell, j} = Y_{\ell, j}^{(d)}$ whenever clear from the context.

We denote by $\oproj_k$  the orthogonal projections to $V_{d,k}$ in $L^2(\S^{d-1}(\sqrt d),\tau_d)$. This can be written in terms of spherical harmonics as
\begin{align}
\oproj_k f(\bx) \equiv& \sum_{l=1}^{B(\S^{d-1}; k)} \< f, Y_{kl}\>_{L^2} Y_{kl}(\bx). 
\end{align}
We also define
$\oproj_{\le \ell}\equiv \sum_{k =0}^\ell \oproj_k$, $\oproj_{>\ell} \equiv \id -\oproj_{\le \ell} = \sum_{k =\ell+1}^\infty \oproj_k$,
and $\oproj_{<\ell}\equiv \oproj_{\le \ell-1}$, $\oproj_{\ge \ell}\equiv \oproj_{>\ell-1}$.

\subsubsection{Gegenbauer polynomials}
\label{sec:Gegenbauer}

The $\ell$-th Gegenbauer polynomial $Q_\ell^{(d)}$ is a polynomial of degree $\ell$. Consistently
with our convention for spherical harmonics, we view $Q_\ell^{(d)}$ as a function $Q_{\ell}^{(d)}: [-d,d]\to \reals$. The set $\{ Q_\ell^{(d)}\}_{\ell\ge 0}$
forms an orthogonal basis on $L^2([-d,d],\tilde\tau^1_{d})$, where $\tilde\tau^1_{d}$ is the distribution of $\sqrt{d}\<\bx,\be_1\>$ when $\bx\sim \tau_d$,
satisfying the normalization condition:
\begin{align}
\< Q^{(d)}_k(\sqrt{d}\< \be_1, \cdot\>), Q^{(d)}_j(\sqrt{d}\< \be_1, \cdot\>) \>_{L^2(\S^{d-1}(\sqrt d))} = \frac{1}{B(\S^{d-1};k)}\, \delta_{jk} \, .  \label{eq:GegenbauerNormalization}
\end{align}
In particular, these polynomials are normalized so that  $Q_\ell^{(d)}(d) = 1$. 
As above, we will omit the superscript $(d)$ in $Q_\ell^{(d)}$ when clear from the context.

Gegenbauer polynomials are directly related to spherical harmonics as follows. Fix $\bv\in\S^{d-1}(\sqrt{d})$ and 
consider the subspace of  $V_{\ell}$ formed by all functions that are invariant under rotations in $\reals^d$ that keep $\bv$ unchanged.
It is not hard to see that this subspace has dimension one, and coincides with the span of the function $Q_{\ell}^{(d)}(\<\bv,\,\cdot\,\>)$.

We will use the following properties of Gegenbauer polynomials
\begin{enumerate}
\item For $\bx, \by \in \S^{d-1}(\sqrt d)$
\begin{align}
\< Q_j^{(d)}(\< \bx, \cdot\>), Q_k^{(d)}(\< \by, \cdot\>) \>_{L^2} = \frac{1}{B(\S^{d-1}; k)}\delta_{jk}  Q_k^{(d)}(\< \bx, \by\>).  \label{eq:ProductGegenbauer}
\end{align}
\item For $\bx, \by \in \S^{d-1}(\sqrt d)$
\begin{align}
Q_k^{(d)}(\< \bx, \by\> ) = \frac{1}{B(\S^{d-1}; k)} \sum_{i =1}^{ B(\S^{d-1}; k)} Y_{ki}^{(d)}(\bx) Y_{ki}^{(d)}(\by). \label{eq:GegenbauerHarmonics}
\end{align}
\end{enumerate}
These properties imply that ---up to a constant--- $Q_k^{(d)}(\< \bx, \by\> )$ is a representation of the projector onto 
the subspace of degree -$k$ spherical harmonics
\begin{align}
(\oproj_k f)(\bx) = B(\S^{d-1}; k) \int_{\S^{d-1}(\sqrt{d})} \, Q_k^{(d)}(\< \bx, \by\> )\,  f(\by)\, \tau_d(\de\by)\, .\label{eq:ProjectorGegenbauer}
\end{align}
For a function $\barsigma \in L^2([-\sqrt d, \sqrt d], \tau^1_{d})$ (where $\tau^1_{d}$ is the distribution of $\< \be_!, \bx \> $ when $\bx \sim_{iid} \Unif(\S^{d-1}(\sqrt d))$), denoting its spherical harmonics coefficients $\xi_{d, k}(\barsigma)$ to be 
\begin{align}\label{eqn:technical_lambda_sigma}
\xi_{d, k}(\barsigma) = \int_{[-\sqrt d , \sqrt d]} \barsigma(x) Q_k^{(d)}(\sqrt d x) \tau^1_{d}(\de x),
\end{align}
then we have the following equation holds in $L^2([-\sqrt d, \sqrt d],\tau^1_{d-1})$ sense
\[
\barsigma(x) = \sum_{k = 0}^\infty \xi_{d, k}(\barsigma) B(\S^{d-1}; k) Q_k^{(d)}(\sqrt d x). 
\]

To  any rotationally invariant kernel $H_d(\bx_1, \bx_2) = h_d(\< \bx_1, \bx_2\> / d)$,
with $h_d(\sqrt{d}\, \cdot \, ) \in L^2([-\sqrt{d},\sqrt{d}],\tau^1_{d})$,
we can associate a self-adjoint operator $\cuH_d:L^2(\S^{d-1}(\sqrt{d}))\to L^2(\S^{d-1}(\sqrt{d}))$
via
\begin{align}
\cuH_df(\bx) \equiv \int_{\S^{d-1}(\sqrt{d})} h_d(\<\bx,\bx_1\>/d)\, f(\bx_1) \, \tau_d(\de \bx_1)\, .
\end{align}
By rotational invariance,   the space $V_{k}$ of homogeneous polynomials of degree $k$ is an eigenspace of
$\cuH_d$, and we will denote the corresponding eigenvalue by $\xi_{d,k}(h_d)$. In other words
$\cuH_df(\bx) \equiv \sum_{k=0}^{\infty} \xi_{d,k}(h_d) \oproj_{k}f$.   The eigenvalues can be computed via
\begin{align}
  \xi_{d, k}(h_d) = \int_{[-\sqrt d , \sqrt d]} h_d\big(x/\sqrt{d}\big) Q_k^{(d)}(\sqrt d x) \tau^1_{d-1}(\de x)\, .
\end{align}

\subsubsection{Hermite polynomials}
\label{sec:Hermite}

The Hermite polynomials $\{\bbHe_k\}_{k\ge 0}$ form an orthogonal basis of $L^2(\reals,\gamma)$, where $\gamma(\de x) = e^{-x^2/2}\de x/\sqrt{2\pi}$ 
is the standard Gaussian measure, and $\bbHe_k$ has degree $k$. We will follow the classical normalization (here and below, expectation is with respect to
$G\sim\normal(0,1)$):
\begin{align}
\E\big\{\bbHe_j(G) \,\bbHe_k(G)\big\} = k!\, \delta_{jk}\, .
\end{align}
As a consequence, for any function $g\in L^2(\reals,\gamma)$, we have the decomposition
\begin{align}\label{eqn:sigma_He_decomposition}
g(x) = \sum_{k=0}^{\infty}\frac{\mu_k(g)}{k!}\, \bbHe_k(x)\, ,\;\;\;\;\;\; \mu_k(g) \equiv \E\big\{g(G)\, \bbHe_k(G)\}\, .
\end{align}

The Hermite polynomials can be obtained as high-dimensional limits of the Gegenbauer polynomials introduced in the previous section. Indeed, the Gegenbauer polynomials (up to a $\sqrt d$ scaling in domain) are constructed by Gram-Schmidt orthogonalization of the monomials $\{x^k\}_{k\ge 0}$ with respect to the measure 
$\tilde\tau^1_{d}$, while Hermite polynomial are obtained by Gram-Schmidt orthogonalization with respect to $\gamma$. Since $\tilde\tau^1_{d}\Rightarrow \gamma$
(here $\Rightarrow$ denotes weak convergence),
it is immediate to show that, for any fixed integer $k$, 
\begin{align}
\lim_{d \to \infty} \Coeff\{ Q_k^{(d)}( \sqrt d x) \, B(\S^{d-1}; k)^{1/2} \} = \Coeff\left\{ \frac{1}{(k!)^{1/2}}\,\bbHe_k(x) \right\}\, .\label{eq:Gegen-to-Hermite}
\end{align}
Here and below, for $P$ a polynomial, $\Coeff\{ P(x) \}$ is  the vector of the coefficients of $P$. As a consequence,
for any fixed integer $k$, we have
\begin{align}\label{eqn:mu_lambda_relationship}
\mu_k(\barsigma) = \lim_{d \to \infty} \xi_{d,k}(\barsigma) (B(\S^{d-1}; k)k!)^{1/2}, 
\end{align}
where $\mu_k(\barsigma)$ and $\xi_{d,k}(\barsigma)$ are given in Eq. (\ref{eqn:sigma_He_decomposition}) and (\ref{eqn:technical_lambda_sigma}). 

\subsection{Functions on the hypercube}\label{sec:functions_hypercube}

%Consider $\btheta \sim \Unif ( \{ -1, +1 \} )^{\otimes d}$ and $\bx \sim \Unif ( \{ -1, +1 \} )^{\otimes d}$. We will mirror the notations used for the functional space on the sphere. 
Fourier analysis on the hypercube is a well studied subject \cite{o2014analysis}. The purpose of this section is to introduce
some notations that  make the correspondence with proofs on the sphere straightforward.
For convenience, we will adopt the same notations as for their spherical case. 

\subsubsection{Fourier basis}

Denote $\Cube = \{ -1 , +1 \}^d $ the hypercube in $d$ dimension. Let us denote $\tau_{d}$ to be the uniform probability measure on $\Cube$. All the functions will be assumed to be elements of $L^2 (\Cube, \tau_d)$ (which contains all the bounded functions $f : \Cube \to \R$), with scalar product and norm denoted as $\< \cdot , \cdot \>_{L^2}$ and $\| \cdot \|_{L^2}$:
\[
\< f , g \>_{L^2} \equiv \int_{\Cube} f(\bx) g(\bx) \tau_d ( \de \bx) = \frac{1}{2^n} \sum_{\bx \in \Cube} f(\bx) g(\bx).
\]
Notice that $L^2 ( \Cube, \tau_d)$ is a $2^n$ dimensional linear space. By analogy with the spherical case we decompose $L^2 ( \Cube, \tau_d)$ as a direct sum of $d+1$ linear spaces obtained from polynomials of degree $\ell = 0, \ldots , d$
\[
L^2 ( \Cube, \tau_d) = \bigoplus_{\ell = 0}^d V_{d,\ell}.
\]

For each $\ell \in \{ 0 , \ldots , d \}$, consider the Fourier basis $\{ Y_{\ell, S}^{(d)} \}_{S \subseteq [d], |S | =\ell}$ of degree $\ell$, where for a set $S \subseteq [d]$, the basis is given by
\[
Y_{\ell, S}^{(d)} (\bx) \equiv x^S \equiv \prod_{i \in S} x_i.
\]
It is easy to verify that (notice that $x_i^k = x_i$ if $k$ is odd and  $x_i^k = 1$ if $k$ is even)
\[
\< Y_{\ell, S}^{(d)} , Y_{k, S'}^{(d)} \>_{L^2} = \E[ x^{S} \times x^{S'} ] = \delta_{\ell,k} \delta_{S,S'}. 
\]
Hence $\{ Y_{\ell, S}^{(d)} \}_{S \subseteq [d], |S | =\ell}$ form an orthonormal  basis of $V_{d,\ell}$ and 
\[
\dim (V_{d,\ell} ) = B(\Cube;\ell) = {{d}\choose{\ell}}.
\]
As above, we will omit the superscript $(d)$ in $Y_{\ell, S}^{(d)}$ when clear from the context.

\subsubsection{Hypercubic Gegenbauer}

We consider the following family of polynomials $\{ Q^{(d)}_\ell \}_{\ell = 0 , \ldots, d}$ that we will call hypercubic Gegenbauer, defined as
\[
Q^{(d)}_\ell (  \< \bx , \by \> ) = \frac{1}{B(\Cube; \ell)} \sum_{S \subseteq [d], |S| = \ell} Y_{\ell,S}^{(d)} ( \bx ) Y_{\ell,S}^{(d)} ( \by ). 
\]
Notice that the right hand side only depends on $ \< \bx , \by \>$ and therefore these polynomials are uniquely defined. In particular,
\[
\< Q_\ell^{(d)} ( \< \ones , \cdot \> ) , Q_k^{(d)} ( \< \ones , \cdot \> ) \>_{L^2} = \frac{1}{B(\Cube;k)} \delta_{\ell k}.
\]
Hence $\{ Q^{(d)}_\ell \}_{\ell = 0 , \ldots, d}$ form an orthogonal basis of $L^2 ( \{ -d , -d+2 , \ldots , d-2 ,d\}, \Tilde \tau_d^1 )$ where $\Tilde \tau_d^1$ is the distribution of $\< \ones , \bx \>$ when $\bx \sim \tau_d$, i.e., $\Tilde \tau_d^1 \sim 2 \text{Bin}(d, 1/2) - d/2$.

We have
\[
\< Q_\ell^{(d)} ( \< \bx , \cdot \> ) , Q_k^{(d)} ( \< \by , \cdot \> ) \>_{L^2} =   \frac{1}{B(\Cube;k)} Q_{k} ( \< \bx , \by \> )\delta_{\ell k} .
\]
For a function $\barsigma ( \cdot / \sqrt{d} ) \in L^2 ( \{ -d , -d+2 , \ldots , d-2 ,d\}, \Tilde \tau_d^1 )$, denote its hypercubic Gegenbauer coefficients $\xi_{d,k} ( \barsigma)$ to be
\[
\xi_{d,k} (\barsigma ) = \int_{\{-d, -d+2 , \ldots , d-2 , d\} } \barsigma(x / \sqrt{d} ) Q_k^{(d)} ( x) \Tilde \tau_d^1 (\de x).
\]

Notice that by weak convergence of $\< \ones, \bx \> / \sqrt{d}$ to the normal distribution, we have also convergence of the (rescaled) hypercubic Gegenbauer polynomials to the Hermite polynomials, i.e., for any fixed $k$, we have
\begin{align}
\lim_{d \to \infty} \Coeff\{ Q_k^{(d)}( \sqrt d x) \, B(\Cube; k)^{1/2} \} = \Coeff\left\{ \frac{1}{(k!)^{1/2}}\,\bbHe_k(x) \right\}\, .\label{eq:Hyper-Gegen-to-Hermite}
\end{align}

\subsection{Hypercontractivity of the uniform distribution on the sphere and the hypercube}
\label{app:hypercontractivity}

By Holder's inequality, we have $\| f \|_{L^p} \le \| f \|_{L^q}$ for any $f$ and any $p \le q$. The reverse inequality does not hold in general, even up to a constant. However, for some measures, the reverse inequality will hold for some sufficiently nice functions. These measures satisfy the celebrated hypercontractivity properties \cite{gross1975logarithmic, bonami1970etude, beckner1975inequalities, beckner1992sobolev}. 

\begin{lemma}[Hypercube hypercontractivity \cite{beckner1975inequalities}]\label{lem:hyper_cube} For any $\ell = \{ 0 , \ldots , d \}$ and $f_* \in L^2 ( \Cube)$ to be a degree $\ell$ polynomial, then for any integer $q\ge 2$, we have
\[
\| f_* \|_{L^q ( \Cube)}^2 \leq (q-1)^\ell \cdot \| f_* \|^2_{L^2 (\Cube)}.
\]
\end{lemma}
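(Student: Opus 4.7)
My plan is to reduce this to the standard Bonami--Beckner hypercontractivity theorem for the noise operator on the hypercube, which states that for the noise operator $T_\rho$ defined by $T_\rho f(\bx) = \E[f(\by)\mid \bx]$ (where $y_i = x_i$ with probability $(1+\rho)/2$ independently) and for any $1 \leq p \leq q$ with $\rho^2 \leq (p-1)/(q-1)$, one has $\|T_\rho f\|_{L^q} \leq \|f\|_{L^p}$. In the Fourier basis $\{Y_S\}_{S \subseteq [d]}$, this operator acts diagonally as $T_\rho Y_S = \rho^{|S|} Y_S$, which is the key algebraic fact I will exploit.

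The plan is then to apply Bonami--Beckner with $p=2$ and $\rho = 1/\sqrt{q-1}$, so that the contraction inequality becomes $\|T_\rho f\|_{L^q} \leq \|f\|_{L^2}$ for every $f \in L^2(\Cube)$. Given a degree-$\ell$ polynomial $f_* = \sum_{|S| \leq \ell} \hat{f}_*(S) Y_S$, I will construct an auxiliary function $g = \sum_{|S| \leq \ell} \rho^{-|S|} \hat{f}_*(S) Y_S$ (well defined because the sum is over $|S| \leq \ell$, so only finitely many positive powers of $\sqrt{q-1}$ appear) and note that $T_\rho g = f_*$ by the diagonal action. Applying Bonami--Beckner to $g$ yields
\[
\|f_*\|_{L^q} = \|T_\rho g\|_{L^q} \leq \|g\|_{L^2}.
\]

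Finally, by Parseval's identity and the degree bound,
\[
\|g\|_{L^2}^2 = \sum_{|S| \leq \ell} \rho^{-2|S|} \hat{f}_*(S)^2 \leq \rho^{-2\ell} \sum_{|S| \leq \ell} \hat{f}_*(S)^2 = (q-1)^\ell \|f_*\|_{L^2}^2,
\]
which gives the claimed inequality upon squaring. The only genuinely non-trivial ingredient is Bonami--Beckner itself, which I would cite from \cite{beckner1975inequalities} (it can itself be proved by tensorization from the two-point inequality on $\{-1,+1\}$ combined with induction on $d$, but this is classical and not something I would reproduce here). Everything else is just Fourier algebra on the hypercube.
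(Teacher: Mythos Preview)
Your proof is correct; the paper does not actually prove this lemma at all --- it simply states it as a classical result and attributes it to \cite{beckner1975inequalities}. Your argument is the standard reduction of the degree-$\ell$ moment bound (often called Bonami's lemma) to the Bonami--Beckner hypercontractive inequality for the noise operator, and it is exactly how this fact is typically derived (see e.g.\ O'Donnell's book).
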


Besides this classical result, we will also use the following simple observation:

\begin{lemma}[Hypercube hypercontractivity for high-degree polynomials ]\label{lem:hyper_high_cube} For any $\ell = \{ 0 , \ldots, d\}$ and $f_* \in \text{span} \{ Y_S : |S| \geq d- \ell \}$ (function orthogonal to degree-$(d - \ell - 1)$ polynomials), then for any integer $q \geq 2$, we have
\[
\| f_* \|_{L^q ( \Cube)}^2 \leq (q-1)^\ell \cdot \| f_* \|^2_{L^2 (\Cube)}.
\]
\end{lemma}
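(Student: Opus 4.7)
The plan is to reduce this high-degree hypercontractivity statement to the standard low-degree hypercontractivity on the hypercube (Lemma \ref{lem:hyper_cube}) by exploiting the duality $S \leftrightarrow S^c$ on the Boolean Fourier basis.

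First I would recall the key identity: the full parity function $Y_{[d]}(\bx) = \prod_{i \in [d]} x_i$ takes values in $\{+1,-1\}$, so $|Y_{[d]}(\bx)| = 1$ pointwise and $Y_{[d]}(\bx)^2 = 1$. Moreover for any $S \subseteq [d]$, $Y_{[d]}(\bx) Y_{S}(\bx) = Y_{S^c}(\bx)$, since $x_i^2 = 1$. Thus multiplication by $Y_{[d]}$ is an involution on $L^q(\Cube)$ that maps $Y_{S}$ to $Y_{S^c}$ and preserves $L^q$ norms for every $q \geq 1$.

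Next, given $f_* \in \spn\{Y_S : |S| \geq d - \ell\}$, write $f_*(\bx) = \sum_{S : |S| \geq d-\ell} c_S Y_S(\bx)$ and define $g(\bx) := Y_{[d]}(\bx) f_*(\bx) = \sum_{S : |S| \geq d-\ell} c_S Y_{S^c}(\bx)$. Since $|S^c| \leq \ell$ for each $S$ in the sum, $g$ is a polynomial of degree $\leq \ell$. Because $|Y_{[d]}(\bx)| = 1$ pointwise on $\Cube$, we have $|f_*(\bx)|^q = |g(\bx)|^q$ for every $\bx$ and every $q \geq 2$, so $\| f_* \|_{L^q(\Cube)} = \| g \|_{L^q(\Cube)}$ and in particular $\| f_* \|_{L^2(\Cube)} = \| g \|_{L^2(\Cube)}$.

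Finally, applying Lemma \ref{lem:hyper_cube} to the degree-$\ell$ polynomial $g$ gives $\| g \|_{L^q(\Cube)}^2 \leq (q-1)^\ell \| g \|_{L^2(\Cube)}^2$, which by the norm identities above yields the desired bound $\| f_* \|_{L^q(\Cube)}^2 \leq (q-1)^\ell \| f_* \|_{L^2(\Cube)}^2$. There is no real obstacle here: the only thing one has to check is that multiplication by $Y_{[d]}$ sends the span of $\{Y_S : |S| \geq d-\ell\}$ into the span of polynomials of degree $\leq \ell$, which is immediate from $Y_{[d]} Y_S = Y_{S^c}$.
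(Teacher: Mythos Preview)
Your proof is correct and follows essentially the same approach as the paper: multiply by the full parity function $Y_{[d]}$ to convert $f_*$ into a degree-$\ell$ polynomial with identical $L^q$ norms, then apply the standard hypercube hypercontractivity (Lemma \ref{lem:hyper_cube}). Your write-up is in fact slightly more explicit than the paper's about why the $L^q$ norms are preserved.
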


\begin{proof}[Proof of Lemma \ref{lem:hyper_high_cube}]
Note that for any $S \subseteq [d]$, we have $Y_S (\bx) = \big( \prod_{i \in [d]} x_i \big) \cdot Y_{S^c} (\bx)$. Hence, we have 
\[
f_* (\bx) = \sum_{S \subseteq [d], |S| \geq d- \ell} c_S Y_S (\bx)= \big( \prod_{i \in [d]} x_i \big)\sum_{S \subseteq [d], |S| \leq  \ell} c_{[d]\setminus S]} Y_S (\bx) =: \big( \prod_{i \in [d]} x_i \big) \cdot \Tilde f_* (\bx) \, .
\]
We deduce that $\| f_* \|_{L^q (\Cube^d)} = \| \Tilde f_* \|_{L^q (\Cube^d)}$, with $\Tilde f_*$ a degree-$\ell$ polynomial. We can therefore directly apply Lemma \ref{lem:hyper_cube}.
\end{proof}

Finally, we have the following similar hypercontractivity property on the sphere:

\begin{lemma}[Spherical hypercontractivity \cite{beckner1992sobolev}]\label{lem:hypercontractivity_sphere}
For any $\ell \in \N$ and $f_* \in L^2(\S^{d-1})$ to be a degree $\ell$ polynomial, for any $q \ge 2$, we have 
\[
\| f_* \|_{L^q(\S^{d-1})}^2 \le (q - 1)^\ell \cdot \| f_* \|_{L^2(\S^{d-1})}^2. 
\]
\end{lemma}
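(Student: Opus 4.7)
The plan is to reduce the degree-$\ell$ polynomial $L^q$-bound to the hypercontractivity of a one-parameter semigroup acting diagonally on spherical harmonics. Concretely, define the noise operator $K_\rho$ on $L^2(\S^{d-1})$ for $\rho \in [0,1]$ by $K_\rho Y_{k,s} = \rho^k Y_{k,s}$ on each spherical harmonic of degree $k$, extended by linearity and $L^2$-closure. The key non-trivial input (the content of Beckner's 1992 paper, and the analogue on the sphere of Nelson's Gaussian hypercontractivity) is that $K_\rho$ is a contraction $L^p \to L^q$ whenever $\rho \le \sqrt{(p-1)/(q-1)}$. This is typically proved by establishing a sharp logarithmic Sobolev inequality on $\S^{d-1}$ arising from the positive Ricci curvature of the sphere via the Bakry-Émery CD$(d-2,\infty)$ criterion, and then invoking Gross's equivalence between LSI and semigroup hypercontractivity.

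Given that input, the proof of Lemma \ref{lem:hypercontractivity_sphere} is a short algebraic manipulation. Pick $\rho = 1/\sqrt{q-1}$ so that $K_\rho : L^2 \to L^q$ is a contraction. For a polynomial $f_*$ of degree $\le \ell$, decompose $f_* = \sum_{k=0}^\ell f_k$ with $f_k := \oproj_k f_* \in V_{d,k}$, and define its ``inverse-smoothed'' version
\[
g \;:=\; \sum_{k=0}^\ell \rho^{-k} f_k \in L^2(\S^{d-1}),
\]
so that $K_\rho g = f_*$ by construction. By orthogonality of the subspaces $V_{d,k}$,
\[
\|g\|_{L^2}^2 \;=\; \sum_{k=0}^\ell \rho^{-2k}\|f_k\|_{L^2}^2 \;\le\; \rho^{-2\ell}\sum_{k=0}^\ell \|f_k\|_{L^2}^2 \;=\; \rho^{-2\ell}\|f_*\|_{L^2}^2.
\]
Applying contractivity and squaring yields
\[
\|f_*\|_{L^q}^2 \;=\; \|K_\rho g\|_{L^q}^2 \;\le\; \|g\|_{L^2}^2 \;\le\; \rho^{-2\ell}\|f_*\|_{L^2}^2 \;=\; (q-1)^\ell\|f_*\|_{L^2}^2,
\]
which is exactly the claim.

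The only real obstacle is the semigroup hypercontractivity of $K_\rho$ itself; once that is granted, the polynomial-degree version above is essentially immediate and the dependence on $\ell$ falls out of the decay $\rho^{-k}$ together with orthogonality. An equivalent route proceeds via the Poisson semigroup $P_t Y_{k,s} = e^{-kt} Y_{k,s}$ with the change of variable $\rho = e^{-t}$, which leads to the identical constant after the same exponent bookkeeping. Since Lemma \ref{lem:hypercontractivity_sphere} is attributed to \cite{beckner1992sobolev}, the natural finish is simply to cite Beckner's (or equivalently Mueller–Weissler's) hypercontractive inequality on $\S^{d-1}$ for the input and present the three lines above as the reduction.
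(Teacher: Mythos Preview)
Your reduction is correct and is the standard way to extract the degree-$\ell$ moment bound from semigroup hypercontractivity on the sphere. Note, however, that the paper does not actually prove this lemma: it simply states it with a citation to Beckner (1992), so there is no ``paper's own proof'' to compare against --- your sketch is precisely the kind of argument one would give to flesh out that citation.
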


\end{document}